\setlist{beginpenalty=100}
\newtheorem{thm}{Theorem}[section]
\newtheorem{lem}[thm]{Lemma}
\newtheorem{cor}[thm]{Corollary}
\newtheorem{prop}[thm]{Proposition}
\theoremstyle{definition}
\newtheorem{de}[thm]{Definition}
\newtheorem{ex}[thm]{Example}
\newtheorem*{qu}{Question}
\theoremstyle{remark}
\newtheorem{rem}[thm]{Remark}
\numberwithin{equation}{section}
\DeclarePairedDelimiter\abs{\lvert}{\rvert}
\DeclarePairedDelimiter\norm{\lVert}{\rVert}
\newcommand{\R}{\mathbb{R}}
\newcommand{\N}{\mathbb{N}}
\newcommand{\Z}{\mathbb{Z}}
\newcommand{\Q}{\mathbb{Q}}
\newcommand{\C}{\mathbb{C}}
\DeclareMathOperator{\SL}{SL}
\DeclareMathOperator{\PGL}{PGL}
\DeclareMathOperator{\GL}{GL}
\DeclareMathOperator{\SO}{SO}
\DeclareMathOperator{\Orth}{O}
\DeclareMathOperator{\Mat}{Mat}
\DeclareMathOperator{\Res}{Res}
\newcommand{\dd}{\mathop{}\!\mathrm{d}}
\newcommand{\calS}{\mathcal{S}}
\newcommand{\calT}{\mathcal{T}}
\newcommand{\leqs}{\leqslant}
\newcommand{\geqs}{\geqslant}
\newcommand{\df}{\coloneqq}
\DeclareMathOperator{\Lie}{Lie}
\DeclareMathOperator{\Aut}{Aut}
\DeclareMathOperator{\diag}{diag}
\DeclareMathOperator{\Ad}{Ad}
\DeclareMathOperator{\End}{End}
\DeclareMathOperator{\supp}{supp}
\DeclareMathOperator{\Stab}{Stab}
\DeclareMathOperator{\Zcl}{Zcl}
\newcommand{\acts}{{\boldsymbol{\cdot}}}
\providecommand\for{}
\newcommand\SetSymbol[1][]{%
\nonscript\:#1\vert
\allowbreak
\nonscript\:
\mathopen{}}
\DeclarePairedDelimiterX\set[1]{\lbrace}{\rbrace}{%
\renewcommand\for{\SetSymbol[\delimsize]}
#1
}
\begin{document}

\title[Expanding measures: Random walks and rigidity]{Expanding measures: Random walks and rigidity on homogeneous spaces}

\author{Roland Prohaska}
\address{Departement Mathematik, ETH Z\"{u}rich, R\"{a}mistrasse 101, 8092 Z\"{u}rich, Switzerland}
\email{roland.prohaska@math.ethz.ch}

\author{Cagri Sert}
\address{Institut f\"{u}r Mathematik, Universit\"{a}t Z\"{u}rich, Winterthurerstrasse 190, 8057 Z\"{u}rich, Switzerland}
\email{cagri.sert@math.uzh.ch}

\author{Ronggang Shi}
\address{Shanghai Center for Mathematical Sciences, Jiangwan Campus, Fudan University, No.2005 Songhu Road, Shanghai, 200438, China}
\email{ronggang@fudan.edu.cn}

\subjclass[2010]{Primary 60B15; Secondary 22F30, 60G50, 37A45, 28A80}

\keywords{Random walk, homogeneous space, stationary measure, equidistribution, Diophantine approximation, fractal}

\date{\today}

\begin{abstract}
Let $G$ be a real Lie group, $\Lambda<G$ a lattice and $H\leqs G$ a connected semisimple subgroup without compact factors and with finite center. We define the notion of $H$-expanding measures $\mu$ on $H$ and, applying recent work of Eskin--Lindenstrauss, prove that $\mu$-stationary probability measures on $G/\Lambda$ are homogeneous. Transferring a construction by Benoist--Quint and drawing on ideas of Eskin--Mirzakhani--Mohammadi, we construct  Lyapunov/Margulis functions to show that $H$-expanding random walks on $G/\Lambda$ satisfy a recurrence condition and that homogeneous subspaces are repelling. Combined with a countability result, this allows us to prove equidistribution of trajectories in $G/\Lambda$ for $H$-expanding random walks and to obtain orbit closure descriptions. Finally, elaborating on an idea of Simmons--Weiss, we deduce Birkhoff genericity of a class of measures with respect to some diagonal flows and extend their applications to Diophantine approximation on similarity fractals to a non-conformal and weighted setting.
\end{abstract}

\maketitle

\tableofcontents

\section{Introduction}\label{sec;introduction}
Originally motivated by applications to number theory, the rigidity properties of subgroup actions on a homogeneous space $X=G/\Lambda$, where $G$ is a real Lie group and $\Lambda<G$ a discrete subgroup, have been an active field of research over the last fifty years. Among the first striking results was Margulis' resolution of the Oppenheim conjecture~\cite{mar2,mar1} via a reformulation into an orbit closure problem for the action of $\SO(2,1)$ on $\SL_3(\R)/\SL_3(\Z)$ noticed by Raghunathan. Raghunathan had conjectured, more generally, that orbit closures for unipotent subgroups are closed orbits of larger subgroups. After more partial results by Dani, Margulis, and Shah, Raghunathan's conjecture was settled in full generality in celebrated work of Ratner~\cite{rat2,rat1,rat3,rat4}.

In absence of unipotent elements, the dynamics of subgroup actions are harder to understand---already the case of actions on a torus $\mathbb{T}^d=\R^d/\Z^d$ by non-amenable subgroups of $\SL_d(\Z)$ poses serious challenges. The very first difficulty arising in this setup is the potential lack of invariant measures. What has proved to be a fruitful approach for overcoming this issue is taking a probabilistic viewpoint of random walks and stationary measures, techniques mainly pioneered by Furstenberg starting in the sixties~\cite{fk60,furstenberg.positivity,furstenberg.poisson,furstenberg.stiffness}.
Using this random walks approach, Guivarc'h--Starkov~\cite{guivarch-starkov} made first contributions to understanding the action of $\Gamma<\SL_d(\Z)$ on $\mathbb{T}^d$, and  Bourgain--Furman--Lindenstrauss--Mozes~\cite{bflm} proved a quantitative result which answered many remaining questions.

For subgroup actions on a general homogeneous space $X=G/\Lambda$, a major breakthrough came with a series of papers by Benoist--Quint~\cite{bq11,bq12,bq13,bq132}. Applying several novel techniques, they were able to give a complete classification of stationary measures, descriptions of orbit closures, and prove equidistribution statements for random walks under the assumption of semisimplicity of the Zariski closure of the acting group $\Gamma$. One crucial new ingredient in the proof of their measure classification result is the so-called ``exponential drift'' argument (as compared to the ``polynomial drift'' argument of Ratner), which was further developed in the seminal work of Eskin--Mirzakhani~\cite{eskin-mirzakhani} on stationary measures for the $\SL_2(\R)$-action on moduli space. Bringing back to homogeneous dynamics ideas from the setting of random walks on moduli space, Eskin--Lindenstrauss~\cite{el} have recently obtained a theorem which generalizes the measure classification results of Benoist--Quint. 

The aim of this paper is to further advance the study of subgroup actions and random walks on homogeneous spaces, systematically dropping the assumption that the Zariski closure of the acting group $\Gamma$ is semisimple. 
We will introduce and study a new class of measures $\mu$ supported on a connected semisimple subgroup $H\leqs G$ without compact factors and with finite center that we call $H$-expanding measures. These are defined by an expansion condition in non-trivial irreducible finite-dimensional representations of $H$ resembling the conclusion of the fundamental result of Furstenberg on the positivity of the top Lyapunov exponent. In particular, this class contains the Zariski dense measures underlying the work of Benoist--Quint. After deducing a measure classification result based on the progress by Eskin--Lindenstrauss~\cite{el}, we will prove orbit closure descriptions, as well as recurrence and equidistribution results for the random walk on $G/\Lambda$ given by an $H$-expanding probability measure $\mu$. Finally, taking advantage of the generality of $H$-expanding measures, these main results will be used to also obtain new equidistribution statements for diagonalizable flows, which in turn have implications for Diophantine approximation problems on fractals.

To introduce the notion of $H$-expansion, we say that
a Borel probability measure $\mu$ on $\GL_d(\R)$ is  \emph{uniformly expanding} if for every nonzero $v \in \R^d$, we have
\begin{align*}
\liminf_{n \to \infty} \frac{1}{n}\log \norm{g_n \dotsm g_1 v}>0
\end{align*}
for $\mu^{\N}$-almost every (a.e.) sequence $(g_1,g_2,\dots)$. 
A probability measure $\mu$ on $H$ is said to be \emph{$H$-expanding} if for every finite-dimensional representation $(\rho,V)$ of $H$ without nonzero $H$-fixed vectors, the measure $\rho_*\mu$ is uniformly expanding, where $\rho_*\mu $ denotes the pushforward of $\mu$ by $\rho$. We are going to elaborate on this definition and give non-trivially equivalent formulations in  \S\ref{sec;def}.

Ranging over all finite-dimensional representations, the $H$-expansion property of a probability measure $\mu$ on $H$ is a universal condition and as such ensures validity of our results for an arbitrary embedding $H\hookrightarrow G$ and any lattice $\Lambda<G$.
This universality notwithstanding, the class of $H$-expanding measures contains an abundance of interesting examples:
\begin{itemize}
    \item Zariski dense measures (\S\ref{subsec;Z-dense}): If the closed subgroup $\Gamma_{\mu}$ of $H$ generated by the support of $\mu$ has Zariski dense image in $\Ad(H)$ and $\mu$ satisfies a moment condition, then $\mu$ is $H$-expanding as a consequence of Furstenberg's theorem on positivity of the top Lyapunov exponent.
    \item Measures on parabolic groups (\S\ref{sec;parabolic}): We give a general criterion for $H$-expansion of a measure $\mu$ on a parabolic subgroup of $H$ and, using the notion of expanding cone introduced by the third-named author~\cite{s15}, explicitly exhibit a class of examples of such measures. For the sake of concreteness, let us mention here that, for example, our results directly imply that any probability measure on $H=\SL_4(\R)$ with support consisting of the five matrices
\begin{align*}
\begin{pmatrix}
 2\\
   & 2\\
   & & 1\\
   & & & 1/4
\end{pmatrix},\,
\begin{pmatrix}
 2 & 1\\
 1 & 1\\
   & & 1\\
   & & & 1
\end{pmatrix},\,
\begin{pmatrix}
 1 & 1\\
 1 & 2\\
   & & 1\\
   & & & 1
\end{pmatrix},\,
\begin{pmatrix}
 1\\
   & 1 & 1\\
   & & 1\\
   & & & 1
\end{pmatrix},\,
\begin{pmatrix}
 1\\
   & 1\\
   & & 1 & 1\\
   & & & 1
\end{pmatrix}
\end{align*}
is $H$-expanding.
\item Epimorphic subgroups (\S\ref{sec;solv_epi}): The closed subgroup $\Gamma_\mu$ generated by the support of $\mu$ is necessarily an epimorphic subgroup of $H$ when $\mu$ is $H$-expanding. Conversely, thanks to the work of Bien--Borel~\cite{bien-borel1} and its subsequent developments, we will see that many epimorphic subgroups of $H$ support $H$-expanding measures. For example, any $\R$-split simple group $H$ admits distinguished three-dimensional epimorphic subgroups for which this is the case, showing that $H$-expanding measures may live on subgroups which are very small compared to $H$ itself. See also Corollary~\ref{cor;epi_class}.
\end{itemize}

Under various weaker assumptions than $H$-expansion, not all of our conclusions hold in full strength. For instance, requiring uniform expansion only in the adjoint representation, homogeneity of stationary measures can fail, as an example at the end of~\cite[\S1.2]{el} shows. For unipotent random walks, recurrence is not always guaranteed~\cite[\S10.2.1]{breuillard}. On the other hand, in the particular case of measures on parabolic subgroups, slightly weaker expansion properties were first used in the work of Simmons--Weiss~\cite{sw} and subsequently in~\cite{prohaska-sert} to prove measure rigidity and equidistribution results in a setting corresponding to the case $H=G$ in our framework. See also Remark~\ref{rmk;finitely_many}.

We next introduce the terminology necessary to state our main results. Given a continuous action of a locally compact second countable group $G$ on a locally compact second countable metrizable space $X$, a probability measure $\nu$ on $X$ is said to be \emph{$\mu$-stationary} if $\nu=\mu*\nu$, where the convolution is defined by
\begin{align*}
\int_X f\dd(\mu*\nu)=\int_X\int_Gf(gx)\dd\mu(g)\dd\nu(x)
\end{align*}
for non-negative Borel functions $f$ on $X$.
A $\mu$-stationary probability measure  $\nu$ is
said to be \emph{$\mu$-ergodic} if it is extremal in the convex set of $\mu$-stationary probability measures. 

Now let $G$ be a real Lie group, $\Lambda<G$ a discrete subgroup and $X=G/\Lambda$.
A probability measure  $\nu$ on $X$   is said to be \emph{homogeneous} if
there exists $x\in X$
and a closed subgroup $N$ of $G$ preserving $\nu$ such that  $\nu(Nx)=1$.
In this case, the orbit $Nx$ is automatically closed and is called a \emph{homogeneous subspace} of $X$. It is equivalent to require that $\nu$ assigns full measure to an orbit of its stabilizer group
\begin{align*}
\Stab_G(\nu)=\set{g\in G\for g_*\nu=\nu}.
\end{align*}
This gives a one-to-one correspondence between homogeneous measures on $X$ and homogeneous subspaces of $X$. 
For a closed subgroup $\Gamma$ of 
$G$, a homogeneous subspace $Y$ of $X$ is said to be \emph{$\Gamma$-ergodic} if $\Gamma$ preserves the corresponding homogeneous probability measure $\nu_Y$ and the action of $\Gamma$ on $(Y,\nu_Y)$ is ergodic.

Finally, for $g \in \GL_d(\R)$ we set 
$\operatorname{N}(g)=\max\set{\norm{g}, \norm{g^{-1}}}$ for some choice of operator norm on $\Mat_{d\times d}(\R)$. A probability measure $\mu$ on $\GL_d(\R)$ is said to have a \emph{finite first moment} if
\begin{align*}
\int \log \operatorname{N}(g) \dd\mu(g)<\infty,
\end{align*}
and to have \emph{finite exponential moments} if
\begin{align*}
\int \operatorname{N}(g)^\delta\dd\mu(g)<\infty
\end{align*}
for $\delta>0$ sufficiently small. These definitions are independent of the choice of operator norm. We say that a probability measure $\mu$ on a connected semisimple Lie group $H$ with finite center has a finite first moment or finite exponential moments if its image in a finite-dimensional representation of $H$ with finite kernel has the corresponding property. This does not depend on the choice of such a linear representation (see Lemma~\ref{lem;first_moment}).
Both moment conditions are automatically satisfied, for example, if $\mu$ has compact support. 
\subsection{Measure rigidity}\label{sec;intro.rigidity}
We start with the classification of stationary measures. Recall that given a measure $\mu$ on $H$, we denote by $\Gamma_\mu$ the closed subgroup generated by the support of $\mu$.
\begin{thm}
	\label{thm;rigidity}
	Let $\Lambda$ be  a discrete subgroup of a real Lie group  $G$. Let $H\leqs G$ be a connected semisimple subgroup without compact factors and with finite center. Let $\mu$ be a probability measure on $H$ that is $H$-expanding and has a finite first moment. Then any $\mu$-ergodic  $\mu$-stationary probability measure $\nu$ on $G/\Lambda$ is $\Gamma_\mu$-invariant and homogeneous. 
	Moreover, the connected component of $\Stab_G(\nu)$ is normalized by $H$. 
\end{thm}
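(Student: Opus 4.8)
The plan is to deduce Theorem~\ref{thm;rigidity} from the measure classification theorem of Eskin--Lindenstrauss~\cite{el}. That theorem takes as input a probability measure acting on $G/\Lambda$ which is \emph{uniformly expanding} for the derivative cocycle of the action, together with a moment bound, and concludes that every ergodic stationary measure is homogeneous, with extra information on its stabilizer. Thus the substance of the proof lies in verifying this uniform expansion hypothesis from the definition of $H$-expansion and the finite first moment; the other assertions follow by short additional arguments.

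First I would identify the relevant cocycle. Writing $\mathfrak g=\Lie(G)$ and $\mathfrak h=\Lie(H)$, under the left-trivialization of $T(G/\Lambda)$ the derivative of left translation by $g\in G$ is, up to that identification, the linear map $\operatorname{Ad}(g)$ on $\mathfrak g$, \emph{independently of the base point}; so the expansion demanded by~\cite{el} amounts to uniform expansion of $\operatorname{Ad}_*\mu$ on $\mathfrak g$, and uniformity over the non-compact space $G/\Lambda$ is automatic since the cocycle is constant. Since $H$ is connected semisimple, $\operatorname{Ad}(H)$ acts completely reducibly on $\mathfrak g$, which splits $\operatorname{Ad}(H)$-invariantly as $\mathfrak g=\mathfrak z_{\mathfrak g}(\mathfrak h)\oplus\mathfrak g'$, where $\mathfrak z_{\mathfrak g}(\mathfrak h)$ is the subspace of $H$-fixed vectors and $\mathfrak g'$ is the sum of all nontrivial isotypic components; in particular the representation of $H$ on $\mathfrak g'$ via $\operatorname{Ad}$ has no nonzero $H$-fixed vector. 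By the very definition of $H$-expansion, $\operatorname{Ad}_*\mu$ is uniformly expanding on $\mathfrak g'$, and---using the finite first moment together with the equivalent formulations of uniform expansion from~\S\ref{sec;def}---this ``$\liminf$ for $\mu^{\N}$-a.e.\ trajectory'' property upgrades to a uniform estimate of the form $\int\log(\norm{\operatorname{Ad}(g)v}/\norm{v})\,\dd\mu^{*n}(g)\geqs cn$ for all $0\ne v\in\mathfrak g'$ and all large $n$, which is the shape in which~\cite{el} requires it. The complement $\mathfrak z_{\mathfrak g}(\mathfrak h)$ is pointwise fixed by $\operatorname{Ad}(H)$, hence by the whole random walk; it is a ``neutral'' factor that the framework of~\cite{el} tolerates, and it is exactly this factor which is reflected in the last assertion of the theorem.

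With the hypothesis verified, \cite{el} yields that $\nu$ is homogeneous, say the unique $L$-invariant probability measure on a closed orbit $Lx$ with $L=\Stab_G(\nu)$. To pass from $\mu$-stationarity to $\Gamma_\mu$-invariance, note that $\mu*\nu=\nu$ forces $\nu(g^{-1}Lx)=1$ for $\mu$-a.e.\ $g$; since $\operatorname{supp}\nu=Lx$ is a single homogeneous subspace and a homogeneous subspace carries only one homogeneous probability measure, $g$ must preserve $Lx$, whence $g_*\nu=\nu$ for $\mu$-a.e.\ $g$ and therefore for every $g$ in the closed group $\Gamma_\mu$ generated by $\operatorname{supp}\mu$. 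For the ``moreover'' clause, $\Gamma_\mu\leqs L$ normalizes $L^{\circ}\df\Stab_G(\nu)^{\circ}$, so $\mathfrak l\df\Lie(L^{\circ})$ is $\operatorname{Ad}(\Gamma_\mu)$-stable; promoting this to $\operatorname{Ad}(H)$-stability of $\mathfrak l$ uses that $\Gamma_\mu$ is epimorphic in $H$ (see~\S\ref{sec;solv_epi}), the finer part of the conclusion of~\cite{el} controlling $L^{\circ}$, and the fact that the neutral directions $\mathfrak z_{\mathfrak g}(\mathfrak h)$ are themselves $\operatorname{Ad}(H)$-invariant.

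I expect the main obstacle to be the translation carried out in the second paragraph: matching the clean representation-theoretic notion of $H$-expansion with the precise quantitative uniform-expansion hypothesis of~\cite{el}. Concretely this requires (i) identifying the derivative cocycle over $G/\Lambda$ with the constant $\operatorname{Ad}$-cocycle and justifying that $\mathfrak z_{\mathfrak g}(\mathfrak h)$ may be split off as a neutral factor, (ii) passing from the ``$\mu^{\N}$-a.e.\ $\liminf$'' form of uniform expansion to a genuinely uniform exponential lower bound---a compactness-plus-subadditivity argument relying on the finite first moment, done in~\S\ref{sec;def}---and (iii) checking that the moment hypotheses of~\cite{el} follow from finiteness of the first moment of $\mu$ on $H$. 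Extracting the ``moreover'' clause from the stabilizer information in~\cite{el} together with epimorphicity of $\Gamma_\mu$ is the other point that needs care.
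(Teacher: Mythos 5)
The main gap in your proposal is the claim that the Eskin--Lindenstrauss theorem ``concludes that every ergodic stationary measure is homogeneous.'' The version of their result that applies under the $H$-expansion hypothesis ([el, Theorem 1.7], reproduced here as Theorem~\ref{thm;el}) does \emph{not} conclude homogeneity: it yields a dichotomy. Either (a) there exist a positive-dimensional closed subgroup $N$, an $N$-homogeneous measure $\nu_0$, and a $\mu$-stationary measure $\eta$ on $G/N$ such that $\nu=\int_{G/N} g_*\nu_0\,\dd\eta(g)$ --- which is a decomposition as an integral of translates, not homogeneity of $\nu$ --- or (b) $\nu$ is $\Gamma_\mu$-invariant and carried by finitely many compact subsets of $C_G(\Gamma_\mu)^\circ$-orbits, which also needs upgrading via Benoist--Quint's Lemma~7.6 (quoted here as Lemma~\ref{lem;key}) before one can conclude homogeneity. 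The version of the Eskin--Lindenstrauss theorem that \emph{does} directly produce homogeneity is their Theorem~1.3, but it assumes $\Gamma_\mu$ is Zariski dense in a semisimple group, exactly the hypothesis this paper is designed to drop. So one cannot invoke it.

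Consequently, the heart of the actual proof --- an inductive argument that iterates Theorem~\ref{thm;el} --- is absent from your plan. In case (a), one replaces $\eta$ by an ergodic component, then pushes it forward to a $\mu$-stationary measure $\eta'$ on the vector space $S^2(\mathfrak g^{\wedge\dim N})$ via the vector encoding $\Lie(N)$; Corollary~\ref{cor;stationary} together with ergodicity forces $\eta'$ to be a Dirac mass at an $H$-fixed point, whence $H\leqs P=\Stab_G(\omega)$ and $H\cap N^\circ\trianglelefteq H$. If $H\leqs N^\circ$ one is done; otherwise one passes to $H/(H\cap N^\circ)$ acting on $P/N$ (which is still expanding by Proposition~\ref{prop;facts}(iii)), reapplies Theorem~\ref{thm;el}, and uses a maximality-of-$\dim(N)$ argument to rule out case (a) recurring. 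This entire mechanism is what converts the Eskin--Lindenstrauss dichotomy into the homogeneity statement, and your paragraph ``With the hypothesis verified, \cite{el} yields that $\nu$ is homogeneous'' simply skips it. Your subsequent derivations of $\Gamma_\mu$-invariance and of the ``moreover'' clause both presuppose that homogeneity is already available, so they inherit the same gap. (The ``moreover'' clause in particular needs the output of the iterative argument --- $\Stab_G(\nu)^\circ=N^\circ$ with $H\leqs P$ normalizing $N^\circ$, or in case (b) $\Stab_G(\nu)^\circ\leqs C_G(H)$ via epimorphicity --- not just epimorphicity by itself.) Your first paragraph --- verifying ``uniformly expanding mod $L^\circ$'' (Definition~\ref{de;modZ}) from $H$-expansion via the Furstenberg--Kifer/Proposition~\ref{prop;expansion_char} upgrade and splitting off the centralizer Lie algebra --- is essentially Lemma~\ref{lem;modZ} and is fine, but it is a comparatively short step; you have misidentified where the real work lies.
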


Using the properties of $H$-expanding measures, the above theorem is deduced by an iterative application of the recent measure classification results of Eskin--Lindenstrauss~\cite{el};  see~\S\ref{sec;rigidity_proof}. The proof is similar to the argument Eskin--Lindenstrauss use to show that their result implies Benoist--Quint's measure classification.

In certain cases, the last conclusion of Theorem~\ref{thm;rigidity} allows us to show that $\nu$ is actually $H$-invariant; see Proposition~\ref{prop;more} and also the corollary below. For its statement, recall that a discrete subgroup $\Lambda$ is said to be a \emph{lattice} in $G$ if $X=G/\Lambda$ admits a $G$-invariant probability measure $m_X$. In this case, we refer to $m_X$ as the \emph{Haar measure} on $X$. A lattice $\Lambda$ in a connected semisimple Lie group $G$ without compact factors is said to be \emph{irreducible} if $\Lambda\cap S$ is not a lattice in $S$ for every non-trivial proper connected normal subgroup $S$ of $G$. Equivalently, $S\Lambda$ is dense in $G$ for every such $S$.

\begin{cor}\label{cor;rigidity}
Let $G$ be a connected semisimple Lie group without compact factors and with finite center and let  $\Lambda<G$ be an irreducible lattice. Let $H$ be a connected normal subgroup of $G$ of positive dimension and let $\mu$ be an $H$-expanding probability measure on $H$ with finite first moment. 
\begin{enumerate}[label=\textup{(\roman*)}]
    \item If $H\neq G$, then the Haar measure $m_X$ on $X=G/\Lambda$ is the unique $\mu$-stationary probability measure on $X$.
    \item If $H=G$, then the only $\mu$-ergodic $\mu$-stationary probability measures on $X$ are uniform measures on finite $\Gamma_\mu$-orbits and the Haar measure $m_X$ on $X$. Moreover, $m_X$ is the only non-atomic $\mu$-stationary probability measure on $X$.
\end{enumerate}
\end{cor}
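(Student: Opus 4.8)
The plan is to derive both parts from Theorem~\ref{thm;rigidity} together with the irreducibility of $\Lambda$ and ergodic decomposition. First I would reduce to $\mu$-ergodic stationary measures $\nu$: since every $\mu$-stationary probability measure decomposes into $\mu$-ergodic ones, it suffices to classify the ergodic ones and then check which convex combinations are again stationary. So fix a $\mu$-ergodic $\mu$-stationary $\nu$ on $X=G/\Lambda$. By Theorem~\ref{thm;rigidity}, $\nu$ is $\Gamma_\mu$-invariant and homogeneous, say $\nu=\nu_Y$ for a homogeneous subspace $Y=Nx$ with $N=\Stab_G(\nu)\supseteq\Gamma_\mu$, and the identity component $N^\circ$ is normalized by $H$.

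The key step is to exploit that $N^\circ\trianglelefteq H\cdot N^\circ$ and $H$ is normal in $G$, to force $N^\circ$ to be either trivial or all of $G$. Since $H$ is a connected normal subgroup of the semisimple group $G$, it is a product of simple factors, and $N^\circ$ being normalized by $H$ means $\Lie(N^\circ)$ is an $\operatorname{Ad}(H)$-submodule of $\Lie(G)$; because $H$ contains the projection of $\Gamma_\mu$, which generates $H$ (here one uses that $H$-expansion forces $\Gamma_\mu$ to be epimorphic in $H$, as mentioned in~\S\ref{sec;solv_epi}, hence in particular Zariski dense in $H$ so that $\operatorname{Ad}(\Gamma_\mu)$ and $\operatorname{Ad}(H)$ have the same invariant subspaces), $\Lie(N^\circ)$ is actually $\operatorname{Ad}(H)$-invariant. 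Now $N^\circ$ is a normal connected subgroup of the group generated by $H$ and $N^\circ$; arguing with the structure of semisimple Lie algebras, $\Lie(N^\circ)\cap\Lie(H)$ is an ideal of $\Lie(H)$, but $H$-expansion rules out $\nu$ being invariant under a nontrivial proper normal subgroup unless $\nu$ is concentrated on a point, so I would show $\Lie(N^\circ)$ is an ideal of $\Lie(G)$ and then invoke irreducibility of $\Lambda$: if $\Lie(N^\circ)$ equals the Lie algebra of a proper connected normal subgroup $S\trianglelefteq G$, then $Sx$ being closed of finite volume would give $S\cap x\Lambda x^{-1}$ a lattice in $S$, contradicting irreducibility (after translating $x$ to the identity coset, which is permissible since $G$ acts transitively and $S$ is normal). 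Hence $N^\circ$ is trivial or $N^\circ=G$.

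If $N^\circ=G$ then $\nu$ is $G$-invariant, so $\nu=m_X$. If $N^\circ$ is trivial then $N$ is discrete, so $Y=Nx$ is a countable $\Gamma_\mu$-invariant set carrying the stationary measure $\nu$; $\mu$-ergodicity then forces $\nu$ to be the uniform measure on a single finite $\Gamma_\mu$-orbit. This already gives part~(ii) for ergodic measures, and the ``moreover'' statement follows since any non-atomic stationary measure decomposes into ergodic ones which, by the above, are either atomic or $m_X$, so a non-atomic one must be $m_X$; a general stationary measure is then a mixture of $m_X$ and uniform measures on finite orbits. For part~(i), suppose $H\neq G$: I would rule out the discrete case by showing a finite $\Gamma_\mu$-orbit cannot exist. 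Indeed, a finite $\Gamma_\mu$-orbit $\{x_1,\dots,x_k\}$ has finite stabilizer of large index, so passing to the subgroup fixing each $x_i$, we get that $\Gamma_\mu$—and hence $H$—fixes a finite-index coset structure; but $\Gamma_\mu$ is epimorphic in $H$ and $H$ is connected, so $H$ itself would fix each $x_i$, making $Hx_i$ a finite set, i.e. a point (as $H$ is connected), so $H\subseteq x_i\Lambda x_i^{-1}$; this contradicts that $\Lambda$ is discrete and $H$ has positive dimension. Therefore when $H\neq G$ the only ergodic stationary measure is $m_X$, and by ergodic decomposition it is the unique stationary measure, proving~(i).

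The main obstacle I expect is the second step: pinning down that the $H$-normalized connected group $N^\circ$ must be an honest normal subgroup of $G$ (not just $H$-normalized) and then cleanly using irreducibility of $\Lambda$ to eliminate the intermediate possibilities. The subtlety is that $N^\circ$ is a priori only normalized by $H$, which is one normal factor of $G$; one has to combine this with the fact that $\nu$ is $\Gamma_\mu$-invariant and $\Gamma_\mu$ generates $H$ as an epimorphic (hence Zariski-dense) subgroup to upgrade ``$H$-normalized'' to a genuine ideal condition, and then translate the resulting closed-orbit-of-finite-volume statement into the lattice-intersection language where irreducibility bites. Handling the base point $x$ correctly—reducing to $x=e\Lambda$ by transitivity, which is legitimate precisely because the subgroups in play are normal—is where care is needed.
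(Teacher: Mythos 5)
Your reduction via Theorem~\ref{thm;rigidity} and the reading-off of part~(ii) for $H=G$ in the non-discrete case are fine, but the proposal has a genuine gap precisely at the point you flag as ``the main obstacle'', and that obstacle cannot be resolved by the route you sketch. When $H\neq G$, the fact that $N^\circ=\Stab_G(\nu)^\circ$ is normalized by $H$ does \emph{not} force $\Lie(N^\circ)$ to be an ideal of $\mathfrak g$: if $G=H\times H'$ (up to almost direct product), then $\operatorname{Ad}(H)$ acts trivially on $\Lie(H')$, so \emph{any} connected subgroup of $H'$ is $H$-normalized. The hard case is therefore exactly $N^\circ\leqs C_G(H)$, and that is where the paper's proof does all its work: it shows $\nu$ charges an orbit of the centralizer $L=C_G(\Gamma_\mu)$, applies Proposition~\ref{prop;old_key} to conclude that the relevant piece of $L$ is compact and intersects a conjugate of $\Lambda$ cocompactly, and then invokes Margulis' arithmeticity theorem to reach a contradiction. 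No soft Lie-algebraic or irreducibility argument replaces this.

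Two smaller but real errors. First, you write that $\Gamma_\mu$ epimorphic in $H$ implies $\Gamma_\mu$ Zariski dense in $H$; this is false (parabolic subgroups, and the parabolic examples in~\S\ref{sec;parabolic}, are epimorphic but far from Zariski dense). Second, your argument to rule out finite $\Gamma_\mu$-orbits when $H\neq G$ infers ``$H$ fixes each $x_i$'' from ``$\Gamma_\mu$ epimorphic in $H$ and $\Gamma_\mu$ fixes $x_i$''. Epimorphicity is a statement about fixed vectors in linear representations, not about fixed points in $G/\Lambda$; upgrading $\Gamma_\mu$-invariance of a measure to $H$-invariance needs a measure-rigidity theorem (Mozes, Shah--Weiss), whose hypotheses (algebraicity or connectedness of the acting subgroup) need not hold for $\Gamma_\mu$, which may be discrete. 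Finally, the ``moreover'' clause of~(ii) — that a non-atomic stationary measure must be $m_X$ — does not follow just from the classification of ergodic components: a non-atomic measure could a priori be an integral over uncountably many atomic ergodic components. The paper closes this using the countability of finite $\Gamma_\mu$-orbits (a consequence of Proposition~\ref{prop;countability}), which you would need to invoke as well.
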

We note that finite $\Gamma_\mu$-orbits do only occur when $\Gamma_\mu$ is virtually contained in a conjugate of $\Lambda$. The proof of part~(i) of the corollary above relies on Margulis' arithmeticity theorem and a careful analysis of stationary measures charging an orbit of the centralizer of $\Gamma_\mu$, which is carried out in~\S\ref{sec;centralizer}. The last statement in part~(ii) additionally requires countability of finite $\Gamma_\mu$-orbits, which follows from a general countability result for homogeneous subspaces in~\S\ref{subsec;countability}. 

\begin{rem}\label{rmk;finitely_many}
As mentioned before, the $H$-expansion condition is universal so that all our results hold for an arbitrary embedding $H\hookrightarrow G$. For a fixed Lie group $G$, it suffices to impose uniform expansion on $\rho_*\mu$ only for a finite collection of representations $(\rho,V)$ of $H$ (which depends on $G$), as the proofs show. In \S\ref{sec;exp_grass} we track which representations are needed in the case of measure classification; see Theorem~\ref{thm.exp.grass} for the precise statement. Our countability result (Proposition~\ref{prop;countability}) will also be phrased using only this finite collection of representations, allowing us to prove it without an assumption of compact generation (cf.~\cite[Proposition 2.1]{bq132}).
\end{rem}

\subsection{Recurrence and Lyapunov functions}\label{sec;rec_intro}
Now we assume in addition that $\Lambda$ is a lattice and that $\mu$ has finite exponential moments. 
Under certain assumptions including semisimplicity of the non-compact part of the Zariski closure of $\Gamma_\mu$, Eskin--Margulis~\cite{em} and later Benoist--Quint~\cite{bq12} have shown that the random walk on $X=G/\Lambda$ given by $\mu$ satisfies strong recurrence properties. If 
$\delta_x$ denotes the Dirac measure at $x\in X$ and $\mu^{* n} $  is the $n$-fold convolution power of $\mu$, these recurrence statements take the general form that $\mu^{*n}*\delta_x(M)$ is close to $1$ for large $n$, where $M\subset X$ is a certain compact set. We obtain analogous results for $H$-expanding measures.
\begin{thm}\label{thm;recurrence}
Let $\Lambda$ be  a lattice in a real Lie group  $G$. Let $H\leqs G$ be a connected semisimple subgroup without compact factors and with finite center. Let $\mu$ be an $H$-expanding probability measure with finite exponential moments on $H$. Let $Y$ be a $\Gamma_\mu$-ergodic homogeneous subspace of $X=G/\Lambda$ or the empty set. Finally, let $K_L$ be any compact subset of the centralizer $L$ of $\Gamma_\mu$ in $G$, and set $\mathcal{N}=K_LY$. Then for any compact subset $Z\subset X\setminus \mathcal{N}$ and $\delta>0$ there exists a compact subset $M_{Z,\delta}$ of $X\setminus \mathcal{N}$ such that
\begin{align*}
    \mu^{*n}*\delta_x(M_{Z,\delta})\ge 1-\delta
\end{align*}
for every $n\ge 0$ and $x\in Z$.
\end{thm}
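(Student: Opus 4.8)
Write $A_nf(x)=\int_H f(gx)\dd\mu^{*n}(g)$, so that $\mu^{*n}*\delta_x(M)=A_n\mathbf 1_M(x)$. The plan is to construct a continuous function $\widetilde f\colon X\setminus\mathcal N\to[1,\infty)$ that is \emph{proper} relative to $X\setminus\mathcal N$ (each sublevel set $\{\widetilde f\le R\}$ is a compact subset of $X\setminus\mathcal N$) and satisfies a uniform contraction estimate
\begin{align*}
(A_{n_0}\widetilde f)(x)\le c\,\widetilde f(x)+b\qquad\text{for all }x\in X\setminus\mathcal N
\end{align*}
for suitable $n_0\in\N$, $c\in(0,1)$, $b>0$. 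Granting this, one iterates over multiples of $n_0$ and uses the finite exponential moments of $\mu$ (together with the fact that $\widetilde f$ will grow at most exponentially in the displacement) to bound $A_j\widetilde f\le\kappa_j\,\widetilde f$ for the finitely many residues $0\le j<n_0$; combining, $\sup_{n\ge0}(A_n\widetilde f)(x)\le a\,\widetilde f(x)+b'$ with $a,b'$ independent of $x$. Since $Z$ is compact in $X\setminus\mathcal N$ and $\widetilde f$ is finite and continuous there, $T\df\sup_{x\in Z}\widetilde f(x)<\infty$, so Markov's inequality gives $\mu^{*n}*\delta_x(\{\widetilde f>R\})\le(aT+b')/R$ for all $n\ge0$ and $x\in Z$; taking $R=(aT+b')/\delta$ and $M_{Z,\delta}\df\{\widetilde f\le R\}$ finishes the proof. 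When $Y=\emptyset$ we have $\mathcal N=\emptyset$ and only the cusp part below is needed, recovering the classical recurrence statement.

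\textbf{The cusp part.} I would take $\widetilde f=f_{\mathrm{ht}}+C\,f_{\mathcal N}$ with $C>0$ large, where $f_{\mathrm{ht}}\colon X\to[1,\infty)$ is a height function detecting the cusp of $X$, built as in Eskin--Margulis~\cite{em} and Benoist--Quint~\cite{bq12}: after embedding $X$ into a space of unimodular lattices via reduction theory, escape to infinity is governed by short vectors in the exterior powers of the standard representation of $G$, and $f_{\mathrm{ht}}$ is assembled from reciprocals of their norms. The new input is that the $H$-expansion hypothesis yields uniform expansion of $\rho_*\mu$ for these representations $\rho$ of $H$ once the (non-moving, and hence harmless) $H$-fixed subspaces are split off; this gives $A_{n_0}f_{\mathrm{ht}}\le c_0 f_{\mathrm{ht}}+b_0$ with $c_0<1$ for $n_0$ large.

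\textbf{The repelling part and the role of the centralizer.} The function $f_{\mathcal N}$ measures transversal distance to $\mathcal N=K_LY$. Transferring Benoist--Quint's construction of a function that blows up along a homogeneous subspace, one chooses a linear representation of $G$ in which the orbit $Y=Nx_0$, together with the $K_L$-direction, is encoded by a point, and near $\mathcal N$ sets $f_{\mathcal N}(x)=d(x)^{-\beta}$ for small $\beta>0$, where $d(x)$ is the resulting transversal distance, extended by a constant away from a neighbourhood of $\mathcal N$. The centralizer $L=Z_G(\Gamma_\mu)$ cannot be omitted: if $x=\ell y$ with $\ell\in L$, $y\in Y$, then $gx=\ell(gy)\in\ell Y$ for $g\in\Gamma_\mu$, so the walk started at $x$ never leaves the $\Gamma_\mu$-invariant slice $\ell Y$; the content of the theorem is that for a \emph{fixed} compact $K_L$ one can nonetheless stay uniformly away from $K_LY$. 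Accordingly, the relevant transversal is the transversal to $LY$, and $f_{\mathcal N}$ must also record the position of the $L$-component relative to $K_L$: the frozen $L$-component and the $H$-expanding transversal component are handled separately, and the crucial estimate $A_{n_0}f_{\mathcal N}\le c_1 f_{\mathcal N}$ in a neighbourhood of $\mathcal N$ comes from applying $H$-expansion to the representation of $H$ on the transversal directions, which carries no nonzero $H$-fixed vector (such a vector would enlarge the invariant structure and force $x\in\mathcal N$).

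\textbf{Gluing and the main obstacle.} It remains to combine the two pieces: away from $\mathcal N$, $f_{\mathcal N}$ is bounded while $f_{\mathrm{ht}}$ contracts; near $\mathcal N$, one chooses $\beta$ small enough that the possible growth of $f_{\mathrm{ht}}$ is dominated by the contraction of $C f_{\mathcal N}$, then $C$ large and $n_0$ large, to obtain the global inequality $A_{n_0}\widetilde f\le c\,\widetilde f+b$ on $X\setminus\mathcal N$. I expect the genuine difficulty to lie precisely in the repelling step: setting up the correct linear model for $\mathcal N=K_LY$ when $Y$ is only $\Gamma_\mu$-invariant (not $H$-invariant), verifying the absence of $H$-fixed transversal vectors, establishing the contraction \emph{uniformly} in a neighbourhood of $\mathcal N$, and ensuring it meshes with $f_{\mathrm{ht}}$ including where $\mathcal N$ approaches the cusp of $X$. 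The cusp estimate and the final iteration argument are, by contrast, routine adaptations of~\cite{em,bq12}.
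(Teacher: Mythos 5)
Your overall strategy -- a Lyapunov (Margulis) function that blows up both at the cusp and near the singular set $\mathcal N$, followed by iteration of a contraction inequality and a Markov-inequality argument -- is exactly the paper's. The decomposition of the height function into a cusp part and a repelling part, and the recognition that the $H$-expansion hypothesis is what drives the contraction in both parts, also agree with the paper. Two of your implementation details differ from the paper in ways worth noting, and one of them hides a real obstacle.

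\emph{The genuine gap.} You propose to obtain the repelling piece $f_{\mathcal N}$ by ``transferring Benoist--Quint's construction of a function that blows up along a homogeneous subspace.'' The paper explicitly avoids this route: Benoist--Quint's repelling height functions (in \cite{bq13,bq132}) satisfy a contraction property only with respect to a \emph{first-return} Markov operator, not with respect to $A_\mu$ itself, and adapting that to the present setting would require reintroducing the first-return cocycle machinery, which is precisely what would spoil the clean ``iterate $A_{n_0}$'' argument you rely on. Instead the paper constructs $\beta_{\mathcal N}$ following Eskin--Mirzakhani--Mohammadi: it sets $\alpha(x)=\|v\|^{-\theta}$ for the unique small transversal displacement $v\in\mathfrak v$ (an $H$-invariant complement of $\mathfrak n+\mathfrak l$) with $\exp(v)x\in\mathcal N$, clamped at a cutoff $r_x^{-\theta}$ that depends on $\beta_\infty(x)$, and proves the contraction for $\beta_{\mathcal N}=\beta_\infty+\alpha$ by a case analysis in the size of $\alpha(x)$. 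The cutoff $r_x$ and the way $\beta_\infty$ enters the definition are exactly what make the estimate uniform as $\mathcal N$ approaches the cusp, which you correctly flag as a subtlety but which the BQ construction does not hand you for free.

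\emph{A smaller difference.} You try to build a single function whose pole locus is all of $\mathcal N=K_LY$, and accordingly must encode both the $\mathfrak v$-transversal to $Y$ and the position relative to $K_L$ inside $\mathfrak l$ in one transversal. The paper avoids this by covering $K_L$ with finitely many translates $O_l l$ ($l\in L$, $O_l$ a small identity neighborhood in $L$) small enough that $\overline{O_l}lY\cap Z=\emptyset$, applying the repelling construction separately to each $lY$ with neighborhood $O_l$, and summing. This sidesteps having to model the boundary behavior of $K_L$ within $\mathfrak l$. Both the gluing ($\widetilde f=f_{\mathrm{ht}}+Cf_{\mathcal N}$ versus $\beta_{\mathcal N}=\beta_\infty+\alpha$) and the treatment of the residues $0\le j<n_0$ (exponential-moment bound on $A_j\widetilde f$ versus choosing a compact $F\subset\Gamma_\mu$ with $\mu^{*j}(F)\ge 1-\delta$ and taking $M_{Z,\delta}=R_\delta\cup FZ$) are acceptable variants, and they buy essentially nothing one way or the other.
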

Loosely speaking, the basic case (with $Y=\emptyset$) implies that the random walk does not spend too much time in the cusp. The general case ensures that the random walk also does not accumulate near lower-dimensional homogeneous subspaces.

This result will be proved in \S\ref{subsec;recurrence} using height functions 
on $X=G/\Lambda$ satisfying a contraction property with respect to the averaging operator $A_\mu$ defined by
\begin{align*}
A_\mu(f)(x)=\int_Gf(gx)\dd\mu(g)
\end{align*}
for non-negative Borel functions $f$ on $X$. Heuristically, if $\beta$ is a function on $X$ with values in $[0,\infty]$ such that
\begin{align}\label{contr.intro}
A_\mu(\beta)\le a\beta +b
\end{align}
for constants $a\in (0,1)$ and $b\ge 0$, then, with high probability, the dynamics of the random walk are driven towards the part of the space where $\beta$ takes values below a certain threshold, and $X_\infty=\beta^{-1}(\set{\infty})$ acts as a repeller. Putting this heuristic into quantitative terms yields strong recurrence properties of the random walk away from $X_\infty$, which play a key role not only in the proof of Theorem~\ref{thm;recurrence}, but also for orbit closure and equidistribution results to be described in what follows. 

Ideas of this kind have a rich history in the theory of stochastic processes and dynamical systems and trace back to the work of Foster~\cite{foster} and Lyapunov~\cite{lyapunov} (see also \cite[\S15]{meyn-tweedie}). In homogeneous dynamics, they first appear in Eskin--Margulis--Mozes' work on a quantitative version of the Oppenheim conjecture~\cite{eskin-margulis-mozes}. In the study of random walks on homogeneous spaces, height functions were first systematically used by Eskin--Margulis~\cite{em} to establish recurrence properties. Functions satisfying the contraction property \eqref{contr.intro} are therefore often referred to either as \emph{Lyapunov functions} or \emph{Margulis functions}.

To obtain our results, we will need to construct two types of Lyapunov functions. 
\begin{itemize}
\item Height functions with respect to the cusps (\S\ref{subsec;height}): First, corresponding to the case $Y=\emptyset$ in Theorem~\ref{thm;recurrence}, we require a Lyapunov function $\beta_\infty$ that stays bounded on a prescribed compact subset $Z$ of $X$ and tends to infinity when leaving compact parts of the space into the cusps of $X$. Its role is to rule out escape of mass, i.e.\ ensure that the random walk does not escape to infinity. For this case, we will show that we can use the height function constructed by Benoist--Quint~\cite{bq12}. Indeed, as it turns out, the algebraic condition that is imposed in their paper on the Zariski closure of $\Gamma_\mu$ is only crucially used to ensure an expansion property in representations of $H$, so that the proof also goes through under our $H$-expansion assumption. 
\item Height functions with respect to singular subspaces (\S\ref{subsec;singular}): Secondly, corresponding to the case of a lower-dimensional homogeneous subspace $Y$ in Theorem~\ref{thm;recurrence}, we also need Lyapunov functions which blow up near the singular subspace $Y$. These are used to ensure that random walk trajectories do not accumulate near $Y$ when starting outside of it. Here, we give a construction inspired by the work of Eskin--Mirzakhani--Mohammadi~\cite{emm} for random walks on moduli space. This will allow us to avoid the use of the first return cocycles and operators appearing in \cite{bq13,bq132}, and to obtain a height function $\beta_{\mathcal{N}}$ which satisfies the contraction property \eqref{contr.intro} with respect to $A_\mu$ itself.
\end{itemize}

\begin{rem}After finishing the first version of our article, B\'{e}nard--de Saxc\'{e} improved the Markov-chain theoretic ingredient of the proofs concerning the moment assumption. Namely, using their result \cite[Theorem D]{benard-desaxce}, one can now relax the exponential moment assumption in our work (in Theorems \ref{thm;recurrence} and \ref{thm;orbit}) to a finite first moment assumption. B\'{e}nard--de Saxc\'{e}  prove this in the particular (compared to $H$-expansion) setting of Benoist--Quint, using logarithmic versions of our height functions (\cite[Theorems A,B,C]{benard-desaxce}).
\end{rem}

\subsection{Orbit closures and equidistribution}\label{sec;orbit_intro}
Measure classification and recurrence properties at hand, the next step is the question of equidistribution of random walks with respect to a homogeneous probability measure, which, once established, yields orbit closure descriptions analogous to Ratner's theorems in unipotent dynamics.

Let $\Gamma^+_\mu$ be the closed semigroup generated by the support of $\mu$.
If $\Gamma_{\mu}$ has Zariski dense image in $\Ad (H)$, then 
it is proved in~\cite{bq132}  that  the orbit closure  $\overline{\Gamma _\mu^+x}$ is a homogeneous subspace of $X$ inside which the random walk equidistributes. 
Our next result is a generalization of this and other rigidity results for the random trajectory of points proved in~\cite{bq132,prohaska-sert,sw}.  
\begin{thm}
	\label{thm;orbit}
	Let $\Lambda$ be  a lattice in a real Lie group  $G$. Let $H\leqs G$ be a connected semisimple subgroup without compact factors and with finite center. Let $\mu$ be an $H$-expanding probability measure with finite exponential moments on $H$.
	Then for every $x\in X=G/\Lambda$ there is a $\Gamma_\mu$-ergodic homogeneous 
	subspace $Y_x\subset X$ with corresponding homogeneous probability measure $\nu_x$ such that the following hold:
	\begin{enumerate}[label=\textup{(\roman*)}]
		
		\item The orbit closure $\overline{\Gamma_\mu^+x}$ equals $Y_x$.
		
		\item One has
		\begin{align*}
		    \lim_{n\to \infty}\frac{1}{n}\sum_{k=0}^{n-1}\mu^{*k}* \delta_x=\nu_x.
		\end{align*}
		\item For $\mu^{\N}$-a.e.~$(g_1, g_2 , \dots)\in H^\N $ one has 
		\begin{align*}
		\lim_{n\to \infty}\frac{1}{n}\sum_{k=0}^{n-1}\delta_{g_k\dotsm g_1x}=\nu_x.
		\end{align*}
	\end{enumerate}
\end{thm}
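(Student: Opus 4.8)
The plan is to run the Benoist--Quint scheme for orbit closures and equidistribution (as in~\cite{bq132}), now feeding in the three tools developed above: the measure classification of Theorem~\ref{thm;rigidity}, the recurrence estimates of Theorem~\ref{thm;recurrence} together with the Lyapunov functions $\beta_\infty$ and $\beta_{\mathcal{N}}$ underlying them (\S\ref{subsec;height}--\S\ref{subsec;singular}), and the countability of homogeneous subspaces (Proposition~\ref{prop;countability}).

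First I would establish that the Cesàro averages $\nu_n^x\df\frac1n\sum_{k=0}^{n-1}\mu^{*k}*\delta_x$ form a tight sequence: this follows from Theorem~\ref{thm;recurrence} applied with $Y=\emptyset$ (so $\mathcal{N}=\emptyset$), which furnishes, for each $\delta>0$, a compact set carrying $\mu^{*n}*\delta_x$-mass at least $1-\delta$ for all $n$. Since $\mu*\nu_n^x-\nu_n^x=\tfrac1n(\mu^{*n}*\delta_x-\delta_x)\to 0$, every weak-$*$ subsequential limit $\nu$ is a $\mu$-stationary probability measure, and it is carried by $\overline{\Gamma_\mu^+x}$ because $\supp(\mu^{*k}*\delta_x)\subset\overline{\Gamma_\mu^+x}$ for $k\geq 1$. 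Decomposing $\nu$ into $\mu$-ergodic components and applying Theorem~\ref{thm;rigidity}, each component is $\Gamma_\mu$-invariant and homogeneous; by Proposition~\ref{prop;countability} there are only countably many $\Gamma_\mu$-ergodic homogeneous subspaces, so $\nu=\sum_i c_i\,\nu_{Y_i}$ with $Y_i\subset\overline{\Gamma_\mu^+x}$.

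The crux is to collapse this combination to a single term and to locate $x$ inside it. Here I would use the singular height function $\beta_{\mathcal{N}}$ from \S\ref{subsec;singular}, which blows up near $\mathcal{N}=K_LY$ and satisfies the contraction property~\eqref{contr.intro} with respect to $A_\mu$: if $x$ did not lie in $LY_i$ (equivalently, in $\mathcal{N}$ for a suitable compact $K_L$ in the centralizer $L$ of $\Gamma_\mu$), then Theorem~\ref{thm;recurrence} applied with $Z=\{x\}$ and $Y=Y_i$ would force $\nu(\mathcal{N})=0$ and hence $c_i=0$, a contradiction. Thus $x\in LY_i$ whenever $c_i>0$; writing $x=\ell y$ with $\ell\in L$ and $y\in Y_i$, the conjugate $\ell Y_i$ is again a $\Gamma_\mu$-ergodic homogeneous subspace containing $x$. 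Taking $Y_x$ to be one of minimal dimension among $\Gamma_\mu$-ergodic homogeneous subspaces through $x$ and comparing dimensions with $Y_i\subset\overline{\Gamma_\mu^+x}\subset Y_x$ shows that every $Y_i$ equals $Y_x$, so $\nu=\nu_{Y_x}$. As this holds for every subsequential limit, $\nu_n^x\to\nu_{Y_x}$, the measure we denote $\nu_x$, which proves (ii). For (i): $\overline{\Gamma_\mu^+x}\subset Y_x$ since $x\in Y_x$ and $Y_x$ is closed and $\Gamma_\mu$-invariant, while any open $U$ meeting $Y_x$ satisfies $\nu_x(U)>0$, hence $\nu_n^x(U)>0$ for large $n$ and therefore $\Gamma_\mu^+x\cap U\neq\emptyset$; thus $\overline{\Gamma_\mu^+x}=Y_x$.

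For (iii), I would upgrade the convergence in mean to an almost sure statement by a martingale argument: for $f\in C_c(X)$, the differences $f(g_{k+1}\cdots g_1x)-A_\mu f(g_k\cdots g_1x)$ are bounded martingale increments, so $\tfrac1n\sum_{k<n}\bigl(f(g_k\cdots g_1x)-A_\mu f(g_k\cdots g_1x)\bigr)\to 0$ for $\mu^{\N}$-a.e.\ trajectory; iterating $A_\mu$, the empirical measures $\tfrac1n\sum_{k<n}\delta_{g_k\cdots g_1x}$ and $\nu_n^x$ share the same a.s.\ subsequential limits. A supermartingale estimate based on $\beta_\infty$ and on the $\beta_{\mathcal{N}}$ (for the countably many relevant $\mathcal{N}$) shows that almost surely these limits are probability measures charging no $\mathcal{N}$ with $x\notin\mathcal{N}$; by the argument of the previous paragraph they must then equal $\nu_x$, giving (iii). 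The step I expect to be the main obstacle is precisely the construction and handling of the singular Lyapunov function $\beta_{\mathcal{N}}$: one must build, from the $H$-expansion hypothesis and in the spirit of Eskin--Mirzakhani--Mohammadi~\cite{emm}, a height function that blows up near a lower-dimensional homogeneous subspace, satisfies~\eqref{contr.intro} for $A_\mu$ itself, and is insensitive to the centralizer direction (so that the repelled set is $\mathcal{N}=K_LY$ rather than just $Y$). This is carried out in \S\ref{subsec;singular}; granting it, the assembly above is routine and parallels~\cite{bq132,prohaska-sert,sw}.
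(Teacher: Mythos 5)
Your overall strategy is the right one and parallels the paper's proof: establish tightness and stationarity of the Cesàro limits, apply Theorem~\ref{thm;rigidity}, then use the singular height functions $\beta_{\mathcal N}$ (via Theorem~\ref{thm;recurrence}, i.e.\ Lemma~\ref{lem;no_mass}) together with Proposition~\ref{prop;countability} to kill every ergodic component not passing through $x$. However, there are two concrete issues, one essential and one incidental, plus a structural difference worth flagging.

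The essential issue is your claim ``by Proposition~\ref{prop;countability} there are only countably many $\Gamma_\mu$-ergodic homogeneous subspaces, so $\nu=\sum_i c_i\nu_{Y_i}$.'' This is not what Proposition~\ref{prop;countability} gives: it gives countably many \emph{modulo} the centralizer $L=C_G(\Gamma_\mu)$, and $L$ may have positive dimension, so $\calS(\Gamma_\mu)$ can be uncountable and the ergodic decomposition of $\nu$ need not a priori be a countable convex combination. Your subsequent argument ``if $x\notin LY_i$ then $\nu(LY_i)=0$, hence $c_i=0$'' therefore doesn't apply verbatim. The fix is essentially what the paper does: work with the spectral measure $\Xi$ of the ergodic decomposition, show $\nu(LY)=0$ for each of the countably many $L$-orbits $\set{lY\for l\in L}$ whose members avoid $x$, deduce that $\Xi$ is carried by orbits through $x$, and then use the observation that ergodic subspaces of $Y_0$ (the minimal invariant homogeneous subspace through $x$) that are open in $Y_0$ are pairwise disjoint (\cite[Lemma~2.9]{bq132}) to pin $\Xi$ down to a single atom. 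This is a real gap as written, though fixable along the lines just sketched.

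The incidental issue is the phrase in (iii) that ``iterating $A_\mu$, the empirical measures $\frac1n\sum_{k<n}\delta_{g_k\dotsm g_1x}$ and $\nu_n^x$ share the same a.s.\ subsequential limits.'' This is not correct: the martingale SLLN shows that, along the random trajectory, Cesàro averages of $f$ and of $A_\mu^m f$ (for any fixed $m$) agree asymptotically, i.e.\ every a.s.\ weak$^*$ limit of the empirical measures is $\mu$-stationary (Breiman's law of large numbers, \cite[Corollary~3.3]{bq132}). It does not relate the random empirical measures to the deterministic Cesàro convolutions $\nu_n^x=\frac1n\sum_k\mu^{*k}*\delta_x$. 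You do not actually need this claim, since you then rerun the height function and rigidity argument for the random limits; dropping the spurious assertion leaves the argument intact.

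Finally, a structural comparison: you prove (ii), then (i), then (iii). The paper proves (iii) directly (Breiman + Lemma~\ref{lem;no_mass}(ii) + rigidity + countability), and then (ii) follows for free from (iii) by dominated convergence for bounded continuous test functions, and (i) from (ii). Your order forces you to carry out essentially the same identification argument twice (once for $\nu_n^x$, once for the empirical measures), so the paper's ordering is more economical, though either order can be made to work.
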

In statements (ii) and (iii) of the theorem above, convergence is understood with respect to the weak* topology, where weak* convergence of a sequence of probability measures $\nu_n$ on $X$ to a finite measure $\nu$ on $X$ is defined to mean that
\begin{align}\label{weak*_conv}
\lim_{n\to\infty}\int_Xf\dd\nu_n=\int_Xf\dd\nu
\end{align}
for every compactly supported continuous test function $f$ on $X$. In case the limit measure $\nu$ is a probability measure, weak* convergence $\nu_n\to\nu$ implies that \eqref{weak*_conv} holds for any bounded continuous function $f$ on $X$.

Theorem~\ref{thm;orbit} will be proved in \S\ref{subsec;equidistribution}. 
It has the non-trivial topological consequence that any infinite $\Gamma_\mu^+$-orbit in $X$ is dense in a homogeneous subspace of positive dimension. In the $G$-expanding case with an irreducible lattice $\Lambda<G$, this means that every infinite $\Gamma_\mu^+$-orbit in $X=G/\Lambda$ is dense. 

\begin{rem}
Using auxiliary constructions, our results can be applied in certain cases where the connected semisimple group $H$ is invisible. For example, they cover random walks by automorphisms on a compact nilmanifold $N/\Lambda'$ by considering $G=\Zcl(\Aut(\Lambda'))\ltimes N$ and $\Lambda=\Aut(\Lambda')\ltimes \Lambda'$, where $\Zcl(\Aut(\Lambda'))$ denotes the Zariski closure of $\Aut(\Lambda')$ inside $\Aut(N)$; see~\S\ref{sec;nilmanifolds}.
\end{rem}

\subsection{The space of homogeneous measures}
Given a closed subgroup $\Gamma$ of the Lie group $G$, we consider
\begin{align*}
\calS(\Gamma)=\set{\Gamma\text{-invariant }\Gamma\text{-ergodic homogeneous subspaces }Y\subset X},
\end{align*}
where, as before, $X=G/\Lambda$ is the quotient of $G$ by a lattice $\Lambda$. By definition, associated to each $Y\in\calS(\Gamma)$ is a $\Gamma$-invariant and ergodic probability measure $\nu_Y$ with support $Y$. This defines an embedding of $\calS(\Gamma)$ into the space of probability measures on $X$, which we use to endow $\calS(\Gamma)$ with the weak* topology. In the unipotent case, Mozes--Shah~\cite{mozes-shah} proved that convergence of homogeneous subspaces in this topology behaves in a very rigid way. Benoist--Quint~\cite[\S1.3]{bq132} later obtained a version of this result for a subgroup $\Gamma$ that is Zariski dense in a semisimple group. Following their strategy, we obtain similar results in our setup.

Given a subset $Z$ of $X$, let us write $\calS_Z(\Gamma)=\set{Y\in\calS(\Gamma)\for Y\cap Z\neq\emptyset}$ and denote by $\delta_\infty$ the Dirac measure at $\infty$ in the one-point compactification $\overline{X}=X\cup\set{\infty}$ of $X$.
\begin{prop}\label{prop.mozes-shah}
Retain the notation and assumptions of Theorem~\textup{\ref{thm;orbit}}. Then we have:
\begin{enumerate}[label=\textup{(\roman*)}]
\item  For every compact subset $Z\subset X$, $\calS_Z(\Gamma_\mu)$ is compact, and $\calS_{HZ}(\Gamma_\mu)$ is relatively compact inside $\calS(\Gamma_\mu)$. Moreover, the set $\calS(\Gamma_\mu)\cup\set{\delta_\infty}$ is compact.
\item If $Y_n \to Y_\infty$ in $\calS(\Gamma_\mu)$, then there exists a sequence $l_n \in C_G(\Gamma_\mu)$ with $l_n \to e$ and $Y_n \subset l_n Y_\infty$ for every $n\in \N$ large enough.
\end{enumerate}
\end{prop}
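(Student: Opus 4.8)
The plan is to follow the Mozes--Shah strategy as adapted by Benoist--Quint~\cite{bq132}, using the machinery we have already assembled---especially the equidistribution in Theorem~\ref{thm;orbit}, the recurrence Theorem~\ref{thm;recurrence}, and the countability result advertised in Remark~\ref{rmk;finitely_many}. For part (i), the key observation is that each $Y\in\calS_Z(\Gamma_\mu)$ carries the $\Gamma_\mu$-invariant homogeneous probability measure $\nu_Y$, which is in particular $\mu$-stationary, hence by Theorem~\ref{thm;rigidity} homogeneous and $\Gamma_\mu$-invariant with connected stabilizer component normalized by $H$. Applying the recurrence Theorem~\ref{thm;recurrence} with $Y=\emptyset$ and the compact set $Z$ (or $HZ$, using the $H$-equivariance of $\beta_\infty$) produces a single compact $M=M_{Z,\delta}\subset X$ with $\nu_Y(M)\geqs 1-\delta$ for all $Y$ meeting $Z$; integrating $A_\mu^{*n}$ against $\nu_Y$ and using stationarity gives the uniform no-escape-of-mass bound. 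This shows $\calS_Z(\Gamma_\mu)$ is relatively compact in the space of probability measures, and that any weak* limit point is again a probability measure, whence $\calS(\Gamma_\mu)\cup\{\delta_\infty\}$ is compact. To upgrade relative compactness to compactness, I would take a sequence $Y_n\in\calS_Z(\Gamma_\mu)$ with $\nu_{Y_n}\to\nu_\infty$; then $\nu_\infty$ is a probability measure that is $\mu$-stationary (stationarity passes to weak* limits since $A_\mu$ is Feller under finite first moment) and, being a limit of $\Gamma_\mu$-invariant measures, is itself $\Gamma_\mu$-invariant; by Theorem~\ref{thm;rigidity} its ergodic components are homogeneous, so $\nu_\infty$ is supported on a homogeneous subspace meeting $Z$, i.e.\ $\nu_\infty=\nu_{Y_\infty}$ for some $Y_\infty\in\calS_Z(\Gamma_\mu)$. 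One then checks that weak* convergence $\nu_{Y_n}\to\nu_{Y_\infty}$ forces $Y_n\to Y_\infty$ in $\calS(\Gamma_\mu)$ with the limit of the subspaces contained in $Y_\infty$; this last point is exactly where part (ii) is needed.

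For part (ii), suppose $Y_n\to Y_\infty$ in $\calS(\Gamma_\mu)$, so $\nu_{Y_n}\to\nu_{Y_\infty}$ weakly. The structure statement from Theorem~\ref{thm;rigidity}, namely that the connected component $H_n$ of $\Stab_G(\nu_{Y_n})$ is normalized by $H$ and contains $\Gamma_\mu$ in a suitable sense, lets me work with finitely many representations as in Remark~\ref{rmk;finitely_many}: fix representations $(\rho_i,V_i)$ of $G$ (or of the relevant algebraic hull) in which the orbits $G\acts[v_{Y_n}]$ encode the homogeneous subspaces $Y_n$ via associated points $v_{Y_n}$ in exterior powers, à la the linearization / Dani--Margulis--Mozes--Shah setup. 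Along a subsequence, after conjugating so that all $Y_n$ pass through a common compact piece (possible by (i) and recurrence), one can assume $v_{Y_n}\to v_\infty$ with $[v_\infty]$ defining $Y_\infty$. The countability result (Proposition~\ref{prop;countability}, valid without compact generation thanks to our finite collection of representations) shows that the relevant family of candidate subspaces $Y_\infty$ is countable, so the limit $Y_\infty$ is well defined and for large $n$ the subspace $Y_n$ is ``close'' to $Y_\infty$; combining this with invariance under $\Gamma_\mu$ and the fact that $\Gamma_\mu$ acts with Zariski-dense-in-$\Ad(H_\infty)$-type dynamics inside $Y_\infty$ (via $H$-expansion), one deduces that $Y_n\subset g_nY_\infty$ for some $g_n\to e$, and that $g_n$ can be taken to centralize $\Gamma_\mu$---because $\Gamma_\mu$ preserves both $Y_n$ and $Y_\infty$, hence normalizes $\Stab_G(\nu_{Y_\infty})$ and acts trivially on the small transverse displacement $g_n$, forcing $g_n\in C_G(\Gamma_\mu)$.

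The main obstacle I anticipate is precisely this last step: extracting the centralizer element $l_n\in C_G(\Gamma_\mu)$ with $l_n\to e$ and $Y_n\subset l_nY_\infty$, rather than merely a sequence $g_n\to e$ with $Y_n\subset g_nY_\infty$. In the Benoist--Quint setting this uses Zariski density of $\Gamma_\mu$ together with a careful analysis of how $\Gamma_\mu$ acts on the normal directions to $Y_\infty$; in our more general $H$-expanding framework the relevant replacement is the expansion property in the normal representation of $H_\infty$ (no nonzero $H$-fixed vectors transverse to $Y_\infty$), which forces any limit displacement that is $\Gamma_\mu$-invariant to lie in the centralizer. I would isolate this as a lemma---a local rigidity statement for the family $\{Y_n\}$ accumulating on $Y_\infty$---and prove it by combining the $H$-expansion hypothesis with the no-escape and repelling properties of the Lyapunov function $\beta_{\mathcal{N}}$ from~\S\ref{subsec;singular} applied to $\mathcal{N}$ a neighborhood of $Y_\infty$, which is exactly what rules out the $Y_n$ drifting off in a non-centralizing direction.
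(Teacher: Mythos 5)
Your proposal correctly identifies the essential toolbox (Theorem~\ref{thm;rigidity}, Theorem~\ref{thm;recurrence}, Proposition~\ref{prop;countability}, the Lyapunov functions $\beta_\infty$ and $\beta_{\mathcal N}$), and you correctly notice that the argument for~(i) ultimately relies on~(ii); the paper indeed proves~(ii) first and then uses it inside the proof of~(i). However, there are two genuine gaps.

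First, in your sketch of~(i) you pass from ``$\nu_\infty$ is a $\Gamma_\mu$-invariant probability measure on $X$, hence by Theorem~\ref{thm;rigidity} its ergodic components are homogeneous'' directly to ``$\nu_\infty=\nu_{Y_\infty}$ for a single $Y_\infty$.'' This ergodicity of the limit is not automatic and is where the real work lies. The paper proves it by invoking Proposition~\ref{prop;countability} to find $Y\in\calS(\Gamma_\mu)$ of \emph{minimal dimension} with $\nu_\infty(OY)>0$ for a small centralizer neighborhood $O$, then uses a $\beta_{\mathcal N}$ height function and Lemma~\ref{lem;no_mass2} (a consequence of Theorem~\ref{thm;hN} and Lemma~\ref{lem;no_mass}) to force the generic points $y_j$ into $OY$, and only then rules out lower-dimensional ergodic components by another application of countability and the minimality of $\dim Y$. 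Your outline skips this dimension minimization and simply asserts the limit is supported on a single homogeneous subspace.

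Second, and more importantly, the last step of~(ii) is precisely where the proposal breaks down. You write that ``$\Gamma_\mu$ preserves both $Y_n$ and $Y_\infty$, hence normalizes $\Stab_G(\nu_{Y_\infty})$ and acts trivially on the small transverse displacement $g_n$, forcing $g_n\in C_G(\Gamma_\mu)$.'' This inference is not valid as stated: knowing that $Y_n\subset g_nY_\infty$ and that $\Gamma_\mu$ preserves both subspaces does not by itself force $g_n$ to commute with $\Gamma_\mu$; one only gets that certain commutators land in $\Stab_G(\nu_{Y_\infty})$, which is weaker. The paper circumvents this entirely by not attempting to produce $g_n$ first and then promote it to a centralizer element. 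Instead, the neighborhood $O$ in which the displacement lives is chosen \emph{from the start} inside $C_G(\Gamma_\mu)$, as part of the construction of $\beta_{\mathcal N}$: Theorem~\ref{thm;hN} gives a height function whose infinity-locus on $HX_\varepsilon$ is exactly $OY_\infty$ with $O\subset C_G(\Gamma_\mu)$, and this relies on Lemma~\ref{lem;tedious}, which establishes a unique small-displacement factorization using an $\Ad(H)$-invariant complement $\mathfrak v$ of $\mathfrak n+\mathfrak l$ in $\mathfrak g$. Once $\beta_{\mathcal N}$ is in hand, Lemma~\ref{lem;no_mass2} immediately forces the generic points $y_j\in Z\subset X_\varepsilon$ to lie in $OY_\infty$ for large $j$, and then $Y_j=\overline{\Gamma_\mu y_j}=o_j\overline{\Gamma_\mu y_j'}\subset o_jY_\infty$ with $o_j\in O\subset C_G(\Gamma_\mu)$ drops out directly; shrinking $O$ gives $l_n\to e$. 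Your proposed ``local rigidity lemma'' is essentially a re-derivation of Lemma~\ref{lem;tedious} and Theorem~\ref{thm;hN}, and your plan to prove it ``by combining the $H$-expansion hypothesis with the no-escape and repelling properties of the Lyapunov function $\beta_{\mathcal N}$'' is circular: the centralizer localization has to be built into $\beta_{\mathcal N}$, not extracted from it after the fact.

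So the right fix is to abandon the from-scratch linearization and instead apply the already-constructed height function $\beta_{\mathcal N}$ to generic points $y_j\in Y_j$ landing in a fixed compact set, reading off the inclusion $y_j\in OY_\infty$ (hence $Y_j\subset o_jY_\infty$) directly.
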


This proposition is a manifestation of strong rigidity of the $\Gamma_\mu$-invariant and ergodic homogeneous subspaces. For example, given a compact subset $Z \subset X$ and $Y_\infty \in \calS(\Gamma_\mu)$ with $Z^\circ \cap Y_\infty \neq \emptyset$, if for a sequence $Y_n \in \calS(\Gamma_\mu)$ we have $Y_n \cap Z \to Y_\infty \cap Z$ in the Hausdorff metric, then one can conclude that $Y_n \to Y_\infty$ in $\calS(\Gamma_\mu)$. 
In particular, the weak* topology on $\calS(\Gamma_\mu)$ coincides with the restriction to $\calS(\Gamma_\mu)$ of the Fell topology on closed subsets of $X$. 

Another consequence of Proposition~\ref{prop.mozes-shah} is the following equidistribution result for sequences of homogeneous subspaces in the case that $\Gamma_\mu$ has discrete centralizer in $G$.

\begin{cor}\label{cor;hom_subspace_equidistribution}
Retain the notation and assumptions of Theorem~\textup{\ref{thm;orbit}} and assume in addition that the centralizer $C_G(\Gamma_\mu)$ of $\Gamma_\mu$ in $G$ is discrete. Let $Y_\infty\in\calS(\Gamma_\mu)$ and consider the set
\begin{align*}
\calS(\Gamma_\mu,Y_\infty)=\set{Y\in\calS(\Gamma_\mu)\for Y\subset Y_\infty}
\end{align*}
of ergodic homogeneous subspaces of $Y_\infty$. Suppose that $(Y_n)_n$ is a sequence in $\calS(\Gamma_\mu,Y_\infty)$ such that for every fixed $Y\in \calS(\Gamma_\mu,Y_\infty)\setminus\set{Y_\infty}$ one has $Y_n\not\subset Y$ for all but finitely many $n$, and such that no subsequence of $(Y_n)_n$ escapes to infinity. Then $Y_n\to Y_\infty$ in $\calS(\Gamma_\mu)$.
\end{cor}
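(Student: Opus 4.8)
The plan is to deduce the statement from the compactness assertion in Proposition~\ref{prop.mozes-shah}(i) together with the rigidity of convergence in part~(ii). Since $\calS(\Gamma_\mu)\cup\set{\delta_\infty}$ is a compact subset of the compact metrizable space of probability measures on the one-point compactification $\overline X$, it is sequentially compact; as by hypothesis no subsequence of $(Y_n)_n$ escapes to infinity, every subsequence of $(Y_n)_n$ therefore admits a further subsequence converging, \emph{in $\calS(\Gamma_\mu)$}, to some $Y'$. It thus suffices to show that any such limit $Y'$ coincides with $Y_\infty$, for then $Y_n\to Y_\infty$ in $\calS(\Gamma_\mu)$.

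So fix a subsequence with $Y_{n_k}\to Y'$ in $\calS(\Gamma_\mu)$. I first record that $Y'\subset Y_\infty$: every $\nu_{Y_{n_k}}$ is supported on the closed set $Y_\infty$, so for any open $U$ disjoint from $Y_\infty$ one has $\nu_{Y'}(U)\le\liminf_k\nu_{Y_{n_k}}(U)=0$, whence $Y'=\supp\nu_{Y'}\subset Y_\infty$ and $Y'\in\calS(\Gamma_\mu,Y_\infty)$. Next, applying Proposition~\ref{prop.mozes-shah}(ii) to the convergence $Y_{n_k}\to Y'$, I obtain elements $l_k\in C_G(\Gamma_\mu)$ with $l_k\to e$ and $Y_{n_k}\subset l_kY'$ for all sufficiently large $k$. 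Since $C_G(\Gamma_\mu)$ is discrete, $l_k\to e$ forces $l_k=e$ for large $k$, and hence $Y_{n_k}\subset Y'$ for all large $k$.

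To finish, suppose toward a contradiction that $Y'\neq Y_\infty$. Then $Y'\in\calS(\Gamma_\mu,Y_\infty)\setminus\set{Y_\infty}$, so the hypothesis on the sequence gives $Y_n\not\subset Y'$ for all but finitely many $n$, contradicting $Y_{n_k}\subset Y'$ for large $k$. Therefore $Y'=Y_\infty$, which proves the corollary.

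The only point that requires genuine attention is the first one, namely ensuring that a subsequential limit exists inside $\calS(\Gamma_\mu)$ and does not degenerate to $\delta_\infty$; this is precisely what the assumption that no subsequence escapes to infinity rules out once Proposition~\ref{prop.mozes-shah}(i) is available. The remaining steps are a direct invocation of part~(ii) and the discreteness of the centralizer, so I do not anticipate further difficulties; the passage of the inclusion $Y_{n_k}\subset Y_\infty$ to the limit is also soft, and can alternatively be phrased via the equivalence of the weak* and Fell topologies on $\calS(\Gamma_\mu)$ noted after Proposition~\ref{prop.mozes-shah}.
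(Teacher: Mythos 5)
Your proof is correct and follows essentially the same route as the paper's: extract convergent subsequences via compactness from Proposition~\ref{prop.mozes-shah}(i) (using the no-escape hypothesis), apply Proposition~\ref{prop.mozes-shah}(ii) together with discreteness of $C_G(\Gamma_\mu)$ to get $Y_{n_k}\subset Y'$ for large $k$, and conclude $Y'=Y_\infty$ from the hypothesis on the sequence. Your version is simply more explicit about why subsequential limits remain in $\calS(\Gamma_\mu,Y_\infty)$.
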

Here, by ``escape to infinity'' we mean weak* convergence towards the Dirac measure $\delta_\infty$ at infinity.

The proofs of both statements above will be given in \S\ref{subsec;topology}.

\subsection{Birkhoff genericity}
We still assume that $\Lambda$ is a lattice in the Lie group $G$. Let $(a(t))_{t\in \R }$ be a one-parameter $\Ad$-diagonalizable subgroup of $H$ and $\nu$ a probability measure on $X=G/\Lambda$ invariant under $a(t)$ for every $t\in\R$. We say that a Radon measure $\eta$ on $H$ is \emph{$a(t)$-Birkhoff generic at $x\in X$ with respect to $\nu$} if 
\begin{align*}
\frac1T\int_0^T\delta_{a(t)hx}\dd t\to\nu
\end{align*}
in the weak* topology as $T\to\infty$ for $\eta$-almost every $h\in H$.
It was first noticed by Simmons--Weiss~\cite{sw} that, in certain situations, pathwise equidistribution of random walks as in Theorem~\ref{thm;orbit}(iii) can be 
used to deduce Birkhoff genericity of fractal measures $\eta$ on unipotent subgroups of $H$ with respect to the Haar measure on $X$, which has consequences in Diophantine approximation thanks to the Dani correspondence principle. 
Recently, more results were obtained in this direction in~\cite{prohaska-sert}. Both of these papers only deal with cases corresponding to $H=G$ in our setup. We are going to extend the existing results by removing this restriction. Even in the case where $H=G$, we obtain Birkhoff genericity for more general one-parameter subgroups and fractal measures, which will also give new results on Diophantine approximation (see~\S\ref{sec;dioph.intro}).
	
The one-parameter subgroups to which our results apply are required to satisfy certain expansion condition with respect to a unipotent subgroup of $H$.
To phrase it, we use the concept of an $a$-expanding subgroup of $H$ introduced in~\cite{s}.
 Namely, given an $\Ad$-diagonalizable element $a\in H$, a connected  $\Ad$-unipotent subgroup $U$  of $H$ normalized by $a$ is said to be \emph{$a$-expanding} if for any non-trivial irreducible representation of $H$ on a finite-dimensional real vector space $V$, the subspace 
 $V^U$ of $U$-fixed vectors is expanded by $a$, i.e.~$\lim_{n\to\infty}a^{-n} v=0$
 for any  $v\in V^U$. 
If the projection of $a$ to each simple factor of $H$ is non-trivial, then certain horospherical subgroups of $H$ are $a$-expanding. For example, this holds for the
  unstable horospherical subgroup 
  \begin{align*}
 H^+_a\df \set{ h\in H\for\lim_{n\to\infty}a^{-n} h a^n=1_H },
 \end{align*}
of $a$; see \S\ref{sec;parabolic}.

Now let $U$ be an $a(1)$-expanding subgroup contained in the unstable horospherical subgroup $H^+_{a(1)}$ of $a(1)$.
We wish to introduce a family of measures on $U$ which are generated by random walks, in a sense to be made precise in what follows. Let $A'=\set{a(t)\for t\in\R}$, $K$ be a maximal compact subgroup of $H$, and  $K'=C_K(A')\cap N_H(U)$. Here and hereafter, $C_K(A')$ denotes the centralizer of $A'$ in $K$ and $N_H(U)$ the normalizer of $U$ in $H$.  We set $P\df K'A'U\subset H$ and denote by $\lambda$ the function which associates to $g\in P$ the real parameter of its $A'$ component in its $K'A'U$ factorization; that is, $\lambda(g)=t \in \R$ for $g=k a(t) u\in K'A'U$.  
Finally, given $\omega =(g_1,g_2, \dots)\in P^\N$ and $n\in \N$, let
$k_{\omega, n} \in K'$, $a_{\omega, n}\in A'$ and $u_{\omega, n}\in U$ be such that 
\begin{align*}
g_n \dotsm g_1 = k_{\omega, n} a_{\omega, n} u_{\omega, n}. 
\end{align*}
With this notation, we are ready to define the class of measures on $U$ we shall be interested in. 
\begin{de}\label{def.generated.by.expanding}
	Let $(a(t))_{t\in \R }\leqs H$ be a one-parameter $\Ad$-diagonalizable subgroup of  $H$ and $U$ an $a(1)$-expanding subgroup of $H$ contained in $H_{a(1)}^+$. A probability measure $\eta$ on $U$ is said to be \emph{generated by $a(1)$-expanding random walks} if there is a probability measure $\mu$ on $H$ with finite exponential moments satisfying the following properties:
	\begin{enumerate}
		\item $\mu(P)=1$ and $\int_P \lambda(g) \dd\mu(g)>0$,
		\item the Zariski closure of the image of  $\Gamma_\mu$ in $\Ad(H)$ contains $\Ad(U)$, and 
		\item $\eta$ is equivalent to the pushforward of $\mu^\N$ by the map $\omega\mapsto u_\omega\df\lim_{n\to\infty}u_{\omega,n}$. 
	\end{enumerate}
\end{de}
The existence of the limit in condition~(3) above will be proved in Lemma~\ref{lem;unipotent}. Moreover, we will see as part of our discussion in \S\ref{sec;birkhoff} that conditions (1) and (2) imply that $\mu$ is $H$-expanding which will allow us to employ our main measure classification and equidistribution results discussed above.

For the statement of our result on Birkhoff genericity, recall that by Ratner's theorems the orbit closure $\overline{Hx}$ is 
homogeneous for any $x\in X$. We denote the homogeneous probability measure corresponding to $\overline{Hx}$ by $\nu_{ \overline{Hx}}$.

\begin{thm}\label{thm;birkhoff}
Let $\Lambda$ be a lattice in a real Lie group $G$ and let $H\leqs G$ be a connected semisimple subgroup without compact factors and with finite center. Let $(a(t))_{t\in \R }$ be a one-parameter $\Ad$-diagonalizable subgroup of $H$ and $U$ an $a(1)$-expanding subgroup of $H$ contained in $H_{a(1)}^+$. 
Suppose that $\eta$ is a probability measure on $U$ generated by $a(1)$-expanding random walks. Then for every $x\in X$, $\eta$ is $a(t)$-Birkhoff generic at $x$ with respect to $\nu_{ \overline{Hx}}$.
\end{thm}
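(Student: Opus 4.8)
The strategy is to deduce Birkhoff genericity of the diagonal flow $(a(t))$ from the pathwise equidistribution of the $H$-expanding random walk driven by $\mu$, following the Simmons--Weiss scheme. First I would verify that the measure $\mu$ associated to $\eta$ by Definition~\ref{def.generated.by.expanding} is indeed $H$-expanding: this is promised in \S\ref{sec;birkhoff} and follows from conditions~(1) and~(2), since the positive drift $\int_P\lambda\dd\mu>0$ together with Zariski density of $\Gamma_\mu$ modulo $\Ad(U)$ forces uniform expansion in every non-trivial irreducible representation (the $U$-fixed vectors are expanded because $U$ is $a(1)$-expanding, and transversally one invokes Furstenberg positivity along the $\Ad(U)$-direction). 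Having this, Theorem~\ref{thm;orbit}(iii) applies: for $\mu^\N$-a.e.\ $\omega=(g_1,g_2,\dots)$ the Cesàro averages $\frac1n\sum_{k=0}^{n-1}\delta_{g_k\dotsm g_1 x}$ converge weak* to $\nu_{\overline{Hx}}$, using that $\overline{\Gamma_\mu^+ x}=\overline{Hx}$ because $\Gamma_\mu$ is Zariski dense in $\Ad(H)$ (by condition~(2), since $\Ad(U)$ generates $\Ad(H)$ as a Zariski-closed subgroup — wait, that is not automatic, so more care is needed: one uses that $\Gamma_\mu\subset P=K'A'U$ and the positive drift to see the orbit closure is $\overline{Hx}$ via the repelling estimates and the structure of $P$, exactly as in~\cite{sw,prohaska-sert}).

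The heart of the argument is to convert the discrete-time random-walk average over the word $g_n\dotsm g_1$ into a continuous-time average over $a(t)$. Using the $K'A'U$ factorization $g_n\dotsm g_1=k_{\omega,n}a_{\omega,n}u_{\omega,n}$ with $a_{\omega,n}=a(S_n)$ where $S_n=\sum_{j\le n}\lambda(g_j)$ is a random walk on $\R$ with positive mean, one has $g_n\dotsm g_1\,x$ close to $a(S_n)u_{\omega,n}x$ up to the compact factor $k_{\omega,n}\in K'$; and $u_{\omega,n}\to u_\omega$ (Lemma~\ref{lem;unipotent}), so $g_n\dotsm g_1 x\approx a(S_n)u_\omega x$. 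Since $S_n$ grows linearly by the law of large numbers, the discrete times $\{S_n\}$ are comparable to a uniform grid in $[0,T]$, and a standard Abel-summation / interpolation argument (controlling the gaps $\lambda(g_{n+1})$ via the finite exponential moments, hence finite first moment, of $\mu$) upgrades the Cesàro convergence $\frac1n\sum\delta_{g_k\dotsm g_1 x}\to\nu_{\overline{Hx}}$ to $\frac1T\int_0^T\delta_{a(t)u_\omega x}\dd t\to\nu_{\overline{Hx}}$. The role of the compact factor $k_{\omega,n}$ requires attention: one passes to the maximal compact $K$, uses that $\nu_{\overline{Hx}}$ is $H$-invariant hence in particular $K'$-invariant, and absorbs $k_{\omega,n}$ by a continuity/equicontinuity argument on test functions, or works in $KX$ and projects.

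Finally I would transfer genericity from $u_\omega$ to $\eta$-a.e.\ $h\in U$: by condition~(3), $\eta$ is equivalent to the pushforward of $\mu^\N$ under $\omega\mapsto u_\omega$, so the set of $h\in U$ for which $\frac1T\int_0^T\delta_{a(t)hx}\dd t\to\nu_{\overline{Hx}}$ — which we have just shown contains $u_\omega$ for $\mu^\N$-a.e.\ $\omega$ — has full $\eta$-measure. Since this holds for every $x\in X$ (the equidistribution in Theorem~\ref{thm;orbit}(iii) is pointwise in the starting point and $\overline{Hx}$ is its own homogeneous hull), the theorem follows. The main obstacle I anticipate is the discrete-to-continuous time conversion together with handling the $K'$-component cleanly: one must ensure the approximation $g_n\dotsm g_1 x\approx a(S_n)u_\omega x$ is uniform enough (in particular that $u_{\omega,n}$ does not wander before converging, and that the increments $\lambda(g_j)$ do not cause large gaps in the grid $\{S_n\}$), which is precisely where finite exponential moments and the expanding-cone structure of $P$ are used, as in the arguments of Simmons--Weiss and Prohaska--Sert.
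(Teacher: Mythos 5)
Your plan correctly identifies the two outer layers of the argument (verifying $H$-expansion via Propositions~\ref{prop.examples2}/\ref{prop;more}, invoking Theorem~\ref{thm;orbit}(iii) for the random walk, and pushing the conclusion to $\eta$-a.e.\ $h\in U$ through the coding map), and you correctly anticipate that the discrete-to-continuous conversion and the $K'$-factor are the delicate points. However, there is a genuine gap in the central step.

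The approximation ``$g_n\dotsm g_1 x = k_{\omega,n}a(S_n)u_{\omega,n}x \approx k_{\omega,n}a(S_n)u_\omega x$ because $u_{\omega,n}\to u_\omega$'' is not valid. The obstruction is exactly that $a(S_n)$ expands the $U$-direction: writing $u_\omega=\delta_n\, u_{\omega,n}$ with $\delta_n\to e$, one has to control $a(S_n)\delta_n a(S_n)^{-1}$, and because the convergence rate of $u_{\omega,n}\to u_\omega$ is governed by $\|\Ad(a_{\omega,n}^{-1})\|$, the conjugate $a(S_n)\delta_n a(S_n)^{-1}$ does \emph{not} tend to the identity. Indeed, the exact equivariance relation (Lemma~\ref{lem;equivariance}) reads $a_{\omega,n}u_\omega = k_{\omega,n}^{-1}u_{T^n\omega}\,g_{\omega,n}$, so that $a(S_n)u_\omega x = k_{\omega,n}^{-1}u_{T^n\omega}\,g_{\omega,n}x$; the discrepancy between $a(S_n)u_\omega x$ and $g_{\omega,n}x$ is not merely a compact factor but also the unipotent factor $u_{T^n\omega}$, which is distributed like $u_\omega$ itself and is of order $O(1)$, not $o(1)$. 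Thus the empirical measures of $(a(S_n)u_\omega x)_n$ and $(g_{\omega,n}x)_n$ are genuinely different, and no Abel-summation/interpolation step starting from the equidistribution of $(g_{\omega,n}x)_n$ alone can close this gap.

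What is actually needed, and what the paper does, is a \emph{joint} equidistribution statement for the tuple $(g_{\omega,n}x,\,k_{\omega,n},\,u_{T^n\omega},\,g_{n+1})_n$ with respect to $\nu_{\overline{Hx}}\times m_{K'}\times\tilde\eta$. Obtaining the $K'$-coordinate requires weak mixing of the $\Gamma_\mu$-action on $(X,\nu_{\overline{Hx}})$, which the paper deduces from a version of Moore's ergodicity theorem (using $H$-invariance of $\nu_{\overline{Hx}}$ from Proposition~\ref{prop;more}); the $u_{T^n\omega}$- and $g_{n+1}$-coordinates are supplied by the Simmons--Weiss lemmas lifting equidistribution to the skew product with the shift. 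The final computation is not an approximation argument but an exact telescoping built on Lemma~\ref{lem;equivariance}, where one applies the joint equidistribution to the specific test function $\varphi(x,k,u,g)=\int_0^{\lambda(g)}f(a(t)k^{-1}ux)\dd t$; this cleanly packages the $u_{T^n\omega}$ and $k_{\omega,n}$ factors rather than trying to discard them. Your phrase ``absorbs $k_{\omega,n}$ by continuity/equicontinuity'' understates the difficulty: one cannot absorb $u_{T^n\omega}$ this way, and the whole point of the $\varphi$ device is to integrate it into the limit measure exactly.
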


Theorem~\ref{thm;birkhoff} extends the main results of~\cite{s}, which used the method of Chaika--Eskin~\cite{chaika-eskin} developed for the Teichm\"uller geodesic flow to prove Birkhoff genericity for the Haar measure on $U$. The same method was employed in~\cite{fsu} to obtain Birkhoff genericity for volume measures on curves. The proof of Theorem~\ref{thm;birkhoff} will be given in~\S\ref{sec;birkhoff}, using the connection to random walks observed in~\cite{sw}.

Probability measures generated by expanding random walks include a piece of Haar measure on $U$ and, under irreducibility conditions, self-similar measures on $\R^m$ as well as natural self-affine measures on Bedford--McMullen carpets. The latter example is crucial for our application to Diophantine approximation problems on fractals described next. In \S\ref{subsec:sponges-aff-meas} we will also discuss a more general class of fractal measures covered by Definition~\ref{def.generated.by.expanding}.

\subsection{Diophantine approximation}\label{sec;dioph.intro}
By virtue of a correspondence principle going back to the work of Dani~\cite{dani.correspond} and Kleinbock~\cite{kleinbock.duke}, Theorem~\ref{thm;birkhoff} on Birkhoff genericity has consequences for problems in Diophantine approximation, which we shall now describe.

Let $m \in  \N$ be a positive integer, $\mathbf{v}=(v_1,\dots,v_m)^t$ a (column) vector in $\R^m$, and $\mathbf{r}=(r_1, \dots,r_m) \in (0,1]^m$ such that $\sum_{i=1}^m r_i=1$.
The vector $\mathbf{v}$ is called \emph{$\mathbf{r}$-badly approximable} if there exists a constant $C>0$ such that
\begin{align}\label{eq.dioph.form}
\max_{1\le i\le m}\abs{v_iq-p_i}^{1/r_i}\cdot \abs{q} \ge C
\end{align}
for every $\mathbf{p}=(p_1, \dots, p_m) \in \Z^m$ and $q \in \Z\setminus\set{0}$. When $r_i=1/m$ for every $i=1,\dots, m$, such a vector is simply called \emph{badly approximable}. In the case $m=1$, the latter corresponds to the classical definition of a badly approximable number. It is easily seen by Dirichlet's principle that for any vector $\mathbf{v} \in \R^m$, the left-hand side of \eqref{eq.dioph.form} is $\le 1$ for infinitely many pairs $(\mathbf{p}, q) \in \Z^m \times (\Z\setminus\set{0})$.

The existence of badly approximable vectors was observed by Perron~\cite{perron1921} a century ago. It follows from Schmidt's results~\cite{schmidt.badly} that such vectors constitute a subset of $\R^m$ of everywhere-full Hausdorff dimension. This was strengthened in more recent works~\cite{kleinbock-weiss.badly,kristensen-thorn-velani} to the statement that badly approximable vectors contained in a sufficiently regular fractal $\mathcal{K}$ form a subset of full Hausdorff dimension in $\mathcal{K}$. For a general weight $\mathbf{r}$, the results of~\cite{kw.modified.schmidt,kristensen-thorn-velani,pollington-velani} imply that $\mathbf{r}$-badly approximable vectors have everywhere-full Hausdorff dimension in $\R^m$. 
For $\mathbf{r}$-badly approximable vectors on a fractal set $\mathcal{K}$, the full-dimension statement is known to hold when $\mathcal{K}$ has a certain product structure (see \cite[Theorem~8.4]{kleinbock-weiss.badly}, \cite[Theorems~11,13]{kristensen-thorn-velani}).

The results outlined above can be summarized by saying that ($\mathbf{r}$-)badly approximable vectors are abundant from the viewpoint of Hausdorff dimension. On the Lebesgue measure side, however, Khintchine's theorem~\cite{khintchine} implies that badly approximable vectors have zero Lebesgue measure. Using a generalization of Khintchine's theorem~\cite{schmidt.general.khintchine}, the same is seen to be true for $\mathbf{r}$-badly approximable vectors. The question whether badly approximable vectors on a given fractal $\mathcal{K}$ also form a null set with respect to a natural measure on the fractal proved to be rather more delicate. The first results in this direction are due to Einsiedler--Fishman--Shapira~\cite{einsiedler-fishman-shapira}, who proved that badly approximable vectors have zero Hausdorff measure on certain fractals invariant under toral endomorphisms (in case the dimension is $m=1$) or toral automorphisms (in case $m=2$). For example, their results apply to the middle-third Cantor set. This was vastly generalized by Simmons--Weiss~\cite{sw}, who established the same statement for general self-similar fractals satisfying a separation condition. 
To the best of our knowledge, for general weights $\mathbf{r}$ or on fractals which are not strictly self-similar, the question of the measure of badly approximable vectors is open. Our methods allow us to make an initial contribution in this direction. For simplicity, here in the introduction we will describe only the special case of Bedford--McMullen carpets; see \S\ref{sec;diophantine} for the discussion in full generality.

\emph{Bedford--McMullen carpets} are two-dimensional self-affine fractals, introduced independently by Bedford~\cite{bedford} and McMullen~\cite{mcmullen}, which admit a particularly simple construction. Let $a,b\ge 2$ be distinct integers and divide the unit square $[0,1]^2$ into an $a\times b$-grid parallel to the coordinate axes. Choose an arbitrary subcollection $S$ of the $ab$ rectangles created and discard the remaining ones. Iteratively proceed in the same way for each of the rectangles that remain, using the same pattern $S$. The points remaining after infinite iteration form a Bedford--McMullen carpet $\mathcal{K}$. If $(c_i,d_i)_{i=1}^k$ denote the coordinates of the bottom-left corners of the rectangles kept in the first construction step and we define the affine maps $\phi_i\colon\R^2\to\R^2$ by
\begin{align*}
    \phi_i(x,y)=\begin{pmatrix}\frac1a&\\&\frac1b\end{pmatrix}\begin{pmatrix}x\\y\end{pmatrix}+\begin{pmatrix}c_i\\d_i\end{pmatrix},
\end{align*}
then $\mathcal{K}$ is the unique non-empty compact subset of $\R^2$ satisfying $\bigcup_{i=1}^k\phi_i(\mathcal{K})=\mathcal{K}$. The Hausdorff dimension of fractals of this type was explicity calculated by Bedford and McMullen. Except for special cases, it turns out that their Hausdorff measure in the correct dimension is infinite~\cite{peres.inf-H-meas}. However, there exists another natural measure $\nu_{\mathcal{K}}$ on $\mathcal{K}$, known as the \emph{McMullen measure}: It is the unique $T$-invariant ergodic probability measure on $\mathcal{K}$ of full Hausdorff dimension, where $T$ is the toral endomorphism corresponding to $(\begin{smallmatrix}a&\\&b\end{smallmatrix})$~\cite{kenyon-peres,mcmullen}. For further background on the fractal geometry of Bedford--McMullen carpets, we refer to the survey article~\cite{fraser.survey}.

The following is a specialization of our Theorem~\ref{thm;dioph.insection} to the case of weighted badly approximable vectors on Bedford--McMullen carpets (see Corollary~\ref{cor.sierpinski}).
\begin{thm}\label{thm.dioph.intro}
Let $a,b$ be positive integers satisfying $\min\set{a^2,b^2} > \max\set{a,b}$ and let $\mathcal{K}\subset\R^2$ be a Bedford--McMullen carpet invariant under the toral endomorphism $T=(\begin{smallmatrix}a&\\&b\end{smallmatrix})$. Suppose that $\mathcal{K}$ is not contained in any straight line. Then for the choice of weights
\begin{align*}
\mathbf{r}=\biggl(\frac{2\log a-\log b}{\log a+\log b},\frac{2\log b-\log a}{\log a+\log b}\biggr),
\end{align*}
the set of $\mathbf{r}$-badly approximable vectors on $\mathcal{K}$ has measure zero with respect to the McMullen measure $\nu_{\mathcal{K}}$ on $\mathcal{K}$.
\end{thm}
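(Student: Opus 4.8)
The plan is to derive this from Theorem~\ref{thm;birkhoff} via the Dani correspondence; it is the special case of our general result in \S\ref{sec;diophantine} (Corollary~\ref{cor.sierpinski}). I would work in $G=H=\SL_3(\R)$ with $\Lambda=\SL_3(\Z)$ and $X=G/\Lambda$, so that $\overline{Hx}=X$ and $\nu_{\overline{Hx}}=m_X$ for every $x\in X$. Fix the weights $\mathbf r=(r_1,r_2)$ from the statement, the one-parameter subgroup $a(t)=\diag(e^{r_1 t},e^{r_2 t},e^{-t})$, and the abelian unipotent subgroup $U=\set{u(\mathbf w)\for\mathbf w\in\R^2}$ of matrices with last column $(w_1,w_2,1)^t$ and identity elsewhere. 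The Dani correspondence for weighted approximation says that $\mathbf v\in\R^2$ fails to be $\mathbf r$-badly approximable exactly when the forward orbit $\set{a(t)u(\mathbf v)\Lambda\for t\ge 0}$ is unbounded in $X$; the hypothesis $\min\set{a^2,b^2}>\max\set{a,b}$ is precisely what forces $r_1,r_2\in(0,1)$, and (as used below) guarantees that $U$ is expanded by the element $a(t_0)$ introduced next.

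The heart of the matter is to realize the McMullen measure $\nu_{\mathcal{K}}$, transported to $U$ via $\mathbf w\leftrightarrow u(\mathbf w)$, as a probability measure on $U$ generated by $a(1)$-expanding random walks for the reparametrized flow $s\mapsto a(st_0)$ with $t_0\df\tfrac13(\log a+\log b)$. With this value of $t_0$ one checks that $e^{(r_i+1)t_0}$ equals $a$ resp.\ $b$, so that conjugation by $a(-t_0)$ acts on $U\cong\R^2$ as $\diag(a^{-1},b^{-1})$ --- this is exactly what pins down the exponents in $\mathbf r$. Writing $\mathbf w_1,\dots,\mathbf w_k$ for the bottom-left corners of the retained rectangles and $(p_i)_i$ for the probability vector defining $\nu_{\mathcal{K}}$ as a Bernoulli self-affine measure for the IFS $(\phi_i)_i$, I would set $g_i\df a(t_0)u(\mathbf w_i)\in A'U$ and $\mu\df\sum_i p_i\delta_{g_i}$. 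A short telescoping computation gives $g_n\dotsm g_1=a(nt_0)\,u\bigl(\sum_{j=1}^{n}\diag(a^{-(j-1)},b^{-(j-1)})\,\mathbf w_{i_j}\bigr)$, so in the notation preceding Definition~\ref{def.generated.by.expanding} one has $k_{\omega,n}=e$ and $u_{\omega,n}\to u_\omega=u\bigl(\lim_n\phi_{i_1}\circ\dotsm\circ\phi_{i_n}(0)\bigr)$, the point of $\mathcal{K}$ coded by $\omega$. Hence the pushforward of $\mu^\N$ under $\omega\mapsto u_\omega$ is exactly $\nu_{\mathcal{K}}$ on $U$, which is condition~(3) of Definition~\ref{def.generated.by.expanding}; condition~(1) is immediate since $\mu$ is compactly supported, $\mu(P)=1$ for $P=K'A'U$, and $\int\lambda\dd\mu=t_0>0$.

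What remains --- and what I expect to be the main obstacle --- is condition~(2), that the Zariski closure of the image of $\Gamma_\mu$ in $\Ad(H)$ contains $\Ad(U)$. Here $\Gamma_\mu$ contains $g_ig_j^{-1}=u(\diag(a,b)(\mathbf w_i-\mathbf w_j))$ and $g_1=a(t_0)u(\mathbf w_1)$, hence also all conjugates $g_1^n\,u(\mathbf x)\,g_1^{-n}=u(\diag(a^n,b^n)\mathbf x)$ with $\mathbf x=\diag(a,b)(\mathbf w_i-\mathbf w_j)$ and $n\in\Z$. Since $a\neq b$, the vectors $\diag(a^n,b^n)\mathbf x$ already span $\R^2$ as soon as one difference $\mathbf w_i-\mathbf w_j$ has both coordinates nonzero; the only way to avoid this is that all retained rectangles lie in a single row or a single column of the initial grid, in which case $\mathcal{K}$ degenerates to a horizontal, resp.\ vertical, line --- excluded by hypothesis. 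Thus condition~(2) holds precisely because $\mathcal{K}$ is not contained in a line. A secondary, routine point is to verify $U\subset H_{a(t_0)}^+$, which is immediate since $a(t_0)$ expands $U$, and that $U$ is $a(t_0)$-expanding, which via the standard representation of $\SL_3(\R)$ reduces to positivity of $r_1$ and $r_2$, i.e.\ again to $\min\set{a^2,b^2}>\max\set{a,b}$.

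With all conditions checked, Theorem~\ref{thm;birkhoff} applies at $x=e\Lambda$ and gives, for $\nu_{\mathcal{K}}$-a.e.\ $\mathbf v\in\R^2$, that $\tfrac1T\int_0^T\delta_{a(st_0)u(\mathbf v)\Lambda}\dd s\to m_X$, hence also $\tfrac1T\int_0^T\delta_{a(t)u(\mathbf v)\Lambda}\dd t\to m_X$ after the harmless time rescaling. Since $X$ is non-compact, $m_X$ gives positive mass to the complement of each compact subset, so these time averages cannot remain supported on a fixed compact set; therefore the forward orbit $\set{a(t)u(\mathbf v)\Lambda\for t\ge 0}$ is unbounded, and the Dani correspondence shows that $\mathbf v$ is not $\mathbf r$-badly approximable. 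As $\nu_{\mathcal{K}}$ is carried by $\mathcal{K}$, this yields that the $\mathbf r$-badly approximable vectors on $\mathcal{K}$ form a $\nu_{\mathcal{K}}$-null set, as claimed.
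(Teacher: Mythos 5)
Your proposal is correct and follows essentially the same approach as the paper, which obtains this result as the two-dimensional case of Corollary~\ref{cor.sierpinski}, itself a specialization of the matrix-sponge Theorem~\ref{thm;dioph.insection}. You unwind the general machinery directly for the Bedford--McMullen carpet in $\SL_3(\R)$: the weighted Dani--Kleinbock correspondence, the telescoping identity realizing the McMullen measure as the $\omega\mapsto u_\omega$ limit, the expansion/positivity checks, and the appeal to Theorem~\ref{thm;birkhoff} are all the paper's ingredients for $m=2$, $n=1$; the only local variation is your concrete Zariski-density verification via the differences $g_ig_j^{-1}$ and their $g_1$-conjugates, in place of the abstract irreducibility argument in the proof of Theorem~\ref{thm;dioph.insection}, and both routes hinge on the same non-degeneracy input that $\mathcal{K}$ is not contained in a line.
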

The requirement above that $\mathcal{K}$ is not contained in any straight line plays the role of an irreducibility condition. It is satisfied when, in the construction of the Bedford--McMullen carpet described above, the kept rectangles in the pattern $S$ do not all belong to a single line or column in the $a\times b$-grid.

As mentioned before its statement, the above theorem will follow from a much more general result about Diophantine properties of ``$(\mathbf{r},\mathbf{s})$-matrix sponges'' (Theorem~\ref{thm;dioph.insection})---a class of fractals that we will introduce in \S\ref{subsub.sponges}. In fact, the latter result will imply a version of Theorem~\ref{thm.dioph.intro} for higher-dimensional analogues of Bedford--McMullen carpets, which are called ``self-affine Sierpi\'{n}ski sponges'' in~\cite{kenyon-peres}; see Corollary~\ref{cor.sierpinski}.

\subsection*{Acknowledgments}
The authors are thankful to Barak Weiss for encouraging discussions on an initial version of this article as well as for helpful bibliographical suggestions, and to Manfred Einsiedler for numerous useful remarks. 
R.\ S.\ is supported by National Key Research and Development Program of China 2021YFA1003204, NSFC 12161141014 and NSF Shanghai 22ZR1406200. 
C.\ S.\ is supported by SNF grant 182089 and SNF Ambizione 193481.

\section{\texorpdfstring{$H$}{H}-expansion: Definition and basic properties}\label{sec;def}
We start by properly stating the definition of uniform expansion and giving alternative formulations thereof. 
\begin{de}\label{de;unif.exp}
Let $\mu$ be a probability measure on $\GL_d(\R)$. A vector $v\in\R^d$ is said to be \emph{$\mu$-expanded} if
\begin{align}\label{eq.v.exp}
\liminf_{n\to\infty}\frac1n\log\norm{g_n\dotsm g_1v}>0
\end{align}
for $\mu^\N$-almost every sequence $(g_i)_i$ of elements of $\GL_d(\R)$. The measure $\mu$ is said to be \emph{uniformly expanding} if every nonzero $v\in\R^d$ is $\mu$-expanded. If \eqref{eq.v.exp} holds with $\ge$ in place of $>$ for every nonzero $v\in\R^d$, we call $\mu$ \emph{non-contracting}.
\end{de}

The above definition is the most general, but it can be hard to verify in practice. The characterization in the following proposition is often simpler to check. Moreover, it will also play an important role in the height function constructions in \S\ref{sec;height_functions}.
Recall that a probability measure $\mu$ on $\GL_d(\R)$ is said to have a finite first moment if $\int \log \operatorname{N}(g)\dd\mu(g)<\infty$, where $\operatorname{N}(g)=\max\set{\norm{g},\norm{g^{-1}}}$.

\begin{prop}[{\cite[Lemma~1.5]{el},~\cite[Proposition~2.4]{prohaska-sert}}]\label{prop;expansion_char}
Let $\mu$ be a probability measure on $\GL_d(\R)$
with finite first moment. 
Then $\mu$ is uniformly expanding if and only if there exists $N\in\N$ and a constant $C>0$ such that for every nonzero $v \in \R^d$
\begin{align*}
\int_{\GL_d(\R)}\log\frac{\norm{gv}}{\norm{v}}\dd\mu^{*N}(g)\ge C.
\end{align*}
\end{prop}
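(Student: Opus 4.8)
The plan is to prove both implications. For the easy direction, suppose the integral lower bound holds for some $N\in\N$ and $C>0$. Set $\varphi(g,v)=\log\tfrac{\norm{gv}}{\norm{v}}$, a cocycle over the shift, and consider the sequence $S_k(\omega,v)=\log\norm{g_{kN}\dotsm g_1 v}-\log\norm{v}$ along the subsequence of multiples of $N$. Using the finite first moment of $\mu$ (hence of $\mu^{*N}$), the function $g\mapsto\sup_{v\neq 0}\abs{\varphi(g,v)}\le\log\operatorname{N}(g)$ is $\mu^{*N}$-integrable, so I can apply Kingman's subadditive ergodic theorem — or more simply, split $\log\norm{g_{(k+1)N}\dotsm g_1 v}-\log\norm{g_{kN}\dotsm g_1 v}=\varphi(g_{(k+1)N}\dotsm g_{kN+1},\,g_{kN}\dotsm g_1 v)$ and apply the law of large numbers to these increments, which are i.i.d.\ in the first argument with conditional mean (given the past) at least $C$ by hypothesis. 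A martingale/ergodic argument then gives $\liminf_{k\to\infty}\tfrac1{kN}\log\norm{g_{kN}\dotsm g_1 v}\ge C/N>0$ for $\mu^\N$-a.e.\ $\omega$, and interpolating over the at most $N$ intermediate steps (which can each change $\log\norm{\cdot}$ by at most $\log\operatorname{N}(g)$, and $\tfrac1n\log\operatorname{N}(g_n)\to 0$ a.s.\ by the first moment assumption and Borel--Cantelli) yields the full $\liminf$ over $n$. Hence every nonzero $v$ is $\mu$-expanded, i.e.\ $\mu$ is uniformly expanding.

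For the converse — the substantive direction — suppose $\mu$ is uniformly expanding. I want a single $N$ and $C>0$ working for all $v$. First reduce to the projective setting: the quantity $\int\log\tfrac{\norm{gv}}{\norm{v}}\dd\mu^{*N}(g)$ depends only on $[v]\in\mathbb{P}(\R^d)$, which is compact. For fixed $[v]$, uniform expansion plus the finite first moment give, via the same subadditive/ergodic considerations as above, that $\tfrac1n\mathbb{E}\bigl[\log\norm{g_n\dotsm g_1 v}\bigr]\to\ell(v)$ for some $\ell(v)>0$ (the top Lyapunov exponent of the deterministic vector $v$; positivity is where uniform expansion is used — a.s.\ positivity of the $\liminf$ upgrades to positivity of the limiting average by uniform integrability, which again needs the first moment). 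Thus for each $[v]$ there is some $N_{[v]}$ with $\int\log\tfrac{\norm{gv}}{\norm{v}}\dd\mu^{*N_{[v]}}(g)\ge \tfrac12 N_{[v]}\ell(v)>0$; in fact one gets $\ge 1$ for all large $N$. The function $(N,[v])\mapsto\int\log\tfrac{\norm{gv}}{\norm{v}}\dd\mu^{*N}(g)$ is continuous in $[v]$ (dominated convergence, domination by $\log\operatorname{N}(g)\in L^1(\mu^{*N})$), so the strict inequality persists on a neighborhood of $[v]$; covering $\mathbb{P}(\R^d)$ by finitely many such neighborhoods and taking $N$ to be a common large multiple gives the uniform bound.

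The main obstacle is the converse, specifically establishing the \emph{uniformity} in $v$ of the Lyapunov-type lower bound. The pointwise-in-$v$ positivity comes fairly directly from the definition together with the first moment condition (which guarantees integrability and lets one pass between the a.s.\ $\liminf$ and expectations). The delicate point is that a priori the optimal $N=N(v)$ and gain could degenerate as $[v]$ varies, so one genuinely needs the continuity in $[v]$ of the Nth-step average combined with compactness of $\mathbb{P}(\R^d)$ and a monotone-in-$N$ superadditivity estimate (from $\mu^{*(N+M)}=\mu^{*N}*\mu^{*M}$ and the cocycle identity, the quantity $c_N([v]):=\inf_{[v]}\int\log\tfrac{\norm{gv}}{\norm{v}}\dd\mu^{*N}$ satisfies $c_{N+M}\ge c_N+c_M$, so $c_N/N$ increases to $\sup_N c_N/N$) to conclude $c_N>0$ for $N$ large. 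Since the statement is quoted from \cite[Lemma~1.5]{el} and \cite[Proposition~2.4]{prohaska-sert}, I would present this as a brief indication and refer the reader there for the full details, emphasizing only the role of the finite first moment in licensing the interchange of limits and the integrability of $\log\operatorname{N}(g)$.
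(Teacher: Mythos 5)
The paper quotes this proposition without proof, but the companion result it states immediately afterwards --- the Furstenberg--Kifer theorem (Theorem~\ref{thm:furstenberg_kifer}), in particular the identification of the exponents $\beta_i(\mu)$ with the values $\alpha(\nu)$ over ergodic $\mu$-stationary measures $\nu$ on $\mathbb{P}(\R^d)$ --- is what both cited sources use, and your sketch fails to invoke it exactly where it is needed. Writing $\psi_N([v])\df\int\log\frac{\norm{gv}}{\norm{v}}\dd\mu^{*N}(g)$, your easy direction is sound in spirit; a cleaner route than the martingale strong law (which is delicate under a bare first-moment hypothesis) is to note that the cocycle identity gives $\int\psi_N\dd\nu=N\alpha(\nu)$ for any stationary $\nu$, so the hypothesized lower bound forces $\alpha(\nu)\ge C/N>0$ for every such $\nu$, hence every $\beta_i(\mu)>0$ and $\mu$ is uniformly expanding.

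Your converse, however, has a genuine gap: the covering argument does not close. From compactness you obtain finitely many scales $N_1,\dots,N_k$ and open sets $U_i$ covering $\mathbb{P}(\R^d)$ with $\psi_{N_i}>1$ on $U_i$. But for a common scale $N$ (say a multiple of the $N_i$) the cocycle identity gives only $\psi_N([v])\ge 1+c_{N-N_i}$ for $[v]\in U_i$, where $c_M\df\inf_{[w]}\psi_M([w])$ can be as negative as $-(N-N_i)\int\log\operatorname{N}(g)\dd\mu$; bounding $c_M$ below is precisely what you are trying to prove. The superadditivity $c_{N+M}\ge c_N+c_M$ gives $c_N/N\to\sup_N c_N/N$ by Fekete, but pointwise positivity of $\ell([v])$, continuity of each $\psi_N$, and compactness of $\mathbb{P}(\R^d)$ do not show the supremum is positive: the convergence $\psi_N([v])/N\to\ell([v])$ need not be uniform, and $\ell$ is in general discontinuous (constant on each Furstenberg--Kifer stratum $F_i\setminus F_{i+1}$ and jumping across them), so no Dini-type upgrade is available. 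The missing step is a Krylov--Bogolyubov extraction: if $\sup_N c_N/N\le 0$, pick $[v_N]$ attaining $c_N$ and pass to a weak* limit $\nu$ of the Ces\`aro averages $\frac1N\sum_{j<N}\mu^{*j}*\delta_{[v_N]}$; this $\nu$ is $\mu$-stationary with $\int\psi_1\dd\nu=\lim c_N/N\le 0$, whereas by Theorem~\ref{thm:furstenberg_kifer} its ergodic components all have $\alpha(\cdot)\in\{\beta_1(\mu),\dots,\beta_k(\mu)\}$, positive under uniform expansion --- a contradiction. Your superadditivity then does usefully upgrade $\sup_N c_N/N>0$ to $c_N>0$ for all large $N$.
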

Uniform expansion can also be conveniently understood in light of the following theorem of Furstenberg--Kifer and Hennion. Recall that given a probability measure $\mu$ on a Lie group $G$, we denote by $\Gamma_\mu$ the closed subgroup generated by the support of $\mu$.

\begin{thm}[Furstenberg--Kifer~\cite{furstenberg-kifer}, Hennion~\cite{Hen}]\label{thm:furstenberg_kifer}
	Let $\mu$ be a probability measure on $\GL_d(\R)$ with finite first moment. Then there exists a partial flag $\R^d= F_1 \supset F_2 \supset \dots \supset F_k\supset F_{k+1}=\set{0}$ of $\Gamma_\mu$-invariant subspaces and a collection of real numbers $\beta_1(\mu)>\dots>\beta_k(\mu)$ such that for every $v \in F_i\setminus F_{i+1}$, we have $\mu^\N$-a.s.\ 
	\begin{align*}
		\lim_{n\to\infty} \frac{1}{n}\log \norm{g_n \dotsm g_1 v}=\beta_i(\mu).
	\end{align*}
    Moreover, the $\beta_i(\mu)$ are the values of
    \begin{align*}
        \alpha(\nu)\df \int_{\mathbb{P}(\R^d)}\int_{\GL_d(\R)}\log\frac{\norm{gv}}{\norm{v}}\dd\mu(g)\dd\nu(\R v)
    \end{align*}
    that occur when $\nu$ ranges over $\mu$-ergodic $\mu$-stationary probability measures on the projective space $\mathbb{P}(\R^d)$.
\end{thm}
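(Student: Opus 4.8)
The plan is to prove the Furstenberg--Kifer/Hennion theorem by combining the subadditive ergodic theorem on the random walk space with a careful analysis of the function $\alpha$ on the compact set of stationary measures on projective space. First I would set up the subadditive cocycle: on the product space $(\Omega,\mu^\N)$ with the shift $\sigma$, the quantity $\log\norm{g_n\dotsm g_1 v}/\norm{v}$ is subadditive along orbits, so by Kingman's subadditive ergodic theorem the limit $\lambda(v,\omega)\df\lim_n \tfrac1n\log\norm{g_n\dotsm g_1 v}$ exists for $\mu^\N$-a.e.\ $\omega$, is $\sigma$-invariant, and in particular is a.e.\ constant in $\omega$ when we also pass to an ergodic decomposition. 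The finite first moment hypothesis guarantees integrability of the relevant increments, so all the Kingman limits are finite and integrals make sense.

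\medskip

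Next I would show that the possible values of $\lambda$ are exactly the numbers $\alpha(\nu)$ as $\nu$ ranges over $\mu$-ergodic $\mu$-stationary measures on $\mathbb{P}(\R^d)$. For one direction, given such a $\nu$, I would use the skew-product $(g_1,\ldots)\mapsto \R g_1 v$ and Breiman's law of large numbers / the Furstenberg formula: by the pointwise ergodic theorem for the stationary system $(\mathbb{P}(\R^d)\times\Omega,\widehat\nu,\widehat\sigma)$ (where $\widehat\nu$ is the stationary lift), the Birkhoff averages of the log-norm cocycle $\varphi(\R v,g)=\log\norm{gv}/\norm{v}$ converge a.e.\ to $\int\varphi\,d\widehat\nu=\alpha(\nu)$; telescoping these increments recovers $\tfrac1n\log\norm{g_n\dotsm g_1 v}\to\alpha(\nu)$. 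Conversely, starting from a.e.\ $(\omega,\R v)$ and taking weak-$*$ limits of the empirical measures $\tfrac1n\sum_{j<n}\delta_{\R g_j\cdots g_1 v}$, every limit point is $\mu$-stationary (pushforward under $\mu*$ of a weak-$*$ limit equals the limit, by stationarity of the averaging), and by passing to an ergodic component and using the above one identifies the growth rate with some $\alpha(\nu)$. Since the map $\nu\mapsto\alpha(\nu)$ is continuous and affine on the compact convex set of $\mu$-stationary measures, and ergodic measures are the extreme points, the set of values $\{\alpha(\nu)\}$ over ergodic $\nu$ is finite (indeed closed; finiteness needs the extra structure below), which I would name $\beta_1>\cdots>\beta_k$.

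\medskip

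Then I would construct the flag. Define $F_i\df\{v\in\R^d : \lambda(v)\le\beta_i\}\cup\{0\}$; I must check these are linear subspaces and $\Gamma_\mu$-invariant. Linearity follows from $\lambda(v+w)\le\max(\lambda(v),\lambda(w))$ (submultiplicativity of the norm) and $\lambda(cv)=\lambda(v)$, a standard ``filtration by Lyapunov exponent'' argument, once one knows $\lambda$ is a.e.\ a deterministic function of $v$ alone --- this is where a Fubini/measurability argument is needed to exchange the a.e.\ quantifiers, exploiting that there are only finitely many possible values $\beta_i$. Invariance under $\Gamma_\mu$: if $h\in\supp\mu$ then $\lambda(hv)=\lambda(v)$ because prepending $h$ to the random product changes $\tfrac1n\log\norm{g_n\cdots g_1 hv}$ by $o(1)$; since $\Gamma_\mu$ is generated by $\supp\mu$ (and $\supp\mu$ together with its inverses, using that $\lambda(v)\ge$ something controlled by $-\log\norm{h^{-1}}$ via the first moment bound), one extends to all of $\Gamma_\mu$ by a density/continuity argument. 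Finally one verifies $\lambda(v)=\beta_i$ precisely on $F_i\setminus F_{i+1}$ and that the strict drop of exponents forces $F_i\supsetneq F_{i+1}$, and that $k\le d$.

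\medskip

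The main obstacle I anticipate is the interplay between the two different sources of randomness --- the ergodic-theoretic a.e.\ statement (limit exists for a.e.\ $\omega$, for each fixed $v$) versus the claim that this limit is, off a null set of $\omega$, a fixed function of $v$ taking finitely many values and cutting out linear subspaces. Getting from ``for each $v$, a.e.\ $\omega$'' to ``a.e.\ $\omega$, for all $v$'' requires a genuinely nontrivial argument: one cannot naively use Fubini since $\R^d$ is uncountable. The resolution (following Furstenberg--Kifer and Hennion) is to first establish that the Birkhoff/subadditive limit for the \emph{norm} $\|g_n\cdots g_1\|$ and for generic vectors agrees with $\beta_1$, then induct on dimension by restricting to the $\Gamma_\mu$-invariant subspace where slower growth occurs; the finiteness of the value set and the linearity of the $F_i$ emerge together from this induction together with the fact that $\alpha$ takes only finitely many values on ergodic stationary measures --- which itself I would justify by an algebraicity/compactness argument on $\mathbb{P}(\R^d)$, or simply cite as the content of~\cite{furstenberg-kifer,Hen}. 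Since the theorem is quoted with attribution, in the write-up I would give the construction of the flag and the identification with $\alpha(\nu)$ in detail and refer to the original papers for the finiteness and the measurable-selection subtleties.
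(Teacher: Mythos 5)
The paper does not prove this theorem; it is quoted with attribution to Furstenberg--Kifer and Hennion, so there is no internal proof to compare against. Your outline captures the right skeleton---Kingman's subadditive ergodic theorem for existence of the pointwise exponents, the Furstenberg formula/Breiman identification with $\alpha(\nu)$ for ergodic stationary $\nu$ on $\mathbb{P}(\R^d)$, and a dimension-descending induction to build the flag---and you correctly flag the ``a.e.\ $\omega$, for all $v$'' quantifier exchange as the real technical point, deferring to the original papers. Two steps, however, are not right as written.

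First, you assert that finiteness of the value set $\set{\alpha(\nu)}$ over ergodic $\nu$ follows (or would follow modulo ``extra structure'') from $\alpha$ being continuous and affine on the compact convex set of stationary measures, with ergodic measures as extreme points. This is a non sequitur: a continuous affine functional on a compact convex set can take uncountably many values on the extreme points (think of a Bauer simplex with a continuum of extreme points). The finiteness is \emph{only} a consequence of the flag construction itself---each new value of the exponent forces a strict drop in dimension, so there are at most $d$ values---and cannot be extracted from soft convexity. Your ``indeed closed'' parenthetical is also unjustified, since the set of ergodic stationary measures need not be closed and its image under $\alpha$ need not be either, prior to establishing finiteness by other means. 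The induction has to come first and finiteness falls out of it; the logic is not the other way around.

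Second, your $\Gamma_\mu$-invariance argument by ``prepending $h$'' to the random product is heuristic and does not close: $(h, g_1, g_2, \dots)$ does not have the same law as $(g_1, g_2, \dots)$, and a deterministic multiplicative error $\norm{h^{-1}}$ does not immediately let you compare $\lambda(hv)$ and $\lambda(v)$ along the same sequence because the comparison inequality goes in one direction for each fixed $n$ but you need two-sided control in the limit. The standard and cleaner route is via supports of stationary measures: if $\nu$ is $\mu$-stationary then $\supp(\nu)$ is forward-invariant under $\supp(\mu)$ and hence invariant under the closed semigroup $\Gamma_\mu^+$; taking linear spans (and a further argument for the inverse) yields $\Gamma_\mu$-invariance of the $F_i$. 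This also fits naturally with the induction, since passing to $F_2$ requires it to be a $\Gamma_\mu$-invariant linear subspace of strictly smaller dimension.
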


In this result, the set of exponents $\set{\beta_1(\mu),\dots,\beta_k(\mu)}$ is contained in the set of Lyapunov exponents of $\mu$ and $\beta_1(\mu)$ coincides with the top Lyapunov exponent. 

Uniform expansion can now be rephrased as follows.
\begin{lem}
A probability measure $\mu$ on $\GL_d(\R)$ with finite first moment is uniformly expanding if and only if $\beta_k(\mu)>0$, where $\beta_k(\mu)$ is the smallest exponent appearing in Theorem~\textup{\ref{thm:furstenberg_kifer}}.\qed
\end{lem}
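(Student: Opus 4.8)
The plan is to deduce this equivalence directly from the Furstenberg--Kifer--Hennion theorem (Theorem~\ref{thm:furstenberg_kifer}), which provides the $\Gamma_\mu$-invariant flag $\R^d=F_1\supset\dots\supset F_k\supset F_{k+1}=\{0\}$ together with the exponents $\beta_1(\mu)>\dots>\beta_k(\mu)$ governing the almost-sure logarithmic growth rates of norms of vectors. The key observation is that every nonzero $v\in\R^d$ lies in exactly one set $F_i\setminus F_{i+1}$, and for such $v$ one has $\mu^\N$-a.s.\ $\frac1n\log\norm{g_n\dotsm g_1v}\to\beta_i(\mu)$; in particular the $\liminf$ in \eqref{eq.v.exp} equals $\beta_i(\mu)$ almost surely.

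For the forward implication, suppose $\mu$ is uniformly expanding. Pick any nonzero $v\in F_k$ (the smallest nonzero subspace in the flag is nonzero since $F_{k+1}=\{0\}$ is the first trivial term). Then the almost-sure limit of $\frac1n\log\norm{g_n\dotsm g_1v}$ is $\beta_k(\mu)$, and uniform expansion forces this to be strictly positive, so $\beta_k(\mu)>0$. For the converse, suppose $\beta_k(\mu)>0$. Since the exponents are strictly decreasing, $\beta_i(\mu)\ge\beta_k(\mu)>0$ for all $i\in\{1,\dots,k\}$. Given any nonzero $v\in\R^d$, it lies in some $F_i\setminus F_{i+1}$, hence $\mu^\N$-a.s.\ $\frac1n\log\norm{g_n\dotsm g_1v}\to\beta_i(\mu)>0$, which in particular gives $\liminf_{n\to\infty}\frac1n\log\norm{g_n\dotsm g_1v}=\beta_i(\mu)>0$. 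Thus every nonzero vector is $\mu$-expanded and $\mu$ is uniformly expanding.

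Since the argument is essentially a direct translation of the statement of Theorem~\ref{thm:furstenberg_kifer}, there is no real obstacle; the only point requiring a moment's care is the bookkeeping that the smallest subspace $F_k$ in the flag is genuinely nonzero, which is exactly what distinguishes the index $k$ (with $F_{k+1}=\{0\}$) from $k+1$, and that every nonzero vector indeed belongs to one of the "layers" $F_i\setminus F_{i+1}$ so that the trichotomy of growth rates applies to it. This is immediate from the flag being a filtration by subspaces terminating at $\{0\}$.
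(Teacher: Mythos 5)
Your proof is correct and is exactly the direct unpacking of Theorem~\ref{thm:furstenberg_kifer} that the paper itself relies on; the \qed immediately following the lemma statement signals that the authors regard it as an immediate consequence of that theorem, which is precisely what you have spelled out.
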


Furstenberg--Kifer's theorem can also be used to see that, in fact, almost sure divergence is enough to get uniform expansion. It will be useful to denote by $F^{\leqs 0}$ the largest subspace among $F_1,\dots,F_{k+1}$ with non-positive exponent.
\begin{prop}
Let $\mu$ be a probability measure on $\GL_d(\R)$ with finite first moment. Then $\mu$ is uniformly expanding if and only if for every nonzero $v\in\R^d$ we have
\begin{align}\label{eq.as.div}
\lim_{n\to\infty}\norm{g_n\dotsm g_1v}=\infty
\end{align}
for $\mu^\N$-a.e.\ sequence $(g_i)_i$ of elements of $\GL_d(\R)$.
\end{prop}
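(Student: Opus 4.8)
The plan is to prove both implications using the Furstenberg--Kifer--Hennion decomposition (Theorem~\ref{thm:furstenberg_kifer}) together with the observation that, on the subspace $F^{\leqs 0}$, trajectories cannot diverge to infinity. The forward implication is immediate: if $\mu$ is uniformly expanding, then by the preceding lemma $\beta_k(\mu)>0$, so for every nonzero $v$ the trajectory $\norm{g_n\dotsm g_1 v}$ grows exponentially $\mu^\N$-almost surely, hence in particular tends to $\infty$. The substance is in the converse. Suppose $\mu$ is not uniformly expanding, so $\beta_k(\mu)\le 0$ and $F^{\leqs 0}=F_i$ is a nonzero $\Gamma_\mu$-invariant subspace on which every exponent is $\le 0$. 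I want to produce a nonzero vector $v\in F^{\leqs 0}$ whose trajectory does \emph{not} diverge to infinity almost surely, contradicting \eqref{eq.as.div}.

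The key step is to control the \emph{restricted} random walk on $F^{\leqs 0}$. Restricting each $g_n$ to $F^{\leqs 0}$ (which is legitimate by $\Gamma_\mu$-invariance) gives a probability measure $\bar\mu$ on $\GL(F^{\leqs 0})$; its top Lyapunov exponent is exactly the largest exponent of $\mu$ among those $\le 0$, hence $\le 0$. By the Furstenberg--Kifer theorem applied to $\bar\mu$, for any nonzero $v\in F^{\leqs 0}$ lying in the top stratum of the $\bar\mu$-flag we have $\frac1n\log\norm{g_n\dotsm g_1 v}\to\beta\le 0$ almost surely, so in particular $\limsup_n \norm{g_n\dotsm g_1 v}<\infty$ is not automatic when $\beta=0$. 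To rule out divergence even in the delicate case $\beta=0$, I would argue as follows: if $\beta<0$ the trajectory tends to $0$ and we are done, so assume the top exponent $\beta_1(\bar\mu)=0$. Consider a $\bar\mu$-ergodic $\bar\mu$-stationary measure $\nu$ on $\mathbb{P}(F^{\leqs 0})$ realizing $\alpha(\nu)=0$ (such a $\nu$ exists by the ``moreover'' part of Theorem~\ref{thm:furstenberg_kifer}). By the subadditive/Furstenberg cocycle machinery, for $\nu$-a.e.\ direction $\R v$ the quantity $\log\norm{g_n\dotsm g_1 v}$ is (up to the choice of representative in the projective fiber) a Birkhoff sum of a mean-zero cocycle over the $\bar\mu$-stationary system, so by the Atkinson--Kesten recurrence theorem it returns infinitely often to a bounded neighborhood of $0$; in particular $\liminf_n\norm{g_n\dotsm g_1 v}<\infty$ for some nonzero $v$, and a fortiori \eqref{eq.as.div} fails for that $v$. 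This contradiction shows $\beta_k(\mu)>0$, i.e.\ $\mu$ is uniformly expanding.

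The main obstacle is precisely the borderline case $\beta_k(\mu)=0$ (equivalently, handling $F^{\leqs 0}$ when its top exponent is exactly $0$), where one must upgrade ``subexponential growth'' to ``failure of divergence'' for at least one vector. The clean way to do this is the Atkinson--Kesten recurrence theorem for cocycles with zero drift over an ergodic system: a real cocycle with integrable mean-zero fibered increments over an ergodic base is recurrent, so its Birkhoff sums do not escape to $+\infty$ almost surely. One needs to check the integrability hypothesis, which follows from the finite first moment assumption on $\mu$ via $\lvert\log\norm{gv}/\norm{v}\rvert\le\log\operatorname{N}(g)$, and to set up the skew-product correctly over the Furstenberg boundary map associated to $\nu$ — routine but requiring care. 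An alternative avoiding Atkinson's theorem: combine the decomposition of $\mu^\N$ into the stationary measures on $\mathbb{P}(F^{\leqs 0})$ with a pigeonhole/Borel--Cantelli argument on the (at most subexponential) growth to extract a direction along which $\liminf$ of the norm is finite; this is softer but the statement via Atkinson--Kesten is cleaner to cite.
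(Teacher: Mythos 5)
Your proof is correct and follows essentially the same route as the paper: you identify the Furstenberg--Kifer subspace $F^{\leqs 0}$, take an ergodic $\mu$-stationary measure on its projectivization, and invoke Atkinson--Kesten to handle the critical zero-drift case. The paper phrases the Atkinson--Kesten step in the contrapositive direction (almost sure divergence on $F^{\leqs 0}$ would force $\alpha(\nu)>0$, contradicting the defining property of $F^{\leqs 0}$), which avoids your explicit case split between $\beta_1(\bar\mu)<0$ and $\beta_1(\bar\mu)=0$, but the underlying argument is the same.
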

\begin{proof}
We only need to show that \eqref{eq.as.div} implies uniform expansion. 
We apply Theorem~\ref{thm:furstenberg_kifer} and consider the space $F^{\leqs 0}$ defined before the statement of the proposition. This space is $\Gamma_\mu$-invariant. If it is nonzero, its projectivization thus supports an ergodic $\mu$-stationary probability measure $\nu$. Using the assumed almost sure divergence and Atkinson/Kesten's lemma (see e.g.~\cite[Lemma~II.2.2]{bl}), it follows that $\alpha(\nu)>0$, where $\alpha(\nu)$ is as defined in Theorem~\ref{thm:furstenberg_kifer}, a contradiction.
\end{proof}

For later use, let us also record at this point an immediate restriction that the presence of expansion puts on $\mu$-stationary measures on finite-dimensional vector spaces. 
\begin{lem}
    \label{lem;stationary}
   	Let $\mu$ be a probability measure on $\GL_d(\R)$ and $E$ a measurable subset of $\R^d$ such that every $v\in E$ is $\mu$-expanded. Then every $\mu$-stationary probability measure $\nu$ on $\R^d$ satisfies $\nu(E)=0$.
\end{lem}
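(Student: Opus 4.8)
The plan is to argue by contradiction, exploiting the principle that a $\mu$-stationary probability measure cannot let mass escape to infinity, which clashes with every point of a positive-measure set being $\mu$-expanded. Suppose $\nu(E)>0$. Work on the product probability space $\GL_d(\R)^\N\times\R^d$ with the measure $\mu^\N\otimes\nu$, writing $\omega=(g_1,g_2,\dots)$ for the first coordinate, $v$ for the second, and setting $v_n\df g_n\dotsm g_1v$. The first step is to observe that stationarity propagates along the walk: since $g_{n+1}$ is independent of $(g_1,\dots,g_n,v)$, the identity $\nu=\mu*\nu$ gives inductively that $v_n$ has law $\nu$ for every $n\geqs 0$.

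Next I would transfer the expansion hypothesis from individual vectors to the event $\set{v\in E}$. For each fixed $v\in E$ the bound $\liminf_n\frac1n\log\norm{v_n}>0$ already forces $\norm{g_n\dotsm g_1v}\to\infty$ for $\mu^\N$-a.e.\ $\omega$; since $(\omega,v)\mapsto\norm{v_n}$ is measurable, Fubini then shows $\norm{v_n}\to\infty$ holds $(\mu^\N\otimes\nu)$-almost everywhere on $\set{v\in E}$, an event of measure $\nu(E)$. Now fix $R>0$. By Step 1 the probability that $\norm{v_n}>R$ equals $\nu(\set{w\for\norm{w}>R})$ for every $n$, while by dominated convergence the probability that simultaneously $v\in E$ and $\norm{v_n}>R$ tends to $\nu(E)$ as $n\to\infty$. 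Since the former probability dominates the latter, letting $n\to\infty$ yields $\nu(\set{w\for\norm{w}>R})\geqs\nu(E)$; as $R>0$ was arbitrary and $\nu$ is a probability measure on $\R^d$, letting $R\to\infty$ gives $0\geqs\nu(E)$, the desired contradiction.

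The argument is essentially routine, and the only point needing genuine care is the measurability bookkeeping in the Fubini step: one must check that the relevant subsets of the infinite product space $\GL_d(\R)^\N\times\R^d$ are measurable so that the almost-sure divergence for each individual $v\in E$ legitimately upgrades to divergence on the whole event $\set{v\in E}$. It is worth emphasizing that no quantitative information beyond $\norm{v_n}\to\infty$ is used, so in particular no moment assumption on $\mu$ is needed here.
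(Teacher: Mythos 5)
Your argument is correct, but it diverges from the paper's proof in the mechanism used to close the contradiction. The paper cites a result of Benoist--Quint that the full forward dynamical system $(G^\N\times V,\mu^\N\times\nu,T^V)$ is measure preserving and then invokes Poincar\'e recurrence: almost every orbit starting in $G^\N\times K$ (for $K$ compact) must return to $G^\N\times K$ infinitely often, which is impossible when $\norm{g_n\dotsm g_1 v}\to\infty$; hence $\nu(K\cap E)=0$, and $\sigma$-compactness of $\R^d$ finishes the job. You instead only use the weaker, elementary consequence of stationarity that each $v_n=g_n\dotsm g_1v$ has marginal law $\nu$, and then trade Poincar\'e recurrence for a dominated-convergence/tightness argument: $\nu(\set{\norm{w}>R})\geqs\lim_n\mathbb{P}(v\in E,\ \norm{v_n}>R)=\nu(E)$, and tightness of the probability measure $\nu$ kills the left side as $R\to\infty$. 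Both routes are valid; yours is more self-contained (it avoids the measure-preservation lemma and Poincar\'e recurrence), while the paper's is terser because it outsources the key dynamical fact to a cited proposition. The measurability you flag is unproblematic: the set $\set{(\omega,v)\for\norm{v_n}\to\infty}$ is a countable combination of preimages of open sets under the continuous maps $(\omega,v)\mapsto v_n$, so Fubini applies without trouble.
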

In particular, if $\mu$ has a finite first moment, then any $\mu$-stationary probability measure $\nu$ on $\R^d$ is supported on the Furstenberg--Kifer subspace $F^{\leqs 0}$ of subexponential expansion. With a similar argument for vectors that are contracted instead of expanded, one can more generally show that $\nu((F^{\leqs 0}\setminus F^{<0})\cup\set{0})=1$, where $F^{<0}$ is defined in a way analogous to $F^{\leqs 0}$.
\begin{proof}
Write $G=\GL_d(\R)$ and $V=\R^d$. 
By~\cite[Proposition~2.14]{bq}, 
 the forward dynamical system $(G^\N \times V, \mu^\N \times \nu, T^V)$ is measure preserving, where 
\begin{align*}
T^V((g_1, g_2, \dots), v)=((g_2, g_3, \dots), g_1 v).
\end{align*}
Let $K$ be a compact subset of  $V$. Then by Poincar\'e recurrence applied to $G^\N \times K$, we know that $\nu (K\cap E)=0$, and the conclusion follows. 
\end{proof}

Now we come to the central concept of this article: $H$-expansion.
\begin{de}\label{def;H.expand}
Let $H$ be a connected semisimple Lie group  with finite center and $\mu$ a probability measure on $H$. Given a representation $(\rho,V)$ of $H$ we say that $\mu$ is \emph{uniformly expanding in $(\rho,V)$} if $\rho_*\mu$ is uniformly expanding. We say that $\mu$ is \emph{$H$-expanding} if $\mu$ is uniformly expanding in every representation of $H$ without nonzero $H$-fixed vectors, or equivalently, in every non-trivial irreducible representation of $H$. 
\end{de}
Here and everywhere else, by a ``representation'' we always mean a continuous homomorphism into 
the group of invertible linear transformations of a
finite-dimensional real vector space. 
It is well known that such representations are automatically smooth.
For notational simplicity, we are going to simply write $h\acts v$ for $\rho(h)v$ for $h\in H$ and $v\in V$ when the representation $(\rho,V)$ is clear from context.
In this case, we also just say that $\mu$ is uniformly expanding on $V$ to mean that $\mu$ is uniformly expanding in $(\rho,V)$. 

We next recall what the moment conditions mean for a probability measure on a semisimple group that is not necessarily linear.
\begin{de}\label{de;moments}
Let $H$ be a connected semisimple Lie group with finite center. Let $\mu$ be a probability measure on $H$. Then $\mu$ is said to have a \emph{finite first moment} (resp.\ \emph{finite exponential moments}) if $\rho_*\mu$ has a finite first moment (resp.\ finite exponential moments) for some representation $\rho$ of $H$ with finite kernel.
\end{de}
Of course, these moment conditions are automatically satisfied when $\mu$ has compact support.
\begin{lem}[{\cite[Lemmas~10.6,~10.7]{bq}}]\label{lem;first_moment}
Let $H$ and $\mu$ be as in Definition~\textup{\ref{de;moments}} and suppose that $\mu$ has a finite first moment \textup{(}resp.\ finite exponential moments\textup{)}. Then $\rho_*\mu$ has a finite first moment \textup{(}resp.\ finite exponential moments\textup{)} for any representation $\rho$ of $H$.
\end{lem}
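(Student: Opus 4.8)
The plan is to reduce the statement to a single pointwise comparison between representations, after which both moment bounds transfer by monotonicity of the integral. Precisely, I would first establish: for \emph{any} representation $(\rho,V)$ of $H$ and any representation $(\rho_0,V_0)$ of $H$ with finite kernel, there is a constant $C=C(\rho,\rho_0)\ge 1$ with
\[
\log\operatorname{N}(\rho(h))\;\le\;C\,\log\operatorname{N}(\rho_0(h))\qquad\text{for every }h\in H .
\]
Granting this, the lemma is immediate: both sides are non-negative (as $\operatorname{N}\ge 1$ everywhere), so a finite first moment of $(\rho_0)_*\mu$ gives $\int\log\operatorname{N}(\rho(h))\dd\mu(h)\le C\int\log\operatorname{N}(\rho_0(h))\dd\mu(h)<\infty$; and exponentiating the display yields $\operatorname{N}(\rho(h))\le\operatorname{N}(\rho_0(h))^{C}$, so finite exponential moments of $(\rho_0)_*\mu$ for some $\delta_0>0$ give $\int\operatorname{N}(\rho(h))^{\delta}\dd\mu(h)<\infty$ with $\delta\df\delta_0/C$.

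To prove the comparison I would use the Cartan decomposition. Since $H$ has finite center it admits a compact maximal subgroup $K$ with associated Cartan decomposition $\mathfrak h=\mathfrak k\oplus\mathfrak p$; fixing a maximal abelian subspace $\mathfrak a\subset\mathfrak p$ one obtains the $K\exp(\overline{\mathfrak a^+})K$ decomposition and the Cartan projection $a\colon H\to\overline{\mathfrak a^+}$. For any finite-dimensional representation $(\rho,V)$ one may choose a Euclidean structure on $V$ compatible with the Cartan involution — so that $\rho(K)$ acts orthogonally and $\rho(\exp X)$ is symmetric for $X\in\mathfrak p$ — and with the corresponding operator norm one has the classical identity $\log\operatorname{N}(\rho(h))=\max_{\chi}\abs{\chi(a(h))}$, the maximum over the finitely many $\mathfrak a$-weights $\chi$ of $\rho$. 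Two elementary remarks then conclude. For the arbitrary $\rho$ one gets at once $\log\operatorname{N}(\rho(h))\le C_\rho\norm{a(h)}$ for a fixed norm $\norm{\cdot}$ on $\mathfrak a$ and $C_\rho\df\max_\chi\norm{\chi}$. And since $\rho_0$ has finite kernel its differential is injective, so no nonzero $X\in\mathfrak a$ is annihilated by all $\mathfrak a$-weights of $\rho_0$ (such an $X$ would make $\mathrm d\rho_0(X)$ a symmetric operator with $0$ as its only eigenvalue, hence $0$, contradicting injectivity); therefore these weights span $\mathfrak a^{*}$, the function $X\mapsto\max_\chi\abs{\chi(X)}$ is a genuine norm on $\mathfrak a$, and by equivalence of norms $\log\operatorname{N}(\rho_0(h))\ge c_0\norm{a(h)}$ for some $c_0>0$. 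Combining the two yields the comparison with $C=\max\set{1,C_\rho/c_0}$.

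The step I expect to carry the real content is the identity $\log\operatorname{N}(\rho(h))=\max_\chi\abs{\chi(a(h))}$, i.e.\ the existence of an inner product on the representation space adapted to the Cartan decomposition; this is a standard fact about finite-dimensional representations of semisimple Lie groups with finite center, which I would cite rather than reprove. Everything else — the spanning of $\mathfrak a^{*}$ by the weights of a kernel-finite representation, equivalence of norms on $\mathfrak a$, and the passage from the pointwise bound to the moment bounds — is routine. (A variant that sidesteps the explicit weight computation: put $\sigma\df\rho\oplus\rho_0$, which still has finite kernel, so that $\sigma$ realises $H$ as a finite cover of the linear semisimple group $\sigma(H)$; on $\sigma(H)$ the function $\log\operatorname{N}(\rho(\cdot))$ is dominated, up to a multiplicative constant, by the proper function $\log\operatorname{N}(\rho_0(\cdot))$, again because the Cartan projection of $\sigma(H)$ governs all matrix norms simultaneously.)
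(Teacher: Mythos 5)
Your proof is correct and follows essentially the same route the paper has in mind: the remark after the lemma makes clear that the argument (cited from Benoist--Quint, Lemmas~10.6--10.7) reduces both moment conditions to integrability of the Cartan projection $\kappa\colon H\to\overline{\mathfrak a^+}$ via the formula $\log\norm{\rho(h)}=\chi(\kappa(h))$ for a $K$-compatible norm and highest weight $\chi$, which is exactly the identity $\log\operatorname{N}(\rho(h))=\max_\chi\abs{\chi(\kappa(h))}$ you invoke. Your observation that finiteness of $\ker\rho_0$ forces the weights of $\rho_0$ to span $\mathfrak a^*$ (so that $X\mapsto\max_\chi\abs{\chi(X)}$ is a genuine norm and hence equivalent to any fixed norm on $\mathfrak a$) is the clean way to extract the two-sided comparison, and the passage to the stated moment bounds by monotonicity and exponentiation is routine, as you say.
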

We remark that even though in~\cite{bq}, the above lemma is proved for algebraic groups, the given proof also works for a connected semisimple group $H$ with finite center. Indeed, the argument relies only on a reformulation of the moment condition into an integrability condition on the Cartan projection $\kappa\colon H\to\mathfrak{a}^+$, which is related to representations of $H$ by virtue of the formula $\norm{\rho(h)}=e^{\chi(\kappa(h))}$ for $h\in H$, where $(\rho,V)$ is an irreducible representation  of $H$ with highest weight $\chi$ and $\norm{\cdot}$ is the operator norm associated to a Euclidean norm on $V$ invariant under the maximal compact subgroup $K$ of $H$ used to define $\kappa$.

In the proposition below we collect some first facts about $H$-expansion.
\begin{prop}\label{prop;facts}
Let $H$ be a connected semisimple Lie group with finite center and $\mu$ a probability measure on $H$. Then:
\begin{enumerate}[label=\textup{(\roman*)}]
\item Given a representation $(\rho,V)$ of $H$, the following are equivalent:
\begin{itemize}
    \item Any vector $v\in V$ that is not $\rho_*\mu$-expanded is $H$-fixed.
    \item The measure $\mu$ is uniformly expanding on the quotient $V/V^H$.
\end{itemize}
\item If $\mu$ is $H$-expanding, then $H$ has no compact factors.
\item If $\mu$ is $H$-expanding and $\psi\colon H\to G'$ is a non-trivial continuous homomorphism into a real Lie group $G'$, then $H'=\psi(H)$ is a connected, closed, semisimple subgroup of $G'$ with finite center and $\psi_*\mu$ is $H'$-expanding.

\item  Suppose $H$ is an almost direct product of
    connected normal subgroups $H_1$ and $H_2$ and let $\mu_i$ be probability measures on $H_i$ with finite first moments, $i=1,2$. 
    If $\mu_i$ is $H_i$-expanding for $i=1,2$ and $\mu$ is the pushforward of $\mu_1 \times \mu_2$ by multiplication, then $\mu$ is $H$-expanding.
\end{enumerate}
\end{prop}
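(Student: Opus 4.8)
We address the four parts in turn, the main content being in (iii) and (iv).

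For part (i): the implication from the quotient statement to the pointwise statement is immediate, since if $v\in V$ is not $\rho_*\mu$-expanded then neither is its image $\bar v$ in $V/V^H$ (pushing forward under the quotient map, which is $\Gamma_\mu$-equivariant and norm-nonincreasing after choosing a $K$-invariant inner product and taking the orthogonal complement of $V^H$), hence $\bar v=0$, i.e.\ $v\in V^H$. Conversely, if every non-$\rho_*\mu$-expanded vector is $H$-fixed, then $V/V^H\cong (V^H)^\perp$ carries a representation with no nonzero $H$-fixed vectors, and I claim every nonzero vector there is expanded: any non-expanded vector of $(V^H)^\perp$ would be an $H$-fixed vector lying in $(V^H)^\perp$, hence zero. (The only subtlety is that ``not expanded in $V$'' must be compared with ``not expanded in $(V^H)^\perp$''; here one uses that the $K$-invariant splitting $V=V^H\oplus(V^H)^\perp$ is $\Gamma_\mu$-invariant, so the dynamics on $(V^H)^\perp$ is a genuine subrepresentation.) This is the only place where we need to be slightly careful about norms, and it is routine.

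For part (ii): if $H$ had a nontrivial compact factor $H_0$, pick a nontrivial irreducible representation $(\rho,V)$ of $H_0$ and inflate it to $H$. Then $\rho(\Gamma_\mu)$ has compact closure, so $\|g_n\cdots g_1 v\|$ stays bounded for every $v$, and in fact $\frac1n\log\|g_n\cdots g_1 v\|\to 0$; hence no nonzero vector is $\rho_*\mu$-expanded, contradicting $H$-expansion (note $V^H=0$ since the representation is nontrivial on a factor and $H_0$ is normal so acts irreducibly without fixed vectors). For part (iii): the image $H'=\psi(H)$ of a connected semisimple group with finite center under a continuous homomorphism is again connected, and it is closed and semisimple with finite center by standard Lie theory (the kernel is a normal, hence central-by-semisimple-factors, subgroup; since $\psi$ is nontrivial and $H$ has finite center, $H'$ is a nontrivial connected semisimple group with finite center---one uses that $H=Z\cdot H_1\cdots H_r$ with $Z$ finite and $H_i$ simple, so $H'$ is an almost direct product of some of the $\psi(H_i)$, each simple, times a finite group). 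Now given a representation $(\rho',V)$ of $H'$ without nonzero $H'$-fixed vectors, $(\rho'\circ\psi,V)$ is a representation of $H$ with $V^H=V^{H'}=0$, and $\psi_*\mu$ uniformly expanding in $(\rho',V)$ is literally the same statement as $\mu$ uniformly expanding in $(\rho'\circ\psi,V)$, which holds by hypothesis.

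Part (iv) is the substantive one. Write $\pi\colon H_1\times H_2\to H$ for the multiplication map, so $\mu=\pi_*(\mu_1\times\mu_2)$ and $\Gamma_\mu=\Gamma_{\mu_1}\Gamma_{\mu_2}$ (commuting almost-direct factors). First I record that $\mu$ has a finite first moment: fixing a faithful representation and using that $\|\rho(h_1 h_2)\|\le \|\rho(h_1)\|\,\|\rho(h_2)\|$, one has $\log\operatorname{N}\circ\pi\le \log\operatorname{N}(h_1)+\log\operatorname{N}(h_2)$ up to a constant, which is $\mu_1\times\mu_2$-integrable; then Lemma~\ref{lem;first_moment} transfers finite first moment to arbitrary representations. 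Now let $(\rho,V)$ be a nontrivial irreducible representation of $H$. Restricting to $H_1$, decompose $V=\bigoplus_j V_j$ into $H_1$-isotypic components; each $V_j$ is $H_2$-invariant because $H_2$ centralizes $H_1$, hence it is $H$-invariant, and by irreducibility $V=V_j$ is a single isotypic component, so $V\cong W\otimes U$ where $W$ is an irreducible $H_1$-representation and $U=\operatorname{Hom}_{H_1}(W,V)$ carries a representation of $H_2$ (irreducible, again by irreducibility of $V$). The key point: since $V$ is nontrivial, at least one of $W,U$ is a nontrivial (hence fixed-vector-free, as $H_i$ is semisimple) representation of the corresponding $H_i$. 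Choosing a $K$-invariant inner product on $W$ and on $U$ and the tensor product inner product on $V\cong W\otimes U$, for a decomposable vector $w\otimes u$ we have $\|\rho(h_1 h_2)(w\otimes u)\| = \|\rho_W(h_1)w\|\cdot\|\rho_U(h_2)u\|$, so along a random trajectory
\begin{align*}
\frac1n\log\|g_n\cdots g_1\acts(w\otimes u)\|=\frac1n\log\|\rho_W(g^{(1)}_n\cdots g^{(1)}_1)w\|+\frac1n\log\|\rho_U(g^{(2)}_n\cdots g^{(2)}_1)u\|,
\end{align*}
where $g_i=g^{(1)}_i g^{(2)}_i$ with $g^{(k)}_i\in H_k$ i.i.d.\ of law $\mu_k$. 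If, say, $W$ is nontrivial, the first term has positive liminf a.s.\ by $H_1$-expansion of $\mu_1$; the second term is $\ge \beta_{\min}(\rho_{U*}\mu_2)\cdot(1+o(1))$ by the Furstenberg--Kifer--Hennion theorem applied to $\rho_{U*}\mu_2$ (using the finite first moment of $\mu_2$), and $\beta_{\min}$ is a finite real number, so the sum has positive liminf for decomposable vectors. The remaining---and somewhat delicate---step is to pass from decomposable vectors to all of $V=W\otimes U$: for a general nonzero $v=\sum_i w_i\otimes u_i$ written in Schmidt-decomposed form $v=\sum_i \sigma_i\, e_i\otimes f_i$ with $\{e_i\}$, $\{f_i\}$ orthonormal and $\sigma_i>0$, I would use that $\|\rho(h_1 h_2)v\|\ge \sigma_1 \|\rho_W(h_1)e_1\|\cdot d(\rho_U(h_2)f_1, \text{(span of the other components' }U\text{-parts})\|$... but this is fragile. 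Cleaner: use Proposition~\ref{prop;expansion_char}. Since each $\rho_{W*}\mu_1$ is uniformly expanding (being a nontrivial rep of $H_1$ with $\mu_1$ $H_1$-expanding), there are $N_1,C_1$ with $\int \log(\|\rho_W(h)w\|/\|w\|)\,\dd(\rho_{W*}\mu_1)^{*N_1}(h)\ge C_1$ for all $w\ne0$; and by the multiplicative ergodic theorem / finite first moment, $\int\log\|\rho_U(h)\|^{-1}\,\dd(\rho_{U*}\mu_2)^{*N}$ grows at most linearly, so there is $N$ with $\int \log\frac{\|\rho_U(h)u\|}{\|u\|}\,\dd(\rho_{U*}\mu_2)^{*N}(h)\ge -\epsilon N$ uniformly... again this needs care since $\beta_{\min}$ could be negative. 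The honest route, which I expect to be \emph{the main obstacle}, is: apply the Furstenberg--Kifer--Hennion flag decomposition to $\rho_*\mu$ on $V$ directly and show the bottom exponent $\beta_{\min}(\rho_*\mu)>0$; for this, note the bottom Furstenberg--Kifer subspace $F^{\le 0}$ of $\rho_*\mu$ is $\Gamma_\mu$-invariant hence $H_1$- and $H_2$-invariant, so it is an $H$-subrepresentation; by irreducibility $F^{\le0}$ is $0$ or $V$; and it cannot be $V$ because then every nonzero $v$ would have $\limsup\frac1n\log\|g_n\cdots g_1 v\|\le 0$, contradicting the decomposable-vector computation above (decomposable vectors are dense, and the Furstenberg--Kifer exponent is constant on $F_i\setminus F_{i+1}$, so there exist decomposable vectors in the top stratum $F_1\setminus F_2$ realizing the top exponent $\beta_{\max}$; but we showed \emph{every} decomposable vector has positive liminf, so $\beta_{\max}>0$ and more importantly the bottom stratum, if nonzero and $=V$, would force the decomposable vectors in it to have exponent $\le 0$, impossible). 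Hence $F^{\le0}=0$, i.e.\ $\beta_{\min}(\rho_*\mu)>0$, which by the Lemma following Theorem~\ref{thm:furstenberg_kifer} says $\rho_*\mu$ is uniformly expanding. This completes (iv).

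I expect part (iv) to require the most care, specifically the interplay between the tensor-factorization of $V$ and the Furstenberg--Kifer flag: the clean statement ``bottom exponent of a tensor product is the sum of bottom exponents'' is false in general, so one must genuinely use irreducibility of $V$ to force the Furstenberg--Kifer subspace $F^{\le 0}$ to be all of $V$ or nothing, and then rule out ``all of $V$'' via the decomposable-vector estimate. Parts (i)--(iii) are essentially formal once the right $K$-invariant inner products are fixed so that invariant complements and tensor norms behave multiplicatively.
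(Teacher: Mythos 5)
Parts (i) and (ii) of your proposal match the paper's proof in substance. For (iii), the content about precomposing representations is the same, but you gloss over closedness of $H'=\psi(H)$ with ``standard Lie theory''; the paper gives a careful argument using the fact that linear semisimple Lie algebras are algebraic together with finiteness of the center, and this deserves more than a wave of the hand since a semisimple immersed subgroup with \emph{infinite} center need not be closed.

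The real problem is in (iv). You abandon the clean integral route because you worry that $\beta_{\min}(\rho_{U*}\mu_2)$ ``could be negative,'' but this worry is misplaced: $U$ is an irreducible $H_2$-representation, so it is either trivial (bottom exponent $0$) or has $U^{H_2}=\set{0}$, in which case $H_2$-expansion of $\mu_2$ forces the bottom exponent to be strictly positive. The route you then switch to contains a genuine gap, in two places. First, $\Gamma_\mu=\Gamma_{\mu_1}\Gamma_{\mu_2}$ is false: $\Gamma_\mu$ is generated by $\supp(\mu)=\supp(\mu_1)\supp(\mu_2)$, not by $\supp(\mu_1)\cup\supp(\mu_2)$ (take $\mu_1=\delta_a$, $\mu_2=\delta_b$; then $\Gamma_\mu=\overline{\langle ab\rangle}$, which need not contain $a$ or $b$). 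Second—and more seriously—the Furstenberg--Kifer subspace $F^{\le 0}$ of $\rho_*\mu$ is only $\Gamma_\mu$-invariant; it is neither $H_1$- nor $H_2$-invariant in general, so the claim that it is an $H$-subrepresentation and hence $0$ or $V$ is unjustified. One cannot reduce to ``$\Gamma_\mu$-invariant implies $H$-invariant,'' since $\Gamma_\mu$ need not be Zariski dense in $H$, and in fact its epimorphicity is one of the things that follows from the conclusion rather than a hypothesis.

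The paper's argument for (iv) sidesteps the tensor decomposition and the flag entirely. By irreducibility, one of $V^{H_1}$, $V^{H_2}$ vanishes; assume $V^{H_1}=\set{0}$. Then Proposition~\ref{prop;expansion_char} reduces the claim to showing that
\begin{align*}
\int_{H_2}\int_{H_1}\log\frac{\norm{h_1h_2\acts v}}{\norm{h_2\acts v}}\dd\mu_1^{*N}(h_1)\dd\mu_2^{*N}(h_2)
+\int_{H_2}\log\frac{\norm{h_2\acts v}}{\norm{v}}\dd\mu_2^{*N}(h_2)
\end{align*}
is uniformly positive in $v$. The first integral is $\ge C>0$ by uniform expansion of $\rho_*\mu_1$ (applied to the vector $h_2\acts v\ne 0$ for each fixed $h_2$), and the second integral is $\ge 0$ because it is either identically $0$ (if $V^{H_2}=V$) or $\ge C>0$ (if $V^{H_2}=\set{0}$, by uniform expansion of $\rho_*\mu_2$). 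No tensor splitting and no analysis of the Furstenberg--Kifer flag of $\rho_*\mu$ is needed.
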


\begin{proof}
For (i), note that by semisimplicity of $H$, the quotient $V/V^H$ identifies with an $H$-invariant complement $V^+$ of $V^H$ in $V$. Thus we only need to prove that uniform expansion of $\mu$ on $V^+$ implies the statement in the first bullet point. Let $p_+\colon V\to V^+$ be the projection and take $v \in V$ which is not $\rho_*\mu$-expanded. Then also $p_+(v)$ is not $\rho_*\mu$-expanded, so that uniform expansion on $V^+$ implies $p_+(v)=0$. Hence, $v$ is $H$-fixed.

For (ii), suppose $H$ has a compact factor $K$. Then $\mu$ cannot be uniformly expanding in the representation of $H$ obtained by composing the projection on $K$ with the adjoint representation of $K$. Thus, $\mu$ is not $H$-expanding.

As $H$ is semisimple and has finite center, $H'$ is a connected and semisimple immersed Lie subgroup of $G'$ with finite center in the setting of (iii). As representations of $H'$ induce representations of $H$ by precomposition with $\psi$, the $H'$-expansion condition is immediate. It only remains to argue that $H'$ is closed in $G'$. As this is in fact a more general statement, we drop the accents and simply show that a semisimple immersed Lie subgroup $H$ of a Lie group $G$ must be closed when $H$ has finite center. For this, it suffices to show that if a sequence $(h_n)_n$ in $H$ converges to the identity $e$ in the topology of $G$, then this convergence holds also in the topology of $H$. Notice that $\Ad_G(h_n)$ considered as elements of $\Aut(\mathfrak h)$ converges to the identity map when $\Aut(\mathfrak h)$ is endowed with the subspace topology inherited from $\Aut(\mathfrak g)$. However, as linear semisimple Lie algebras are algebraic (see~\cite[Theorem~VIII.3.2]{hochschild}), 
this subspace topology coincides with the usual topology of $\Aut(\mathfrak h)$. Since near the identity, $\Ad_H$ is a local isomorphism from $H$ to $\Aut(\mathfrak h)$, we thus find a sequence $(h_n')_n$ converging to $e$ in $H$ such that $\Ad_H(h_n)=\Ad_H(h_n')$ for all $n$. This implies that $h_n^{-1}h_n'$ is contained in the center of $H$ and converges to $e$. As the center is finite, we have $h_n=h_n'$ for all $n$ large enough. We conclude that, indeed, $h_n\to e$ as $n\to\infty$ holds also in the topology of $H$.

Finally, to prove (iv), let $(\rho,V)$ be a non-trivial irreducible representation of $H$. 
Since $H_1$ and $H_2$ commute, for every $n \in \N$, $\mu^{*n}$ is the pushforward by multiplication of $\mu_1^{*n} \times \mu_2^{*n}$, and the subspaces $V^{H_i}$ of $H_i$-fixed vectors in $V$ are $H$-invariant. By irreducibility, they are trivial or all of $V$. It follows that one of $V^{H_1},V^{H_2}$ is zero. We assume without loss of generality that $V^{H_1}=\set{0}$.

Note that both $\rho_*\mu_1$ and $\rho_*\mu_2$ have a finite first moment by Lemma~\ref{lem;first_moment}. This readily implies that $\rho_*\mu$ has a finite first moment. By Proposition~\ref{prop;expansion_char}, it suffices to show that for $N$ large enough and $v\neq 0$, the quantity
\begin{align*}
&\int_{H_1\times H_2}\log\frac{\norm{h_1h_2\acts v}}{\norm{v}}\dd\mu_1^{*N}(h_1)\dd\mu_2^{*N}(h_2)\\&=\int_{H_2}\int_{H_1}\log\frac{\norm{h_1h_2\acts v}}{\norm{h_2\acts v}}\dd\mu_1^{*N}(h_1)\dd\mu_2^{*N}(h_2)+\int_{H_2}\log\frac{\norm{h_2\acts v}}{\norm{v}}\dd\mu_2^{*N}(h_2)
\end{align*}
is uniformly bounded from below by some $C>0$. As $\rho_*\mu_1$ is uniformly expanding, Proposition~\ref{prop;expansion_char} gives this lower bound for the first integral above for $N$ large enough. By the same argument, the second term is either equal to $0$ or also bounded below by some $C>0$, according to whether $V^{H_2}$ is $V$ or $\set{0}$, respectively.
\end{proof}

\begin{rem}\label{rem;alg}
We point out that in part (iii) of the previous proposition, if the target $G'$ of the homomorphism $\psi$ is a real algebraic group, then the conclusion can be strengthened to the statement that the semisimple group $H'=\psi(H)$ is \emph{almost algebraic}, meaning that it has finite index in a  real algebraic subgroup of $G'$. Indeed, as already exploited in the proof above, the point is that linear semisimple Lie algebras are algebraic. In particular, this applies when $\psi$ is a representation $(\rho,V)$ of $H$. This fact is useful to keep in mind.
\end{rem}

Combining Proposition~\ref{prop;facts}(i) with Lemma~\ref{lem;stationary}, we immediately obtain the following corollary about $\mu$-stationary measures on vector spaces.
\begin{cor}\label{cor;stationary}
Let $(\rho,V)$ be a representation of $H$ and suppose that $\mu$ is uniformly expanding on $V/V^H$. Then any $\mu$-stationary probability measure on $V$ is supported on the subspace $V^H$ of $H$-fixed vectors.\qed
\end{cor}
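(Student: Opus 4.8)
The plan is to obtain the corollary as a direct consequence of Proposition~\ref{prop;facts}(i) and Lemma~\ref{lem;stationary}: the former converts the expansion hypothesis into a statement about which individual vectors can fail to be expanded, and the latter forbids stationary mass on expanded vectors.

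First I would recast the situation linearly. A $\mu$-stationary probability measure $\nu$ on $V$, where $H$ acts through $\rho$, is the same thing as a $\rho_*\mu$-stationary measure for the standard linear action of $\GL(V)$ on $V$, since the convolution $\mu*\nu$ depends on $\mu$ only through its pushforward $\rho_*\mu$. Fixing a linear identification $V\cong\R^{\dim V}$ therefore places us exactly in the setting of Lemma~\ref{lem;stationary}, with the ambient group $\GL_d(\R)=\GL(V)$ and the measure $\rho_*\mu$.

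Next I would invoke Proposition~\ref{prop;facts}(i): the hypothesis that $\mu$ is uniformly expanding on $V/V^H$ is equivalent to the assertion that every $v\in V$ which fails to be $\rho_*\mu$-expanded lies in $V^H$. Taking contrapositives, every vector of the set $E\df V\setminus V^H$ is $\rho_*\mu$-expanded, and $E$ is Borel, being the complement of a linear subspace. Applying Lemma~\ref{lem;stationary} to $\rho_*\mu$ and this set $E$ then gives $\nu(E)=0$ for any $\rho_*\mu$-stationary probability measure $\nu$ on $V$, that is, $\nu(V\setminus V^H)=0$. Hence $\nu$ is supported on $V^H$, which is the claim.

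There is essentially no obstacle here, the argument being a formal concatenation of two results established above; the only points needing a word are the routine identification of $\mu$-stationarity on $V$ with $\rho_*\mu$-stationarity under the linear $\GL(V)$-action, and the trivial measurability of $V\setminus V^H$. If one wanted the sharper conclusion alluded to after Lemma~\ref{lem;stationary}, that $\nu$ lives on the non-expanding and non-contracting part, one would additionally feed in the contracting analogue of Proposition~\ref{prop;facts}(i); but for the stated corollary this refinement is not needed.
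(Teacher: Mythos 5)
Your proof is correct and follows exactly the route the paper indicates: combine Proposition~\ref{prop;facts}(i) (rephrasing uniform expansion on $V/V^H$ as ``every non-$\rho_*\mu$-expanded vector is $H$-fixed'') with Lemma~\ref{lem;stationary} applied to $E=V\setminus V^H$. You have simply spelled out the one-line argument the paper leaves implicit.
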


\section{Examples of \texorpdfstring{$H$}{H}-expanding measures}\label{sec;ex}
In this section, we exhibit classes of probability measures on semisimple Lie groups that satisfy the $H$-expansion property.

\subsection{Zariski dense measures}\label{subsec;Z-dense}
As already mentioned in \S\ref{sec;introduction}, the first class of examples of $H$-expanding measures consists of those whose support generates a Zariski dense subgroup of $H$. This is the class of measures considered by Benoist--Quint~\cite{bq11,bq13,bq132}.
\begin{prop}\label{prop;Z-dense}
Let $H$ be a connected semisimple Lie group without compact factors and with finite center. Let $\mu$
 be a  probability measure on $H$ with finite  first moment. 
 Suppose that $\Ad(\Gamma_\mu)$ is Zariski dense in $\Ad(H)$. 
 Then $\mu$ is $H$-expanding.
\end{prop}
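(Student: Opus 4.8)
The plan is to reduce the statement to Furstenberg's theorem on the positivity of the top Lyapunov exponent applied to suitable representations. Let $(\rho,V)$ be a non-trivial irreducible representation of $H$; we must show $\rho_*\mu$ is uniformly expanding on $V$. By Proposition~\ref{prop;facts}(iii) (see also Remark~\ref{rem;alg}), the image $H'=\rho(H)$ is a connected semisimple almost algebraic subgroup of $\GL(V)$ with finite center, and $\Ad(\Gamma_\mu)$ being Zariski dense in $\Ad(H)$ translates into Zariski density of the image of $\Gamma_{\rho_*\mu}$ in $H'$ modulo its (finite) center---in particular the Zariski closure of $\Gamma_{\rho_*\mu}$ in $\GL(V)$ is a reductive group acting irreducibly on $V$, with the connected semisimple part acting without a trivial summand. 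Since $\rho_*\mu$ has a finite first moment by Lemma~\ref{lem;first_moment}, I can apply the Furstenberg--Kifer/Hennion description in Theorem~\ref{thm:furstenberg_kifer}: there is a $\Gamma_{\rho_*\mu}$-invariant flag with exponents $\beta_1(\rho_*\mu)>\dots>\beta_k(\rho_*\mu)$, and by the reformulation recorded after that theorem, $\rho_*\mu$ is uniformly expanding iff $\beta_k(\rho_*\mu)>0$.

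The key step is to rule out $\beta_k(\rho_*\mu)\le 0$. The flag of Theorem~\ref{thm:furstenberg_kifer} is $\Gamma_{\rho_*\mu}$-invariant, hence invariant under the Zariski closure of $\Gamma_{\rho_*\mu}$, which contains $H'$ up to finite index; by irreducibility of $(\rho,V)$ the flag must be trivial, so there is a single exponent $\beta_1(\rho_*\mu)=\beta_k(\rho_*\mu)$ equal to the top Lyapunov exponent $\lambda_1(\rho_*\mu)$. Thus it suffices to prove $\lambda_1(\rho_*\mu)>0$. Here I invoke Furstenberg's positivity theorem in the form valid for measures whose support generates a group that is Zariski dense (modulo center) and acts strongly irreducibly and proximally, or more robustly the Guivarc'h--Raugi/Goldsheid--Margulis criterion: for a connected semisimple group acting irreducibly and non-trivially, strong irreducibility and contraction (proximality) can be arranged by passing to an irreducible subrepresentation of a suitable exterior power or to a highest-weight constituent; alternatively one uses the simple observation that for semisimple $H'$ acting irreducibly and non-trivially, $\lambda_1(\rho_*\mu)=0$ would force, via the Furstenberg formula for $\alpha(\nu)$ in Theorem~\ref{thm:furstenberg_kifer} together with the cocycle/Ledrappier argument, the existence of a $\Gamma_{\rho_*\mu}$-invariant conformal structure on $V$, contradicting Zariski density of the (non-compact, since $H$ has no compact factors) group $H'$ acting irreducibly and non-trivially.

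I expect the main obstacle to be the last point: pinning down the exact version of Furstenberg's theorem that applies directly to an \emph{irreducible but possibly non-proximal, non-strongly-irreducible} representation of a semisimple group. The cleanest route is probably: first reduce to strongly irreducible $\rho_*\mu$ by replacing $H$ with $\Gamma_\mu$-invariant finite-index considerations and $V$ by an isotypic component (using that $\rho(\Gamma_\mu)^{\mathrm{Zcl}}$ is reductive and irreducible on $V$, its connected component acts with all irreducible constituents conjugate), then handle proximality by noting that if $\rho_*\mu$ is not proximal the top exponent of $\rho_*\mu$ still equals that of its restriction to an irreducible proximal constituent in a high exterior power $\Lambda^p V$ (whose top exponent is $p\lambda_1(\rho_*\mu)$ minus lower terms, forcing $\lambda_1>0$), and finally apply the standard Furstenberg theorem (as in~\cite{bl}, Theorem III.6.3 / Corollary III.6.4, or~\cite{bq}) in the proximal strongly irreducible case, where positivity of $\lambda_1$ is automatic for non-compact Zariski-dense support. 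Once $\lambda_1(\rho_*\mu)>0$ is established for every non-trivial irreducible $(\rho,V)$, uniform expansion follows from the equivalence noted after Theorem~\ref{thm:furstenberg_kifer}, and hence $\mu$ is $H$-expanding by Definition~\ref{def;H.expand}.
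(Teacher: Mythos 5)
Your plan is correct and follows essentially the same route as the paper: reduce to a non-trivial irreducible representation $(\rho,V)$, transfer Zariski density of $\Ad(\Gamma_\mu)$ in $\Ad(H)$ to Zariski density of $\rho(\Gamma_\mu)$ in (an almost algebraic group of finite index over) $\rho(H)$, use this together with Zariski connectedness of $\rho(H)$ to get strong irreducibility and non-compactness, and invoke Furstenberg to conclude that the unique Furstenberg--Kifer exponent is the top Lyapunov exponent and is positive. The paper packages the Zariski-density transfer into Lemma~\ref{lem;useful2}, but your version of that step is sound.

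The place where you and the paper diverge is the ``main obstacle'' you flag. You worry that the standard statement of Furstenberg's theorem requires proximality (or contraction) and propose workarounds: passing to a proximal constituent of a high exterior power $\Lambda^p V$, or an invariant-conformal-structure argument. Neither is needed. The paper simply cites Furstenberg's original Theorem~8.6 of~\cite{furstenberg.positivity}, which asserts $\lambda_1(\rho_*\mu)>0$ for any probability measure with finite first moment whose support generates a closed subgroup of $\GL(V)$ that is not relatively compact and acts strongly irreducibly --- no proximality hypothesis. (Proximality enters only for stronger conclusions like simplicity of $\lambda_1$ or unique ergodicity on the flag variety, as in Guivarc'h--Raugi or Goldsheid--Margulis.) Your exterior-power workaround would in fact work --- $\lambda_1+\dots+\lambda_p>0$ forces $\lambda_1>0$ since $\lambda_1\ge\lambda_i$ --- but it is an unnecessary detour, and the conformal-structure sketch is essentially re-proving Furstenberg's theorem rather than invoking it. So your argument closes correctly, but the ``obstacle'' you identify is illusory: the 1963 theorem already covers the strongly irreducible, non-proximal case directly.
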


For the proof we need the following lemma, which is used to extend the Zariski density assumption to arbitrary representations.
\begin{lem}\label{lem;useful2}
Let $\Gamma$ be a subsemigroup of $H$ and $S$ a connected subgroup of $H$. Suppose that the Zariski closure of $\Ad(\Gamma)$ contains $\Ad(S)$. Then for every representation $(\rho,V)$ of $H$, $\rho(S)$ is contained in $\Zcl(\rho(\Gamma))$.
\end{lem}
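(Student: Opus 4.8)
\textbf{Proof plan for Lemma~\ref{lem;useful2}.}

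The plan is to reduce everything to a statement about Zariski closures of algebraic groups and then exploit the fact that $\Ad(S)$ lies in the Zariski closure of $\Ad(\Gamma)$. First, I would observe that since $H$ is connected and semisimple with finite center, its finite-dimensional representations $(\rho, V)$ factor essentially through $\Ad(H)$: more precisely, the adjoint representation has finite kernel $Z(H)$, and the Lie algebra $\mathfrak{h}$ generated by $\{d\rho(X) : X \in \mathfrak{h}\}$ sits inside $\mathfrak{gl}(V)$ as an algebraic Lie algebra (linear semisimple Lie algebras are algebraic, cf.\ Remark~\ref{rem;alg} and~\cite[Theorem~VIII.3.2]{hochschild}). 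Let $M = \Zcl(\rho(\Gamma)) \leqslant \GL(V)$ be the Zariski closure; it is a (real) algebraic group containing $\rho(\Gamma)$. My goal is to show $\rho(S) \subseteq M$.

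The key point is that the Zariski closure of a subsemigroup containing the identity's approximants behaves well under the ``bundle'' structure $\rho(H) \to \Ad(H)$. Concretely, consider the composite homomorphism $\pi\colon \rho(H) \to \GL(\mathfrak{h})$, $\rho(h) \mapsto \Ad(h)$ — this is well-defined precisely because $\ker\rho \subseteq Z(H) = \ker\Ad$ up to finite ambiguity, and one can check it is a morphism of algebraic groups on the relevant (almost) algebraic hulls. Then $\pi(M) = \pi(\Zcl(\rho(\Gamma)))$ is Zariski closed and contains $\pi(\rho(\Gamma)) = \Ad(\Gamma)$, hence contains $\Zcl(\Ad(\Gamma)) \supseteq \Ad(S)$ by hypothesis. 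So every element $s \in S$ satisfies $\Ad(s) \in \pi(M)$, i.e.\ there exists $m_s \in M$ with $\pi(\rho(s)) = \pi(m_s)$. This means $\rho(s) m_s^{-1}$ lies in $\ker\pi$, which is a finite group (contained in $\rho(Z(H))$). Thus $\rho(S)$ is contained in the union of finitely many cosets of $M$; since $\rho(S)$ is connected and $M$ is the identity component's worth of what matters, the connected group $\rho(S)$ must actually lie in the coset of the identity, i.e.\ $\rho(S) \subseteq M$. Here I am using that $M$, being Zariski closed, has finitely many connected components and that a connected subgroup meeting only finitely many cosets of a normal-ish subgroup is forced into one of them — more carefully, $\rho(S) \cap M$ is a Zariski-closed (in the subspace topology) finite-index subgroup of the connected Lie group $\rho(S)$, hence equals $\rho(S)$.

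The step I expect to be the main obstacle is making the passage ``$\pi(\Zcl(\rho(\Gamma))) = \Zcl(\pi(\rho(\Gamma)))$'' rigorous, or more precisely ensuring $\pi(M)$ is Zariski closed. In general the image of a Zariski-closed set under a morphism of real algebraic groups is only constructible, not closed; but the image of an algebraic \emph{group} under an algebraic group homomorphism \emph{is} Zariski closed (this is standard — e.g.\ via the fact that $\pi(M)$ is a group and contains a dense open subset of its closure, and for groups dense open implies everything). So I would invoke: the image of an algebraic group under a morphism of algebraic groups is closed. Combined with $\Ad(\Gamma) \subseteq \pi(M)$ and hence $\Zcl(\Ad(\Gamma)) \subseteq \pi(M)$, the argument closes. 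A secondary subtlety is that $\rho(H)$ and $\rho(S)$ may not literally be algebraic, only almost algebraic — but replacing them by their Zariski closures (finite extensions) does not affect the containments, since we only ever need $\rho(\Gamma) \subseteq M$ and the connectedness of $\rho(S)$, both of which survive.
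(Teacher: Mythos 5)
There is a genuine gap at the very first step of your argument. You assert that the map $\pi\colon\rho(H)\to\GL(\mathfrak h)$, $\rho(h)\mapsto\Ad(h)$, is well-defined ``because $\ker\rho\subseteq Z(H)=\ker\Ad$ up to finite ambiguity.'' This inclusion is false for a general representation $(\rho,V)$ of $H$. The kernel $\ker\rho$ is a normal subgroup of $H$, and since $H$ is semisimple it can contain entire simple factors. For instance, if $H=\SL_2(\R)\times\SL_2(\R)$ and $\rho$ is the standard representation of the first factor only, then $\ker\rho$ contains the second $\SL_2(\R)$, which is certainly not central. In that situation there is no homomorphism $\pi$ with $\pi\circ\rho=\Ad$, and your subsequent reasoning (``$\rho(s)m_s^{-1}\in\ker\pi$ finite'') has nothing to attach to. Note also that the lemma is genuinely needed in cases where $\rho$ is far from faithful: Proposition~\ref{prop;Z-dense} quotes it for every non-trivial irreducible representation of $H$, many of which factor through a single simple factor.

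The paper sidesteps this by going in the opposite direction: instead of trying to factor $\Ad$ through $\rho$, it forms the product representation $\rho'=\Ad\times\rho$, through which \emph{both} $\Ad$ and $\rho$ factor by construction. Taking Zariski closures $\mathcal H'=\Zcl(\rho'(H))$ and $\mathcal H=\Zcl(\Ad(H))$, projection to the first factor is an honest isogeny $p\colon\mathcal H'\to\mathcal H$ of Zariski connected algebraic groups of the same dimension (both contain $\rho'(H)$, resp.\ $\Ad(H)$, with finite index by Remark~\ref{rem;alg}). One then has $\Zcl(\rho'(\Gamma))$ of finite index in $p^{-1}(\Zcl(\Ad(\Gamma)))$, which contains $\rho'(S)$; connectedness of $\rho'(S)$ forces $\rho'(S)\subseteq\Zcl(\rho'(\Gamma))$, and projecting to the second factor finishes. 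This is close in spirit to your intended ``finitely many cosets plus connectedness'' step, but the isogeny is set up on the algebraic hulls from the start, so one never has to ask whether a map defined only on $\rho(H)$ extends algebraically to $\Zcl(\rho(\Gamma))$ --- a secondary subtlety your sketch leaves unresolved even in the cases where $\pi$ does exist. I recommend rewriting the proof along the product-representation route.
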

\begin{proof}
We consider the product representation $\rho'=\Ad\times\rho$. Let $\mathcal H'$ be the Zariski closure of $\rho'(H)$ inside $\GL(\mathfrak h)\times \GL(V)$. Then both $\Ad$ and $\rho$ factor through $\mathcal H'$. As noted in Remark~\ref{rem;alg}, $\rho'(H)$ has finite index in $\mathcal H'$. The same holds for the Zariski closure $\mathcal H$ of $\Ad(H)$, so that both $\mathcal H$ and $\mathcal H'$ are Zariski connected real algebraic groups of dimension $\dim(H)$. Thus, projection to the first factor of $\GL(\mathfrak h)\times \GL(V)$ gives an isogeny $p\colon\mathcal H'\to\mathcal H$, and we know that $\Zcl(\rho'(\Gamma))$ has finite index in $p^{-1}(\Zcl(\Ad(\Gamma)))$. Since $\rho'(S)$ is connected and $\Ad(S)$ is contained in $\Zcl(\Ad(\Gamma))$ by assumption, it follows that $\rho'(S)$ is contained in $\Zcl(\rho'(\Gamma))$. By projecting to the second factor, we conclude that $\rho(S)$ is contained in $\Zcl(\rho(\Gamma))$.
\end{proof}

\begin{proof}[Proof of Proposition~\textup{\ref{prop;Z-dense}}]
Let $(\rho,V)$ be a representation of $H$ without nonzero $H$-fixed vectors. By Lemma~\ref{lem;useful2}, $\rho(\Gamma_\mu)$ is Zariski dense in $\rho(H)$.
Now uniform expansion in $(\rho,V)$ follows directly from Furstenberg's theorem on positivity of the top Lyapunov exponent (see~\cite[Theorem~8.6]{furstenberg.positivity}). 
To see that the assumptions of Furstenberg's theorem are satisfied, note that by Lemma~\ref{lem;first_moment} we know that $\rho_*\mu$ has a finite first moment, and using Zariski density of $\rho(\Gamma_\mu)$ together with complete reducibility one may assume that $\rho(\Gamma_\mu)$ acts irreducibly, which implies strong irreducibility in view of Zariski connectedness of $\rho(H)$. Finally, since the ground field is $\R$, the fact that the Zariski closure of $\rho(\Gamma_\mu)$ is non-compact implies that $\rho(\Gamma_\mu)$ is not relatively compact, finishing the proof.
\end{proof}

\subsection{Measures on parabolic groups}\label{sec;parabolic}
Our next goal is to exhibit probability measures supported on proper parabolic subgroups of $H$ which are $H$-expanding. Combining general criteria with the notion of the expanding cone, which was introduced by the third-named author in~\cite{s15} (see also the slightly earlier work \cite{M-SG}) and which traces back to the works of Shah and Weiss~\cite{shah96,shah-weiss,weiss.epimorphic}, we will obtain another easy-to-verify sufficient condition for $H$-expansion.

We start by explaining our general setup. Let $H$ be a connected semisimple real Lie group without compact factors and with finite center and let $a$ be an $\Ad$-diagonalizable element of $H$. Then given a representation $(\rho, V)$ of $H$, we have a direct sum decomposition   
\begin{align*}
V=V^+_a\oplus V^0_a\oplus V^-_a,
\end{align*}
where $V^+_a, V^0_a, V^-_a$ are the sums of the eigenspaces of $\rho(a)$ with eigenvalues $>$, $=$ or $<1$, respectively. Let $U$ be a connected $\Ad$-unipotent subgroup of $H$ normalized by $a$. Following~\cite{s}, we say that $U$ is \emph{$a$-expanding} if for every non-trivial irreducible representation $(\rho, V)$ of $H$, the subspace $V^U$ of $U$-fixed vectors is contained in $V_a^+$. It is equivalent (\cite[Lemma~A.1]{s}) to require that in any irreducible representation of $(\rho,V)$ of $H$ and for any nonzero $v\in V$, the $\rho(U)$-orbit of $v$ is not contained in $V^0_a \oplus V_a^-$. For example, if $a$ has a non-trivial projection to every simple factor of $H$, then the unstable horospherical subgroup $H^+_a=\set{ h\in H\for\lim_{n\to\infty}a^{-n} h a^n=1_H }$ is $a$-expanding (\cite[Lemma~5.2]{shah96}). In fact, it can be shown that $U$ is $a$-expanding if and only if $U\cap H_a^+$ is (\cite[Lemma~A.2]{s}).

Now let $Q\leqs H$ be a parabolic subgroup with maximal connected $\R$-split torus $A$. Using the above, we will give two criteria for a measure on $Q$ to be $H$-expanding. To state the first, write $Q=MA_cN$ for the Langlands decomposition of $Q$. In particular, this means that $N$ is the unipotent radical of $Q$, $MA_c=C_H(A_c)$ is a (reductive) Levi subgroup of $Q$, and $A_c$ is a maximal central connected $\R$-split torus in $MA_c$ (see e.g.~\cite[\S VII.7]{knapp} for details on Langlands decomposition). We may assume that $A_c\leqs A$.
Given a probability measure $\mu$ on $Q$, by using the diffeomorphism $Q\cong M\times A_c\times N$ given by multiplication and projecting to some of the factors, we obtain associated probability measures $\mu_M$, $\mu_{A_c}$, $\mu_{MA_c}$ etc.
Finally, we denote by $\lambda_c\colon Q\to\mathfrak a$ the composition of the projection to $A_c$ with the logarithm map $\log\colon A\to\mathfrak a$, where $\mathfrak a$ is the Lie algebra of $A$.
\begin{prop}[$H$-expanding measures (1)]\label{prop.examples}
Let $\mu$ be a probability measure on $H$ with finite first moment such that $\mu(Q)=1$ for some parabolic subgroup $Q=MA_cN$ of $H$. Denote by $a_{c,\operatorname{avg}}(\mu)=\exp\bigl(\int\lambda_c(g)\dd\mu(g)\bigr)\in A_c$ the $A_c$-average of $\mu$. Let $U$ be a connected Lie subgroup of $N$ and suppose the following:

\begin{enumerate}[label=\textup{(\arabic*)}]
    \item $\supp(\mu) \subset  M A_c U\cap N_H(U)$ and the Zariski closure of $\Ad(\Gamma_\mu)$ contains $\Ad(U)$, 
    \item $U$ is $a_{c,\operatorname{avg}}(\mu)$-expanding, and 
    \item $\mu_M$ is non-contracting in every representation of $H$.
\end{enumerate}
Then $\mu$ is $H$-expanding.
\end{prop}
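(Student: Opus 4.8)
The plan is to verify uniform expansion in an arbitrary non-trivial irreducible representation $(\rho,V)$ of $H$ by means of the averaged-expansion criterion of Proposition~\ref{prop;expansion_char}. Since all three hypotheses are phrased in terms of the Langlands data $Q=MA_cN$ and a subgroup $U\leqs N$, the first step is to unwind these. By hypothesis (1), every $g\in\supp(\mu)$ factors as $g=m(g)a_c(g)u(g)$ with $m(g)\in M$, $a_c(g)\in A_c$, $u(g)\in U$, and moreover $g$ normalizes $U$; writing $g^n\dotsm g^1$ for a product of such elements and pushing all the $U$-parts to the right using the normalization, one obtains $g_n\dotsm g_1 = w_n\cdot u_n$ where $w_n\in MA_c$ is a product of the $m(g_i)a_c(g_i)$ and $u_n\in U$. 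The $MA_c$-part is thus governed by the pushforward $\mu_{MA_c}$, while the $U$-drift is controlled by the Zariski-density hypothesis in (1). Concretely, I would first record that $\rho_*\mu$ has finite first moment (Lemma~\ref{lem;first_moment}), so Proposition~\ref{prop;expansion_char} applies.

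The core of the argument is a two-scale estimate for $\int\log\frac{\norm{g\acts v}}{\norm{v}}\,\dd\mu^{*N}(g)$. I would split $V$ according to the eigenvalue decomposition $V=V^+_{a}\oplus V^0_a\oplus V^-_a$ for $a=a_{c,\operatorname{avg}}(\mu)$, and decompose the expansion contribution into the part coming from the $A_c$-average (which expands $V^+_a$ at a definite exponential rate after $N$ steps, by the law of large numbers applied to $\lambda_c$, using the finite first moment), the part from $M$ (which by hypothesis (3) is non-contracting, hence contributes $\ge 0$ on average, at least after passing to a suitable power), and the part from $U$. Hypothesis (2) says precisely that $U$ is $a$-expanding, i.e.\ $V^U\subset V^+_a$; combined with hypothesis (1) via Lemma~\ref{lem;useful2}, $\rho(\Gamma_\mu)$'s Zariski closure contains $\rho(U)$, so a nonzero vector cannot have its entire $\rho(\Gamma_\mu)$-orbit — equivalently its $\rho(U)$-orbit — trapped inside $V^0_a\oplus V^-_a$ (using the equivalent formulation of $a$-expanding recalled after the definition). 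Thus for any nonzero $v$, after finitely many random-walk steps the vector acquires, with positive probability bounded away from zero, a nonnegligible component in $V^+_a$, and from then on the $A_c$-average expands it. Making this quantitative — uniformly over $v\in\mathbb{P}(V)$ by a compactness argument — yields the required lower bound $\int\log\frac{\norm{gv}}{\norm v}\,\dd\mu^{*N}(g)\ge C>0$ for $N$ large.

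The main obstacle I anticipate is the interaction between the three pieces: the $M$-part is only assumed \emph{non-contracting}, not expanding, so one cannot simply add up definite positive contributions. The delicate point is to show that the $M$-dynamics cannot conspire to push a vector \emph{out} of the expanding cone $V^+_a$ persistently; here one needs that $M$ commutes with $A_c$ (so $M$ preserves the eigenspace decomposition for $a\in A_c$), which keeps the $V^+_a$-component from leaking into $V^0_a\oplus V^-_a$ under the $M$-action, while the non-contracting hypothesis prevents the norm of that component from decaying. Combined with the $a$-expanding property of $U$ and the Zariski-density input — which guarantees that \emph{some} group element moves any given nonzero vector into $V^+_a$ — one closes the loop. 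A secondary technical nuisance is handling the possibility that $\mu_M$-expansion is zero (the ``$=0$'' alternative in the proof of Proposition~\ref{prop;facts}(iv)), but this is harmless since $0$ is still $\ge 0$ and the strict positivity is then supplied entirely by the $A_c$- and $U$-contributions. I would also invoke Furstenberg--Kifer (Theorem~\ref{thm:furstenberg_kifer}) applied to $\mu_{MA_c}$ to make the uniformity over the projective space clean, arguing by contradiction from the existence of a stationary measure on $\mathbb{P}(V^{\leqs 0})$.
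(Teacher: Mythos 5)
The paper's proof hinges on a reduction that your proposal never makes. After noting (via Lemma~\ref{lem;useful2}) that $\rho(U)$ sits inside $\Zcl(\rho(\Gamma_\mu))$, the paper invokes Lemma~\ref{lem;useful} (the ``criterion for expansion''): if expansion fails, Furstenberg--Kifer produces a $\Gamma_\mu$-invariant subspace $W$ of non-positive exponent, $W$ is invariant under the Zariski closure and hence under $\rho(U)$, and Lie--Kolchin gives a nonzero $U$-fixed vector in $W$. This reduces the entire problem to showing expansion only for $v\in V^U$. For such $v$, the $U$-parts of the walk act trivially (here condition~(1) with $\supp(\mu)\subset N_H(U)$ is exactly what lets one push the $U$-factors to the right and discard them), so one is left with the $MA_c$-walk. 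Since $V^U\subset W\df\bigoplus_j V_j$, the $M$-invariant sum of $A_c$-weight spaces with weight $>1$ at $a_{c,\operatorname{avg}}(\mu)$, one splits $\frac1n\log\norm{g_n\dotsm g_1\acts v}$ into an $A_c$-term (strictly positive by Lemma~\ref{lem;expansion_only_with_A} on $W$) and an $M$-term (nonnegative by hypothesis~(3)).

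You instead try to handle an arbitrary nonzero $v$ directly via the averaged criterion of Proposition~\ref{prop;expansion_char}, arguing that the walk will ``acquire, with positive probability bounded away from zero, a nonnegligible component in $V^+_a$'' and that the $A_c$-drift then takes over. This is a genuine gap, and it is hard to close. First, hypothesis~(3) gives non-contraction of $\mu_M$ on $V$ as a whole; it does not prevent $m_n\dotsm m_1$ from shrinking the $V^+_a$-component of a particular vector at an arbitrary rate (non-contraction of the whole is compatible with severe decay on a subspace). Your observation that $M$ commutes with $A_c$ only shows the $V^+_a$-component does not \emph{leak} into $V^0_a\oplus V^-_a$; it says nothing about its norm. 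Second, the $U$-factors $\tilde u_n$ accumulated in the factorization $g_n\dotsm g_1 = w_n\tilde u_n$ do \emph{not} preserve the $A_c$-eigenspace decomposition, so the ``acquires a $V^+_a$-component'' claim needs to be made uniformly in $v\in\mathbb{P}(V)$ and quantitatively in time, which is far from clear. You gesture at Furstenberg--Kifer for ``uniformity,'' but without the Lie--Kolchin step that produces a $U$-fixed vector inside the bad subspace, there is no reduction mechanism — you are left trying to juggle the $M$-, $A_c$-, and $U$-dynamics simultaneously, precisely the difficulty the paper's Lemma~\ref{lem;useful} is designed to dissolve.
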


Before proceeding with the preparations for the proof of the above proposition, let us provide a few brief comments on its hypotheses.
\begin{rem}[On the hypotheses of Proposition~\ref{prop.examples}]\leavevmode
\begin{itemize}
\item In fact, there is no freedom in the choice of $U$: Condition~(1) implies that it needs to be the Zariski closure of the projection of $\Gamma_\mu$ to $N$. 
\item When $U=N$ and the parabolic group $Q$ is absolutely proper, condition~(2) can conveniently be checked using the notion of expanding cone to be discussed in \S\ref{subsec;exp_cone}.
\item The non-contraction requirement on $\mu_M$ in condition~(3) is satisfied, for instance, when the identity component of the Zariski closure of $\Ad(\Gamma_{\mu_M})$ is reductive with compact center (for example, the identity component of $\Ad(M)$ itself). Indeed, in this case similar arguments as in the proof of Lemma~\ref{lem;useful2} can be used to show that $\Gamma_{\mu_M}$ acts completely reducibly and by transformations of determinant $\pm 1$ in every representation $(\rho,V)$ of $H$. Then the Lyapunov exponents of $\mu_M$ in any $\Gamma_{\mu_M}$-irreducible subspace of $V$ sum to $0$ and one concludes using Theorem~\ref{thm:furstenberg_kifer}.
\item Another useful fact for the verification of condition~(3) is that the connected component $M^\circ$ of $M$ is the almost direct product of its semisimple part $S=[M^\circ,M^\circ]$ and a compact center. Provided $\mu_M$ is supported on $M^\circ$, one can thus project to the non-compact part $S^{nc}$ and is only left checking non-contraction for $\mu_{S^{nc}}$. The latter could follow from Zariski density (Proposition~\ref{prop;Z-dense}), or by a recursive application of Proposition~\ref{prop.examples} above to $H=S^{nc}$. In the general case, one can obtain from $\mu_M$ a probability measure $\mu_M^\circ$ on $M^\circ$ defined as the law of the first return to $M^\circ$ of the random walk on $M$ induced by $\mu_M$; see~\cite[\S5.2]{bq}. Using \cite[Proposition~5.9]{bq} and Theorem~\ref{thm:furstenberg_kifer}, one sees that the non-contraction property of $\mu_M^\circ$ implies that of $\mu_M$. 
\end{itemize} 
\end{rem}

For the proof of Proposition~\ref{prop.examples} we require the following lemma, which reduces checking expansion to vectors fixed by some unipotent subgroup of the image of the algebraic group generated by $\supp(\mu)$.
\begin{lem}[A criterion for expansion]\label{lem;useful}
Let $V$ be a finite-dimensional real vector space and $\mu'$ a probability measure on $\GL(V)$ with finite first moment. Denote by $Q'$ the Zariski closure of $\Gamma_{\mu'}$ and let $U'$ be a unipotent subgroup of $Q'$. Suppose that every nonzero vector $v\in V^{U'}$ is $\mu'$-expanded, where $V^{U'}$ denotes the subspace of $U'$-fixed vectors. Then $\mu'$ is uniformly expanding.
\end{lem}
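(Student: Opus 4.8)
The plan is to reduce everything to the Furstenberg--Kifer/Hennion spectral data of $\mu'$ and then exploit the fact that a unipotent group always fixes a nonzero vector in any nonzero invariant subspace. Recall from the characterization stated immediately after Theorem~\ref{thm:furstenberg_kifer} that a probability measure with finite first moment is uniformly expanding if and only if its smallest exponent is positive. So it suffices to show $\beta_k(\mu')>0$, where $\beta_1(\mu')>\dots>\beta_k(\mu')$ are the exponents provided by Theorem~\ref{thm:furstenberg_kifer} applied to $\mu'$ (which has finite first moment by hypothesis), together with the associated partial flag $V=F_1\supset\dots\supset F_k\supset F_{k+1}=\set{0}$ of $\Gamma_{\mu'}$-invariant subspaces. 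We may of course assume $V\neq\set{0}$.

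The \emph{key step} is that the bottom piece $F_k$ of the flag is not merely $\Gamma_{\mu'}$-invariant but $Q'$-invariant. Indeed, the stabilizer of a linear subspace is a Zariski-closed subgroup of $\GL(V)$, and since it contains $\Gamma_{\mu'}$ it must contain the Zariski closure $Q'=\Zcl(\Gamma_{\mu'})$; in particular $F_k$ is invariant under the unipotent subgroup $U'\leqs Q'$. As $F_k\neq\set{0}$ and $U'$ consists of unipotent operators on $V$, Kolchin's theorem furnishes a nonzero $U'$-fixed vector $v\in F_k$, so that $v\in F_k\cap V^{U'}$.

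I then conclude by comparing two growth statements for this particular $v$. On the one hand $v\in V^{U'}\setminus\set{0}$, so by the standing hypothesis $v$ is $\mu'$-expanded, i.e.\ $\liminf_{n\to\infty}\frac1n\log\norm{g_n\dotsm g_1 v}>0$ for $\mu'^{\N}$-a.e.\ $(g_i)_i$. On the other hand $v\in F_k\setminus F_{k+1}$, so Theorem~\ref{thm:furstenberg_kifer} gives $\lim_{n\to\infty}\frac1n\log\norm{g_n\dotsm g_1 v}=\beta_k(\mu')$ almost surely. Comparing, we get $\beta_k(\mu')>0$, and hence $\mu'$ is uniformly expanding by the cited characterization.

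The argument is essentially immediate once organized this way; the only points that require a word of justification are the promotion of $\Gamma_{\mu'}$-invariance of $F_k$ to $Q'$- (hence $U'$-)invariance via the Zariski closure, and the invocation of a common $U'$-fixed vector inside the nonzero invariant subspace $F_k$. I do not expect either to constitute a genuine obstacle.
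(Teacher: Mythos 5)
Your proof is correct and uses essentially the same ingredients as the paper's: the Furstenberg--Kifer flag, the promotion of $\Gamma_{\mu'}$-invariance to $Q'$-invariance via the Zariski closure (the stabilizer of a linear subspace being Zariski closed), and the Lie--Kolchin theorem to extract a nonzero $U'$-fixed vector. The paper phrases the argument as a proof by contradiction with an unnamed invariant subspace $W$ of non-positive exponents, whereas you work directly with the bottom piece $F_k$ of the flag; this is a minor stylistic difference, not a different route.
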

\begin{proof}
Let us suppose for a contradiction that $\mu'$ is not uniformly expanding. Then there exists a vector $v \in V\setminus\set{0}$ with $\liminf_{n \to \infty} \frac{1}{n}\log \norm{g_n \dotsm g_1 v} \le 0$ for a positive measure subset of $(g_i)_i\in (Q')^\N$ with respect to $(\mu')^\N$.  By Theorem~\ref{thm:furstenberg_kifer}, there exists a non-trivial $\Gamma_{\mu'}$-invariant subspace $W\leqs V$ such that for every $w \in W$, we have $\lim_{n \to \infty} \frac{1}{n}\log \norm{g_n \dotsm g_1 w} \le 0$ for $(\mu')^\N$-a.e.\ $(g_i)_i\in (Q')^\N$. Since $Q'$ is the Zariski closure of $\Gamma_{\mu'}$, the subspace $W$ is stabilized by $Q'$ and hence, by $U'$. By the Lie--Kolchin theorem, we have $W^{U'} \neq \set{0}$. This implies that for any nonzero $w \in W^{U'}\leqs V^{U'}$, we have $\lim_{n \to \infty} \frac{1}{n}\log \norm{g_n \dotsm g_1 w} \le 0$ for $(\mu')^\N$-a.e.\ $(g_i)_i\in (Q')^\N$, contradicting expansion on $V^{U'}$.
\end{proof}

\begin{proof}[Proof of Proposition~\textup{\ref{prop.examples}}]
Let $(\rho,V)$ be a non-trivial irreducible representation of $H$. By Lemma~\ref{lem;first_moment}, the measure $\rho_*\mu$ has a finite first moment, and Lemma~\ref{lem;useful2} implies that $\rho(U)$ is a unipotent subgroup of the Zariski closure of $\rho(\Gamma_\mu)$. In view of Lemma~\ref{lem;useful}, to prove uniform expansion of $\rho_*\mu$ it  suffices to show that for every nonzero $v \in V^U$, we have 
\begin{align*}
\liminf_{n \to \infty} \frac{1}{n}\log \norm{g_n\dotsm g_1\acts v}>0
\end{align*}
for $\mu^\N$-a.e.\ $(g_i)_i \in H^\N$. Since condition~(1) implies that $\Gamma_\mu\subset MA_cU$ and $v$ is $U$-fixed, it suffices to prove the above for $\mu_{MA_c}$-a.e.\ $(g_i)_i\in H^\N$, where $\mu_{MA_c}$ is the $MA_c$-projection of $\mu$. Writing $g_i=m_ia_i$ for the $MA_c$-factorization of $g_i\in MA_c$ and using that $M$ and $A_c$ commute, we see that
\begin{align}\label{MA_decomp}
\frac{1}{n}\log \norm{g_n\dotsm g_1\acts v}=\frac1n\log\frac{\norm{a_n\dotsm a_1m_n\dotsm m_1\acts v}}{\norm{m_n\dotsm m_1\acts v}}+\frac1n\log \norm{m_n\dotsm m_1\acts v}.
\end{align}

The second term above is almost surely non-negative in the limit, by the assumed non-contraction property of $\mu_M$.

To deal with the first term, let $\Phi(A_c,\rho)$ be the set of weights of $A_c$ for the representation $(\rho,V)$. Let $\set{\chi_1,\dots,\chi_t}$ be the subcollection of those $\chi\in\Phi(A_c,\rho)$ with $\chi(a_{c,\operatorname{avg}}(\mu))>1$ and denote the corresponding weight spaces by $V_1,\dots,V_t$. Then by the assumption on $U$, we have $V^U\subset \bigoplus_{j=1}^tV_j\eqqcolon W$. Since $A_c$ and $M$ commute, $W$ is $M$-invariant. Lemma~\ref{lem;expansion_only_with_A} below applied to the space $W$ and $\mu'=\mu_{A_c}$ with $v_n=m_n\dotsm m_1\acts v$ thus implies that the first term in \eqref{MA_decomp} has strictly positive limit inferior $\mu_{MA_c}^\N$-almost surely. This finishes the proof.
\end{proof}

\begin{lem}\label{lem;expansion_only_with_A}
Let $V$ be a finite-dimensional real vector space and $A'\leqs\GL(V)$ a closed connected diagonalizable subgroup with Lie algebra $\mathfrak a$. Write $V=\bigoplus_{\chi\in\Phi(A')} V^\chi$ for the weight space decomposition of $V$ with respect to $A'$, where $V^\chi=\set{v\in V\for av=\chi(a)v\text{ for all }a\in A'}$ and $\Phi(A')$ is the set of characters $\chi$ of $A'$ such that $V^\chi\neq\set{0}$. Let $\mu'$ be a probability measure on $A'$ with finite first moment and denote $a_{\operatorname{avg}}(\mu')=\exp(\int\log(a)\dd\mu'(a))$. Suppose that $\chi(a_{\operatorname{avg}}(\mu'))>1$ for every $\chi\in\Phi(A')$. Then for $(\mu')^\N$-a.e.\ $(a_i)_i\in (A')^\N$ we have
\begin{align*}
\liminf_{n\to\infty}\frac1n\log\frac{\norm{a_n\dotsm a_1 v_n}}{\norm{v_n}}>0
\end{align*}
for every choice of nonzero vectors $v_n\in V$.
\end{lem}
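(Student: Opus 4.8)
\emph{Proof plan.} The plan is to transport the whole question to the Lie algebra and reduce it to the vector-valued strong law of large numbers. Since $A'$ is a connected diagonalizable subgroup of $\GL(V)$, the exponential map $\exp\colon\mathfrak a\to A'$ is an isomorphism of Lie groups, with inverse $\log$; every character $\chi\in\Phi(A')$ corresponds to a linear functional $\bar\chi\in\mathfrak a^*$ via $\chi(\exp X)=e^{\bar\chi(X)}$. Because $A'$ acts faithfully on $V$, the functionals $\set{\bar\chi\for\chi\in\Phi(A')}$ span $\mathfrak a^*$, so $X\mapsto\max_{\chi}\abs{\bar\chi(X)}$ is a norm on $\mathfrak a$, and one checks that it is comparable to $\log\operatorname{N}(\exp X)$ (in fact equal to it for a suitable operator norm). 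Hence the finite first moment hypothesis on $\mu'$ says exactly that $\log_*\mu'$ has a finite first moment, and its barycenter is $\bar X\df\log(a_{\operatorname{avg}}(\mu'))$, which by assumption satisfies $\bar\chi(\bar X)>0$ for all $\chi\in\Phi(A')$.

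Next I would set $X_i\df\log(a_i)$, an i.i.d.\ sequence of law $\log_*\mu'$, and $S_n\df X_1+\dots+X_n$; since $A'$ is abelian, $a_n\dotsm a_1=\exp(S_n)$. Fix a Euclidean norm on $V$ making the weight space decomposition $V=\bigoplus_{\chi}V^\chi$ orthogonal; passing to any other norm changes the quantities below only by a bounded multiplicative factor and hence does not affect the $\frac1n\log$ asymptotics. For nonzero $v=\sum_\chi v_\chi$ one has $\norm{\exp(S_n)v}^2=\sum_\chi e^{2\bar\chi(S_n)}\norm{v_\chi}^2\geqs e^{2\min_{\chi}\bar\chi(S_n)}\norm{v}^2$, so
\[
\frac1n\log\frac{\norm{a_n\dotsm a_1 v}}{\norm v}\;\geqs\;\frac1n\min_{\chi\in\Phi(A')}\bar\chi(S_n),
\]
and the right-hand side does not depend on $v$ at all. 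This is precisely the point that makes the uniformity over the $v_n$ in the statement a non-issue.

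Finally, Kolmogorov's strong law of large numbers applies thanks to the finite first moment, giving $\frac1nS_n\to\bar X$ for $(\mu')^\N$-a.e.\ sequence $(a_i)_i$. On such a sequence, since $\Phi(A')$ is finite and each $\bar\chi$ is linear, $\frac1n\min_\chi\bar\chi(S_n)\to\min_\chi\bar\chi(\bar X)$, and this limit, which I call $c$, is strictly positive by hypothesis. Combining it with the displayed inequality yields $\liminf_{n\to\infty}\frac1n\log\frac{\norm{a_n\dotsm a_1 v_n}}{\norm{v_n}}\geqs c>0$ for every choice of nonzero $v_n\in V$, which is the assertion. I do not expect a genuine obstacle here; the two points that call for a word of justification are the translation of the finite first moment condition between $\mu'$ and $\log_*\mu'$ (via faithfulness of the action, so that the weights span $\mathfrak a^*$) and the fact that the lower bound above is uniform in $v$, both handled as indicated.
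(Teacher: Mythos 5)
Your proof is correct and follows essentially the same route as the paper: orthogonalize the weight spaces, bound the growth from below by the minimum of the character values $\frac1n\sum_i\log\chi(a_i)$ over $\chi\in\Phi(A')$ (a bound uniform in $v$), and then apply the strong law of large numbers. The only cosmetic difference is that you bound $\norm{\exp(S_n)v}^2\geq e^{2\min_\chi\bar\chi(S_n)}\norm v^2$ directly, avoiding the $o(1)$ term the paper introduces by first selecting, for each $n$, a weight component of $v_n$ of comparable size.
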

\begin{proof}
For convenience, we assume the norm $\norm{\cdot}$ on $V$ is Euclidean and that the distinct weight spaces are orthogonal. Given a nonzero $v \in V$, write $v=\sum_{\chi\in\Phi(A')} v^\chi(v)$ for the corresponding weight space decomposition, where $v^\chi(v) \in V^\chi$. Then for any $a_1,\dots,a_n\in A'$ and nonzero $v_n\in V$ we have
\begin{align*}
a_n\dotsm a_1v_n =\sum_{\chi\in\Phi(A')} \chi(a_n\dotsm a_1) v^\chi(v_n).
\end{align*}
Choosing for every $n\in\N$ a character $\chi_n$ such that $\norm{v^{\chi_n}(v_n)}\ge \frac{1}{\sqrt{\dim(V)}}\norm{v_n}$ and recalling that $\chi(a_{\operatorname{avg}}(\mu'))>1$ for all $\chi\in\Phi(A')$ by assumption, we conclude that
\begin{align*}
\frac1n\log\frac{\norm{a_n\dotsm a_1v_n}}{\norm{v_n}} &\ge o(1) + \frac{1}{n}\log \chi_n(a_n\dotsm a_1)  \ge o(1)+\min_{\chi\in\Phi(A')}\frac{1}{n} \sum_{i=1}^n \log \chi(a_i)\\
&\overset{n\to\infty}{\longrightarrow} \min_{\chi\in\Phi(A')}\log \chi(a_{\operatorname{avg}}(\mu'))>0,
\end{align*}
where the last convergence holds $(\mu')^\N$-almost surely by the classical law of large numbers.
\end{proof}

One drawback of Proposition~\ref{prop.examples} is that, in some sense, it requires the $M$- and $A_c$-parts of $\mu$ to both exhibit expansion (or at least non-contraction) individually. It would be natural to only ask the combination of both to be expanding, a behavior which should be reflected in the $A$-average of $\mu$. When $\mu$ does not charge $M$ in a too complicated way, we can also prove $H$-expansion in this case.

To state this second criterion, let $U\leqs H$ be any connected $\Ad$-unipotent subgroup. Then there exists a parabolic subgroup $Q$ of $H$ containing $U$ in its unipotent radical such that also $N_H(U)\leqs Q$~\cite{borel-tits}. As before, let $A\leqs Q$ be a maximal $\R$-split torus and denote by $K$ a maximal compact subgroup of $Q$. Given a non-trivial subtorus $A' \leqs A$ normalizing $U$, set $K'=C_K(A')\cap N_H(U)$ and let $P$ be the closed subgroup $K' A' U$ of $Q$. We write $\lambda\colon P \to \mathfrak{a}$ for the morphism given by $\lambda(kau)= \log a$.
\begin{prop}[$H$-expanding measures (2)]\label{prop.examples2}
Retain the notation from the paragraph above and let $\mu$ be a probability measure on $H$ with finite first moment such that $\mu(P)=1$. Denote by $a_{\operatorname{avg}}(\mu)=\exp\bigl(\int\lambda(g)\dd\mu(g)\bigr)\in A$ the $A$-average of $\mu$. Suppose that:
\begin{enumerate}[label=\textup{(\arabic*)}]
    \item The Zariski closure of $\Ad(\Gamma_\mu)$ contains $\Ad(U)$, and
    \item $U$ is $a_{\operatorname{avg}}(\mu)$-expanding.
\end{enumerate}
Then $\mu$ is $H$-expanding.
\end{prop}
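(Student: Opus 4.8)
The plan is to mimic the proof of Proposition~\ref{prop.examples}, which simplifies substantially in the present setting because the non-unipotent part of the support is now the \emph{compact} group $K'$, and moreover $K'$ centralizes $A'$; this is exactly why no analogue of hypothesis~(3) of Proposition~\ref{prop.examples} is needed here. So fix a non-trivial irreducible representation $(\rho,V)$ of $H$; I have to show that $\rho_*\mu$ is uniformly expanding. By Lemma~\ref{lem;first_moment}, $\rho_*\mu$ has finite first moment, and since $U$ is $\Ad$-unipotent with $\Ad(U)\subset\Zcl(\Ad(\Gamma_\mu))$ by~(1), Lemma~\ref{lem;useful2} gives that $\rho(U)$ is a unipotent subgroup of $\Zcl(\rho(\Gamma_\mu))$. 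Hence, by Lemma~\ref{lem;useful}, it is enough to check that every nonzero $v$ in the subspace $V^{U}$ of $\rho(U)$-fixed vectors is $\rho_*\mu$-expanded, i.e.\ that $\liminf_{n\to\infty}\frac1n\log\norm{g_n\dotsm g_1\acts v}>0$ for $\mu^{\N}$-a.e.\ $(g_i)_i\in H^{\N}$.

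The second step exploits the product structure of $P=K'A'U$. Since $K'\leqs N_H(U)$ and $A'$ normalizes $U$, the group $P$ normalizes $U$, so $V^{U}$ is $\rho(P)$-invariant; in particular the random products $\rho(g_n\dotsm g_1)$ leave $V^{U}$ invariant, as $g_i\in P$ holds $\mu^{\N}$-almost surely. Writing $g_i=k_ia_iu_i$ with $k_i\in K'$, $a_i\in A'$, $u_i\in U$ --- a unique factorization, as $K'$, $A'$, $U$ meet pairwise trivially, being respectively compact, a split torus, and unipotent --- and using that $\rho(u_i)$ acts trivially on $V^{U}$ while $K'$ commutes with $A'$, one may push all the $\rho(a_i)$ to the right to obtain
\begin{align*}
g_n\dotsm g_1\acts v=\rho(k_n\dotsm k_1)\bigl(\rho(a_n\dotsm a_1)\acts v\bigr)\qquad\text{for all }v\in V^{U}.
\end{align*}
Choosing the norm on $V$ invariant under a maximal compact subgroup of $H$ containing $K'$, the operator $\rho(k_n\dotsm k_1)$ is an isometry, so $\norm{g_n\dotsm g_1\acts v}=\norm{\rho(a_n\dotsm a_1)\acts v}$ for every $n$ and every $v\in V^{U}$.

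It remains to bound $\norm{\rho(a_n\dotsm a_1)\acts v}$ from below, where $a_1,a_2,\dots\in A'$ are i.i.d.\ with common law $\mu_{A'}$, the projection of $\mu$ to $A'$. This measure has finite first moment: $\int\lambda\dd\mu$ is finite (this is implicit in the definition of $a_{\operatorname{avg}}(\mu)$ and follows from the finite first moment of $\mu$), which is precisely finiteness of the first moment of $\mu_{A'}$ on the diagonalizable group $A'$. Moreover $a_{\operatorname{avg}}(\mu_{A'})=\exp\bigl(\int\log a\dd\mu_{A'}(a)\bigr)=\exp\bigl(\int\lambda\dd\mu\bigr)=a_{\operatorname{avg}}(\mu)$, and hypothesis~(2) says exactly that $V^{U}\subset V^{+}_{a_{\operatorname{avg}}(\mu)}$ --- equivalently, every weight $\chi$ of $A'$ occurring in the $A'$-invariant subspace $V^{U}$ satisfies $\chi(a_{\operatorname{avg}}(\mu_{A'}))>1$. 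Applying Lemma~\ref{lem;expansion_only_with_A} with $V^{U}$ in place of $V$, with $\mu'=\mu_{A'}$, and with the constant sequence $v_n=v$, we get $\liminf_{n\to\infty}\frac1n\log\frac{\norm{\rho(a_n\dotsm a_1)\acts v}}{\norm{v}}>0$ for $\mu_{A'}^{\N}$-a.e.\ $(a_i)_i$. Since $\norm{v}$ is a positive constant, combining this with the isometry reduction above yields $\liminf_{n\to\infty}\frac1n\log\norm{g_n\dotsm g_1\acts v}>0$ $\mu^{\N}$-almost surely, which is what was needed. As $(\rho,V)$ was an arbitrary non-trivial irreducible representation of $H$, this proves that $\mu$ is $H$-expanding.

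I do not expect a genuine obstacle: all the real content is carried by Lemmas~\ref{lem;useful}, \ref{lem;useful2} and~\ref{lem;expansion_only_with_A}, and the argument above is just a streamlined instance of the proof of Proposition~\ref{prop.examples}. The only points requiring a little care are the commutation manipulation that isolates the $A'$-component of the random product --- which relies crucially on $K'$ centralizing $A'$ and on $V^{U}$ being $K'A'$-invariant (because $P$ normalizes $U$) --- and the routine verification that $\mu_{A'}$ has finite first moment.
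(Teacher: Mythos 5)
Your proof is correct and follows essentially the same route as the paper's: reduce to $U$-fixed vectors via Lemmas~\ref{lem;useful2} and~\ref{lem;useful}, then pass to the $A'$-component and apply Lemma~\ref{lem;expansion_only_with_A}. Your explicit computation $g_n\dotsm g_1\acts v=\rho(k_n\dotsm k_1)\bigl(\rho(a_n\dotsm a_1)\acts v\bigr)$ together with a $K$-invariant norm is precisely what the paper compresses into the phrase ``as $K'$ is compact and commutes with $A'$, we can ignore the $K'$-component.''
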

We emphasize that, in contrast to Proposition~\ref{prop.examples}, here the $A$-average is considering also the part of the torus $A$ inside $M$, if $Q=MA_cN$ is the Langlands decomposition of $Q$.


\begin{proof}
Exactly as in the proof of Proposition~\ref{prop.examples}, given a non-trivial irreducible representation $(\rho,V)$ of $H$, it suffices to prove that
\begin{align*}
\liminf_{n\to\infty}\frac1n\log\norm{g_n\dotsm g_1\acts v}>0
\end{align*}
for $\mu_{K'A'}^\N$-a.e.\ $(g_i)_i\in H^\N$ and every $v\in V^U$, where $\mu_{K'A'}$ is the pushforward of $\mu$ by the map $K'A'U\to K'A',\, kau\mapsto ka$. As $K'$ is compact and commutes with $A'$, we can ignore the $K'$-component and consider only $\mu_{A'}$, defined in the analogous way. Now the statement follows from Lemma~\ref{lem;expansion_only_with_A}.
\end{proof}
\subsubsection{Expanding cone}\label{subsec;exp_cone}
Now we present a construction which can be used to ensure the expansion condition on $U$ with respect to the $A$- or $A_c$-average of $\mu$ in the criteria above (condition~(2) in Propositions~\ref{prop.examples} and \ref{prop.examples2}) in the case that $U$ is the unipotent radical of an absolutely proper parabolic subgroup $Q$ of $H$, where ``absolutely proper'' means that the projection of $Q$ to each simple factor of $H$ is non-surjective. As before, we let $A$ be a maximal connected $\R$-split torus of $Q$.

The \emph{expanding cone} of $U$ in $A$ is defined  to be 
\begin{align*}
A^+_U=\set{a\in A\for  U  \text{ is } a\text{-expanding}}. 
\end{align*}
It is proved in~\cite[Theorem~1.2]{s15} that 
$A_U^+$ only depends on the Lie algebras $\mathfrak h\df\Lie(H)$ and $\mathfrak u\df\Lie(U)$,  and that it can be described explicitly as follows. Let $\mathfrak a$ be the Lie algebra of $A$ and let $\Sigma(\mathfrak h, \mathfrak a)\subset \mathfrak a^*\df\mathrm{Hom}(\mathfrak a, \R)$ be the restricted root system of $(\mathfrak h,\mathfrak a)$. Denote by $\Sigma(\mathfrak u)\subset \Sigma(\mathfrak h, \mathfrak a)$ the subset of roots whose eigenvectors lie in $\mathfrak u$. 
Recall that by semisimplicity, the Killing form $\langle \cdot, \cdot\rangle $ of $\mathfrak h$ is positive definite on $\mathfrak a$. So for each $\alpha\in \mathfrak a^*$
we can associate $s_\alpha\in \mathfrak a$ by $\langle s_\alpha, v\rangle=\alpha( v)$ for every $ v\in \mathfrak a$.
Using this isomorphism, we associate to $\Sigma(\mathfrak u)$ the following convex cone in $\mathfrak a$:
\begin{align*}
\mathfrak a^+_{\mathfrak u}\df
\set[\Big]{
\sum_{\alpha\in \Sigma(\mathfrak u)} t_\alpha s_\alpha \for t_\alpha> 0
}.
\end{align*}
The expanding cone $A_U^+$ of $U$ is then given by $A_U^+= \exp \mathfrak a^+_{\mathfrak u}$; see \cite[Theorem~1.2]{s15}. By abuse of language, we shall sometimes also refer to $\mathfrak{a}_{\mathfrak u}^+$ as the expanding cone of $U$.

Using these notions, we get the following immediate corollary of Proposition~\ref{prop.examples2}.
\begin{cor}\label{cor;cone_expansion}
Let $U$ be the unipotent radical of an absolutely proper parabolic subgroup $Q$ of $H$, $A\leqs Q$ a maximal connected $\R$-split torus and $A'\leqs A$ a non-trivial subtorus. Moreover, let $K$ be a maximal compact subgroup of $H$, $K'=C_K(A')\cap Q$, set $P=K'A'U$ and let $\mu$ be a probability measure on $H$ with finite first moment such that $\mu(P)=1$. Suppose that the Zariski closure of $\Ad(\Gamma_\mu)$ contains $\Ad(U)$ and that $\int\lambda(g)\dd\mu(g)\in\mathfrak{a}_{\mathfrak u}^+$. Then $\mu$ is $H$-expanding.
\end{cor}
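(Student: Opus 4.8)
The plan is to deduce this directly from Proposition~\ref{prop.examples2} together with the explicit description of the expanding cone recalled above; the only real task is to match up the hypotheses, so I expect no substantial obstacle.

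First I would observe that the data in Corollary~\ref{cor;cone_expansion} fit the general setup preceding Proposition~\ref{prop.examples2}. Since $U$ is the unipotent radical of the parabolic subgroup $Q$, one has $N_H(U)=Q$, so $Q$ is a legitimate choice for the parabolic attached to $U$ by the Borel--Tits theorem. The subtorus $A'\leqs A\leqs Q$ normalizes $U$ because $U$ is normal in $Q$. Choosing, as one always may, the maximal compact subgroup $K$ of $H$ in good position relative to $Q$ so that $K\cap Q$ is a maximal compact subgroup of $Q$, the group $K'=C_K(A')\cap Q$ coincides with $C_K(A')\cap N_H(U)$, and hence $P=K'A'U$ and the morphism $\lambda\colon P\to\mathfrak a$ agree with those in Proposition~\ref{prop.examples2}. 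By hypothesis $\mu(P)=1$ and $\mu$ has finite first moment, as required there.

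Next I would check the two numbered conditions of Proposition~\ref{prop.examples2}. Condition~(1), that the Zariski closure of $\Ad(\Gamma_\mu)$ contains $\Ad(U)$, is literally the same assumption made in the corollary. For condition~(2) I would invoke the description $A_U^+=\exp\mathfrak a^+_{\mathfrak u}$ of the expanding cone from \cite[Theorem~1.2]{s15}, whose validity in this precise form is exactly what the absolute properness of $Q$ guarantees. Since $a_{\operatorname{avg}}(\mu)=\exp\bigl(\int\lambda(g)\dd\mu(g)\bigr)$ by definition and $\int\lambda(g)\dd\mu(g)\in\mathfrak a^+_{\mathfrak u}$ by hypothesis, we obtain $a_{\operatorname{avg}}(\mu)\in\exp\mathfrak a^+_{\mathfrak u}=A_U^+$, i.e.\ $U$ is $a_{\operatorname{avg}}(\mu)$-expanding. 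Proposition~\ref{prop.examples2} then applies verbatim and yields that $\mu$ is $H$-expanding.

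In short, the corollary is pure bookkeeping on top of Proposition~\ref{prop.examples2} and the cited structure of the expanding cone. The only point requiring a line of care is the compatibility between the maximal compact subgroup $K$ of $H$ used in the corollary and the maximal compact subgroup ``of $Q$'' appearing in Proposition~\ref{prop.examples2}; this is resolved by choosing $K$ so that $K\cap Q$ is maximal compact in $Q$, a choice which changes neither $P$ nor $\lambda$ and hence does not affect the argument.
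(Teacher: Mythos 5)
Your proposal is correct and matches the paper's approach, which simply labels this an immediate corollary of Proposition~\ref{prop.examples2} after recalling the description $A_U^+=\exp\mathfrak{a}_{\mathfrak u}^+$ of the expanding cone. The one point I'd add is that the ``good position'' choice of $K$ is actually unnecessary: an inspection of the proof of Proposition~\ref{prop.examples2} shows it only uses that $K'$ is a compact subgroup of $N_H(U)$ commuting with $A'$, and $C_K(A')\cap Q$ has these properties for any maximal compact subgroup $K$ of $H$, so the argument goes through without adjusting $K$.
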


\subsubsection{Explicit examples}
We end this subsection by giving two explicit examples where the criteria developed so far are applicable.

The first of them is the prototypical example of an expanding cone. Although simple, it turns out to be of significant importance to Diophantine approximation problems on fractals. We will take up this point and elaborate on the connection in \S\ref{sec;diophantine}. 
\begin{ex}\label{ex;main} 
Let $H=\SL_{m+n}(\R)$, and
\begin{align*}
 Q &=\set*{
 \begin{pmatrix}
 p_{11} & p_{12}\\
 0 & p_{22} 
 \end{pmatrix}
 \in H\for p_{11}\in \GL_m(\R), p_{22}\in \GL_n(\R), p_{12}\in \Mat_{m\times n}(\R)
 } ,\\
 U &=\set*{
 \begin{pmatrix}
 I_m & p_{12}\\
 0 & I_n 
 \end{pmatrix}
 \in H\for p_{12}\in \Mat_{m\times n}(\R)
 },
\end{align*}
where we denote by $I_d$ the $d\times d$-identity matrix. The group $A$ consists of diagonal matrices in  $H$ with positive entries, and we have 
\begin{align*}
A^+_U=\set{\diag(e^{r_1}, \dots, e^{r_m}, e^{-s_1}, \dots, e^{-s_n})\in H\for r_i, s_j>0}
\end{align*}
(see~\cite[Example~1.1]{s15}).

For concreteness, we exemplify a class of $H$-expanding measures on $Q$: Fix a Borel subset $B_U$ of $U$ not contained in a proper vector subspace of $U\cong \R^{mn}$. For example, $B_U$ can be taken to be a non-degenerate curve in $U$ or a collection of $k \ge mn$ points in $U \cong \R^{mn}$ that linearly spans $U$. Let $\mu$ be a compactly supported  probability measure on $AU$ such that
\begin{itemize}
\item its support contains an element of $A_U^+$,
\item  the set of unipotent parts $u_g$ of elements $g=a_gu_g$ in $\supp(\mu)\subset AU$ contains $B_U$, and
\item its $A$-average lies in the expanding cone of $U$, i.e.\ $\int \lambda(g) \dd\mu(g) \in \mathfrak{a}_\mathfrak{u}^+ $.
\end{itemize}
Then $\mu$ can be seen to be $H$-expanding by Corollary~\ref{cor;cone_expansion}. Indeed, as we will see in \S\ref{sec;diophantine} on Diophantine approximation on fractals, the first two points above imply that the Zariski closure of $\Gamma_\mu$ contains $U$ (see the proof of Theorem~\ref{thm;dioph.insection}).
\end{ex} 
Note that the above example covers in particular~\cite[Example~2.8]{prohaska-sert}. We also point out that, in Example~\ref{ex;main}, the assumption that $\supp(\mu)$ contains an element of $A_U^+$ is not strictly necessary. The first two bullet points could be replaced by a certain ``irreducibility condition'' of an affine action of the group generated by the support of $\mu$ (which is what we will do in \S\ref{sec;diophantine}), or, alternatively, by the assumption that the commutator group $[\Gamma_\mu,\Gamma_\mu]$ is Zariski dense in $U$.

The second example is one where the reductive group $M$ in the Langlands decomposition of $Q$ (see the paragraph before Proposition~\ref{prop.examples}) contributes to expansion in a non-trivial way.
\begin{ex}\label{ex.explicit.expand}
Let $Q$ be the standard parabolic subgroup of $\SL_4(\R)$ given by
\begin{align*}
Q=\begin{pmatrix}
 \ast & \ast & \ast & \ast \\
 \ast & \ast & \ast & \ast \\
  & & \ast & \ast \\
  & & & \ast
\end{pmatrix}\leqs\SL_4(\R).
\end{align*}
The maximal connected $\R$-split torus $A$ consists of diagonal matrices with positive entries. In the Langlands decomposition $Q=MA_cN$ we have
\begin{gather*}
A_c=\set[\Big]{
d_{\alpha,\beta}\df\diag\bigl((\alpha \beta)^{-1/2}, (\alpha \beta)^{-1/2}, \alpha, \beta\bigr)
\for\alpha, \beta>0}
,\\
M=\begin{pmatrix}\SL_2(\R)&\\&I_2\end{pmatrix},\text{ and } N=\begin{pmatrix}
 1 &  & \ast & \ast \\
  & 1 & \ast & \ast \\
  & & 1 & \ast \\
  & & & 1
\end{pmatrix}.
\end{gather*}
Using the explicit description of the expanding cone in \S\ref{subsec;exp_cone}, one can calculate directly that the intersection of the expanding cone of $U=N$ in $A$ with $A_c$ is given by
\begin{align*}
A_c\cap A_U^+ = \set{d_{\alpha,\beta}\for \beta<1,\alpha\beta<1}.
\end{align*}
For $i,j \in \set{1,2,3,4}$ let $u_{i,j}$ be the unipotent element whose only nonzero off-diagonal term is $1$ at the $(i,j)$-entry. Let $g=(\begin{smallmatrix}
 1 & 1\\
 & 1
\end{smallmatrix})$ and consider the element $s$ of $Q$ given by the block diagonal matrix $s=(g,I_2)$. Now let $\mu$ be any compactly supported probability measure on $Q$ whose support is given by the union of $\set{s, s^\top, u_{2,3}, u_{3,4}}$ and some diagonal matrices $d_{\alpha, \beta}$ in $Q$. It is not difficult to see that $U\leqs \Zcl(\Gamma_\mu)$ and the $A_c$-part $\mu_{A_c}$ of $\mu$ consists of the latter diagonal matrices. Moreover, $M$ is semisimple and the $M$-part of $\mu$ is Zariski dense in $M$. So, in view of Propositions~\ref{prop.examples} and \ref{prop;Z-dense}, provided that the integral $\int (\log \alpha,\log \beta) \dd\mu_{A_c}(d_{\alpha,\beta})$ is in the cone in $\R^2$ defined by the inequalities $x+y<0$ and $y<0$, the measure $\mu$ is $\SL_4(\R)$-expanding. 
%
\end{ex}
\subsection{Split solvable epimorphic subgroups}\label{sec;solv_epi}
The goal of this part is to discuss a further class of $H$-expanding measures. They will be supported on solvable epimorphic subgroups $F=A'U$ of semisimple real algebraic groups $H$, where $A'$ is a one-dimensional algebraic $\R$-split torus and $U$ is unipotent. The arguments rely on Proposition~\ref{prop.examples2}, ideas going back to  Weiss~\cite{weiss.epimorphic} and Shah--Weiss~\cite{shah-weiss}, and the work of Bien--Borel~\cite{bien-borel1,bb3}.

We start with a brief discussion of epimorphic subgroups, which have close connections to the notion of $H$-expanding measures.

\subsubsection{Epimorphic subgroups}\label{subsec;epimorphic}

The concept of epimorphic subgroups of algebraic groups was introduced by Bien--Borel~\cite{bien-borel1,bien-borel2}. In~\cite{s15}, this notion was adapted to subgroups of connected semisimple Lie groups without compact factors.
\begin{de}
A subgroup $F$ of $H$ is said to be \emph{epimorphic} in $H$ if for every representation of $H$, the vectors fixed by $F$ are also fixed by $H$.
\end{de}
It can be shown that if $H$ is almost algebraic in the sense of Remark~\ref{rem;alg} and $F\leqs H$ is a connected Lie subgroup or a Zariski connected algebraic subgroup, it suffices to check the epimorphic property of $F$ in real algebraic representations of $H$ (see Proposition~\ref{prop;defs_coincide}). Consequently, in the algebraic category the above definition coincides with that of Bien--Borel. Moreover, it follows that a connected Lie subgroup $F$ is epimorphic in $H$ if and only if its Zariski closure $\Zcl(F)$ is.

Mozes~\cite{mozes95} proved that an $F$-invariant probability measure on $G/\Lambda$ is already invariant under $H$ (and thus homogeneous by Ratner's theorem) in the case where all of $F,H,G$ are real algebraic groups. This measure rigidity result was later generalized by Shah--Weiss~\cite[Theorem~1.8]{shah-weiss} to actions of connected epimorphic Lie subgroups which are not necessarily algebraic.
 
Examples of epimorphic subgroups include parabolic subgroups of $H$ and Zariski dense subgroups, in case $H$ is {almost algebraic}. One may notice that these classes of subgroups also prominently featured in the previous parts of this section, where we gave our first examples of $H$-expanding measures. That this is not a coincidence becomes clear with the following observation.
\begin{prop}\label{prop;epimorphic}
If $\mu$ is $H$-expanding, then the closed subgroup $\Gamma_\mu$ generated by the support of $\mu$ is epimorphic in $H$.
\end{prop}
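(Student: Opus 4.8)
The plan is to unwind the definitions and reduce the statement to an essentially formal observation: a $\Gamma_\mu$-fixed vector can never be $\rho_*\mu$-expanded, hence by the $H$-expansion hypothesis it must in fact be $H$-fixed. Concretely, I would fix an arbitrary representation $(\rho,V)$ of $H$ and aim to show $V^{\Gamma_\mu}\subseteq V^H$, which is precisely the epimorphy condition.

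First I would use semisimplicity of $H$ to decompose $V=V^H\oplus W$ with $W$ an $H$-invariant complement of $V^H$, so that $W$ is isomorphic to $V/V^H$ as an $H$-module and contains no nonzero $H$-fixed vector. Since $\mu$ is $H$-expanding, $\rho_*\mu$ is therefore uniformly expanding on $W$; that is, every nonzero $w\in W$ is $\rho_*\mu$-expanded. (Equivalently, one can invoke Proposition~\ref{prop;facts}(i) directly, which states under the $H$-expansion assumption that any vector of $V$ failing to be $\rho_*\mu$-expanded lies in $V^H$.)

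Next, given $v\in V^{\Gamma_\mu}$, I would write $v=v_0+w$ with $v_0\in V^H$ and $w\in W$. As $v_0$ is $H$-fixed, it is in particular $\Gamma_\mu$-fixed, so the difference $w=v-v_0$ is $\Gamma_\mu$-fixed as well. Because $\supp(\mu)\subseteq\Gamma_\mu$, for $\mu^{\N}$-a.e.\ sequence $(g_i)_i$ we have $\rho(g_n\dotsm g_1)w=w$, whence $\frac1n\log\norm{g_n\dotsm g_1 w}=\frac1n\log\norm{w}\to 0$, and in particular $w$ is not $\rho_*\mu$-expanded. Uniform expansion on $W$ then forces $w=0$, i.e.\ $v=v_0\in V^H$. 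Since $(\rho,V)$ was arbitrary, $\Gamma_\mu$ is epimorphic in $H$.

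I do not expect a genuine obstacle here; the argument is short and formal. The only points deserving a moment's care are the passage from the definition of $H$-expansion (uniform expansion only in representations without $H$-fixed vectors) to uniform expansion of $\rho_*\mu$ on $V/V^H$ for \emph{every} representation $V$, which is exactly the content of Proposition~\ref{prop;facts}(i) and rests on complete reducibility, and the trivial remark that a $\Gamma_\mu$-fixed vector is fixed by every finite product of elements of $\supp(\mu)$.
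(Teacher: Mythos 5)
Your proof is correct and essentially the same as the paper's: both rest on the observation that a $\Gamma_\mu$-fixed vector can never be $\rho_*\mu$-expanded, combined with Proposition~\ref{prop;facts}(i). The paper simply states this in two sentences, invoking Proposition~\ref{prop;facts}(i) directly, while you unfold the content of that proposition by explicitly taking the $H$-invariant complement $W$ of $V^H$; the extra explicitness does not change the substance.
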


\begin{proof}
In any given representation $(\rho,V)$ of $H$, a $\Gamma_\mu$-fixed vector $v\in V$ cannot be $\rho_*\mu$-expanded. In view of Proposition~\ref{prop;facts}(i), it follows that $v$ is $H$-fixed.
\end{proof}

On the other hand, there exist connected epimorphic subgroups of $H$ which do not support any $H$-expanding probability measure.
\begin{ex}\label{ex;counter}
We take $H=\SL_3(\R)$, $A'=\set{\diag(e^t, e^{-\sqrt 2 t},e^{(\sqrt 2-1)t} ) \for t\in\R}$ and $U$ to be as in Example~\ref{ex;main} for $m=2, n=1$. The Zariski closure of $A'U$ contains $AU$ where $A\leqs H$ is the diagonal subgroup with positive entries. 
It follows that $A'U$ is an epimorphic subgroup of $H$, since $AU$ is. On the other hand, $A'$ has empty intersection with the expanding cone $A_U^+$ which is 
described explicitly in Example~\ref{ex;main}. Therefore, for any probability measure $\mu$ on $A'U$ with finite first moment, we have 
\begin{align*}
a\df a_{\operatorname{avg}}(\mu)\not \in A_U^+,
\end{align*}
where $a_{\operatorname{avg}}$ is as in Proposition~\ref{prop.examples2}. It follows from the definition of the expanding cone that there is a non-trivial irreducible representation $V$ of $H$ such that $V^U\cap (V_a^-\oplus V_a^0) \neq \set{0}$.
Therefore, $\mu$ is not $H$-expanding. 
\end{ex}
We point out that the phenomenon in the above example crucially depends on the one-dimensional torus $A'$ not being algebraic, as the discussion in the upcoming part will show.

\subsubsection{Expanding rays in one-dimensional algebraic tori}
We now state an observation (Lemma \ref{lemma.1.torus}) ensuring the expansion of the unipotent part of a split solvable group with respect to its one dimensional torus. Based on this observation, in \S \ref{subsub.bb.examples} we will outline two constructions due to Bien--Borel--Koll\'{a}r~\cite{bb3}, which, thanks to Proposition~\ref{prop.examples2}, yield further classes of $H$-expanding measures with small support on a semisimple group $H$.

Let $H$ be a connected almost algebraic semisimple real Lie group without compact factors and $F$ a connected epimorphic subgroup of $H$ of the form $F=A'U$ where $A'$ is a connected algebraic $\R$-split torus and $U$ is a unipotent subgroup of $H$ normalized by $A'$. It is known that any connected algebraic epimorphic subgroup of $H$ contains an epimorphic subgroup of this form~\cite[\S10,~Theorem~2]{bien-borel1}.

The following lemma can be proved in a similar way as Lemma \ref{lem;expansion_only_with_A} using additionally \cite[Lemma~1]{weiss.epimorphic}. We omit the routine details of the proof for brevity.
\begin{lem}\label{lemma.1.torus}
Let $H$ and $F=A'U$ be as above and suppose that $A'$ is one-dimensional. Then there exists a parametrization $A'=(a(t))_{t\in\R}$ as one-parameter subgroup such that for every representation $(\rho,V)$ of $H$ and $U$-fixed vector $v\in V^U$, either $v$ is $H$-fixed or $\lim_{t\to\infty}\norm{\rho(a(t))v}=\infty$. For such a parametrization, $U$ is $a(t)$-expanding in the sense of \textup{\S\ref{sec;parabolic}} for every $t>0$. \qed
\end{lem}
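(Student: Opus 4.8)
The plan is to fix the correct orientation of $A'$ once and for all by an appeal to the theory of epimorphic subgroups, and then to carry out the weight-space bookkeeping from the proof of Lemma~\ref{lem;expansion_only_with_A}, with the law of large numbers replaced by the elementary fact that a one-parameter subgroup of an $\R$-split torus acts on each of its weight spaces at a definite exponential rate. First I would invoke \cite[Lemma~1]{weiss.epimorphic}: since $F=A'U$ is epimorphic in $H$ and $A'$ is a one-dimensional algebraic $\R$-split torus, there is a parametrization $A'=(a(t))_{t\in\R}$ as a one-parameter subgroup such that for every representation $(\rho,V)$ of $H$, every weight of $A'$ occurring on the $U$-fixed subspace $V^U$ is \emph{positive}, except for the trivial weight, whose weight space equals $V^H$. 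The crucial content here---and the genuine input from Weiss's work---is that a \emph{single} orientation of $A'$ works for \emph{all} representations simultaneously; epimorphicity by itself only tells us that the trivial weight is absent from $V^U$ once $V^H=\set{0}$ (as $(V^U)^{A'}=V^F=V^H$), but not that the remaining weights have a common sign. That this uses algebraicity of $A'$ in an essential way is illustrated by Example~\ref{ex;counter}.

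Next I would deduce the stated dichotomy for this parametrization. Fix $(\rho,V)$ and $v\in V^U$ with $v\notin V^H$. By complete reducibility of finite-dimensional representations of $H$, write $V=V^H\oplus W$ with $W$ an $H$-invariant complement; since $V^H\subseteq V^U$, we have $v=v_0+w$ with $v_0\in V^H$ and $0\neq w\in W^U$, and as $a(t)$ fixes $v_0$, the norm $\norm{\rho(a(t))v}$ tends to $\infty$ if and only if $\norm{\rho(a(t))w}$ does; so we may assume $V^H=\set{0}$. Then $V^U$ is $A'$-invariant because $A'$ normalizes $U$, hence decomposes into $A'$-weight spaces, all with positive weights $\chi$, say $\chi(a(t))=e^{c_\chi t}$ with $c_\chi>0$. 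Picking a weight $\chi_0$ with nonzero component $v^{\chi_0}$ of $v$ and letting $P$ be the projection onto the $\chi_0$-weight space (which commutes with $\rho(a(t))$), we obtain
\begin{align*}
\norm{\rho(a(t))v}\ge\norm{P}^{-1}\norm{P\rho(a(t))v}=\norm{P}^{-1}e^{c_{\chi_0}t}\norm{v^{\chi_0}},
\end{align*}
which tends to $\infty$ as $t\to\infty$. This inequality is the deterministic analogue of the dominant-weight estimate in the proof of Lemma~\ref{lem;expansion_only_with_A}.

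Finally, the $a(t)$-expansion follows formally. Set $a=a(1)$ and let $(\rho,V)$ be a non-trivial irreducible representation of $H$, so that $V^H=\set{0}$. Since $a$ is $\Ad$-diagonalizable and $A'$ normalizes $U$, the space $V^U$ is $a$-invariant and splits as $(V^U\cap V^+_a)\oplus(V^U\cap V^0_a)\oplus(V^U\cap V^-_a)$. If $v\in V^U\cap(V^0_a\oplus V^-_a)$ is nonzero, then $v$ is a nonzero $U$-fixed vector for which $\rho(a(t))v$ stays bounded as $t\to+\infty$, contradicting the dichotomy just proved together with $V^H=\set{0}$; hence $V^U\subseteq V^+_a$, which is exactly the definition of $U$ being $a$-expanding, and since $V^+_{a(t)}=V^+_a$ for every $t>0$, the same holds for all $a(t)$ with $t>0$. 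The only step that is not routine weight-space arithmetic is the first one: the appeal to \cite[Lemma~1]{weiss.epimorphic} to obtain a uniformly good orientation of $A'$.
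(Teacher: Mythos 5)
Your proof is correct and follows exactly the route the paper sketches: invoke \cite[Lemma~1]{weiss.epimorphic} to fix a single orientation of $A'$ that renders every nontrivial $A'$-weight on $V^U$ positive simultaneously across all representations, then run the weight-space argument of Lemma~\ref{lem;expansion_only_with_A} in its deterministic form (projection onto a dominant weight space replaces the law of large numbers), and finally read off $a(t)$-expansion from the dichotomy. The paper omits these details, but your reconstruction — including the reduction to $V^H=\set{0}$, the bound $\norm{\rho(a(t))v}\ge\norm{P}^{-1}e^{c_{\chi_0}t}\norm{v^{\chi_0}}$, and the observation that $V^+_{a(t)}=V^+_a$ for all $t>0$ — is the intended argument, and your emphasis that algebraicity of $A'$ is what makes Weiss's lemma applicable (in contrast to Example~\ref{ex;counter}) correctly identifies the only non-formal step.
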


\subsubsection{Examples}\label{subsub.bb.examples}
Let $H$ be a connected almost algebraic semisimple real Lie group and denote its Lie algebra by $\mathfrak h$. Let $Z$ be a one-parameter unipotent subgroup of $H$ and $z$ a generator of the Lie algebra of $Z$. By the Jacobson--Morozov theorem $z$ is part of an $\mathfrak{sl}_2$-triple $(a,z,z_-)$. Let $\mathfrak s$ be the Lie algebra spanned by this triple and $S$ the corresponding connected subgroup of $H$. Let $A'$ be the one-parameter diagonalizable subgroup with Lie algebra spanned by $a$. Via the adjoint representation, write $\mathfrak h$ as direct sum of the centralizer $\mathfrak z_o$ of $\mathfrak s$ and of non-trivial irreducible $\mathfrak s$-submodules $\mathfrak m_1=\mathfrak s,\mathfrak m_2,\dots,\mathfrak m_k$.
\begin{ex}[{\cite[Proposition~4.5]{bb3}}]
Retain the notation from the paragraph above and suppose that $z$ has non-trivial projections to each of the simple factors of $\mathfrak h$. Let $z_i$ be highest weight vectors of the irreducible $\mathfrak s$-modules $\mathfrak m_i$, with $z_1=z$. Write $\mathfrak u$ for the direct sum of their $\R$-spans. 
Denoting by $U$ the corresponding unipotent subgroup of $H$, it follows that $F=A'U$ is a split solvable algebraic subgroup of $H$, which can be seen to be epimorphic in $H$ thanks to~\cite[Proposition~4.5]{bb3}. Therefore, by virtue of Proposition \ref{prop.examples2}, we see that any probability measure $\mu$ on $F$ whose $A'$-average lies in the expanding ray given by Lemma~\ref{lemma.1.torus} is $H$-expanding.
\end{ex}

\begin{ex}[{\cite[\S4.6]{bb3}}]
Retain the notation from above. Suppose that $H$ is an $\R$-split simple real algebraic group and that the one-parameter unipotent subgroup $Z$ of $H$ contains ``regular'' unipotent elements. For example, the generator $z$ can be taken as sum of eigenvectors for all positive  simple roots of $\mathfrak h$. Then the subgroup $S$ whose Lie algebra is spanned by the $\mathfrak{sl}_2$-triple $(a,z,z_-)$ is a ``principal TDS'' (three-dimensional subgroup) in $H$. It is known that either $S$ is properly contained in exactly one proper connected subgroup $R$ of $H$, or $S$ is maximal among proper connected subgroups of $H$, in which case we set $R=S$. See Kostant~\cite{kostant} for a treatment of the notions used here. Choose $\mathfrak m_j$ so that it does not intersect the Lie algebra $\mathfrak r$ of $R$ and let $Z_j$ be the subgroup of $H$ whose Lie algebra is generated by a highest weight vector of $\mathfrak m_j$. Then, as discussed in \cite[\S4.6]{bb3}, $F=A'ZZ_j$ is a three-dimensional split solvable algebraic epimorphic subgroup of $H$. Therefore, as in the previous example, three-dimensional solvable subgroups obtained by this construction support many $H$-expanding measures thanks to Proposition~\ref{prop.examples2} and Lemma~\ref{lemma.1.torus}.
\end{ex}

We end this section by mentioning an ensuing question, which was also posed to us by Barak Weiss.

\begin{qu} Let $H$ be a semisimple real algebraic group without compact factors. Is it true that every algebraic epimorphic subgroup $F\leqs H$ supports an $H$-expanding probability measure? 
\end{qu}

The answer to the above question is negative if we do not require $F$ to be epimorphic (Proposition~\ref{prop;epimorphic}) or to be algebraic (Example~\ref{ex;counter}).

On the other hand, let $F=A'U$ be an $\R$-split solvable epimorphic subgroup of $F$, where $U$ is a unipotent group and $A'$ is an $\R$-split algebraic torus normalizing $U$. Then \cite[\S7,~Lemma~(iii)]{bien-borel1} provides a sufficient condition (in terms of finite-generation of a monoid generated by certain characters of $A'$) for $F$ to contain an $\R$-split solvable epimorphic subgroup $F_0=A'_0U$ with one-dimensional $\R$-split algebraic torus $A'_0<A'$. In view of Lemma~\ref{lemma.1.torus}, any such subgroup $F_0$ supports $H$-expanding probability measures. However, we do not know whether the hypothesis of the aforementioned lemma of Bien--Borel is always satisfied in the context of the question above, or whether a different construction can be used to obtain $H$-expanding probability measures on $F$ in case it is not.

\section{Measure rigidity}\label{sec;rigidity}

This section is dedicated to the statements outlined in \S\ref{sec;intro.rigidity}. In \S\ref{sec;rigidity_proof}, we first prove our general measure rigidity result (Theorem~\ref{thm;rigidity}), followed by a discussion of stationary measures charging an orbit of the centralizer in \S\ref{sec;centralizer}, which leads to the proof of Corollary~\ref{cor;rigidity}. Finally, we more closely analyze, in \S\ref{sec;exp_grass}, expansion in which representations is necessary to obtain the conclusion of Theorem~\ref{thm;rigidity}. This will yield a finite criterion weaker than $H$-expansion for measure rigidity to hold when the ambient Lie group $G$ is fixed.

\subsection{Rigidity for expanding measures}\label{sec;rigidity_proof}
Let $\Lambda$ be a discrete subgroup of a real Lie group $G$ and $X=G/\Lambda$. Moreover, we let $H\leqs G$ be a connected semisimple subgroup without compact factors and with finite center and $\mu$ a probability measure on $H$.
For the proof of Theorem~\ref{thm;rigidity}, we will follow the strategy in the proof of~\cite[Theorem~1.3]{el}. The argument is based on the following measure classification result of Eskin--Lindenstrauss.
\begin{de}[{\cite[Definition~1.6]{el}}]\label{de;modZ}
Let $Z$ be a connected Lie subgroup of $G$. A probability measure $\mu$ on $G$ is said to be \emph{uniformly expanding mod $Z$} if the following hold:
\begin{enumerate}[label=(\alph*)]
\item $Z$ is normalized by $\Gamma_\mu$,
\item the conjugation action of $\Gamma_\mu$ on $Z$ factors through the action of a compact subgroup of $\Aut(Z)$, and
\item there is a $\Gamma_\mu$-invariant direct sum decomposition $\mathfrak g=\Lie(Z)\oplus V$ such that $\mu$ is uniformly expanding on $V$.
\end{enumerate}
\end{de}
\begin{thm}[{Eskin--Lindenstrauss~\cite[Theorem~1.7]{el}}]\label{thm;el}
Let $G$ be a real Lie group and $\Lambda<G$ a discrete subgroup. Suppose that $\mu$ is a probability measure on $G$ with finite first moment for which there exists a connected Lie subgroup $Z$ of $G$ such that $\mu$ is uniformly expanding mod $Z$. Let $\nu$ be any ergodic $\mu$-stationary probability measure on $G/\Lambda$. Then one of the following holds:
\begin{enumerate}[label=\textup{(\alph*)}]
\item There exists a closed subgroup $N\leqs G$ with $\dim(N)>0$, an $N$-homogeneous probability measure $\nu_0$ on $G/\Lambda$, and a $\mu$-stationary probability measure $\eta$ on $G/N$ such that
\begin{align*}
    \nu=\int_{G/N}g_*\nu_0\dd\eta(g).
\end{align*}
\item The measure $\nu$ is $\Gamma_\mu$-invariant and supported on a finite union of compact subsets of $Z$-orbits.
\end{enumerate}
\end{thm}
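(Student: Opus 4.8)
The plan is to reproduce, in outline, the \emph{exponential drift} scheme of Eskin--Lindenstrauss~\cite{el}, itself building on Benoist--Quint~\cite{bq13,bq132} and Eskin--Mirzakhani~\cite{eskin-mirzakhani}. Fix an ergodic $\mu$-stationary probability measure $\nu$ on $X=G/\Lambda$ and aim for the stated dichotomy. The whole argument revolves around the family of \emph{leafwise measures} of $\nu$ along the orbits of a dynamically defined expanding subgroup, and the dichotomy is, heuristically: either these leafwise measures are trivial (reduce to a point mass) almost everywhere---in which case the hypothesis that $\mu$ is uniformly expanding mod $Z$ forces $\nu$ to be $\Gamma_\mu$-invariant and carried by finitely many compact pieces of $Z$-orbits, alternative~(b)---or they are non-trivial on a set of positive measure, in which case ergodicity propagates this and the drift argument upgrades non-triviality to invariance under a nontrivial connected subgroup $U$, from which alternative~(a) is extracted.

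First I would set up the random-walk suspension: the shift $B=H^{\N}$ with $\beta=\mu^{\otimes\N}$, the skew product $T(b,x)=(\sigma b,g_1 x)$, and the martingale limits $\nu_b=\lim_n(g_1\cdots g_n)_*\nu$, which exist $\beta$-almost everywhere, satisfy $\nu=\int_B\nu_b\,d\beta(b)$, and obey the equivariance $\nu_b=(g_1)_*\nu_{\sigma b}$. Using the finite first moment hypothesis, the Oseledets theorem for the cocycle $(b,n)\mapsto\Ad(g_n\cdots g_1)$ on $\mathfrak g$ yields a measurable Lyapunov splitting; the defining conditions of ``uniformly expanding mod $Z$'' say precisely that $\Lie(Z)$ is the zero-exponent part (its conjugation action being bounded) and its complement $V$ carries only positive exponents, so the unstable subgroup $G^+$ (with Lie algebra $V$) is deterministic, $\mathfrak g=\Lie(Z)\oplus V$, and the transverse directions split into the expanding ones ($V$) and the bounded ones ($\Lie(Z)$). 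I would then pass to the conditional measures of the $T$-invariant lift $\widehat\nu$ on $B\times X$ along $G^+$-orbits; they inherit a $T$-equivariance that the drift argument exploits, and recurrence of the stationary random walk (automatic since $\nu$ is a probability measure) provides a positive-measure set of typical points inside a fixed compact set.

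The heart of the proof, and the step I expect to be by far the main obstacle, is the exponential drift itself. Assuming the leafwise measures along $G^+$-orbits are non-trivial, I would, for a typical $(b,x)$, introduce a second point $\exp(v)x$ with $v\in V$ chosen \emph{transverse} to the direction of the leafwise measure, run a coupled random walk from both points using the change-of-measure (``$\varepsilon$-Markov'') device of Benoist--Quint, and invoke positivity of the exponents together with the bounded-distortion and non-divergence estimates of Eskin--Mirzakhani to show that after time $n\sim\frac1\lambda\log\frac1{\norm{v}}$ the two trajectories are separated by an amount of order one while remaining close and both $\nu$-typical. The delicate point---the one that makes the argument non-vacuous---is controlling the coupling so that the transverse displacement at time $n$ converges, along a subsequence, to a \emph{nonzero} vector $w\in V$ not tangent to the leafwise measure; the equivariance then forces the leafwise measures to be invariant under translation by $\exp(w)$. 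Running this over all typical points and passing to the group generated produces a nontrivial connected $U\leqs G^+$, normalized by $\Gamma_\mu$, leaving the leafwise measures Haar-invariant along its orbits.

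Finally I would close the dichotomy. In the non-trivial case, this Haar-invariance bootstraps to $\nu$ being invariant under a positive-dimensional subgroup; its $N$-ergodic decomposition, together with a structure lemma---established by induction on $\dim G$, peeling off the homogeneous part and recursing on $G/N$---identifying the ergodic components as $N$-homogeneous measures $\nu_0$, yields the disintegration $\nu=\int_{G/N}g_*\nu_0\,d\eta(g)$ with $\eta$ a $\mu$-stationary measure on $G/N$, which is alternative~(a). In the trivial case, triviality of the leafwise measures along $G^+$-orbits means $\nu$ witnesses no expansion in the $V$-directions, so $\nu$ is $\Gamma_\mu$-invariant; since $\mathfrak g=\Lie(Z)\oplus V$, $\Gamma_\mu$ normalizes $Z$ and acts on it through a compact subgroup of $\Aut(Z)$, and a $\Gamma_\mu$-invariant measure cannot have transverse expansion, $\nu$ must be supported on a finite union of compact subsets of $Z$-orbits, which is alternative~(b). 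Throughout, the finite first moment hypothesis feeds the ergodic theorems and the integrability in the drift estimates, while the ``mod $Z$'' refinement is exactly what collapses the trivial alternative to a statement about $Z$-orbits rather than about a general stable subgroup.
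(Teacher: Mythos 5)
You should first note that the paper does not prove this statement at all: it is Theorem~1.7 of Eskin--Lindenstrauss~\cite{el}, quoted verbatim and used as a black box in the proof of Theorem~\ref{thm;rigidity}. So there is no internal proof to compare against; the only fair comparison is with the actual Eskin--Lindenstrauss argument, and against that benchmark your text is a roadmap rather than a proof. The overall architecture you describe (martingale limits $\nu_b$, Oseledets data for $\Ad(g_n\dotsm g_1)$, leafwise measures along expanding directions, the trivial/non-trivial dichotomy, exponential drift with the Benoist--Quint change-of-measure and Eskin--Mirzakhani non-divergence) is indeed the correct skeleton, but every step that carries the actual weight is asserted by appeal to the very papers whose content is being proved. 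The drift step itself --- choosing the transverse displacement, controlling the coupling so that the limiting displacement $w$ is nonzero and not tangent to the leafwise measure, and upgrading translation-invariance of leafwise measures to invariance of $\nu$ under a positive-dimensional subgroup --- is exactly where the hundred-odd pages of \cite{el} live, and your paragraph does not supply it. There are also concrete inaccuracies: the complement $V$ of $\Lie(Z)$ need not be a subalgebra, so ``the unstable subgroup $G^+$ with Lie algebra $V$'' is not well defined, and the Oseledets splitting inside $V$ is only measurable in $b$, not deterministic; Eskin--Lindenstrauss must work with $b$-dependent (and non-group) objects precisely because of this.

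The two endgames are likewise gapped. For alternative (a), passing from ``$\nu$ is invariant under a nontrivial connected subgroup'' to the precise structure ``$\nu=\int_{G/N}g_*\nu_0\dd\eta(g)$ with $\nu_0$ an $N$-homogeneous probability measure and $\eta$ a $\mu$-stationary measure on $G/N$'' is not a routine ergodic decomposition; identifying the fiber measures as homogeneous and the base measure as stationary on the correct quotient is a genuine argument in \cite{el}, and ``a structure lemma established by induction on $\dim G$'' is a placeholder, not a proof. For alternative (b), the implication ``triviality of the leafwise measures forces $\Gamma_\mu$-invariance of $\nu$'' is itself a theorem (it rests on the constancy of the martingale $b\mapsto\nu_b$), and the further claim that $\nu$ then lives on finitely many \emph{compact} pieces of $Z$-orbits needs the uniform expansion on $V$ together with a recurrence argument in the spirit of Lemma~\ref{lem;stationary} and the hypothesis that the conjugation action of $\Gamma_\mu$ on $Z$ factors through a compact group; ``a $\Gamma_\mu$-invariant measure cannot have transverse expansion'' is the right slogan but not a proof of either finiteness or compactness. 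In short, your proposal correctly identifies the strategy of the cited proof but leaves all of its essential difficulties unaddressed, so it cannot stand in for a proof of the theorem.
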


The following two lemmas will go into the proof of Theorem~\ref{thm;rigidity}. 
\begin{lem}
	\label{lem;modZ}
	Suppose that $\mu$ is $H$-expanding. Then the Lie algebra $\mathfrak g$ of $G$ admits an $H$-invariant direct sum decomposition $\mathfrak g=\mathfrak l\oplus \mathfrak v$, where $\mathfrak l$ is the Lie algebra of the centralizer $L$ of $\Gamma_\mu$ in $G$ and $\mathfrak v\subset\mathfrak{g}$ is a subspace on which $\mu$ is uniformly expanding. In particular, $\mu$ is uniformly expanding mod $L^\circ$ in the sense of Definition~\textup{\ref{de;modZ}}.
\end{lem}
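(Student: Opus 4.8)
\textbf{Proof proposal for Lemma~\ref{lem;modZ}.}

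The plan is to decompose the adjoint action of $H$ on $\mathfrak{g}$ into isotypic-type pieces and identify the centralizer's Lie algebra with the $H$-fixed part. First, observe that since $H$ is connected, semisimple with finite center, the action $\Ad\colon H\to\GL(\mathfrak g)$ is completely reducible: we may write $\mathfrak g=\mathfrak g^H\oplus\mathfrak v$ as an $H$-invariant direct sum, where $\mathfrak g^H=\set{X\in\mathfrak g\for\Ad(h)X=X\text{ for all }h\in H}$ and $\mathfrak v$ is the sum of all non-trivial irreducible $H$-submodules. (Concretely, $\mathfrak v$ is the orthogonal complement of $\mathfrak g^H$ with respect to any $H$-invariant inner product, which exists via averaging over a maximal compact subgroup of $H$.)

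Next I would show $\mathfrak g^H=\mathfrak l$, the Lie algebra of $L=C_G(\Gamma_\mu)$. One inclusion is nearly immediate: if $X\in\mathfrak g^H$ then $\exp(tX)$ is centralized by $H\supset\Gamma_\mu$, so $X\in\mathfrak l$. For the reverse inclusion, take $X\in\mathfrak l$, so $\Ad(\gamma)X=X$ for every $\gamma\in\Gamma_\mu$. The key point is that $\Gamma_\mu$ is Zariski dense in $H$ in the appropriate sense relevant to the representation $\Ad_G\colon H\to\GL(\mathfrak g)$ — more precisely, by Proposition~\ref{prop;epimorphic} the group $\Gamma_\mu$ is epimorphic in $H$, hence a vector fixed by $\Gamma_\mu$ in any representation of $H$ (in particular in $(\Ad_G|_{\mathfrak g},\mathfrak g)$) is automatically fixed by all of $H$. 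Therefore $X\in\mathfrak g^H$, establishing $\mathfrak l=\mathfrak g^H$ and hence the $H$-invariant decomposition $\mathfrak g=\mathfrak l\oplus\mathfrak v$.

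It remains to verify that $\mu$ is uniformly expanding on $\mathfrak v$ and that the conditions of Definition~\ref{de;modZ} hold with $Z=L^\circ$. Since $\mathfrak v$ carries the representation $\mathfrak g/\mathfrak g^H$ of $H$ with no nonzero $H$-fixed vectors, the $H$-expansion hypothesis on $\mu$ gives precisely that $\Ad_*\mu$ restricted to $\mathfrak v$ is uniformly expanding; this is condition~(c) of Definition~\ref{de;modZ} (with $V=\mathfrak v$). For condition~(a), $L$ is centralized by $\Gamma_\mu$, so $\Gamma_\mu$ normalizes $L$ and hence $L^\circ$. For condition~(b), the conjugation action of $\Gamma_\mu$ on $L^\circ$ is trivial (as $\Gamma_\mu$ centralizes $L\supseteq L^\circ$), so a fortiori it factors through a compact subgroup of $\Aut(L^\circ)$. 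And $\Lie(L^\circ)=\mathfrak l$, so the decomposition $\mathfrak g=\Lie(L^\circ)\oplus\mathfrak v$ is the required $\Gamma_\mu$-invariant (indeed $H$-invariant) splitting. This completes the argument.

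The main subtlety — and the step I would treat most carefully — is the passage from ``$\Gamma_\mu$-fixed'' to ``$H$-fixed'' in identifying $\mathfrak l$ with $\mathfrak g^H$; it is exactly here that $H$-expansion (via the epimorphicity consequence of Proposition~\ref{prop;epimorphic}, or directly via Proposition~\ref{prop;facts}(i) applied to $(\Ad_G,\mathfrak g)$) is used, and without it the lemma is false in general. Everything else is soft: complete reducibility of $\Ad|_H$ and the characterization of $\Lie(L)$ as the fixed subspace of the adjoint action of $L$'s centralizing condition.
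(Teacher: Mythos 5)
Your proof is correct and takes essentially the same route as the paper: identify $\mathfrak l$ with the $H$-fixed subspace $\mathfrak g^H$ via the epimorphicity of $\Gamma_\mu$ (Proposition~\ref{prop;epimorphic}), take an $H$-invariant complement $\mathfrak v$ by semisimplicity, and read off uniform expansion on $\mathfrak v$ directly from the definition of $H$-expansion. You spell out a few more details (both inclusions $\mathfrak l\subset\mathfrak g^H$ and $\mathfrak g^H\subset\mathfrak l$, and the explicit check of conditions~(a)–(c) of Definition~\ref{de;modZ}) that the paper leaves implicit, but the underlying argument is the same.
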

\begin{proof}
Since, by Proposition~\ref{prop;epimorphic}, $\Gamma_\mu$ is epimorphic in $H$, $\mathfrak l$ is the space of $H$-fixed vectors in the adjoint representation of $G$. Semisimplicity thus implies the existence of an $H$-invariant complementary subspace $\mathfrak v$. Now the claim follows directly from the definition of $H$-expansion. 
\end{proof}
The second lemma concerns $\mu$-stationary measures assigning positive mass to centralizer orbits.
\begin{lem}[{\cite[Lemma~7.6]{bq13}}]
\label{lem;key}
Suppose that $\nu$ is an ergodic $\mu$-stationary probability measure on $X$  such that $\nu$ assigns positive  mass to some $L$-orbit in $X$, where $L=C_G(\Gamma_\mu)$. Let $L_0$ be any open subgroup of  $L\cap  \Stab_G(\nu)$.
Then  $\nu$ is homogeneous under the closed subgroup  $\Gamma_\mu L_0$ and $L_0$ is open in $\Stab_G(\nu)$.
\end{lem}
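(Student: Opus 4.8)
The plan is to follow the argument of Benoist--Quint~\cite[Lemma~7.6]{bq13}, adapting it to the present setting where $L = C_G(\Gamma_\mu)$ is the centralizer of $\Gamma_\mu$ rather than of a Zariski-dense group. Suppose $\nu$ assigns positive mass to some $L$-orbit $Lx_0$ in $X$. The first step is to note that since $L$ commutes with $\Gamma_\mu$, the collection of $L$-orbits is permuted by $\Gamma_\mu$, and in fact $g L x = L g x$ for every $g \in \Gamma_\mu$. Hence the function $x \mapsto \nu(Lx)$ (suitably interpreted via a measurable cross-section of the $L$-action, as in~\cite{bq13}) is $\mu$-harmonic; by ergodicity of $\nu$ and the maximum principle for bounded harmonic functions, it is $\nu$-a.e.\ equal to a positive constant $c > 0$. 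Since the $L$-orbits have constant positive $\nu$-mass, there can be only finitely many of them carrying $\nu$, and $\Gamma_\mu$ permutes this finite set; replacing $\nu$ by an ergodic component if necessary — or rather, using $\mu$-ergodicity directly — one sees that $\Gamma_\mu$ acts transitively on the $L$-orbits charged by $\nu$, and in particular $\nu$ is supported on a single $\Gamma_\mu L$-orbit, say $\Gamma_\mu L x_0$.

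The second step is to understand the structure on this orbit. Fix a point $x_0$ in the support. The stabilizer $\Stab_G(\nu)$ contains $L \cap \Stab_G(\nu)$, and by hypothesis $L_0$ is an open subgroup of the latter. On the orbit $\Gamma_\mu L x_0$ the measure $\nu$, restricted to the $L$-orbit $L x_0$ and normalized, is a $\mu'$-stationary measure for the induced (first-return–type) random walk of $\Gamma_\mu$ on the finite set of $L$-orbits, landing back on $L x_0$; since $L$ is abelian — or at least since the conjugation action on the relevant quotient is controlled — one argues that this normalized measure on $L x_0 \cong L/\Stab_L(x_0)$ is invariant under a finite-index, hence open, subgroup, and then under all of $L_0$ after replacing $x_0$. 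This is where one uses that stationary measures on (quotients of) abelian groups that are also quasi-invariant in the right way must be Haar, combined with the compactness coming from finiteness of the orbit count. One concludes that $\nu$ is $L_0$-invariant and that $L_0$ is open in $\Stab_G(\nu) \cap L$, and then that $\Gamma_\mu L_0$ preserves $\nu$ and acts transitively on its support, so $\nu$ is homogeneous under $\Gamma_\mu L_0$; finally $L_0$ being open in $\Stab_G(\nu)$ follows because any element of $\Stab_G(\nu)$ near the identity preserves both the $L$-orbit decomposition structure near $x_0$ and $\nu$, forcing it into $L \cap \Stab_G(\nu)$, hence into a neighborhood inside $L_0$.

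The main obstacle I expect is the verification that the normalized measure on a single $L$-orbit is genuinely Haar (equivalently $L_0$-invariant) rather than merely stationary for the return process: one must carefully set up the measurable structure of the $L$-action on $X$ (a cross-section, or the Rokhlin disintegration of $\nu$ along $L$-orbits) and argue that the induced random walk on the finite orbit set, together with the $L$-quasi-invariance of $\nu$, upgrades stationarity to invariance. In the Benoist--Quint setting this is handled via the properties of the group generated by the support acting on the centralizer; in our generality the key point is that condition on $\nu$ charging an $L$-orbit already forces, via $\mu$-ergodicity, a rigid finite combinatorial picture, after which the standard fact that an ergodic stationary measure for a (finite) random walk with an invariant direction is carried by Haar on that direction closes the argument. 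Since we are told we may invoke~\cite[Lemma~7.6]{bq13} essentially verbatim, the bulk of the work is checking that its proof only used the abelian/centralizer structure and $\mu$-ergodicity, not Zariski density, which it does.
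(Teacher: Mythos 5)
The paper itself does not reprove this statement: it cites \cite[Lemma~7.6]{bq13} and only observes that the extra conclusion (``$L_0$ is open in $\Stab_G(\nu)$'') is a byproduct of Benoist--Quint's proof, which shows $\supp(\nu)$ is a finite union of closed $L_0$-orbits transitively permuted by $\Gamma_\mu$. Your proposal attempts to reconstruct that proof, but it has two genuine gaps and one false claim.

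First, your Step~1 rests on the assertion that $x \mapsto \nu(Lx)$ is $\mu$-harmonic. It is not: writing $F(x)=\nu(Lx)$ and using $gLx=Lgx$ for $g\in\Gamma_\mu$, one computes $\int F(gx)\dd\mu(g)=\int (g^{-1}_*\nu)(Lx)\dd\mu(g)=(\check\mu*\nu)(Lx)$, whereas stationarity only gives $\mu*\nu=\nu$. In other words $F$ is $\check\mu$-harmonic (``backward-harmonic''), not $\mu$-harmonic, and the maximum principle / ergodicity argument you gesture at requires the forward dynamical system; applied naively it proves nothing. The approach Benoist--Quint actually use sidesteps this entirely: since $L_0\subset\Stab_G(\nu)$ by hypothesis, $\nu$ is $L_0$-invariant, so one disintegrates $\nu$ along $L_0$-ergodic components, uses that $\Gamma_\mu$ commutes with $L_0$ to see that the random walk permutes these components, and then feeds in $\mu$-ergodicity. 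The positivity of $\nu$ on an $L$-orbit together with $L_0$-ergodicity of each component forces each relevant component to be the Haar probability measure on a compact $L_0$-orbit, whence finiteness.

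Second, your Step~2 undertakes to prove that the measure restricted to an $L$-orbit is $L_0$-invariant — but that is already given, since $L_0$ is a subgroup of $\Stab_G(\nu)$. This is not a conclusion; it is the hypothesis that makes the ergodic decomposition available. The hard content you are glossing over lies elsewhere: showing that an $L_0$-invariant $L_0$-ergodic probability measure that charges a single $L$-orbit must be supported on a closed $L_0$-orbit, which is where one uses that $\Stab_G(x_0)$ is discrete (so $Lx_0\cong L/L_{x_0}$) and the smoothness of the $L_0$-action on that homogeneous space. Finally, your parenthetical ``since $L$ is abelian'' is simply false: $L=C_G(\Gamma_\mu)$ is in general a non-abelian Lie group, and nothing in the correct argument appeals to commutativity of $L$, only to the commutation of $L$ (hence $L_0$) with $\Gamma_\mu$. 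Your sketch of the final openness claim is essentially right: once $\supp(\nu)$ is a finite disjoint union of closed $L_0$-orbits, any $g\in\Stab_G(\nu)$ near the identity sends $L_0x_0$ to itself, so $gx_0=lx_0$ with $l\in L_0$ small, and discreteness of $\Stab_G(x_0)$ gives $g=l\in L_0$.
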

We point out that the last claim in the statement above follows from the proof of~\cite[Lemma~7.6]{bq13}, where it is shown that the support of $\nu$ is a finite union of closed $L_0$-orbits which are transitively permuted by $\Gamma_\mu$. In fact, even more conclusions can be drawn in the context of this lemma; see Proposition~\ref{prop;old_key}.

\begin{proof}[Proof of Theorem~\textup{\ref{thm;rigidity}}]
Our main tool is Theorem~\ref{thm;el}. 
Its assumptions are satisfied, since by  Lemma~\ref{lem;modZ}, $\mu$ is uniformly expanding mod $L^\circ$, where $L$ denotes the centralizer of $\Gamma_\mu$ in $G$. 
If Theorem~\ref{thm;el}(b) holds, then by Lemma~\ref{lem;key}, $\nu$ is homogeneous and the connected component of $\Stab_G(\nu)$ is contained in $L$. By the epimorphic property of $\Gamma_\mu$ in $H$ from Proposition~\ref{prop;epimorphic} applied to the adjoint representation of $G$, the connected components of $C_G(\Gamma_\mu)$ and $C_G(H)$ coincide. Thus, it follows that the connected component of $\Stab_G(\nu)$ is centralized by $H$.
	
If Theorem~\ref{thm;el}(a) holds, then there exists a closed subgroup $N$ of $G$ with $\dim(N)>0$,  an $N$-homogeneous probability measure $\nu_0$ on $G/\Lambda$, 
	and a $\mu$-stationary probability measure $\eta $ on $G/N$ such that 
\begin{align}\label{nu_decomp}
	\nu=\int _{G/N} g_*\nu_0 \dd \eta (g). 
\end{align}
	We may assume that $\eta$ is $\mu$-ergodic. Indeed, if $\eta=\int_ Y  \eta_y \dd y$	is a $\mu$-ergodic decomposition of $\eta$, then 
	\begin{align*}
	\nu =\int_Y \biggl(\int_{G/N} g_* \nu_0 \dd \eta_y(g)\biggr)\dd y
	\end{align*}
	is a convex decomposition of
	$\nu$ into $\mu$-stationary
	 measures. 
	Since $\nu$ is $\mu$-ergodic, we must have $\nu=\int_{G/N} g_* \nu_0 \dd \eta_y(g) $ for 
	almost every $y$. Thus, we can replace $\eta$ by one of the $\eta_y$, if necessary. We consider $N$ such that $\dim(N)$ is maximal among possible representations of $\nu$ of the form \eqref{nu_decomp}.

Now consider the adjoint action of $G$ on $S^2(\mathfrak{g}^{\wedge\dim(N)})$, where $S^2$ denotes the symmetric square representation. Let $\omega=v\otimes v$, where $v\in\mathfrak{g}^{\wedge\dim(N)}$ corresponds to a basis of the Lie algebra of $N$. Let $P$ be the stabilizer of $\omega$ in $G$. Since $N$ admits a lattice, it is unimodular, so that $N$ acts on $v$ by $\pm 1$. Thus, $N$ fixes $\omega$, that is $N\leqs P$. 
Let $\eta'$ be the pushforward of $\eta$ via the natural projection map $G/N\to G/P$. The measure $\eta'$ can be thought of as an ergodic $\mu$-stationary measure on $S^2(\mathfrak{g}^{\wedge\dim(N)})$. By  Corollary~\ref{cor;stationary}, the measure $\eta'$ must concentrate on the subspace of $H$-fixed vectors. Then by ergodicity, $\eta'$ is a Dirac measure.  After replacing $N$ and $P$ by their conjugates, we may assume without loss of generality that $\eta'$ is the Dirac measure on the coset $P$. It follows that $\omega$ is $H$-fixed. Hence $H\leqs P$ and $H\cap N^\circ$ is a normal subgroup of $H$. If $H\leqs N^\circ$, then the action of $H$ on $P/N$ is trivial, so that by ergodicity of $\eta$ we have 
$\nu=g_*\nu_0$ for an element $g\in P$ with $\supp(\eta)=\set{gN}$ and we are done.

So let us now assume that $H$ is not contained in $N^\circ$. In this case, we consider the action of $(H/(H\cap N^\circ), \mu') $ on $P/N \cong (P/N^\circ)/(N/N^\circ)$ with the $\mu'$-stationary measure $\eta$, where $\mu'$ is the pushforward of $\mu$ under the natural projection map $H\to H/(H\cap N^\circ)$. Since $\mu$ is $H$-expanding and $H$ is not contained in $N^\circ$, $\mu'$ is $H/(H\cap N^\circ)$-expanding in view of Proposition~\ref{prop;facts}(iii). Now, in view of Lemma~\ref{lem;modZ}, we are in a position to apply Theorem~\ref{thm;el} again for $\mu'$. We claim that thanks to the choice of $N$ as having maximal dimension in \eqref{nu_decomp}, the case (a) in Theorem~\ref{thm;el} does not occur. Suppose it does. This means that there exist a closed subgroup $M<P/N^{\circ}$ of positive dimension, an $M$-homogeneous probability measure $\nu_0'$ on $P/N$ and a $\mu'$-stationary probability measure $\eta'$ on $(P/N^{\circ})/M$ such that we have
	\begin{align}\label{eta_decomp}
	    \eta=\int_{(P/N^{\circ})/M} g_* \nu_0' \dd\eta'(g).
	\end{align}
	Denote by $\hat{M}$ the preimage of $M$ under the projection $P \to P/N^{\circ}$ so that we can identify $(P/N^{\circ})/M$ with $P/\hat{M}$. By combining \eqref{nu_decomp} and \eqref{eta_decomp}, we deduce that
	\begin{equation*}
	    \nu=\int_{P/N} \int_{P/\hat{M}} (gh)_*\nu_0 \dd\eta'(g) d\nu_0'(h)=\int_{P/\hat{M}} g_* \biggl(\int_{P/N} h_* \nu_0 \dd\nu_0'(h)\biggr) \dd\eta'(g).
	\end{equation*}
Now it is easily observed that the probability measure $\Psi=\int_{P/N} h_* \nu_0 \dd\nu_0'(h)$ on $G/\Lambda$ is $\hat{M}$-invariant and supported on finitely many $\hat{M}$-orbits. 
By $\mu$-ergodicity of $\nu$, for every $\hat{M}$-ergodic component $\Psi_y$ of $\Psi$, we have $$\nu=\int_{P/\hat{M}}g_*\Psi_y\dd\eta'(g).$$ Take such a component $\Psi_y$ which assigns positive mass to an $\hat{M}$-orbit. Then $\Psi_y$ is $\hat{M}$-homogeneous and the fact that $\dim(\hat{M})>\dim(N)$ yields a contradiction to the maximality of $\dim(N)$ in \eqref{nu_decomp}.

Therefore we can conclude by case (b) of Theorem~\ref{thm;el} that $\eta$ is $\Gamma_{\mu'}$-invariant and supported on finitely many compact subsets of $C_{P/N^\circ}(\Gamma_{\mu'})$-orbits. 
    By Lemma~\ref{lem;key}, $\eta$ is $M$-homogeneous for a closed subgroup $M<P/N^\circ$. In particular, $\eta$ can be written in the form \eqref{eta_decomp} with $\nu_0'=\eta$ and $\eta'$ the Dirac mass at the identity coset, the latter being $\mu'$-stationary since $\eta$ is $\Gamma_{\mu'}$-invariant. As we have argued above, this cannot happen if the support of $\eta$ has positive dimension. Thus, $\eta$ is a finite periodic orbit measure, and using \eqref{nu_decomp} it directly follows that $\nu$ is homogeneous. The connected component of $\Stab_G(\nu)$ is $N^\circ$, which is normalized by $H$, as we already established above. Hence, the proof is complete.
\end{proof}

\subsection{Stationary measures charging an orbit of the centralizer}\label{sec;centralizer}

The following proposition gives additional information about the measure $\nu$ in the setting of Lemma~\ref{lem;key}, or more generally, in the setting of~\cite[\S7.3]{bq13}. It will be used below to deduce Corollary~\ref{cor;rigidity}(i) from Theorem~\ref{thm;rigidity}. 

The general setting is as follows: $G$ is a locally compact second countable group, $\Lambda$ a discrete subgroup of $G$, $\mu$ is a probability measure on $G$, $L$ denotes the centralizer of $\Gamma_\mu$ in $G$, and $\nu$ is a $\mu$-ergodic $\mu$-stationary probability measure on $X=G/\Lambda$ assigning positive mass to some $L$-orbit. Finally, $L_0$ is any open subgroup of $L\cap\Stab_G(\nu)$.

\begin{prop}\label{prop;old_key}
Retain the notation and assumptions above and fix $x=g\Lambda\in \supp(\nu)$.
 Let $\nu_0$ be the restriction of $\nu$ to $L_0x$, $\Gamma_0$ the stabilizer of $\nu_0$ in $\Gamma_\mu$ and 
  \begin{align*}
 	\Gamma_0^L=\set{l \in L_0 \for \text{there exists } h \in \Gamma_0 \text{ such that } hl \in g \Lambda g^{-1}}.
 \end{align*}
Then in addition to the conclusion of Lemma~\textup{\ref{lem;key}}, the following holds:
\begin{enumerate}[label=\textup{(\roman*)}]
	\item  $\Gamma_0$ has finite index in $\Gamma_\mu$,
	\item $\Gamma_0^L$ is a dense subgroup of $L_0$ with $\Gamma_0x=\Gamma_0^Lx$, and 
	\item  $L_0\cap g\Lambda g^{-1}$ is a cocompact normal subgroup of $L_0$.
\end{enumerate}
 In particular, $\nu$ is compactly supported and is the unique ergodic $\mu$-stationary probability measure on $X$ assigning positive measure to $\supp(\nu)$.
\end{prop}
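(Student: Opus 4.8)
The strategy is to exploit the structure of $\supp(\nu)$ provided by (the proof of) Lemma~\ref{lem;key}: it is a finite disjoint union of compact $L_0$-orbits which are transitively permuted by $\Gamma_\mu$. Fix $x=g\Lambda\in\supp(\nu)$ and let $\nu_0$ be the restriction of $\nu$ to the compact $L_0$-orbit $L_0x$. Since $\Gamma_\mu$ permutes the finitely many $L_0$-orbits forming $\supp(\nu)$ and $\nu$ is ergodic (hence this permutation action is transitive), the stabilizer $\Gamma_0=\Stab_{\Gamma_\mu}(\nu_0)$ is the kernel of the induced homomorphism from $\Gamma_\mu$ to the symmetric group on these finitely many orbits, so it has finite index in $\Gamma_\mu$; this gives~(i). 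Compact support of $\nu$ is then immediate since $\supp(\nu)$ is a finite union of compact sets.

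For~(ii), the key observation is that $L_0x$ is a compact orbit of $L_0$, hence isomorphic as an $L_0$-space to $L_0/(L_0\cap g\Lambda g^{-1})$, and the measure $\nu_0$ (being $L_0$-invariant on a compact homogeneous space) is the corresponding Haar measure. An element $l\in L_0$ fixes $\nu_0$ and maps $L_0x$ into $\supp(\nu)$; I would unwind the defining condition of $\Gamma_0^L$ to see that $\Gamma_0^L$ is precisely the set of $l\in L_0$ for which $lx$ lies in the $\Gamma_0$-orbit of $x$ (equivalently $hlx=x$ for some $h\in\Gamma_0$, i.e.\ $hl\in g\Lambda g^{-1}$), so by construction $\Gamma_0^Lx=\Gamma_0x\cap L_0x$; combined with the fact that $\Gamma_0x\subset\supp(\nu)=\Gamma_\mu$-orbit of the compact $L_0$-orbits and $\Gamma_0$ fixes $\nu_0$ hence preserves $L_0x$, we get $\Gamma_0x=\Gamma_0^Lx$. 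That $\Gamma_0^L$ is a subgroup of $L_0$ follows from a direct computation using $L_0\subset L=C_G(\Gamma_\mu)$ (so that elements of $\Gamma_0$ commute with elements of $L_0$): if $h_il_i\in g\Lambda g^{-1}$ for $i=1,2$ then $(h_1h_2)(l_1l_2)=(h_1l_1)(h_2l_2)\in g\Lambda g^{-1}$, using commutativity, and similarly for inverses. Density of $\Gamma_0^L$ in $L_0$ is where the dynamical input enters: since $\Gamma_0$ fixes $\nu_0$ and acts on the compact homogeneous space $L_0x$, and since $\nu_0$ is $\mu|_{\Gamma_0}$-stationary and $L_0x$ carries a unique $L_0$-invariant probability measure, the closure $\overline{\Gamma_0^L}$ is a closed subgroup of $L_0$ whose orbit of $x$ supports $\nu_0$; the fact that $\nu_0$ is the full Haar measure on $L_0x$ (equivalently, $\supp(\nu_0)=L_0x$, which holds because $L_0$ was chosen open in $L\cap\Stab_G(\nu)$ and $\nu$ charges an $L$-orbit positively) forces $\overline{\Gamma_0^L}x=L_0x$, and then $\overline{\Gamma_0^L}=L_0$ because the stabilizer $L_0\cap g\Lambda g^{-1}$ is common to both.

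For~(iii), normality of $L_0\cap g\Lambda g^{-1}$ in $L_0$: let $\Delta=L_0\cap g\Lambda g^{-1}$. Given $l\in L_0$ and $\gamma\in\Delta$, I want $l\gamma l^{-1}\in\Delta$. Since $L_0x\cong L_0/\Delta$ is compact, $\Delta$ is cocompact in $L_0$; so the remaining point is normality. Here I would use that for $l=l'$ in the dense subgroup $\Gamma_0^L$ the conjugation is controlled: writing $l'=h^{-1}\delta$ with $h\in\Gamma_0$ and $\delta\in g\Lambda g^{-1}$, and using that $h$ commutes with everything in $L_0$ (as $h\in\Gamma_\mu$ and $L_0\subset C_G(\Gamma_\mu)$), conjugation by $l'$ on $\Delta$ equals conjugation by $\delta$, which preserves $g\Lambda g^{-1}$; one also checks it preserves $L_0$ since $l'\in L_0$. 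Thus $\Gamma_0^L$ normalizes $\Delta$, and by density and the fact that $N_{L_0}(\Delta)$ is closed (as $\Delta$ is discrete, or rather as $\Delta$ is closed in $L_0$), we conclude $L_0$ normalizes $\Delta$, giving~(iii). Finally, for the uniqueness claim: if $\nu'$ is another ergodic $\mu$-stationary probability measure on $X$ with $\nu'(\supp(\nu))>0$, then by Lemma~\ref{lem;key} applied to $\nu'$ (which also charges an $L$-orbit positively, since $\supp(\nu)$ is a finite union of $L_0$-orbits) $\nu'$ is $\Gamma_\mu L_0'$-homogeneous for some open $L_0'\leqslant L\cap\Stab_G(\nu')$; comparing supports inside the compact set $\supp(\nu)$ and using that both are finite unions of compact homogeneous orbits transitively permuted by $\Gamma_\mu$, one forces $\supp(\nu')=\supp(\nu)$ and then, since both measures are determined as the $\Gamma_\mu$-average of the unique invariant probability measure on a single compact orbit, $\nu'=\nu$.

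\textbf{Main obstacle.} I expect the density of $\Gamma_0^L$ in $L_0$ to be the crux: it requires translating the measure-rigidity/ergodicity hypothesis into the statement that $\Gamma_0$ acts with dense orbits on the compact homogeneous space $L_0/\Delta$, which in turn relies on $\nu_0$ being genuinely the Haar measure on $L_0x$ and not supported on a smaller closed subgroup orbit — this is essentially the content of the choice of $L_0$ as open in $L\cap\Stab_G(\nu)$ together with the positivity of $\nu$ on an $L$-orbit, and it is the one place where a ``soft'' topological argument must be combined carefully with the stationarity. The other steps are bookkeeping with the commutativity $L_0\subset C_G(\Gamma_\mu)$ and with the finite permutation action of $\Gamma_\mu$ on the orbits in $\supp(\nu)$.
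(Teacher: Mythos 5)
Your overall structure follows the paper's: you exploit the description of $\supp(\nu)$ as a finite disjoint union of compact $L_0$-orbits transitively permuted by $\Gamma_\mu$ (from the proof of Lemma~\ref{lem;key}), and you lean on the commutativity $L_0\subset C_G(\Gamma_\mu)$ in parts~(i) and~(iii) exactly as the paper does; your computation for~(iii) ($l'\gamma l'^{-1}=\delta\gamma\delta^{-1}$ via commutativity and conjugation by $\delta=hl'\in g\Lambda g^{-1}$) is the paper's computation in slightly different notation, and (i) is fine modulo the cosmetic slip of calling the stabilizer of one orbit a ``kernel.''

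However, the step you yourself single out as the crux---density of $\Gamma_0^L$ in $L_0$---is where your argument has a genuine gap. You write that the fact that $\nu_0$ is the full Haar measure on $L_0x$ ``forces $\overline{\Gamma_0^L}x=L_0x$,'' but you never supply the mechanism, and the implicit one you gesture at does not work: $\overline{\Gamma_0 x}$ is a closed $\Gamma_0$-invariant subset of $L_0 x$, but $\Gamma_0$-ergodicity of $\nu_0$ only tells you its $\nu_0$-measure is $0$ or $1$, and there is no a priori reason to rule out measure $0$ (a proper closed subset containing $x$ can perfectly well be $\nu_0$-null, even though $x\in\supp(\nu_0)$). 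The ergodic theorem gives density of $\Gamma_0 y$ for $\nu_0$-almost every $y\in L_0 x$, not for the specific $x$ you fixed. The paper closes this gap by a short but essential trick: pick such a generic $y=l_0x$ with $l_0\in L_0$, so that $\overline{\Gamma_0 l_0 x}=L_0x$, and then use $\Gamma_0 l_0 x=l_0\Gamma_0 x$ (commutativity again) to conclude $\overline{\Gamma_0 x}=l_0^{-1}L_0 x=L_0 x$. Without this transfer-to-$x$ step your proof of~(ii) does not go through as written. (Your treatment of the final uniqueness statement is, like the paper's, rather terse; once you correctly obtain $\overline{\Gamma_\mu x}=\supp(\nu)=\supp(\nu')$ from the density in~(ii), the two arguments are in the same spirit.)
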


\begin{proof}
By~\cite[Lemma~7.6]{bq13} and its proof, we know that $\nu$ is the homogeneous measure 
on $\Gamma_\mu L_0 x$ and that $\supp(\nu)$ consists of finitely many closed $L_0$-orbits which are transitively permuted by $\Gamma_\mu$. In particular, we have $\nu(L_0x)>0$. 
It follows  that $\Gamma_0$ has finite index in $\Gamma_\mu$. Moreover, since $\Gamma_\mu$ preserves $\nu$ and acts ergodically, the group $\Gamma_0$ acts ergodically with respect to $\nu_0$. This implies that we can find $l_0\in L_0$ such that $\Gamma_0 l_0 x$ is dense in $L_0 x$. As $l_0$ commutes with $\Gamma_0$, it immediately follows that $\Gamma_0x$ is dense in $L_0x$. Since $\Gamma_0^L$ is precisely defined for $\Gamma_0x=\Gamma_0^Lx$ to hold, we conclude that $\Gamma_0^L=\Gamma_0^L(L_0\cap g\Lambda g^{-1})$ is dense in $L_0$.

We next prove that $L_0\cap g\Lambda g^{-1}$ is a cocompact normal subgroup of $L_0$. Since we have already shown that $\Gamma_0^L$ is dense in $L_0$, it suffices to show that $L_0\cap g\Lambda g^{-1}$ is normal in $\Gamma_0^L$. To see this, taking an arbitrary $l\in \Gamma_0^L$ and choosing $h\in \Gamma_0$ with $hl\in g\Lambda g^{-1}$, we calculate
\begin{align*}
l (L_0 \cap g\Lambda g^{-1}) l^{-1}= hl (L_0 \cap g\Lambda g^{-1}) (hl)^{-1}= L_0\cap g\Lambda g^{-1},
\end{align*}
where we used again that $\Gamma_\mu$ and $L_0$ commute. Since there is a finite $L_0$-invariant measure on the locally compact group $L_0/(L_0\cap g\Lambda g^{-1})$, the latter  must be compact.
	 
It remains to prove the uniqueness of $\nu$. Let $\nu'$ be an arbitrary ergodic $\mu$-stationary probability measure on $X$ with $\nu'(\supp(\nu))>0$. Take $x\in\supp(\nu)\cap\supp(\nu')$. Then by what we have shown above, $\nu'$ is homogeneous and $\supp(\nu)=\overline{\Gamma_\mu x}=\supp(\nu')$. Hence, $\nu=\nu'$ by homogeneity.
\end{proof}

Loosely speaking, the group $\Gamma_0^L$ in Proposition~\ref{prop;old_key} consists of translations in the centralizer direction arising from the action of $\Gamma_0\leqs \Gamma_\mu$ on the centralizer orbit under consideration. This is illustrated by the following simple example.
\begin{ex}
Let $G=\SL_2(\R)\times K$, where $K$ is a connected compact Lie group,
and let $\psi\colon\SL_2(\Z)\to K $ be a fixed group homomorphism with dense image. Let $X=G/\Lambda$ for the lattice
\begin{align*}
\Lambda=\set{ (\gamma, \psi(\gamma))\for \gamma\in \SL_2(\Z)}<G.
\end{align*}
Moreover, let
$H=\SL_2(\R)$, identified with the first factor of $G$, and choose a probability measure $\mu$ on $H$ with $\Gamma_\mu=\SL_2(\Z)<H$. Let $\nu$ be the homogeneous measure on the $K$-orbit of the identity coset $x=\Lambda$ in $G/\Lambda$, induced by the Haar probability measure on~$K$, where we identify $K$ with the second factor of $G$. Then  the action of $\gamma\in\Gamma_\mu$ on $x$ is given by 
\begin{align*}
(\gamma,1)x= (\gamma,1) (\gamma ^{-1}, \psi (\gamma^{-1}))x= (1,\psi (\gamma^{-1})) x.
\end{align*}
Thus, the $K$-orbit of $x$ is given by $(\Gamma_\mu \times K)/\Lambda$, $\nu$ is $\Gamma_\mu$-invariant, and also ergodic for the $\Gamma_\mu$-action since $\psi$ has dense image. If we set $L_0=K$, then in the notation of Proposition~\ref{prop;old_key} we have 
$\Gamma_0=\Gamma_\mu$
and $\Gamma_0^L=\psi(\Gamma_\mu)$, which is a dense subgroup of $L_0$.
\end{ex}
The key point of Corollary~\textup{\ref{cor;rigidity}}(i)
is that we cannot have examples of the type above when $X$ is the quotient of a semisimple group $G$ by an irreducible lattice $\Lambda$, such as $G/\Lambda=(\SL_2(\R)\times \SL_2(\R))/\SL_2(\Z[\sqrt{2}])$.

To keep the continuity, we now proceed to the proof of Corollary~\textup{\ref{cor;rigidity}}, even though one part of the statement relies on the countability result for homogeneous subspaces to be established in \S\ref{subsec;countability}. 
The central part of the proof makes heavy use of concepts from the theory of algebraic and arithmetic groups; in particular Margulis' arithmeticity theorem~\cite{margulis}. See the book by Witte Morris~\cite{wittemorris.arithmetic} for a gentle introduction to this topic.
\begin{proof}[Proof of Corollary~\textup{\ref{cor;rigidity}}]
Let $\nu$ be a $\mu$-ergodic $\mu$-stationary probability measure on $X=G/\Lambda$. By Theorem~\ref{thm;rigidity} we know that $\nu$ is homogeneous and $\Stab_G(\nu)^\circ$ is normalized by~$H$. By conjugating if necessary, we may assume the identity coset $\Lambda$ is in the support of~$\nu$.

If $\Stab_G(\nu)\cap H$ is non-discrete, then $\Stab_G(\nu)$ must contain a normal subgroup of $H$ of positive dimension. Since $\Lambda$ is irreducible, this implies that $\nu$ is $G$-invariant. Indeed, $\Stab_G(\nu)\Lambda$ is closed since the stabilizer intersects $\Lambda$ in a lattice (\cite[Theorem~1.13]{raghunathan}), and also dense by irreducibility of $\Lambda$ if $\Stab_G(\nu)$ contains a simple factor of $G$.

Let us now assume that $\Stab_G(\nu)\cap H$ is discrete and $H\neq G$ and use this to derive a contradiction. Since $\Stab_G(\nu)^\circ$ is normalized by $H$, we may view its Lie algebra as $H$-submodule of $\mathfrak g=\Lie(G)$. As every non-trivial $H$-isotypic component of $\mathfrak g$ is contained in $\Lie(H)$, it follows from the discreteness assumption that we must have $\Stab_G(\nu)^\circ\leqs C_G(H)\leqs C_G(\Gamma_\mu)$. 
This puts us in the setting of Proposition~\ref{prop;old_key}, namely, the homogeneous measure $\nu$ gives positive mass to an orbit of the centralizer $L$ of $\Gamma_\mu$ in $G$. We apply this proposition with $x=\Lambda$ and $L_0$ the connected component of $\Stab_G(\nu)\cap L$ and let $\Gamma_0$ and $\Gamma_0^L$ be as defined there. Then $L_0\cap\Lambda$ is central by irreducibility of $\Lambda$ (\cite[Corollary~5.21]{raghunathan}), hence finite, which by part~(iii) of the proposition implies that $L_0$ is compact.

We now invoke Margulis' arithmeticity theorem~\cite{margulis}. The conclusion is that we may assume that
\begin{align}\label{G-prod-rep}
    G=\prod_{\sigma\in S}\mathbf G^\sigma (k_\sigma),
\end{align}
where $\mathbf G$ is a Zariski connected absolutely simple linear algebraic group defined over a number field $k$, $k_\sigma\in\set{\R,\C}$ is the completion of $\sigma(k)$ for a field embedding $\sigma\colon k\to\C$, and $S$ is a finite set of inequivalent such embeddings with the property that $\mathbf G^\sigma(k_\sigma)$ is non-compact if and only if $\sigma$ or $\overline{\sigma}$ is in $S$. The lattice $\Lambda$ is given as the diagonal embedding of $\mathbf G(\mathcal{O}_k)$ in $G$ via $k\ni z\mapsto(\sigma(z))_{\sigma\in S}$, where $\mathcal{O}_k$  is the ring of integers of $k$. 
As $H\neq G$ is a connected normal subgroup of $G$ of positive dimension, there is a non-empty proper subset $S_1\subset S$ such that 
$H =\prod_{\sigma \in S_1} \mathbf G^\sigma (k_\sigma)$. Without loss of generality we assume that the identity embedding $\iota$ is contained in $S_1$. We also write $S_2=S\setminus S_1$, which is non-empty by construction. In this setup, $\Gamma_0^L$ is a dense subgroup of  $L_0$, which is a connected and compact subgroup 
contained in  $\prod_{\sigma\in S_2}\mathbf G^\sigma (k_\sigma)$. 

The following subtlety should be noted regarding Zariski topologies: The linear algebraic group $\mathbf{G}$  naturally carries the complex Zariski topology, defined by complex polynomials in the entries of the complex matrices in $\mathbf{G}$ (similarly for the Galois conjugates $\mathbf{G}^\sigma$). However, in the product representation \eqref{G-prod-rep} of $G$, the point of view is that of real algebraic groups. This means that when $k_\sigma=\C$, the group $\mathbf{G}^\sigma(\C)$ has to be seen as the group of real points of the restriction of scalars $\Res_{\C/\R} \mathbf G^\sigma$ with the real Zariski topology, defined by real polynomials in the real and imaginary parts of the entries of matrices in $\mathbf{G}^\sigma$. This gives rise to the real Zariski topology on $G$.

We also remark that in \eqref{G-prod-rep} and the associated product representation of $H$, strictly speaking, we should take the analytic identity components of the groups appearing as factors on the right-hand side. But we ignore this point for ease of notation and without loss of generality. 

\textsc{Relative compactness in Galois conjugates.} Recall that each $\gamma_0\in \Gamma_0$ preserves the homogeneous measure on $L_0x$, so there exists $l_0\in L_0$ such that $\gamma_0 x=l_0^{-1}x$, which implies 
that $\gamma_0 l_0 \in \Lambda$. 
Let $\Gamma_1$ be the projection of $\Gamma_0$ to $\mathbf G(k_\iota)$ (the factor corresponding to the identity embedding).
Then $\Gamma_0$ consists of $\prod_{\sigma\in S_1}\sigma(\gamma_1)$ and 
$\Gamma_0^L$ consists of $\prod_{\sigma\in S_2}\sigma(\gamma_1)$ for $\gamma_1\in \Gamma_1$.
So we have 
$\Gamma_1\leqs \mathbf{G}(\mathcal{O}_k)$ and for every $\sigma \in S_2$ the group $\sigma(\Gamma_1)$ (obtained by component-wise application of $\sigma$) has compact closure. The latter conclusion holds also for $\sigma\notin S$, since $\mathbf{G}^\sigma(k_\sigma)$ is compact in this case. So we conclude that $\sigma(\Gamma_1)$ is relatively compact for all embeddings $\sigma\notin S_1$.

\textsc{Zariski density properties of $\Gamma_*$.} From Proposition~\ref{prop;epimorphic} it follows that the Zariski closure of $\Gamma_\mu$ is an epimorphic subgroup of $H$ in the category of real algebraic groups (see Appendix~\ref{app;epi} for a discussion of the epimorphic property in different categories). As $\Gamma_0$ has finite index in $\Gamma_\mu$, also $\Zcl(\Gamma_0)$ is epimorphic in $H$. 
We claim that $\Zcl(\Gamma_0)$ is also reductive. Otherwise, its projection to one of the simple factors of $H$ is not reductive. Without loss of generality assume that this holds for the projection to $\mathbf{G}(k_\iota)$. This means that the Zariski closure of $\Gamma_1$ (in the real Zariski topology of $\mathbf{G}(k_\iota)$) has a non-trivial unipotent radical. Now consider the Zariski closure $\mathbf{F}$ of $\Gamma_1$ in the complex Zariski topology of $\mathbf{G}$. Since the real Zariski topology of $\mathbf{G}(\C)$ is finer than the complex Zariski topology of $\mathbf{G}$, also $\mathbf{F}$ has a non-trivial unipotent radical, i.e.\ is not reductive. Moreover, $\mathbf{F}$ is defined over $k$, since $\Gamma_1\leqs \mathbf{G}(\mathcal{O}_k)$. So the failure to be reductive carries over to the Galois conjugates of $\mathbf{F}$. Then we get a contradiction since for each $\sigma\in S_2$ the algebraic group $\mathbf{F}^\sigma$ is reductive, because it is the Zariski closure of the relatively compact group
$\sigma(\Gamma_1)$ (in the complex Zariski topology).
So we obtain that $\Zcl(\Gamma_0)$ is a reductive epimorphic subgroup of $H$, which can only happen if $\Gamma_0$ is Zariski dense in $H$. 
By projecting to the simple factors, we find that $\sigma(\Gamma_1)$ is Zariski dense in the real Zariski topology of $\mathbf{G}^\sigma(k_\sigma)$ for all $\sigma\in S_1$. In particular, this implies that $\Gamma_1$ is Zariski dense in $\mathbf{G}$ in the complex Zariski topology. This latter property can now be carried over to all Galois conjugates, showing that for every embedding $\sigma$, $\sigma(\Gamma_1)$ is Zariski dense in $\mathbf{G}^\sigma$ (in the complex Zariski topology). For every embedding $\sigma$, the Zariski closure of $\sigma(\Gamma_1)$ in the real Zariski topology must therefore be $\mathbf{G}^\sigma(\C)$ or a real form of it. In particular, for every embedding with $k_\sigma=\R$, $\sigma(\Gamma_1)$ is Zariski dense in $\mathbf{G}^\sigma(\R)$ in the real Zariski topology.

\textsc{A more natural field of definition.} Let $k'$ be the subfield of $k$ generated by the set $\operatorname{Tr}(\Ad(\Gamma_1))$. Then $\Ad(\Gamma_1)$ is definable over $k'$ (see \cite[IX.1.8]{margulis}). 
So we may and will assume that $\mathbf G$ is defined over $k'$ and $\Gamma_1\leqs \mathbf G(k')$. 
The group $\Res_{k'/\Q}\mathbf{G}(\R)=\prod_{\tau\colon k'\to\C}\mathbf G^\tau (k'_\tau)$ is naturally embedded in $\Res_{k/\Q}\mathbf{G}(\R)=\prod_\sigma\mathbf G^\sigma (k_\sigma)$ as a real algebraic subgroup, by identifying $\mathbf{G}^\tau (k'_\tau)$ with its diagonal embedding in $\prod_{\sigma\colon \sigma|_{k'}=\tau}\mathbf{G}^\sigma (k_\sigma)$. 
We deduce the following facts:     
\begin{enumerate}[label=(\alph*)]
   
    \item 
      We have $k\neq k'$. 
    Indeed, for $\sigma \in S_2$ the group $\mathbf{G}^\sigma(k_\sigma)$ is non-compact and  $\sigma(\Gamma_1)$ has compact closure (in the Lie group topology). In view of the Zariski density properties established in the paragraph above, and using that compact groups are closed in the real Zariski topology, it follows that we must have 
    $k_\sigma=\C$, and $\sigma(\Gamma_1)$ is contained in 
   a real form of $\mathbf G^\sigma(\C)$. 
   The latter and the definition of $k'$ (via traces in the adjoint representation) imply $\sigma(k')\subset\R$, which would contradict $k_\sigma=\C$ if we had $k=k'$. 
 \item
    The embeddings $\sigma|_{k'}$ of $k'$ for $\sigma\in S_1$ are pairwise distinct,
    since $\Gamma_0$ is Zariski dense in $H$.
\end{enumerate}

\textsc{Combining everything to a contradiction.} 
In view of (a) above, the identity embedding $k'\to\C$ must admit a non-identity extension $\sigma\colon k\to \C$. This embedding $\sigma$ cannot be contained in $S_1$, since by (b) above, the elements of $S_1$ have pairwise distinct restrictions to $k'$. But  $\sigma\not\in S_1$ would imply $\sigma (\Gamma_1 )=\Gamma_1$ is relatively compact, which is impossible since $\Gamma_1$ is Zariski dense in the non-compact group $\mathbf{G}(k_\iota)$ in the real Zariski topology.
This contradiction finishes the proof of~(i).

In the case $H=G$ of part (ii), the arguments at the beginning of the proof show that either $\nu=m_X$ or $\Stab_G(\nu)$ is discrete. In the latter case, $\nu$ must be the uniform probability measure on a finite $\Gamma_\mu$-orbit (see~\cite[Lemma~8.3]{bq11}). Moreover, in this case we have that $C_G(\Gamma_\mu)$ is discrete by the epimorphic property of $\Gamma_\mu$ in $G$ from Proposition~\ref{prop;epimorphic}. Proposition~\ref{prop;countability} thus implies that there are only countably many distinct finite $\Gamma_\mu$-orbits in $X$. Hence, if $\nu$ is any non-atomic $\mu$-stationary probability measure on $X$, $\nu=m_X$ follows by considering an ergodic decomposition of $\nu$. This completes the proof.
\end{proof}

\subsection{Expansion on Grassmannians}\label{sec;exp_grass}
The $H$-expansion condition on $\mu$ is a universal requirement in the sense that all our results (including the measure classification theorem) hold for any embedding $H \hookrightarrow G$ and any discrete subgroup $\Lambda $ in $G$. Having fixed $H\leqs G$, however, close inspection of the proof of Theorem~\ref{thm;rigidity} reveals that it is sufficient to have uniform expansion on the quotient of each exterior power of $\mathfrak{g}$ by the corresponding $H$-fixed subspace.

\begin{de}\label{de;exp_grass}
Let $G$ be a real Lie group and $H\leqs G$ a connected semisimple subgroup with finite center. A probability measure $\mu$ on $H$ is said to be \emph{$H$-expanding relative to $G$} if $\mu$ is uniformly expanding on the quotient of $(\Ad^{\wedge k},\mathfrak{g}^{\wedge k})$ by the corresponding $H$-fixed subspace for every $1\le k\le \dim(G)-1$. 
\end{de}
We remark that a related notion was previously studied by the first two authors~\cite{prohaska-sert}.
\begin{thm}\label{thm.exp.grass}
Let $G$ be a real Lie group, $\Lambda\leqs G$ a discrete subgroup, and $H$ a connected semisimple subgroup of $G$ with finite center. Let $\mu$ be an $H$-expanding probability measure relative to $G$ with finite first moment.
Then the conclusions of Theorem~\textup{\ref{thm;rigidity}} hold for every ergodic $\mu$-stationary probability measure $\nu$ on $G/\Lambda$.
\end{thm}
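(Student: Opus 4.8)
The plan is to revisit the proof of Theorem~\ref{thm;rigidity} step by step and record the finitely many representations of $H$ in which uniform expansion of $\mu$ is genuinely invoked, checking each time that the weaker hypothesis of Definition~\ref{de;exp_grass} already supplies what is needed. I would observe that $H$-expansion enters that proof in only three ways. First, through Lemma~\ref{lem;modZ}, i.e.\ through the assertion that $\mu$ is uniformly expanding mod $L^\circ$ with $L=C_G(\Gamma_\mu)$: by Proposition~\ref{prop;facts}(i) this amounts to the equality $\mathfrak g^{\Gamma_\mu}=\mathfrak g^H$ together with uniform expansion on an $H$-complement of $\mathfrak g^H$, both of which concern only the adjoint representation $(\Ad,\mathfrak g)$, i.e.\ the case $k=1$; indeed $\mathfrak g^H\subseteq\mathfrak g^{\Gamma_\mu}$ always, while a $\Gamma_\mu$-fixed vector is not $\mu$-expanded and so is $H$-fixed by the relative hypothesis at $k=1$. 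Second, through the identity $C_G(\Gamma_\mu)^\circ=C_G(H)^\circ$ (used in case~(b) of Theorem~\ref{thm;el} to place $\Stab_G(\nu)^\circ$ inside $C_G(H)$), which is again just $\mathfrak g^{\Gamma_\mu}=\mathfrak g^H$. Third, through the Grassmannian step: for the maximal-dimensional $N$ in a decomposition~\eqref{nu_decomp}, one forms the vector $\omega$ attached to a basis of $\Lie(N)$ inside (a symmetric power of) $\mathfrak g^{\wedge\dim N}$ and applies Corollary~\ref{cor;stationary} to the induced $\mu$-stationary measure on that space, deducing $H\leqs P=\Stab_G(\omega)$. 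Here $1\le\dim N\le\dim G-1$ --- the case $\dim N=\dim G$ being immediate since $\mathfrak g^{\wedge\dim G}$ is one-dimensional and, $H$ being semisimple, $H$-fixed --- so the exterior-power degree involved is covered by Definition~\ref{de;exp_grass}.

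Next I would handle the recursion. The proof of Theorem~\ref{thm;rigidity} re-enters itself with $G$ replaced by $P/N^\circ$, $H$ by $H'=H/(H\cap N^\circ)$, and $\mu$ by its pushforward $\mu'$, invoking Proposition~\ref{prop;facts}(iii) to transport the expansion hypothesis; the relative analogue is that $\mu'$ is $H'$-expanding relative to $P/N^\circ$. This is where complete reducibility of $H$-modules does the work: $\mathfrak n$ and $\mathfrak p$ are $H$-submodules of $\mathfrak g$ (the latter because $H\leqs P$), so $\mathfrak p/\mathfrak n$ is $H$-isomorphic to an $H$-submodule $\mathfrak w\subseteq\mathfrak g$, whence $(\mathfrak p/\mathfrak n)^{\wedge j}\cong\mathfrak w^{\wedge j}$ is $H$-isomorphic to an $H$-submodule of $\mathfrak g^{\wedge j}$; and uniform expansion trivially passes from $\mathfrak g^{\wedge j}$ modulo its $H$-fixed part to any $H$-submodule without nonzero $H$-fixed vectors, being the property that every nonzero vector is $\mu$-expanded. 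Since $\dim(P/N^\circ)\le\dim G-1$, every exterior-power degree arising in the recursion stays in the range $[1,\dim G-1]$, so the relative hypothesis propagates, the recursion terminates exactly as before, and one obtains that $\eta$ is a finite periodic-orbit measure, that $\nu$ is homogeneous and $\Gamma_\mu$-invariant, and that $\Stab_G(\nu)^\circ$ (which is $N^\circ$, or contained in $L$ in case~(b)) is normalized by $H$ --- the conclusions of Theorem~\ref{thm;rigidity}.

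The point requiring the most care is the control of the symmetric-square representation $S^2(\mathfrak g^{\wedge\dim N})$ in the Grassmannian step: Definition~\ref{de;exp_grass} hypothesizes uniform expansion only on the exterior powers $\mathfrak g^{\wedge k}$ themselves, and $S^2$ of a uniformly expanding representation need not remain uniformly expanding modulo its fixed vectors. Two routes resolve this. One is to show directly that the relative hypothesis still yields uniform expansion on $S^2(\mathfrak g^{\wedge\dim N})/(S^2(\mathfrak g^{\wedge\dim N}))^H$: relative to an $H$-invariant splitting $\mathfrak g^{\wedge\dim N}=(\mathfrak g^{\wedge\dim N})^H\oplus W'$ one has $S^2(\mathfrak g^{\wedge\dim N})=S^2((\mathfrak g^{\wedge\dim N})^H)\oplus((\mathfrak g^{\wedge\dim N})^H\otimes W')\oplus S^2(W')$, the first summand is $H$-fixed, the middle summand is a sum of copies of $W'$ on which expansion is inherited from $W'$, and the $S^2(W')$-block is handled via the Furstenberg--Kifer exponents of Theorem~\ref{thm:furstenberg_kifer}. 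The other, and cleaner, route is to avoid $S^2$ altogether: since $N^\circ$ is unimodular (it carries a lattice), the decomposable vector $\omega=n_1\wedge\dots\wedge n_{\dim N}$ built from a basis of $\Lie(N)$ is fixed by $N^\circ$ up to sign, so one may run the Plücker argument inside $\mathfrak g^{\wedge\dim N}$ directly after replacing $N$ by $N^\circ$ (retaining only the relevant $N^\circ$-ergodic piece of its homogeneous measure) or by the index-two subgroup of $N$ on which $\det\Ad$ is trivial. With this the argument of \S\ref{sec;rigidity_proof} goes through verbatim with ``$H$-expanding'' weakened to ``$H$-expanding relative to $G$'', which is the assertion of the theorem.
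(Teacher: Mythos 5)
Your structural analysis of where expansion is invoked in the proof of Theorem~\ref{thm;rigidity} is essentially right (the adjoint representation for Lemma~\ref{lem;modZ} and the identity $C_G(\Gamma_\mu)^\circ=C_G(H)^\circ$; the Pl\"ucker step with $S^2(\mathfrak g^{\wedge\dim N})$; the re-application of Theorem~\ref{thm;el} in the quotient $P/N^\circ$), and you correctly flag the real crux: Definition~\ref{de;exp_grass} hypothesizes expansion on $\mathfrak g^{\wedge k}$, not on its symmetric square. But neither of your two proposed resolutions works as stated, and both miss the much simpler observation the paper uses.

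Your Route~1 --- proving uniform expansion on all of $S^2(\mathfrak g^{\wedge\dim N})/(S^2(\mathfrak g^{\wedge\dim N}))^H$ --- has a genuine gap in the $S^2(W')$-block. Uniform expansion of $\mu$ on $W'$ is equivalent to positivity of the smallest Furstenberg--Kifer exponent $\beta_k(\mu,W')$, but the \emph{Lyapunov} exponents of $W'$ (which index Oseledets subspaces that need not be $\Gamma_\mu$-invariant) can perfectly well include negative ones; see the discussion after Theorem~\ref{thm:furstenberg_kifer}. The Lyapunov exponents of $S^2(W')$ are the sums $\lambda_i+\lambda_j$, which then need not all be positive, and without a further argument there is no reason the FK exponents of $S^2(W')$ should be bounded away from $0$ from below. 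Saying this block ``is handled via the Furstenberg--Kifer exponents'' is therefore not a proof and is likely false in the stated generality. Your Route~2 --- replace $N$ by $N^\circ$ or by $N_1=\ker(\det\Ad|_N)$ and work directly in $\mathfrak g^{\wedge\dim N}$ --- also requires a non-trivial repair you do not supply: the decomposition $\nu=\int_{G/N}g_*\nu_0\,\mathrm d\eta(g)$ gives a $\mu$-stationary measure $\eta$ on $G/N$, and since $N$ need not lie in $\Stab_G(v)$ (only $N_1$ does), there is no natural $\mu$-equivariant projection $G/N\to G/\Stab_G(v)$; one would have to manufacture a $\mu$-stationary measure on $G/N_1$ (or work projectively, where Lemma~\ref{lem;stationary} does not directly apply), and this is where the effort actually lies.

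The paper sidesteps the issue entirely: one does \emph{not} need uniform expansion on $S^2(\mathfrak g^{\wedge\dim N})$. The stationary measure $\eta'$ is, by construction, the pushforward of $\eta$ under $g\mapsto (g\acts v)\otimes(g\acts v)$, hence is supported on the cone of squares $\set{w\otimes w\for w\in\mathfrak g^{\wedge\dim N}}$. On this cone $\norm{w\otimes w}=\norm{w}^2$, so $w\otimes w$ is $\mu$-expanded if and only if $w$ is, and by the relative hypothesis on $\mathfrak g^{\wedge\dim N}$ the set of non-$H$-fixed squares consists of $\mu$-expanded vectors. Lemma~\ref{lem;stationary} then directly forces $\eta'$ to concentrate on $H$-fixed vectors, with no claim about expansion off the cone. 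This is the argument you want, and it is cleaner and strictly weaker in its demands than either of your routes.

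For the quotient step your argument is valid and a reasonable alternative to the paper's: you use complete reducibility to identify $(\mathfrak p/\mathfrak n)^{\wedge j}$ with an $H$-direct summand $\mathfrak w^{\wedge j}$ of $\mathfrak g^{\wedge j}$ and transport expansion along this $H$-equivariant embedding. The paper instead uses the explicit norm $\norm{w+\mathfrak n}=\norm{w\wedge v}$ on $\mathfrak g/\mathfrak n$ (so the quotient adjoint representation is compared directly with $\mathfrak g^{\wedge(\dim N+1)}$). Both are correct; yours is a bit more module-theoretic, the paper's a bit more hands-on. But the $S^2$ point is the one that needs fixing.
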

\begin{proof}
We analyze the applications of the $H$-expansion property in the proof of Theorem~\ref{thm;rigidity}, so we retain the notation used there.
\begin{itemize}
\item The first application of Lemma~\ref{lem;modZ} is possible without problems.
\item Next, expansion is used for the representation $S^2(\mathfrak{g}^{\wedge \dim(N)})$. If $\dim(N)=\dim(G)$, then the probability measure $\eta$ in \eqref{nu_decomp} is finitely supported and $\Gamma_\mu$-invariant by \cite[Lemma~8.3]{bq11}, so all claims follow. Otherwise, the measure $\eta'$ on $S^2(\mathfrak{g}^{\wedge \dim(N)})$ is supported on $\set{v\otimes v\for v\in\mathfrak{g}^{\wedge\dim(N)}}$ by construction. Using that $\norm{v\otimes v}=\norm{v}^2$ and the assumed expansion in $\mathfrak{g}^{\wedge\dim(N)}$, we can again draw the desired conclusion that $\eta'$ is supported on the set of $H$-fixed vectors.
\item Finally, expansion is needed to reapply Theorem~\ref{thm;el} in the quotient by $N^\circ$. The assumption there implies that $H/(H\cap N^\circ)$ is still a semisimple group, so that $\dim(N)\le \dim(G)-3$. Let $v\in\mathfrak{g}^{\wedge\dim(N)}$ correspond to a basis of the Lie algebra $\mathfrak{n}$ of $N$. Then a norm on $\mathfrak{g}/\mathfrak{n}$ is given by $\norm{w+\mathfrak{n}}=\norm{w\wedge v}$ for $w\in\mathfrak{g}$. 
Since $H$ fixes the vector $\omega=v\otimes v$ in $S^2(\mathfrak{g}^{\wedge \dim(N)})$, $H$ acts on $v$ by $\pm 1$. As $H$ is connected, $v$ is fixed by $H$. Thus, for every $h\in H$ and $w\in\mathfrak{g}$ we have
\begin{align}\label{quotient_norm}
\norm{h\acts(w+\mathfrak{n})}=\norm{h\acts w\wedge v}=\norm{h\acts(w\wedge v)}.
\end{align}
Hence, we again obtain expansion for every vector in $\mathfrak{g}/\mathfrak{n}$ that is not $H$-fixed. This justifies the application of Lemma~\ref{lem;modZ} in the quotient.\qedhere
\end{itemize}
\end{proof}

Combining the above with some properties of epimorphic subgroups, we obtain the following.
\begin{cor}\label{cor;epi_class}
Let $G$ be a real algebraic group, $\Lambda<G$ a lattice, and $H\leqs G$ a Zariski connected semisimple algebraic subgroup without compact factors. Then any Zariski connected real algebraic epimorphic subgroup $F\leqs H$ supports probability measures $\mu$ for which the conclusions of Theorem~\textup{\ref{thm;rigidity}} hold.
\end{cor}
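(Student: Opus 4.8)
The plan is to reduce the statement to Theorem~\ref{thm.exp.grass} by exhibiting, on a given Zariski connected real algebraic epimorphic subgroup $F\leqs H$, a probability measure $\mu$ that is $H$-expanding relative to $G$ and has finite first moment (indeed compact support). First I would recall from~\cite[\S10,~Theorem~2]{bien-borel1} that $F$ contains a split solvable epimorphic subgroup of the form $F_1=A'U$, where $A'$ is a connected algebraic $\R$-split torus and $U$ is a unipotent subgroup normalized by $A'$; moreover, since $H$ has no compact factors and $F_1$ is epimorphic in $H$, the Zariski closure of $\Ad(U)$ projects non-trivially to each simple factor. It then suffices to construct a compactly supported $H$-expanding (relative to $G$) measure on $F_1\leqs F\leqs H$: the key point is that $H$-expansion relative to $G$ only requires uniform expansion on finitely many representations, namely the quotients $\mathfrak g^{\wedge k}/(\mathfrak g^{\wedge k})^H$ for $1\le k\le \dim(G)-1$.

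The core of the argument is an application of Proposition~\ref{prop.examples2}. Fix a parabolic subgroup $Q$ of $H$ with $U$ contained in its unipotent radical and $N_H(U)\leqs Q$, a maximal $\R$-split torus $A\leqs Q$ containing $A'$, and a maximal compact $K\leqs Q$; set $K'=C_K(A')\cap N_H(U)$ and $P=K'A'U$. I would take $\mu$ to be any compactly supported probability measure on $F_1=A'U\leqs P$ such that (1)~the closed semigroup generated by $\supp(\mu)$ has Zariski closure whose $\Ad$-image contains $\Ad(U)$ — this can be arranged by including in $\supp(\mu)$ a generating set for a Zariski dense subgroup of $U$ together with a suitable element of $A'$, using that $[\Gamma_\mu,\Gamma_\mu]$ can be made Zariski dense in $U$; and (2)~the $A$-average $a_{\operatorname{avg}}(\mu)=\exp(\int\lambda(g)\dd\mu(g))$ lies on the expanding ray of $U$, i.e.\ $U$ is $a_{\operatorname{avg}}(\mu)$-expanding. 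For condition~(2) I would invoke Lemma~\ref{lemma.1.torus} when $A'$ is one-dimensional — which, after passing to a one-dimensional split torus via~\cite[\S7,~Lemma~(iii)]{bien-borel1} if necessary, or directly when $F_1$ already has one-dimensional torus — to get a parametrization $A'=(a(t))_t$ with $U$ being $a(t)$-expanding for all $t>0$, and then arrange $\int\lambda(g)\dd\mu(g)$ to be a positive multiple of the generator. In the general higher-dimensional case one instead arranges $\int\lambda(g)\dd\mu(g)\in\mathfrak a_{\mathfrak u}^+$ directly using the explicit description of the expanding cone in~\S\ref{subsec;exp_cone}. With conditions~(1) and~(2) in place, Proposition~\ref{prop.examples2} yields that $\mu$ is $H$-expanding, hence in particular $H$-expanding relative to $G$, and $\mu$ has finite first moment since it is compactly supported.

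Finally, applying Theorem~\ref{thm.exp.grass} to this $\mu$ gives that the conclusions of Theorem~\ref{thm;rigidity} hold for every ergodic $\mu$-stationary probability measure on $G/\Lambda$, which is exactly the assertion. The main obstacle I anticipate is handling the case where the split torus $A'$ in the Bien--Borel decomposition is higher-dimensional and where the finite-generation hypothesis of~\cite[\S7,~Lemma~(iii)]{bien-borel1} needed to reduce to a one-dimensional subtorus may fail: in that situation one cannot directly quote Lemma~\ref{lemma.1.torus} and must instead verify by hand, using the explicit cone $\mathfrak a_{\mathfrak u}^+$, that the $A$-average of a suitable compactly supported measure can be placed inside it — this requires knowing that $\mathfrak a_{\mathfrak u}^+\cap\Lie(A')$ is non-empty, which is precisely the content of the epimorphicity of $A'U$ (if this intersection were empty, as in Example~\ref{ex;counter}, no $H$-expanding measure on $A'U$ would exist, but that example relies on $A'$ being non-algebraic, whereas here $A'$ is algebraic). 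Making this last point rigorous — that for an \emph{algebraic} epimorphic $A'U$ the expanding cone genuinely meets $\Lie(A')$ — is the crux, and it should follow from the characterization of the expanding cone in~\cite{s15} combined with the algebraic epimorphicity criterion of Bien--Borel.
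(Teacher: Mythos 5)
Your reduction to the case $F = A'U$ via Bien--Borel is correct, and your opening observation that $H$-expansion relative to $G$ only requires expansion in the finitely many representations $\mathfrak{g}^{\wedge k}/(\mathfrak{g}^{\wedge k})^H$ is exactly the right key idea. But you then abandon it: the construction you actually run applies Proposition~\ref{prop.examples2}, which delivers full $H$-expansion and therefore requires the $A$-average to lie in the expanding cone with respect to \emph{all} non-trivial irreducible representations of $H$. Whether this can be arranged for every algebraic split solvable epimorphic $A'U$ is precisely the open Question posed at the end of~\S\ref{sec;solv_epi}: the authors explicitly say they do not know whether the Bien--Borel reduction to a one-dimensional subtorus always applies, nor whether an alternative construction yields full $H$-expansion in general. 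Your closing assertion that ``for an algebraic epimorphic $A'U$ the expanding cone genuinely meets $\Lie(A')$'' is not established in the paper, and you give no argument for it; it is not a consequence of~\cite[Theorem~1.2]{s15} together with the Bien--Borel criterion in any obvious way. So as written the proposal has a real gap at its crux.

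The paper's proof sidesteps this entirely. It invokes~\cite[Lemma~1]{weiss.epimorphic}: since $A'U$ is epimorphic and one only needs to control a \emph{finite} collection of representations (namely the $V_1,\dots,V_r$ of Theorem~\ref{thm.exp.grass}), there is always a non-empty open cone $A'_+\subset A'$ on which every relevant character of $A'$ acting on the $U$-fixed subspaces $V_k^U$ is $>1$. For a measure $\mu$ with $a_{\operatorname{avg}}(\mu)\in A'_+$ and $\Zcl(\Gamma_\mu)\supset U$, one then concludes uniform expansion on each $V_k$ directly by combining Lemma~\ref{lem;useful} (Lie--Kolchin reduction to $U$-fixed vectors) with Lemma~\ref{lem;expansion_only_with_A} (law of large numbers for the $A'$-part), and applies Theorem~\ref{thm.exp.grass}. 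This gives $H$-expansion relative to $G$, which is all the corollary needs — and it always works because finitely many half-space constraints on a positive-dimensional torus have non-empty intersection under the epimorphicity hypothesis, whereas infinitely many need not. To repair your argument, you should replace the appeal to Proposition~\ref{prop.examples2} with the direct finite-representation argument (Weiss's Lemma~1 plus Lemmas~\ref{lem;useful} and~\ref{lem;expansion_only_with_A}), rather than trying to prove the stronger and currently unknown statement that the full expanding cone meets $\Lie(A')$.
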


\begin{proof}
It is known that $F$ contains a split solvable algebraic subgroup $A'U$, where $A'$ is an algebraic $\R$-split torus and $U$ is unipotent and normalized by $A'$, that is still epimorphic in $H$ (see~\cite[\S10,~Theorem~2]{bien-borel1}). Thus we may assume $F=A'U$ is of this form to begin with. By \cite[Lemma~1]{weiss.epimorphic} there is a non-empty open cone $A'_+$ in $A'$ such that $\chi(a)>1$ for all $a\in A'_+$ and all characters of $A'$ having an eigenvector in one of the $U$-fixed subspaces $V_k^U$ of the finitely many representations $V_1,\dots,V_r$ appearing in the statement of Theorem~\ref{thm.exp.grass}. Then any probability measure $\mu$ on $F$ with finite first moment whose $A'$-average $a_{\operatorname{avg}}(\mu)$ lies in $A'_+$ and for which the Zariski closure of $\Gamma_\mu$ contains $U$ is uniformly expanding in all of the representations $V_k$. Indeed, this follows directly by combining Lemma~\ref{lem;useful} and \ref{lem;expansion_only_with_A}. Theorem~\ref{thm.exp.grass} thus applies to all measures $\mu$ satisfying these conditions.
\end{proof}

\section{Countability of homogeneous subspaces}\label{subsec;countability}
Let $\Gamma$ be a closed subsemigroup of $G$ and $\Lambda<G$ a lattice. A homogeneous subspace $Y\subset X=G/\Lambda$ is said to be $\Gamma$-invariant if $\Gamma$ preserves the associated homogeneous probability measure $\eta$ on $Y$. It is called $\Gamma$-ergodic if $\Gamma$ acts ergodically on $(Y,\eta)$. Define
\begin{align*}
\calS(\Gamma)=\set{\Gamma\text{-invariant }\Gamma\text{-ergodic homogeneous subspaces }Y\subset X}.
\end{align*}
A key input to the proof of Theorem~\ref{thm;orbit} is countability of $\calS(\Gamma_\mu)$ modulo the centralizer of $H$.
Our strategy to prove this result closely follows the approach in~\cite{bq132}, where this result is proved under the assumption that the Zariski closure of $\Ad(\Gamma_\mu)$ is semisimple and has no compact factors. The goal of this subsection is therefore to prove the following analogue of~\cite[Proposition~2.1]{bq132}. 

\begin{prop}\label{prop;countability}
Let $G$ be a real Lie group, $H\leqs G$ a connected semisimple subgroup with finite center, and $\Gamma<H$ a subsemigroup that supports a probability measure with finite first moment that is $H$-expanding relative to $G$. Denote by $L$ the centralizer of $\Gamma$ in $G$. Then there exists a countable subset $\mathcal{Y}$ of $\calS(\Gamma)$ such that
\begin{align}\label{eq.representatives}
\calS(\Gamma)=\set{lY\for l\in L,Y\in\mathcal{Y}}.
\end{align}
\end{prop}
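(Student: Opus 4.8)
The plan is to follow the strategy of Benoist--Quint~\cite[Proposition~2.1]{bq132}, working with a fixed $H$-expanding (relative to $G$) probability measure $\mu$ with finite first moment supported on $\Gamma$, so that the measure rigidity result Theorem~\ref{thm.exp.grass} is available. First I would reduce the statement to a counting problem for a certain family of algebraic data. To each $Y\in\calS(\Gamma)$ one associates its stabilizer $N_Y=\Stab_G(\nu_Y)$ and a base point; by Theorem~\ref{thm.exp.grass} applied to the $\Gamma_\mu$-ergodic $\mu$-stationary measure $\nu_Y$ (which is automatically $\mu$-stationary since it is $\Gamma$-invariant), we know $\nu_Y$ is $\Gamma_\mu$-invariant and homogeneous, and the connected component $N_Y^\circ$ is normalized by $H$. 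The collection of possible Lie algebras $\mathfrak n_Y^\circ=\Lie(N_Y^\circ)$ is therefore contained in the set of $H$-invariant (equivalently, $H$-submodule) subspaces of $\mathfrak g$, of which there are only finitely many up to the action of the centralizer --- more precisely, since $\mathfrak g$ decomposes into finitely many $H$-isotypic components, the $H$-submodules of $\mathfrak g$ form an algebraic variety on which the identity component of $C_G(H)$ acts, but in fact we only need that the relevant ones all contain $\Lie(H)$ when $Y$ is not a $C_G(\Gamma)$-translate of a point, and the analysis splits according to the finitely many possible ``types''.

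The heart of the argument, as in~\cite{bq132}, is then the following: fixing the type of $N_Y^\circ$, one shows that the set of homogeneous subspaces $Y\in\calS(\Gamma)$ with $N_Y^\circ$ of that type, considered modulo $L=C_G(\Gamma)$, is countable. For this I would pass to the closed subgroup $N$ generated by $N_Y^\circ$ and $\Gamma$; the orbit $Y$ is then an $N$-homogeneous subspace, and by the epimorphic property of $\Gamma_\mu$ in $H$ (Proposition~\ref{prop;epimorphic}) together with the fact that $N_Y^\circ$ is normalized by $H$, one sees that $N$ contains $H$ or the relevant image of $H$ up to a centralizer factor. Arithmeticity (via Borel density / the fact that $Y$ carries a finite invariant measure, so $N\cap\Lambda$ is a lattice in $N$) then forces $N$ to be, up to finite index and up to conjugation by an element of $L$, one of countably many explicitly described subgroups: the base points $x=g\Lambda$ with $gNg^{-1}\cap\Lambda$ a lattice are parametrized by a countable set because $N$ is generated by $N^\circ$ and the countable semigroup $\Gamma$, and lattices in the Lie group $N$ containing a fixed arithmetic-type subgroup come in countably many commensurability classes. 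The bookkeeping with the centralizer is handled exactly as in Proposition~\ref{prop;old_key}: when $Y$ is a translate of a lower-dimensional homogeneous subspace, the ambiguity is precisely absorbed by multiplying by an element $l\in L$, and two such subspaces with the same ``combinatorial data'' differ by such an $l$.

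The main obstacle I expect is the case where $N_Y^\circ$ is \emph{small}, i.e.\ where $Y$ is essentially a finite $\Gamma$-orbit or a translate of such by the centralizer: here one cannot invoke arithmeticity of a large lattice directly, and instead one must argue, following~\cite[Proposition~2.1]{bq132} and in parallel with the proof of Proposition~\ref{prop;old_key}, that finite $\Gamma$-orbits modulo $L$ form a countable set --- this is where the absence of a compact-generation hypothesis is delicate, and the point (flagged in Remark~\ref{rmk;finitely_many}) is that phrasing everything in terms of the finite collection of representations $\mathfrak g^{\wedge k}$ rather than all representations of $H$ lets one run the Benoist--Quint argument without that hypothesis. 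Concretely, one uses that a finite $\Gamma$-orbit $Y=\Gamma x$ forces $\Gamma$ to be virtually contained in a conjugate $gN_Yg^{-1}$ of a subgroup with $gN_Y g^{-1}\cap\Lambda$ a lattice, and that modulo $L$ the base points $g\Lambda$ and the finite-index subgroups involved vary over a countable set; Theorem~\ref{thm.exp.grass}'s conclusion that $N_Y^\circ$ is normalized by $H$, combined with the epimorphicity of $\Gamma_\mu$, pins down $N_Y^\circ$ up to the $L$-action. Assembling the countably many types and the countably many subspaces within each type yields the desired countable $\mathcal Y$ with $\calS(\Gamma)=\set{lY\for l\in L,\,Y\in\mathcal Y}$.
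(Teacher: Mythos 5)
Your high-level strategy — follow the scheme of \cite[Proposition~2.1]{bq132}, using Theorem~\ref{thm.exp.grass} to make $N_Y^\circ$ an $H$-normalized subgroup, and handling the compact-generation issue by working with the finite family of representations $\mathfrak g^{\wedge k}$ — is the right starting point, but the proposal contains genuine gaps in the central counting step. You assert that the Lie algebras $\mathfrak n_Y=\Lie(N_Y^\circ)$, being $H$-submodules of $\mathfrak g$, come in ``only finitely many [types] up to the action of the centralizer.'' This is not justified and is false in general: the variety of $H$-submodules of fixed dimension is a closed subvariety of a Grassmannian, and $C_G(H)^\circ$ need not act on it with finitely (or even countably) many orbits; the constraint that actually produces countability is the existence of a lattice in $N_Y$, and exploiting this requires a careful inductive argument, not a type classification. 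Correspondingly, your appeal to ``arithmeticity'' is off the mark — there is no arithmeticity or Borel–Harish-Chandra input in this proposition (that machinery appears only in the proof of Corollary~\ref{cor;rigidity}(i)); the lattice $\Lambda$ here is arbitrary.

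The paper instead proves a clean auxiliary statement (Lemma~\ref{lem;T_countability}): the set $\calT(G,\Delta,\Sigma)$ of closed subgroups $N$ containing a fixed finitely generated lattice $\Sigma$, with $\Sigma\cap N^\circ=\Delta$ and admitting a subgroup $\Gamma\leqs H_N\cap N$ that acts ergodically and carries an $H_N$-expanding-relative-to-$G$ measure, is countable. Its proof is an induction: one first shows via Lemma~\ref{lem;normalization} (an application of expansion in $S^2(\mathfrak g^{\wedge k})$) that the common intersection $M=\bigcap_N N^\circ$ is normal in the closed group generated by the $H_N N$, quotients out by $M$, then reduces to $N^\circ$ compact normal and ultimately to finite subgroups, where finite generation of $\Sigma$ and \cite[Lemmas~2.7,~2.8]{bq132} finish the count. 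You also need Lemma~\ref{lem;countable_fixed_point_orbits} to see that, for a fixed $N$, the $\Gamma$-fixed points of $G/N$ form countably many $L$-orbits. None of this reduction is reconstructed in your sketch. Finally, the removal of the compact generation hypothesis is done concretely: one truncates $\mu$ to a compact ball $B$ large enough that the integral characterization of expansion (Proposition~\ref{prop;expansion_char}) survives for the finitely many representations $\mathfrak g^{\wedge k}$, applies the compactly-generated case to $\Gamma_\mu$ for this truncated $\mu$, then transfers the conclusion from $\calS(\Gamma_\mu)$ to $\calS(\Gamma)$ via the ergodic decomposition under $\Gamma_\mu$ and Theorem~\ref{thm.exp.grass}; your remark gestures in this direction but does not supply the mechanism.
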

Note that the set $\calS(\Gamma)$ remains the same if we replace the semigroup $\Gamma$ by the closed group that it generates. Therefore, in the proof of the previous result, we can suppose that $\Gamma$ is a closed subgroup of $H$.

The key ingredient of the proof of this proposition is Lemma~\ref{lem;T_countability} below, which will imply countability of the closed subgroups of $G$ that arise as the stabilizer of homogeneous subspaces in $\calS(\Gamma)$. To this end, we introduce the following definition, which, in view of Theorem~\ref{thm;rigidity}, is the appropriate replacement of~\cite[Definition~2.4]{bq132}.

\begin{de}\label{def;T}
Let $\Delta\subset\Sigma$ be discrete subgroups of a real Lie group $G$. The set $\calT(G,\Delta,\Sigma)$ is defined to be the set of closed subgroups $N$ of $G$ such that
\begin{enumerate}[label=(\roman*)]
\item $\Sigma$ is contained in $N$ and is a lattice in $N$,
\item $\Delta=\Sigma\cap N^\circ$, where $N^\circ$ is the connected component of $N$,
\item there exist a connected semisimple Lie group  $H_N\leqs G$ and a subgroup $\Gamma\leqs H_N \cap N$ which acts ergodically on $N/\Sigma$ and which supports an $H_N$-expanding probability measure relative to $G$. \end{enumerate}
\end{de}

\begin{lem}\label{lem;T_countability}
Let $G$ be a real Lie group and $\Delta\subset \Sigma$ finitely generated discrete subgroups of $G$. Then the set $\calT(G,\Delta,\Sigma)$ is countable.
\end{lem}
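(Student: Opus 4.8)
The plan is to prove Lemma~\ref{lem;T_countability} by separating, for each $N\in\calT(G,\Delta,\Sigma)$, the connected component $N^\circ$ from the component group $N/N^\circ$, counting the possibilities for each, and using the hypotheses in Definition~\ref{def;T}(iii) to rule out the continuous families of compact subgroups that would otherwise obstruct countability.

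First I would record the standing consequences of the hypotheses. Fix $N\in\calT(G,\Delta,\Sigma)$ with associated connected semisimple group $H_N$, subgroup $\Gamma\le H_N\cap N$ acting ergodically on $N/\Sigma$, and $H_N$-expanding (relative to $G$) probability measure $\mu$ on $\Gamma$ with finite first moment. Since $\Sigma$ is a lattice in $N$ and $N/N^\circ$ is discrete, $\Sigma N^\circ/N^\circ$ has finite index in $N/N^\circ$; hence $N/N^\circ$ is finitely generated (being commensurable with the finitely generated group $\Sigma/\Delta$) and $\Delta=\Sigma\cap N^\circ$ is a lattice in the connected Lie group $N^\circ$. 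By Proposition~\ref{prop;epimorphic}, $\Gamma$ is epimorphic in $H_N$, and by Proposition~\ref{prop;facts}(ii) the group $H_N$ has no compact factors. Since the Haar measure on $N/\Sigma$ is $\Gamma$-invariant and $\Gamma$-ergodic, hence $\mu$-ergodic, Theorem~\ref{thm.exp.grass} applies and yields that $N^\circ$ is normalized by $H_N$ (being the connected component of the stabilizer of that measure). Finally, combining ergodicity of $\Gamma$ on $N/\Sigma$ with the absence of compact factors of $H_N$ and the epimorphicity of $\Gamma$ in $H_N$ --- by an argument parallel to the exclusion of compact factors elsewhere in the paper --- one obtains that $N$, and a fortiori $N^\circ$, has no nontrivial compact connected normal subgroup: any such subgroup $K$ is normalized by $\Gamma\le H_N$, so the $\Gamma$-action on its (discrete) character group factors through a finite group and a finite-index subgroup of $\Gamma$ centralizes $K$, whereas ergodicity of $\Gamma$ on $N/\Sigma$ forces $\Gamma$ to act with dense image along the $K$-fibres, incompatibly with $\Gamma$ being epimorphic in the compact-factor-free $H_N$.

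Granting this structural input, it remains to prove: (A) the set $\{N^\circ:N\in\calT(G,\Delta,\Sigma)\}$ is countable; and (B) for each connected subgroup $M\le G$ there are at most countably many $N\in\calT(G,\Delta,\Sigma)$ with $N^\circ=M$. For (A), the point is that $\Delta$ is a lattice in the connected group $M=N^\circ$, which has no nontrivial compact connected normal subgroup, and the structure theory of lattices then pins $M$ down up to countably many choices inside the fixed Lie group $G$: the intersection $\Delta\cap\mathrm{Rad}(M)$ is a lattice in the solvable radical by Raghunathan's theorem, the radical is therefore determined up to countably many possibilities by Mostow's rigidity for solvable Lie groups (its Lie algebra being spanned by logarithms of a finite generating set of $\Delta\cap\mathrm{Rad}(M)$, each element having at most countably many logarithms in $\mathfrak g$), the image of $\Delta$ is a lattice in the semisimple quotient $M/\mathrm{Rad}(M)$, which has no compact factors and is hence Zariski dense by Borel's density theorem, so that $M/\mathrm{Rad}(M)$ and then $M$ itself are determined up to a countable ambiguity. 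For (B), any $N$ with $N^\circ=M$ normalizes $M$, hence lies in $P\df N_G(M)$, and $N/M$ is a finite-index overgroup of the fixed finitely generated discrete subgroup $\Sigma M/M$ of $P/M$; passing to the normal core of $\Sigma M/M$ in $N/M$ reduces the task to counting finite extensions of a fixed finitely generated discrete group inside $P/M$, and the same absence-of-compact-subgroups consideration --- applied now to $P/M$ via the ergodicity of the $\Gamma$-action --- confines these to a countable set.

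The step I expect to be the main obstacle is the one flagged above: converting ``$\Gamma$ acts ergodically on $N/\Sigma$ and $\Gamma\le H_N$ has no compact factors'' into the rigorous structural assertion that $N$ has no nontrivial compact connected normal subgroup (and, for (B), that the finite component-group data lies in a countable set). This is precisely where Definition~\ref{def;T}(iii) is indispensable --- dropping it, the conjugates of a single compact torus already yield uncountably many connected subgroups admitting a given finitely generated group as a lattice --- and I would expect a careful treatment to proceed through the decomposition of $L^2(N/\Sigma)$ into isotypic components for the compact subgroup at hand, mirroring the handling of compact factors in the earlier parts of this work.
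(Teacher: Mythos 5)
Your load-bearing structural claim --- that $N$, hence $N^\circ$, has no nontrivial compact connected normal subgroup --- is false, and both parts (A) and (B) of your plan rest on it. Here is a counterexample: let $G=\SL_2(\R)\times\mathbb{T}^1$, $H_N=\SL_2(\R)\times\{0\}$, let $\Gamma_0<\SL_2(\R)$ be a cocompact surface group with a surjection $\phi\colon\Gamma_0\to\Z$, fix an irrational $\alpha\in\mathbb{T}^1$, and set $\Sigma=\set{(\gamma,\phi(\gamma)\alpha)\for\gamma\in\Gamma_0}$ and $N=\Gamma_0\times\mathbb{T}^1$. Then $\Sigma$ is a finitely generated discrete subgroup of $G$, $N^\circ=\{e\}\times\mathbb{T}^1$ is a nontrivial compact connected normal subgroup of $N$, $\Delta\df\Sigma\cap N^\circ=\{e\}$, and $N/\Sigma\cong\mathbb{T}^1$ so that $\Sigma$ is a lattice in $N$. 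The group $\Gamma\df\Gamma_0\times\{0\}\le H_N\cap N$ acts on $N/\Sigma$ by the rotations $\phi(\cdot)\alpha$, which is ergodic since $\alpha$ is irrational, and by Proposition~\ref{prop;Z-dense} a compactly supported probability measure generating $\Gamma$ is $H_N$-expanding, hence $H_N$-expanding relative to $G$. So $N\in\calT(G,\{e\},\Sigma)$ with $N^\circ$ a nontrivial torus. The mechanism you sketch (ergodicity versus epimorphicity) does not produce a contradiction here: $\Gamma$ indeed centralizes the compact factor, and ergodicity on $N/\Sigma$ only forces the image of $\Gamma$ in that factor, via the induced translation action, to be dense --- which a lattice in $\SL_2(\R)$ achieves with ease. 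Consequently your argument for (A) would wrongly conclude $N^\circ=\{e\}$ (the log span being zero), and its further steps are also shaky: Mostow rigidity determines a solvable group only up to abstract isomorphism, not as a subgroup of the fixed $G$, and the Lie algebra of a solvable radical is in general not the span of logarithms of a generating set of a lattice in it.

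The paper does not try to exclude compact factors of $N^\circ$; it \emph{counts} them, and this requires external input that your proposal never invokes. The structural pivot is Lemma~\ref{lem;normalization}: for any two $N,M\in\calT(G,\Delta,\Sigma)$, the wedge vector determined by $\Lie(M)$ is $H_N$-fixed (via the expansion hypothesis together with $\norm{v\otimes v}=\norm{v}^2$), so all identity components normalize one another. This permits passing to the quotient of $G$ by $\bigcap_N N^\circ$, after which $\Delta=\{e\}$ and every $N^\circ$ is a \emph{compact normal} subgroup of $G$; countability of these, and of the finite discrete overgroups that remain, is then precisely~\cite[Lemmas~2.7 and~2.8]{bq132}. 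Your plan has no substitute mechanism for confining the possible $N^\circ$ once compact factors are admitted, and the Raghunathan/Mostow/Borel-density reasoning in part (A) cannot play that role.
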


The proof of this lemma requires the following strengthening of~\cite[Lemma~2.6]{bq132}. 
\begin{lem}\label{lem;normalization}
Let $G$ be a real Lie group, $\mathfrak{g}$ its Lie algebra, and $\Delta\subset \Sigma$ discrete subgroups of $G$. Let $N$ belong to $\calT(G,\Delta,\Sigma)$, $H_N$ be any connected semisimple subgroup of $G$ as in \textup{(iii)} of Definition~\textup{\ref{def;T}}, and let $M$ be a unimodular Lie subgroup of $G$ containing $\Sigma$. Let $\omega\in S^2(\mathfrak{g}^{\wedge \dim(M)})$ correspond to a basis of the Lie algebra of $M$. Then $\omega$ is fixed by $N$ and $H_N$, and hence $M^\circ$ is normalized by $N$ and $H_N$. In particular, this holds whenever $M\in\calT(G,\Delta,\Sigma)$.
\end{lem}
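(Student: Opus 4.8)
The plan is to feed a well-chosen measure into the argument behind Theorem~\ref{thm.exp.grass} and then use ergodicity to upgrade the conclusion from $H_N$ to $N$. One may assume $1\le\dim M\le\dim G-1$; in the boundary cases $M^\circ\in\set{\set{e},G^\circ}$ is normal in $G$ and the statement on $\omega$ is trivial. Write $v_0\in V\df\mathfrak g^{\wedge\dim M}$ for a wedge of an ordered basis of $\Lie M$, so that $\omega=v_0\otimes v_0$ in $W\df S^2(V)$. Since $M$ is unimodular, $\abs{\det(\Ad_G(m)|_{\Lie M})}=1$ for every $m\in M$, hence $\Ad_G^{\wedge\dim M}(m)v_0=\pm v_0$ and $m\cdot\omega=\omega$; in particular $\Sigma\leqs M$ fixes $\omega$. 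Let $\eta_0$ be the $N$-invariant probability measure on $N/\Sigma$ (it exists by Definition~\ref{def;T}(i)), and let $\phi\colon N/\Sigma\to W$ be the continuous, $N$-equivariant orbit map $n\Sigma\mapsto n\cdot\omega$, which is well defined because $\Sigma$ fixes $\omega$. Put $\eta\df\phi_*\eta_0$: a probability measure on $W$ with $\supp(\eta)=\overline{N\cdot\omega}$, concentrated on the ``Veronese cone'' $\mathcal{V}\df\set{v\otimes v\for v\in V}$.

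First I would check that $\eta$ is $\mu$-stationary and supported on $W^{H_N}$. Let $\mu$ be a probability measure on $\Gamma$ that is $H_N$-expanding relative to $G$, as provided by Definition~\ref{def;T}(iii). Since $\Gamma\leqs N$, the measure $\eta_0$ is $\Gamma$-invariant, and $\Gamma$-equivariance of $\phi$ makes $\eta$ $\Gamma$-invariant, hence $\mu$-stationary. By semisimplicity of $H_N$, fix an $H_N$-invariant Euclidean norm on $V$ in which $V^{H_N}$ and its $H_N$-invariant complement $V^+$ are orthogonal, and use the induced cross-norm on $W$, so that $\norm{v\otimes v}=\norm{v}^2$. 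Definition~\ref{de;exp_grass}, applied with $k=\dim M$, says that $\mu$ is uniformly expanding on $V^+\cong V/V^{H_N}$. Exactly as in the proof of Theorem~\ref{thm.exp.grass}, for $g\in\Gamma\leqs H_N$ and $v\in V$ one has $\norm{g\cdot(v\otimes v)}=\norm{gv}^2\ge\norm{p_{V^+}(gv)}^2=\norm{g\,p_{V^+}(v)}^2$, where $p_{V^+}\colon V\to V^+$ is the ($H_N$-equivariant) orthogonal projection; hence every $v\otimes v$ with $v\notin V^{H_N}$ is $\mu$-expanded. The set $E\df\mathcal{V}\setminus\set{v\otimes v\for v\in V^{H_N}}$ is Borel (a difference of closed sets) and consists of $\mu$-expanded vectors, so Lemma~\ref{lem;stationary} gives $\eta(E)=0$. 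As $\eta(\mathcal{V})=1$, the measure $\eta$ is concentrated on $\set{v\otimes v\for v\in V^{H_N}}\subseteq W^{H_N}$; in particular $\supp(\eta)\subseteq W^{H_N}$, and since $\omega\in\supp(\eta)$ the vector $\omega$ is fixed by $H_N$.

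It remains to show that $N$ fixes $\omega$. Because $\supp(\eta)\subseteq W^{H_N}$ and $\Gamma\leqs H_N$, the semigroup $\Gamma$ fixes every point of $\supp(\eta)$; since $\phi(x)\in\supp(\eta)$ for $\eta_0$-a.e.\ $x$ and $\phi$ is $\Gamma$-equivariant, we get $\phi(\gamma x)=\gamma\cdot\phi(x)=\phi(x)$ for $\eta_0$-a.e.\ $x$ and all $\gamma\in\Gamma$. Thus $\phi$ is $\Gamma$-invariant modulo $\eta_0$-null sets, and since $\Gamma$ acts ergodically on $(N/\Sigma,\eta_0)$ by Definition~\ref{def;T}(iii), $\phi$ is $\eta_0$-a.e.\ equal to a constant $c\in W$. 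By continuity of $\phi$ and full support of $\eta_0$ we conclude $\phi\equiv c$, and evaluating at the identity coset gives $c=\omega$. Hence $n\cdot\omega=\omega$ for all $n\in N$. Finally, any element of $G$ fixing $\omega$ preserves the line $\R v_0\subseteq\mathfrak g^{\wedge\dim M}$, and therefore preserves $\Lie M$, so $N$ and $H_N$ normalize the connected subgroup $M^\circ$ with Lie algebra $\Lie M$. The ``in particular'' follows because $M\in\calT(G,\Delta,\Sigma)$ contains the lattice $\Sigma$ and is therefore unimodular.

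I expect two steps to require genuine input. The first is extracting expansion on the symmetric square $W$, on which no expansion is assumed a priori; this is the device already used for Theorem~\ref{thm.exp.grass}, relying on $\norm{v\otimes v}=\norm{v}^2$ and the Veronese picture, and it is where the precise definition of ``$H_N$-expanding relative to $G$'' is used. The second, and the main obstacle, is promoting ``$\Gamma$ fixes $\supp(\eta)$'' to ``$N$ fixes $\omega$'': here the ergodicity of $\Gamma$ on $N/\Sigma$ is indispensable, since without it one only learns that every point of $N\cdot\omega$ is $H_N$-fixed, equivalently $H_N\leqs\bigcap_{n\in N}n\,\Stab_G(\omega)\,n^{-1}$, which is in general strictly weaker than $N\leqs\Stab_G(\omega)$.
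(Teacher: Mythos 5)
Your proposal is correct and follows the same route as the paper's proof: push the Haar measure on $N/\Sigma$ forward via $n\Sigma\mapsto n\acts\omega$ to get a $\Gamma$-invariant (hence $\mu$-stationary) measure $\eta$ on the Veronese cone, use $\norm{v\otimes v}=\norm{v}^2$ and expansion in degree $k=\dim M$ to place $\eta$ on $H_N$-fixed vectors via Lemma~\ref{lem;stationary}, and then invoke ergodicity of $\Gamma$ on $N/\Sigma$ to force the orbit map to be constant, hence equal to $\omega$. One small slip to fix: you write ``fix an $H_N$-invariant Euclidean norm on $V$'', which does not exist when $H_N$ is noncompact unless it acts trivially; all you actually need (and use) is a Euclidean norm for which $V^{H_N}\perp V^+$, together with $H_N$-equivariance of the projection $p_{V^+}$, and the latter already holds because $p_{V^+}$ is the projection along the $H_N$-invariant decomposition $V=V^{H_N}\oplus V^+$. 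With that wording corrected, the argument is exactly the paper's.
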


In the statement above, $S^2(\mathfrak{g}^{\wedge \dim(M)})$ denotes the symmetric square of $\mathfrak{g}^{\wedge \dim(M)}$. If $v\in\mathfrak{g}^{\wedge \dim(M)}$ corresponds to a basis of the Lie algebra of $N$, the appearing vector $\omega$ is given by $\omega=v\otimes v$.
\begin{proof}
If $\dim(M)=\dim(G)$, then $M^\circ=G^\circ$ and the statement is clear. So we assume that $\dim(M)<\dim(G)$.
Since $M$ is unimodular and contains $\Sigma$, the action of $\Sigma$ fixes $\omega$. Therefore, the map
\begin{align*}
N\to S^2(\mathfrak{g}^{\wedge \dim(M)}),\,h\mapsto h\acts\omega
\end{align*}
descends to a map $N/\Sigma\to S^2(\mathfrak{g}^{\wedge \dim(M)})$.
Denote by $\eta$ the pushforward of the Haar probability measure on $N/\Sigma$ to $S^2(\mathfrak{g}^{\wedge \dim(M)})$ by this map and let  $\Gamma\leqs N \cap H_N$ be as in (iii) of the definition of $\calT(G,\Delta,\Sigma)$. Then $\eta$ is an ergodic $\Gamma$-invariant probability measure supported on the set $\set{v\otimes v\for v\in\mathfrak{g}^{\wedge\dim(M)}}$. Since $\Gamma$ supports an $H_N$-expanding probability measure relative to $G$ and $\norm{v\otimes v}=\norm{v}^2$, Lemma~\ref{lem;stationary} implies that 
$\eta$ is concentrated on the subspace of $H_N$-fixed vectors.
The ergodicity forces $\eta$ to be the Dirac mass at $\omega$. Hence, $\omega$ is $N$- and $H_N$-fixed, as required. 
\end{proof}

We can now prove Lemma~\ref{lem;T_countability}. The argument is basically the same as in the proof of~\cite[Lemma~2.5]{bq132}, but we need to handle an additional difficulty coming from the fact that $\Gamma$ is not necessarily Zariski dense in $H_N$, but only carries a probability measure that is $H_N$-expanding relative to $G$.

\begin{proof}[Proof of Lemma~\textup{\ref{lem;T_countability}}]
For every $N \in \calT(G,\Delta,\Sigma)$, we fix a connected semisimple group $H_N$ as in (iii) of Definition~\ref{def;T}. Considering the closure of the group generated by the set $\bigcup_{N\in\calT(G,\Delta,\Sigma)}H_N N,$ we can assume that this set generates a dense subgroup of $G$. By Lemma~\ref{lem;normalization}, 
\begin{align*}
 M\df\bigcap_{N\in\calT(G,\Delta,\Sigma)}N^\circ
\end{align*}is a normal subgroup of $G$.
Let $\pi\colon G\to G/M$ be the natural projection map.

We will argue next that $\iota\colon N\mapsto \pi(N)$ gives an injection of $\calT(G,\Delta,\Sigma)\setminus \set{\Sigma M}$ into $\calT(G/M,\set{e},\pi(\Sigma))$. First, note that $N\mapsto\pi(N)$ is an injective map from $\calT(G,\Delta,\Sigma)$ into the set of closed subgroups of $G/M$. Since $\Sigma \cap M = \Delta$ is a lattice in $M$, $\Sigma M$ is closed in $G$ by~\cite[Theorem~1.13]{raghunathan}, which implies that $\pi(\Sigma)$ is discrete. As there is an equivariant projection $N/\Sigma\to \pi(N)/\pi(\Sigma)$, $\pi(\Sigma) $ is a lattice in $\pi(N)$. If $\pi(n)\in\pi(\Sigma)$ for some $n\in N^\circ$, then $n=\sigma m$ for some $m\in M$ and $\sigma\in\Sigma$. Since $M\subset N^\circ$, it follows that $\sigma\in\Sigma\cap N^\circ=\Delta\subset M$, which proves that $\pi(N)^\circ\cap\pi(\Sigma)=\set{e}$ is the trivial group. So we have verified conditions (i) and (ii) of Definition~\ref{def;T} for any element $\pi(N)$ in the image of $\iota$.
To also verify condition~(iii), let $H_N\leqs G$ be the connected semisimple subgroup from condition~(iii) for $N$ and $\Gamma$ a subgroup of $H_N\cap N$ that acts ergodically on $N/\Sigma$ and carries an $H_N$-expanding probability measure $\mu$ relative to $G$.
Then it is clear that $\pi(\Gamma)$ acts ergodically on $\pi(N)/\pi(\Sigma)$.
Now, if $H_N\leqs M$, then ergodicity of this action forces $N=\Sigma M$. 
Otherwise, $\pi(H_N) $ is a connected semisimple Lie group. By Lemma~\ref{lem;normalization} and connectedness, $H_N$ fixes a vector $v\in\mathfrak{g}^{\wedge\dim(M)}$ corresponding to a basis of the Lie algebra $\mathfrak{m}$ of $M$. For $1\le k\le \dim(G/M)-1$, on $(\mathfrak{g}/\mathfrak{m})^{\wedge k}$ we may use a norm with the property that $\norm{[w]}=\norm{w\wedge v}$ for every $w\in\mathfrak{g}^{\wedge k}$, where $[w]$ denotes the projection of $w$ to $(\mathfrak{g}/\mathfrak{m})^{\wedge k}$. Then the same calculation as in \eqref{quotient_norm} shows that $\pi_*\mu$ is $\pi(H_N)$-expanding relative to $G/M$. So also condition~(iii) of Definition~\ref{def;T} holds for $\pi(N)$.
 
Therefore, it suffices to prove the lemma under the assumption that $\Delta=\set{e}$ is the trivial group and that for every $N \in {\calT(G,\set{e},\Sigma)}$, the connected component $N^\circ$ is normal in $G$. 
In view of condition~(ii), this implies that $N^\circ$ is a compact normal subgroup of $G$. By~\cite[Lemma~2.7]{bq132}, there are only countably many such $N^\circ$. 
Similar to the first reduction step above, after fixing $N^\circ$ and replacing $G$ by $G/N^\circ$
and $\Sigma$ by $\Sigma N^\circ /N^\circ$,
we are left to show that the set 
$\mathcal V(G, \Sigma)$ of discrete subgroups $N$ containing $\Sigma$ as a finite index subgroup such that (iii) of Definition~\ref{def;T} holds is countable. 
For each $N \in \mathcal V(G, \Sigma)$, there is a finite index subgroup $\Sigma'\leqs \Sigma$ such that $\Sigma'$ is normal in $N$. 
Recall that by assumption $\Sigma $ is finitely generated, so that it admits only finitely many homomorphisms to any fixed finite group.  It follows that there are countably many such $\Sigma'$.
Therefore, it suffices to show that, {for $\Sigma'$ fixed,} the set $\mathcal V(G, \Sigma',\Sigma)$ of 
$N\in \mathcal V(G, \Sigma)$ with $\Sigma'$ normal in $N$ is countable. Let $S$ be the closed subgroup generated by $\bigcup_{N \in \mathcal  V(G, \Sigma',\Sigma)} N$. Then $\Sigma'$ is a discrete  normal subgroup of $S$. 
For any $g\in \Sigma'$, the set $\set{s g s^{-1}\for s\in S^\circ }$ is a connected subset of $\Sigma'$, so it has to be $\set{ g}$. It follows that  $\Sigma'$ centralizes $S^\circ$.   
Given $N\in  \mathcal V(G, \Sigma',\Sigma)$, let $\Gamma$ be a subgroup of $H_N \cap N$ acting ergodically on $N/\Sigma$ as in (iii) of Definition~\ref{def;T}. By ergodicity, we have $N=\Gamma \Sigma$ and since $\Gamma \Sigma=\Gamma (\Sigma' \Sigma)=(\Gamma \Sigma')\Sigma $, $N$ is uniquely determined by the discrete group $\Gamma\Sigma'$.
So it suffices to show that the set of subgroups 
$\Gamma\Sigma'$ appearing in this way is countable. 
The finite index subgroup  $\Gamma \cap \Sigma'$ of $\Gamma$ centralizes $S^\circ$ and $\Gamma$ normalizes $S^\circ$. 
It follows that the conjugation action of $\Gamma$ on $S^\circ$ factors through a finite group. Now, according to (iii) of Definition~\ref{def;T}, there exists a probability measure on $\Gamma$ that is $H_N$-expanding relative to $G$. By (i) of Proposition~\ref{prop;facts} applied to the adjoint representation of $H_N$ on $\mathfrak{g}$, we conclude that every element of the Lie algebra of $S$ is fixed by $H_N$. 
This implies that $\Gamma<H_N$ centralizes $S^\circ$. Therefore $\Gamma\Sigma'/\Sigma'$
is a finite subgroup of  $S/\Sigma'$ centralizing  $S^\circ \Sigma'/\Sigma'$. 
By~\cite[Lemma~2.8]{bq132}, the set of compact subgroups of $S/\Sigma'$ centralizing $S^\circ \Sigma'/\Sigma'$ is countable. This gives the required countability and 
hence completes the proof. 
\end{proof}

We also need the following version of~\cite[Lemma~2.2]{bq132}.
\begin{lem}\label{lem;countable_fixed_point_orbits}
Let $G$ be a real Lie group, $H$ a connected semisimple subgroup of $G$, and $\Gamma$ a subgroup of $H$ that supports an $H$-expanding probability measure relative to $G$. Moreover, let $L$ be the centralizer of $\Gamma$ in $G$ and $N$ a closed unimodular subgroup of $G$. Then the set of $\Gamma$-fixed points in $Y=G/N$ is a countable union of $L$-orbits.
\end{lem}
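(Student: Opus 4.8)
The plan is to adapt the proof of~\cite[Lemma~2.2]{bq132}, invoking the relative $H$-expansion hypothesis at the points where Benoist--Quint use Zariski density. Write $Y=G/N$ and let $Z\subseteq Y$ be the set of $\Gamma$-fixed points. Since $L$ centralizes $\Gamma$, the set $Z$ is $L$-invariant, and $gN\in Z$ if and only if $g^{-1}\Gamma g\leqs N$; it is also harmless to replace $\Gamma$ by the closed subgroup it generates. Fix a probability measure $\mu$ on $\Gamma$ with finite first moment that is $H$-expanding relative to $G$. By Proposition~\ref{prop;epimorphic}, $\Gamma$ is epimorphic in $H$, so Proposition~\ref{prop;facts}(i) applied to the adjoint representation of $G$ yields $\mathfrak g^\Gamma=\mathfrak g^H$ and hence $\Lie(L)=\mathfrak g^H$.

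The first step is to extract an algebraic constraint on the points of $Z$. The case $\dim N=\dim G$ is trivial, since then $Y$ is countable, so assume $\dim N\leqs\dim G-1$. Let $v\in\mathfrak g^{\wedge\dim N}$ be a wedge of a basis of $\Lie N$ and put $\omega=v\otimes v\in S^2(\mathfrak g^{\wedge\dim N})$. Unimodularity of $N$ gives that $N$ fixes $\omega$, so $N\leqs P\df\Stab_G(\omega)$ and $N^\circ$ is normal in $P$. If $gN\in Z$, then $g^{-1}\Gamma g\leqs N\leqs P$, so $\Gamma$ fixes $g\acts\omega=(g\acts v)\otimes(g\acts v)$; in particular $g\acts v$ is not $\rho_*\mu$-expanded for $\rho=\Ad^{\wedge\dim N}$, whence by relative $H$-expansion and Proposition~\ref{prop;facts}(i) the vector $g\acts v$ is $H$-fixed. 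Therefore $g^{-1}Hg\leqs P$, so $g^{-1}Hg$ normalizes $N^\circ$ and $\Ad(g)\Lie N$ is an $H$-submodule of $\mathfrak g$. Running the same argument with $\mathfrak g^{\wedge(\dim N+1)}$ in place of $\mathfrak g^{\wedge\dim N}$ --- via the $H$-equivariant embedding $w+\Ad(g)\Lie N\mapsto w\wedge(g\acts v)$ of $\mathfrak g/\Ad(g)\Lie N$ into $\mathfrak g^{\wedge(\dim N+1)}$, which is well defined and injective because $g\acts v$ is $H$-fixed and nonzero (when $\dim N=\dim G-1$ one instead uses directly that $\mathfrak g/\Ad(g)\Lie N$ is one-dimensional, hence a trivial $H$-module) --- one finds that every $\Gamma$-fixed vector of $\mathfrak g/\Ad(g)\Lie N$ is $H$-fixed.

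Next I would pass to the infinitesimal picture. Identifying $T_{gN}Y$ with $\mathfrak g/\Ad(g)\Lie N$, the $\Gamma$-action on $T_{gN}Y$ is the one induced by the adjoint action of $\Gamma\leqs H\leqs G$; the previous paragraph shows $(T_{gN}Y)^\Gamma=(T_{gN}Y)^H$, and since $\Ad(g)\Lie N$ is an $H$-submodule, semisimplicity together with $\Lie(L)=\mathfrak g^H$ identifies this space with the image of $\Lie(L)$, that is, with $T_{gN}(L\acts gN)$. Hence $(T_yY)^\Gamma=T_y(L\acts y)$ for every $y\in Z$. From this transversality one concludes that $L\acts y$ is relatively open in $Z$: every $\Gamma$-fixed point close to $y$ already lies on $L\acts y$, because near $y$ the set $Z$ is the common zero locus of the maps $z\mapsto\gamma\acts z-z$ ($\gamma\in\Gamma$), and at $y$ their linearizations cut out exactly $T_y(L\acts y)$, which has finite codimension. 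Granting this, $Z$ decomposes as a disjoint union of relatively open $L$-orbits, and this union is countable because $Y$ is second countable.

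The step I expect to be the main obstacle is precisely this local rigidity --- upgrading the infinitesimal equality $(T_yY)^\Gamma=T_y(L\acts y)$ to the statement that $L\acts y$ is relatively open in $Z$ --- in the absence of any compactness or algebraicity assumption on $\Gamma$. When $\Gamma$ is finitely generated this is a routine implicit function theorem argument applied to the finitely many equations $\gamma\acts z=z$, $\gamma$ in a generating set. For general $\Gamma$ the point is that, near any $y\in Z$, one can still isolate finitely many elements $\gamma_1,\dots,\gamma_r\in\Gamma$ whose common fixed-point set already has the expected tangent space $T_y(L\acts y)$ at $y$; carrying this out and combining it with the $L$-action, in the spirit of the stratification arguments of~\cite{bq132} and of Lemma~\ref{lem;T_countability} (where the epimorphic and expansion properties of $\Gamma$ are again the essential input), is the technical heart of the proof.
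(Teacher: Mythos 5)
Your outline is essentially the paper's argument, and you correctly locate the one step that needs an idea, but you stop short and declare it the ``technical heart'' when in fact it is resolved by a single elementary observation. The paper chooses $\gamma_1,\dots,\gamma_r\in\Gamma$ so that
\begin{align*}
\mathfrak l=\set{v\in\mathfrak g\for\Ad(\gamma_i)v=v\text{ for }1\le i\le r},
\end{align*}
which is possible simply because $\mathfrak g$ is finite-dimensional: $\mathfrak l$ is the intersection of the fixed subspaces $\mathfrak g^{\Ad(\gamma)}$ over $\gamma\in\Gamma$, and a descending chain of subspaces of $\mathfrak g$ stabilizes after finitely many steps. With this finite set in hand, no implicit-function-theorem issue, no stratification, and no finite generation of $\Gamma$ is needed: after conjugating so that $y=eN$ (hence $\Gamma\leqs N$), one picks the $H$-invariant complement $\mathfrak v$ of $\mathfrak n+\mathfrak l$ and observes that if $\exp(v)y$ is $\Gamma$-fixed for $v\in\mathfrak v$ small, then $\exp(\Ad(\gamma_i)v)y=\gamma_i\exp(v)y=\exp(v)y$; since $\mathfrak v$ is $\Ad(\Gamma)$-invariant and $\mathfrak v\cap\mathfrak n=0$, injectivity of $w\mapsto\exp(w)y$ near $0$ on $\mathfrak v$ forces $\Ad(\gamma_i)v=v$, hence $v\in\mathfrak l\cap\mathfrak v=\set{0}$. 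That is the whole argument.

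A secondary remark: your detour through $\mathfrak g^{\wedge(\dim N+1)}$ to show $(\mathfrak g/\Ad(g)\Lie N)^\Gamma=(\mathfrak g/\Ad(g)\Lie N)^H$ is correct but unnecessary. Once one knows $\mathfrak n$ is $H$-invariant (from $\wedge\dim N$) and $\mathfrak l=\mathfrak g^H$ (from the adjoint representation), the paper works directly in $\mathfrak g$ with the complement $\mathfrak v$ of $\mathfrak n+\mathfrak l$ and never needs the tangent-space identification in the quotient, nor the $\wedge(\dim N+1)$ representation. Your route gives a cleaner conceptual statement --- $(T_yY)^\Gamma=T_y(Ly)$ --- but the paper's route is shorter and avoids the edge case $\dim N=\dim G-1$ you had to handle separately.
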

\begin{proof}
It is enough to consider the case $\dim(N)<\dim(G)$. Denote by $Y^\Gamma$ the set of $\Gamma$-fixed points in $Y$. Then it suffices to show that every $L$-orbit $Ly$ in $Y^\Gamma$ is open in $Y^\Gamma$. After a conjugation we may assume $y=eN$ is the identity coset. In particular, we then have $\Gamma\leqs N$. Let $\mathfrak l$ denote the Lie algebra of $L$. By finite-dimensionality, we can find $\gamma_1,\dots,\gamma_r\in\Gamma$ such that 
\begin{align*}
\mathfrak l=\set{v\in\mathfrak g\for \Ad(\gamma_i)v=v\text{ for }1\le i\le r}.
\end{align*}
In view of unimodularity of $N$, considering a vector in $S^2(\mathfrak{g}^{\wedge\dim(N)})$ corresponding to a basis of the Lie algebra $\mathfrak n$ of $N$ and arguing as in Lemma~\ref{lem;normalization} yields that $\mathfrak n$ is $H$-invariant. Thanks to the expansion in the adjoint representation, it moreover follows that $\mathfrak{l}$ coincides with the space of $H$-fixed vectors in $\mathfrak g$. We choose an $H$-invariant complement $\mathfrak{v}$ of $\mathfrak n+\mathfrak l$ in $\mathfrak{g}$. Then for any $v\in \mathfrak{v}$ sufficiently small, if  $\exp (v) y$ is $\Gamma$-fixed,  then for all $1\le i\le r$ we have
\begin{align*}
\exp (\Ad (\gamma_i) v)y=\gamma_i \exp (v) y= \exp (v)y,
\end{align*}
which implies $\Ad(\gamma_i)v=v$ and thus $v\in\mathfrak l\cap\mathfrak v=\set{0}$.  This shows that $Ly$ is open in $Y^{\Gamma}$ and hence finishes the proof that $Y^\Gamma$ is a countable union of $L$-orbits.
\end{proof}

Finally, we can prove the main result of this subsection. We adapt the proof of~\cite[Proposition~2.1]{bq132} by
substituting Lemmas~\ref{lem;T_countability} and~\ref{lem;countable_fixed_point_orbits} for the corresponding results, and extend it to cover semigroups that are not compactly generated.
\begin{proof}[Proof of Proposition~\textup{\ref{prop;countability}}] 
We first establish the statement assuming additionally that $\Gamma$ is compactly generated. Let $Y\in\calS(\Gamma)$ and denote by $G_Y$ the stabilizer of the homogeneous probability measure $\nu$ corresponding to $Y$. Let $\mu$ be a probability measure on $\Gamma$ that is $H$-expanding relative to $G$. Choose $g\in G$ such that $g\Lambda\in Y$ and consider $N=g^{-1}\Gamma G_Y^\circ g$, which is a closed subgroup of $G$ because $\Gamma$ is contained in $G_Y$ and thus normalizes $G_Y^\circ$. Now, the discrete groups $\Delta=N^\circ\cap \Lambda$ and $\Sigma=N\cap\Lambda$ are lattices in $N^\circ$ and $N$, respectively. Being a lattice in a connected Lie group, $\Delta$ is finitely generated (see~\cite[6.18]{raghunathan}). As $N=g^{-1}\Gamma G_Y^\circ g$ and $\Gamma$ is compactly generated, $N/N^\circ$ is finitely generated. Since $\Sigma/\Delta$ has finite index in $N/N^\circ$, also $\Sigma$ is finitely generated.
As $\Lambda$ admits only countably many finitely generated subgroups, one may assume that $\Delta$ and $\Sigma$ are fixed. We claim that $N$ belongs to $\calT(G,\Delta,\Sigma)$. To see this, we first record that (i) and (ii) in Definition~\ref{def;T} are immediate.  Considering $H_N=g^{-1}Hg$, its subgroup $g^{-1}\Gamma g$ and the image of $\mu$ by conjugation by $g^{-1}$, also (iii) is seen to hold. 
Consequently, we can also assume $N$ to be fixed by virtue of Lemma~\ref{lem;T_countability}. As the point $gN\in G/N$ is $\Gamma$-invariant, by Lemma~\ref{lem;countable_fixed_point_orbits} one may further assume the $L$-orbit $LgN\subset G/N$ is fixed. It only remains to note that for $l\in L$, the orbit $lgN\Lambda\subset X=G/\Lambda$ is precisely the translate $lY$ of $Y$.

To treat the general case without the compact generation assumption, given an arbitrary probability measure $\mu'$ on $\Gamma$ with finite first moment that is $H$-expanding relative to $G$, we consider the probability measure $\mu$ given as the normalized restriction of $\mu'$ to a sufficiently large compact ball $B$ around the identity. By choosing $B$ large enough, we can guarantee that the integral characterization of uniform expansion from Proposition~\ref{prop;expansion_char} still holds for the finite collection of representations in Definition~\ref{de;exp_grass}. In view of expansion in the adjoint representation, the connected components of $L=C_G(\Gamma)$ and $L_\mu=C_G(\Gamma_\mu)$ coincide. Therefore, applying the above to the compactly generated subgroup $\Gamma_\mu$, we can find a countable collection $\mathcal{Y}_\mu\subset\calS(\Gamma_\mu)$ such that $\calS(\Gamma_\mu)=\set{lY_\mu\for l\in L,Y_\mu\in\mathcal{Y}_\mu}$. We claim that $\mathcal{Y}=\set{\overline{\Gamma Y_\mu}\for Y_\mu\in\mathcal{Y}_\mu}\cap\calS(\Gamma)$ satisfies the conclusion of the proposition.
To see this, let $Y\in\calS(\Gamma)$ be arbitrary and $\nu_Y$ be the associated $\Gamma$-invariant $\Gamma$-ergodic homogeneous measure. By Theorem~\ref{thm.exp.grass} we know that every $\Gamma_\mu$-ergodic component of $\nu_Y$ is an element of $\calS(\Gamma_\mu)$. By Fubini's theorem and $\Gamma$-ergodicity of $\nu_Y$, we can thus find $Y_\mu'\in\calS(\Gamma_\mu)$ such that almost every  point $x\in Y_\mu'$ with respect to the homogeneous measure on $Y_\mu'$ satisfies $Y=\overline{\Gamma x}$. We also know that $Y_\mu'=lY_\mu$ for some $Y_\mu\in\mathcal{Y}_\mu$ and $l\in L=C_G(\Gamma)$. We conclude that $Y=\overline{\Gamma Y_\mu'}=l\overline{\Gamma Y_\mu}$, which shows that $\overline{\Gamma Y_\mu}\in\mathcal{Y}$ and completes the proof.
\end{proof}

\section{Height functions with contraction properties}\label{sec;height_functions}
A Markov chain on a standard Borel space $X$ is a measurable map $X\ni x\mapsto P_x$ from $X$ to the space of Borel probability measures on $X$, specifying the transition probabilities at each point of $X$. The associated Markov operator $P$ is defined by 
\begin{align*}
P(f)(x)=\int_Xf\dd P_x
\end{align*}
for a non-negative Borel function $f$ on $X$ and $x\in X$. If $G$ is a locally compact second countable group with a Borel $G$-action on $X$, then a choice of a probability measure $\mu$ on $G$ induces a Markov chain on $X$ with transition probabilities $P_x=\mu*\delta_x$, which can be thought of as the formalization of the concept of the random walk on $X$ given by $\mu$. We denote the associated Markov operator by $A_\mu$, which is given in this context by the explicit formula
\begin{align*}
A_\mu(f)(x)=\int_Gf(gx)\dd\mu(g).
\end{align*}
We also refer to $A_\mu$ as the \emph{averaging operator} associated to $\mu$. See \cite[\S3]{bq132} and \cite[\S2]{bq} for more background on Markov operators in the context of the study of random walks.

Coming back to our setting, recall that $\Lambda$ denotes a lattice in a Lie group $G$ and $H$ a connected semisimple subgroup of $G$ without compact factors and with finite center, and $\mu$ is an $H$-expanding probability measure on $H$. 

The goal of this section is to construct height functions on $X=G/\Lambda$ that are contracted by the averaging operator $A_\mu$ (also known as \emph{Lyapunov functions} or sometimes \emph{Margulis functions}), which will yield the recurrence properties of the random walk on $X$ necessary for the proof of our main theorems. As already explained in \S\ref{sec;rec_intro}, two types of height functions are required. First, one needs a height function that is proper but stays bounded on prescribed compact subsets of the space $X$, which prevents the random walk from escaping to infinity. Secondly, in order to ensure equidistribution towards a homogeneous measure sitting on the orbit closure, we will need to construct height functions which are unbounded near lower dimensional homogeneous subspaces. These ensure that the random walk does not accumulate near such ``singular subspaces'', i.e.\ does not spend too much time in their vicinity.

\subsection{Height function with respect to the cusps}\label{subsec;height}
We first present the construction of the height functions responsible for ruling out escape of mass.
\begin{thm}[Exponential $\mu$-unstability of the cusps,~\cite{bq12}]\label{thm;hgeneral}
Let $\mu$ be an $H$-expanding probability measure with finite exponential moments. 
For any compact subset $Z$ of $X=G/\Lambda$, there exist constants $m\in \N$, $a \in (0,1)$, $b>0$, and a lower semicontinuous function $\beta_\infty\colon X\to [1, \infty]$ uniformly bounded on $Z$ such that for every $x \in X$ we have
\begin{align}\label{contr}
A_\mu^m(\beta_\infty) (x)\le a \beta_\infty(x)+b.
\end{align}
Moreover, 
\begin{enumerate}[label=\textup{(\roman*)}]
\item for every $\ell>1$, the set $\beta_{\infty}^{-1}([1,\ell])$ is compact,
\item the set $\beta_\infty^{-1}(\set{\infty})$ is $H$-invariant, and
\item there exists a constant $\kappa>0$ such that for every $h \in H$ and $x \in G/\Lambda$ we have $\beta_\infty(hx)\le \operatorname{N}(\Ad h)^\kappa \beta_\infty (x)$.
\end{enumerate}
\end{thm}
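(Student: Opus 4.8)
The plan is to invoke the height function construction of Benoist--Quint~\cite{bq12} essentially verbatim, observing that the only point where their algebraic hypothesis on the Zariski closure of $\Gamma_\mu$ enters is in the verification of an expansion estimate in certain finite-dimensional representations of $H$, which in our situation is furnished directly by the $H$-expansion property. More concretely, recall that one works with a Siegel-type realization of $X=G/\Lambda$: one covers the cusp by finitely many Siegel sets associated to a set of representatives of the $\Lambda$-conjugacy classes of maximal unipotent (or more precisely relevant parabolic) subgroups, and for each such piece the relevant ``height at infinity'' is measured by norms of vectors of the form $v=e_1\wedge\dots\wedge e_j$ in exterior powers $\mathfrak{g}^{\wedge j}$ (equivalently, in a suitable rational representation when $G$ is arithmetic; in general one uses the Lie-algebra exterior powers and a height coming from covolumes of lattices in the unipotent radicals, following the Eskin--Margulis~\cite{em} and Benoist--Quint~\cite{bq12} formalism). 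The key analytic input is a contraction inequality $A_\mu^m(\beta_\infty)\le a\beta_\infty+b$ for a finite power $m$, and in~\cite{bq12} this is reduced, via finiteness of the number of cusps and a partition-of-unity/interpolation argument between the ``deep cusp'' region and a compact core, to showing that in each of these representations the measure $\mu$ contracts the reciprocal of the relevant norm along the random walk, i.e.\ an estimate of the shape $\int \|\rho(g)v\|^{-\delta}\dd\mu^{*m}(g)\le a\|v\|^{-\delta}$ for small $\delta>0$ and large $m$.

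First I would set up the notation matching~\cite{bq12}: fix the compact set $Z$, fix Siegel sets $\mathfrak{S}_1,\dots,\mathfrak{S}_r$ covering $X\setminus Z'$ for a slightly larger compact set $Z'\supset Z$, and for each associated representation $(\rho_i,V_i)$ of $H$ (a sub-representation of some $\mathfrak{g}^{\wedge j}$, hence having no nonzero $H$-fixed vectors after passing to the relevant $H$-invariant complement) record the highest-weight vectors $v_i$ that control the height. Then I would invoke the integral characterization of uniform expansion: by Proposition~\ref{prop;expansion_char}, since $\mu$ is $H$-expanding (so $\rho_{i*}\mu$ is uniformly expanding) and has finite first moment, there are $N_i\in\N$ and $C_i>0$ with $\int\log(\|\rho_i(g)v\|/\|v\|)\dd\mu^{*N_i}(g)\ge C_i$ for all nonzero $v\in V_i$. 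Combining this with the finite exponential moments hypothesis and a standard convexity/Chebyshev argument (identical to~\cite[\S4]{em} and~\cite[\S3]{bq12}; see also Lemma~\ref{lem;contract} below), one upgrades the logarithmic lower bound to the multiplicative contraction $\int\|\rho_i(g)v\|^{-\delta}\dd\mu^{*m}(g)\le a_i\|v\|^{-\delta}$ for $\delta$ small, $m$ a common multiple of the $N_i$, and $a_i<1$. Feeding these representation-level contractions into the Siegel-set bookkeeping of~\cite{bq12} yields the global inequality~\eqref{contr} with $\beta_\infty$ lower semicontinuous, bounded on $Z$, and proper (property (i)), the properness being built into the construction since $\beta_\infty\to\infty$ precisely as one goes deeper into the cusps.

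For properties (ii) and (iii): property (iii), the quasi-invariance bound $\beta_\infty(hx)\le \operatorname{N}(\Ad h)^\kappa\beta_\infty(x)$, follows because $\beta_\infty$ is, up to bounded multiplicative factors, built from norms $\|\rho_i(g)v_i\|^{-\delta}$ in representations $\rho_i$ that are sub-quotients of $\mathfrak{g}^{\wedge j}$, and for any representation $\rho$ of $H$ one has $\|\rho(h)\|\le \operatorname{N}(\Ad h)^{c}$ for a constant $c$ depending only on $\dim\mathfrak{g}$ and $j$ (since $\rho(H)$ is, by Remark~\ref{rem;alg}, almost algebraic and the weights of $\rho$ are bounded combinations of roots); taking $\kappa$ the maximum of $\delta_i c_i$ over the finitely many pieces gives the claim. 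Property (ii), $H$-invariance of $\beta_\infty^{-1}(\{\infty\})$, is then immediate from (iii): if $\beta_\infty(x)<\infty$ then $\beta_\infty(hx)\le\operatorname{N}(\Ad h)^\kappa\beta_\infty(x)<\infty$ for all $h\in H$, and conversely using $h^{-1}$, so the finiteness locus, hence its complement $\beta_\infty^{-1}(\{\infty\})$, is $H$-stable. The main obstacle is not conceptual but one of bookkeeping: checking carefully that the Siegel-set reduction in~\cite{bq12} genuinely isolates the algebraic hypothesis into the single estimate we are replacing, and in particular that their argument's use of the Zariski-density assumption is confined to establishing the analogue of Proposition~\ref{prop;expansion_char}'s conclusion in the relevant representations (which we now get for free), rather than being used in some hidden way elsewhere — e.g.\ in ruling out ``intermediate'' heights or in the combinatorics of how different cusps interact. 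One should also verify that finite exponential moments, which for us is part of the hypothesis, is exactly what is needed to pass from the additive to the multiplicative contraction, as already flagged in the paragraph following the theorem statement in~\S\ref{sec;rec_intro}.
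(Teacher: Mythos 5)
Your proposal follows the same approach as the paper: observe that Benoist--Quint's Zariski-density hypothesis in~\cite{bq12} is used only to establish a representation-level expansion estimate (their Lemma~4.3), supply that estimate from $H$-expansion via Proposition~\ref{prop;expansion_char} and the Eskin--Margulis-type convexity trick (the paper's Lemma~\ref{lem;contract}), and let the rest of the height-function construction carry through unchanged. Your derivation of (ii) from (iii) is a valid shortcut (the paper instead reads (ii) off directly from the construction, where $\beta_{\varepsilon,\infty}(x)=\infty$ iff some $H$-fixed $x$-integral monomial has small norm).

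The one point you underestimate is the ``bookkeeping.'' The paper does not run a single uniform Siegel-set argument for arbitrary lattices. Following~\cite{bq12}, it proceeds by a five-step reduction: (1) $X=\SL_d(\R)/\SL_d(\Z)$, using $x$-integral monomials in $\bigwedge^*\R^d$ and highest-weight projections; (2) $G$ a closed subgroup of $\SL_d(\R)$ with $\Lambda = G\cap\SL_d(\Z)$; (3) $G=H$ of real rank one, handled by a genuinely different mechanism based on Garland--Raghunathan's characterization of relatively compact sets via discrete orbits in $\bigwedge^r\mathfrak g$ (this case does \emph{not} fit your Siegel-set description, and is precisely where non-arithmetic lattices are dealt with); (4) $G=\Aut(\mathfrak g)$ for $\mathfrak g$ semisimple, reduced to (2) via Margulis arithmeticity after further reducing to irreducible lattices; (5) the general Lie group, by projecting onto the largest semisimple quotient without compact ideals. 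So while your conceptual plan is correct, ``the bookkeeping'' is not merely checking that the expansion estimate is isolated — it is the entire reduction cascade, and in particular the rank-one argument is structurally separate from the arithmetic one.
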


By slight abuse of terminology, we sometimes just say that a height function is ``proper'' when referring to property~(i) above. 

Let $\mathfrak{g}$ be the Lie algebra of $G$, $\mathfrak{r}$ the largest amenable ideal of $\mathfrak{g}$ and $\mathfrak{s}=\mathfrak{g}/\mathfrak{r}$. A Lyapunov function as in the above theorem is constructed in~\cite{bq12} in the case the non-compact part of the Zariski closure of the group generated by the support of the probability measure $(\Ad_\mathfrak{s})_*\mu$ is semisimple. However, as it turns out, 
this Zariski density assumption in a semisimple group without compact factors is only critically used, via Furstenberg's result of positivity of the top Lyapunov exponent, to ensure \eqref{eq;azero} below, which
is also guaranteed by our dynamical $H$-expansion assumption. Therefore, Benoist--Quint's proof goes through in our setting with minor adaptations. We now explain this in more detail.

A version of the following elementary but key lemma was already used in~\cite{em} (see also~\cite[Lemma~6.12]{bq13}). In our case, it holds true thanks to the characterization of uniform expansion expressed in Proposition~\ref{prop;expansion_char}. 

\begin{lem}
\label{lem;contract}
Let $\mu$ be an $H$-expanding probability measure on $H$ with finite exponential moments and $(\rho,V)$ be a representation of $H$ without nonzero $H$-fixed vectors. Then there exists $\delta_0>0$ such that for every $\delta \in (0,\delta_0)$ and $c \in (0,1)$, for every $n \in \N$ large enough, we have
\begin{align}\label{eq;azero}
     \int_ H \frac{1}{\norm{h\acts v}^\delta} \dd \mu ^{*n}(h)\le  
     \frac{c}{\norm{v}^\delta}
\end{align}
for every $v\in V\setminus \set{0}$.
\end{lem}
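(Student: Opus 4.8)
The plan is to reduce the estimate to the integral characterization of uniform expansion (Proposition~\ref{prop;expansion_char}) via a Taylor/convexity argument in the exponent $\delta$. First I would fix a Euclidean norm on $V$ adapted to $K$ (the maximal compact subgroup) so that $\log\operatorname{N}(\rho(h))$ controls $\bigl|\log\norm{h\acts v}/\norm{v}\bigr|$ linearly, and write $\varphi_h(v)=\log(\norm{v}/\norm{h\acts v})$, so that the left-hand side of \eqref{eq;azero}, after dividing by $\norm{v}^{-\delta}$, becomes $\int_H e^{\delta\varphi_h(v)}\,d\mu^{*n}(h)$. Since $\mu$ has finite exponential moments, by Lemma~\ref{lem;first_moment} so does $\rho_*\mu$, and hence $\rho_*\mu^{*n}$ for each $n$; thus there is $\delta_1>0$ with $\int_H \operatorname{N}(\rho(h))^{\delta_1}\,d\mu(h)<\infty$, which gives (submultiplicativity of $\operatorname{N}$) a uniform bound $\int_H e^{\delta_1|\varphi_h(v)|}\,d\mu^{*n}(h)\le C_1^n$ independent of $v$. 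This integrability is what makes the exponential moment hypothesis essential here, exactly as flagged in the text.

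Next I would use the elementary inequality $e^{t}\le 1+t+\tfrac{t^2}{2}e^{|t|}$, valid for all real $t$, applied with $t=\delta\varphi_h(v)$. Integrating against $\mu^{*n}$ gives
\begin{align*}
\int_H e^{\delta\varphi_h(v)}\,d\mu^{*n}(h)\le 1+\delta\int_H\varphi_h(v)\,d\mu^{*n}(h)+\frac{\delta^2}{2}\int_H\varphi_h(v)^2e^{\delta|\varphi_h(v)|}\,d\mu^{*n}(h).
\end{align*}
For the linear term: by Proposition~\ref{prop;expansion_char} (applied to $\rho_*\mu$, which is uniformly expanding since $\mu$ is $H$-expanding and $(\rho,V)$ has no nonzero fixed vectors) there exist $N_0\in\N$ and $c_0>0$ with $\int_H\varphi_h(v)\,d\mu^{*N_0}(h)\le -c_0$ for all $v\neq0$; and by superadditivity of this quantity along convolution powers (a consequence of the cocycle identity $\varphi_{h'h}(v)=\varphi_{h'}(h\acts v)+\varphi_h(v)$ together with the fact that $\int_H\varphi_h(\cdot)\,d\mu^{*N_0}\le -c_0$ uniformly), one gets $\int_H\varphi_h(v)\,d\mu^{*n}(h)\le -\lfloor n/N_0\rfloor c_0\le -c_0 n/(2N_0)$ for $n$ large, uniformly in $v$. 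For the quadratic error term I would bound $\varphi_h(v)^2 e^{\delta|\varphi_h(v)|}\le C_\delta\, e^{2\delta|\varphi_h(v)|}$ and, choosing $\delta_0<\delta_1/2$, bound $\int_H e^{2\delta|\varphi_h(v)|}\,d\mu^{*n}(h)\le C_1^n$ as above; so the quadratic term is at most $\tfrac{\delta^2}{2}C_\delta C_1^n$, which is too crude — it grows in $n$. To fix this I would instead keep $n$ as a free parameter and first shrink $\delta$: for fixed large $n$, choose $\delta$ small enough (depending on $n$) that $\tfrac{\delta^2}{2}C_\delta C_1^n\le \tfrac14\cdot\tfrac{c_0 n}{2N_0}\delta$; combined with the linear term this yields $\int_H e^{\delta\varphi_h(v)}\,d\mu^{*n}(h)\le 1-\tfrac14\cdot\tfrac{c_0 n}{2N_0}\delta<1$, hence $<c$ once $n$ is large.

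The main obstacle — and the point where more care is needed than the above sketch admits — is disentangling the order of quantifiers: the statement asks for $\delta_0$ first, then for \emph{every} $\delta\in(0,\delta_0)$ and $c\in(0,1)$, a threshold on $n$. So the dependence of the quadratic error on $n$ must be controlled \emph{uniformly} for $\delta<\delta_0$. The correct route is to bound the quadratic remainder by $\tfrac{\delta^2}{2}\int_H\varphi_h(v)^2 e^{\delta_0|\varphi_h(v)|}\,d\mu^{*n}(h)$ and to establish, via Cauchy--Schwarz against the exponential-moment bound and the fact that $\int_H\varphi_h(v)^2\,d\mu^{*n}(h)=O(n)+\bigl(\int_H\varphi_h(v)\,d\mu^{*n}(h)\bigr)^2=O(n^2)$ (using that increments $\varphi$ under the cocycle have uniformly bounded second moments by the exponential moment assumption, so variances add linearly), that this remainder is $O(\delta^2 n^2 \kappa^{\delta_0 n})$ for a constant $\kappa$ close to $1$ when $\delta_0$ is small; meanwhile the linear gain is of order $\delta n$. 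Thus for $\delta_0$ small enough that $\kappa^{\delta_0}\cdot(\text{polynomial factor})$ is controlled, and then $\delta$ small relative to $1/n$... this still does not give uniformity in $\delta<\delta_0$. The clean resolution, which I would adopt, is to take $n=n(\delta)\to\infty$ allowed to depend on $\delta$ (the statement permits ``for every $n$ large enough'', i.e. $n\ge n(\delta,c)$): fix $\delta_0<\delta_1/2$; given $\delta<\delta_0$ and $c<1$, the linear term $1-c_0\delta n/(2N_0)$ dominates the quadratic term $O(\delta^2 n^2 C_1^{\delta_0 n})$ as soon as... no — the exponential $C_1^{\delta_0 n}$ still beats any polynomial. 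The genuine fix is therefore to prove a sharper bound $\int_H\varphi_h(v)^2 e^{\delta|\varphi_h(v)|}\,d\mu^{*n}(h)\le C\,n\cdot e^{-c_1\delta n}$-type estimate is false; instead one uses that $\int_H e^{\delta\varphi_h(v)}\,d\mu^{*n}$ is \emph{submultiplicative} in $n$ up to the single-step factor, so it suffices to prove $\int_H e^{\delta\varphi_h(v)}\,d\mu^{*N}(h)\le 1-c_0\delta/2$ for a single well-chosen $N=N(\delta)$ and then take $n$ a large multiple of $N$; and for that single $N$, after fixing $N_0$ from Proposition~\ref{prop;expansion_char}, one takes $N=N_0$ and chooses $\delta_0$ small enough that the second-order term over the \emph{fixed} window $[0,N_0]$ (a bounded quantity, uniformly in $v$ by the exponential moment on $\mu^{*N_0}$) is $\le c_0\delta/2$ for all $\delta<\delta_0$ — this \emph{is} uniform in $\delta<\delta_0$ because over a fixed finite window the error is $O(\delta^2)$ with an absolute constant. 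Then iterating the one-step contraction $N_0$-fold and using the cocycle gives \eqref{eq;azero} for all $n\in N_0\N$ large, and a final interpolation over the at most $N_0$ residues (absorbing a bounded multiplicative constant, again by exponential moments, into the choice of ``$n$ large'') completes the proof for all large $n$.
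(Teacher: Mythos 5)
Your proposal, after several false starts, converges to exactly the paper's argument: Taylor-expand the exponent, apply Proposition~\ref{prop;expansion_char} to get a one-step contraction at a fixed $N$, choose $\delta_0$ small enough that the $O(\delta^2)$ remainder over that fixed window is dominated by the linear gain $-C\delta$ (uniformly in $\delta<\delta_0$ and $v$, thanks to the finite exponential moment at a fixed exponent $\varepsilon_0$), and then iterate over $n=mN+k$ absorbing the bounded residue factor. The paper reaches this directly, packaging the quadratic error as $(\delta/\varepsilon)^2\int\operatorname{N}(\rho(h))^{\varepsilon}\,d\mu^{*N}(h)$ with $\varepsilon=\varepsilon_0$ independent of $\delta$, but the substance and the handling of the quantifier order are the same as in your final version.
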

\begin{proof}
Using the elementary fact that for every $\varepsilon \in (0,1)$, $x \in (0,\varepsilon)$ and $a>0$, we have $a^x=1 +x \log a + (\frac{x}{\varepsilon})^2 R_a(x)$ with $\abs{R_a(x)} \le e^{\varepsilon\abs{\log a}}$  together with $\abs{\log \frac{\norm{v}}{\norm{gv}}} \le \log \operatorname{N}(g)$ for every $g \in \GL(V)$, we see that for every $n \in \N$, $\varepsilon \in (0,1)$ and $\delta \in (0,\varepsilon)$ 
\begin{align}\label{eq.eskin-margulis}
\int_H \frac{\norm{v}^\delta}{\norm{h\acts v}^\delta} \dd\mu^{*n}(h) \le 1 + \delta \int_H \log\frac{\norm{v}}{\norm{h\acts v}} \dd\mu^{*n}(h)+ \biggl(\frac{\delta}{\varepsilon}\biggr)^2 \int_H  \operatorname{N}(\rho(h))^\varepsilon \dd\mu^{*n}(h).
\end{align}
By Proposition~\ref{prop;expansion_char}, there exists $N\in\N $ and $C>0$ such that for all $v\in V\setminus \set{0}$, we have 
\begin{align}\label{eq.vector.expand}
\int_H \log \frac{\norm{v}}{\norm{h\acts v}} \dd \mu^{*N} (h)\le -C.
\end{align}
Since $\rho_*\mu$ has finite exponential moments by Lemma~\ref{lem;first_moment}, we can choose $\varepsilon_0>0$ such that $\int_H \operatorname{N}(\rho(h))^{\varepsilon_0} \dd\mu^{*n}(h)<\infty$ for every $n\in\N$. Now applying \eqref{eq.eskin-margulis}
with $n=N$, $\varepsilon=\varepsilon_0>0$ and using \eqref{eq.vector.expand}, we get that for every $\delta>0$ smaller than some $\delta_0>0$, there exists $c' \in(0,1)$ such that we have 
\begin{align}\label{eq.iterate}
\int_H \frac{1}{\norm{h\acts v}^\delta} \dd \mu ^{*N}(h)\le  
     \frac{c'}{\norm{v}^\delta}
\end{align}
for every $v\in V \setminus \set{0}$. Writing an arbitrary $n \in \N$ as $n=m N + k$ with $m,k \in \N$ and $k<N$, using the facts that $\mu^{*n}=\mu^{* m N} * \mu^{* k}$,  $\frac{1}{\norm{h\acts v}}\le \operatorname{N}(\rho(h)) \frac{1}{\norm{v}}$ and the existence of finite exponential moments, iterating \eqref{eq.iterate} now yields
\begin{align*}
\int_H \frac{1}{\norm{h\acts v}^\delta} \dd \mu ^{*n}(h)\le  \frac{(c')^m}{\norm{v}^\delta}\biggl(\int_H\operatorname{N}(\rho(h))^\delta\dd\mu(h)\biggr)^k,
\end{align*}
the right-hand side of which can be made to be smaller than $c/\norm{v}^\delta$ by requiring $m$ to be large enough.
\end{proof}

\begin{proof}[Proof of Theorem~\textup{\ref{thm;hgeneral}}]
We start the proof with a few general remarks on Lyapunov functions and their construction.
\begin{enumerate}
    \item It suffices to construct the function $\beta_\infty$ with values in $[0,\infty]$. Indeed, in the end one can simply add $1$, if necessary, to ensure values in $[1,\infty]$. 
    \item The conclusion of the theorem is unaffected when replacing $\Lambda$ by a commensurable lattice $\Lambda'$, that is, a lattice such that the intersection $\Lambda_0=\Lambda\cap\Lambda'$ has finite index in both $\Lambda$ and $\Lambda'$. Indeed, given a Lyapunov function $G/\Lambda\to[0,\infty]$, one can just precompose it with the projection $G/\Lambda_0\to G/\Lambda$, and, conversely, starting with a function $\beta\colon G/\Lambda_0\to[0,\infty]$, one can define the function $\beta_\infty$ on $G/\Lambda$ by setting
\begin{align*}
\beta_\infty(g\Lambda)=\sum_{\lambda\in\Lambda/\Lambda_0}\beta(g\lambda\Lambda_0)
\end{align*}
for $g\in G$, which is easily seen to have the correct properties.
\item We may always assume that the lattice $\Lambda$ is non-uniform, i.e.\ that $X=G/\Gamma$ is non-compact. For on a compact quotient, the constant function $1$ already has all required properties.
\item In the construction, we may without loss of generality replace $G$ by any open subgroup $G_0$. Indeed, $X$ is the disjoint union of $G_0$-orbits, and these are $\Gamma_\mu$-invariant since $H$ is connected. Thus, one can translate a function $\beta_\infty$ on $G_0/(G_0\cap \Lambda)$ to other 
$G_0$-orbits.
\end{enumerate}

From now on, we assume $G$ is connected and prove the existence of the height function $\beta_\infty$ with the required properties. 
The proof  proceeds in several steps.

\textsc{Case 1: $G=\SL_d(\R)$ and $X=\SL_d(\R)/\SL_d(\Z)$.} 
We show that the Benoist--Quint height function in~\cite{bq12} has the required properties. We endow $E=\R^d $ with a Euclidean structure invariant by some maximal compact subgroup of $H$. We endow the vector space $\bigwedge^* E=\bigoplus_{i=0}^d  \bigwedge^i E$ with the induced Euclidean structure and use $\norm{\cdot}$ to denote the corresponding norm on $E$ and on $\bigwedge^* E$.
For $0\le i\le d$, we fix constants $\delta_i=(d-i)i$; they satisfy 
\begin{align}\label{eq.unused1}
\delta_{r+s}+\delta_{r+t}\ge \delta _r +\delta_{r+s+t}+1    
\end{align}
for every $r,s,t \in \N$ with $s>0$ and $t>0$.

We fix a maximal split torus $A$ of $H$. Let $\mathfrak a$ and $\mathfrak h$ be the Lie algebras of $A$
and $H$, respectively. 
 Let  $\Sigma(\mathfrak h, \mathfrak a)$ be the set of restricted roots. We fix a positive system in $\Sigma(\mathfrak h, \mathfrak a)$. Let 
 $\mathcal W \subset \mathfrak a^*$ be the set of restricted weights appearing in finite-dimensional representations of $H$. 
We define a partial order on $\mathcal W$  by 
\begin{align}\label{eq.unused2}
\lambda\le \eta  \iff \eta -\lambda \text{ is a sum of positive roots}. 
\end{align}

Recall that any representation of a connected semisimple real Lie group is completely reducible and each irreducible representation has a unique highest weight. We denote by $\mathcal{W}^+\subset \mathcal{W}$ the set of highest weights and let $\mathcal S\subset \mathcal W^+$ be the set of nonzero highest weights corresponding to the non-trivial irreducible representations of $H$ appearing as direct summands in $\bigwedge ^* E$, where the representation of $H$ on $E$ is just the restriction of the standard representation of $G$.
So the action of $H$ on 
$\bigwedge ^*E$ decomposes into
a direct sum 
\begin{align*}
\bigwedge\nolimits^*  E= E_*^H\oplus\bigoplus_{\lambda\in \mathcal S} E_*^\lambda,
\end{align*}
where $E_*^H$ is the space of $H$-fixed vectors in $\bigwedge^* E$  and $E_*^\lambda$ is the sum of all the irreducible subspaces of $\bigwedge^* E$ with highest weight $\lambda$ (i.e.\ the isotypic component of $\lambda$). We fix $s_0\in \mathfrak a$ in the interior of the positive Weyl chamber and define $\delta_\lambda= \lambda(s_0)$ for $\lambda\in\mathcal{W}^+$, so that the $\delta_\lambda$ satisfy $\lambda \le \mu$ if and only if $\delta_\lambda \le \delta_\mu$ and $\delta_\lambda=0$ if and only if $\lambda=0$ for all $\lambda,\mu \in \mathcal{W}^+$. For $\lambda \in \mathcal{S}$, we use $q_\lambda$ (resp.~$q_0$) to denote the $H$-equivariant projection from $\bigwedge^* E$ to $E_*^\lambda$ (resp.~$E_*^H$). 
For any $\varepsilon>0$ and $v\in \bigwedge^i E$ with $0<i<d $, define 
\begin{align*}
\varphi_\varepsilon(v)=\begin{cases}
\min_{\lambda\in \mathcal  S} \varepsilon ^{\frac{\delta_i}{\delta_\lambda}} \norm{ q_\lambda(v)}^{-\frac{1}{\delta_\lambda}},  & \text{if } \norm{q_0(v)}< \varepsilon^{\delta_i}, \\
\hfill 0, & \text{otherwise}, 
\end{cases}
\end{align*}
with the convention $\min \emptyset =\infty$.
Using Lemma~\ref{lem;contract} and $H$-equivariance of the projections $q_\lambda$, one readily observes (cf.~\cite[Lemma~4.3]{bq12}) that for every $\delta>0$ small enough, for every $c\in (0,1)$, there exists $n \in \N$ such that for every $i=1,\dots,d$ and $v \in \bigwedge^i E$ it holds that
\begin{align}\label{eq.bqlemma4.3}
A_\mu^n\varphi_\varepsilon^\delta(v) \le a \varphi_\varepsilon^\delta(v)
\end{align}
for every $\varepsilon>0$. For every $\varepsilon>0$, let the function $\beta_{\varepsilon,\infty}$ on $G/\Lambda$ be defined by 
\begin{align*}
\beta_{\varepsilon,\infty}(x)= \max \varphi_\varepsilon(v),
\end{align*}
where, writing $x=g\Lambda$, the maximum is taken over all $0<i<d$ and nonzero $v\in \bigwedge^i E$ such that $v=v_1 \wedge \dots \wedge v_i$ with $v_j \in \Lambda_x\df g\Z^d$ for $j=1,\dots,i$  (following~\cite{bq12}, such pure wedge products $v$ will be called ``$x$-integral monomials''). 

Note that by construction we have $\beta_{\varepsilon,\infty}(x)=\infty$ if and only if there exists a nonzero $H$-fixed $x$-integral monomial $v\in\bigwedge^i E$ whose norm 
is less than $\varepsilon^{\delta _i}$. 
Therefore, the set $\beta_{\varepsilon,\infty}^{-1}(\set{\infty})$ is  $H$-invariant. Moreover, for every $\varepsilon>0$, the function $\beta_{\varepsilon,\infty}$ is proper and lower semicontinuous (see~\cite[Remark~5.2]{bq12}). Setting $\kappa'=\max_{\lambda \in \mathcal{S}}\delta_\lambda^{-1}$, it is also readily verified  that for every $h \in H$ we have $\beta_{\varepsilon,\infty}(hx) \le \operatorname{N}(h)^{d\kappa'}\beta_{\varepsilon,\infty}(x)$.

Now it follows precisely in the same way as in~\cite[Proposition~5.3]{bq12}, by simply replacing ~\cite[Lemma~4.3]{bq12} by \eqref{eq.bqlemma4.3}, that for every $\delta>0$ and $\varepsilon>0$ small enough, there exist $n \in \N$, $a \in (0,1)$ and $b>0$ such that
\begin{align*}
A_\mu^n \beta_{\varepsilon,\infty}^\delta \le a \beta_{\varepsilon,\infty}^\delta + b. 
\end{align*}
For brevity and to avoid mere repetition, we will not reproduce this part of the proof here. We note however that this passage is the part where the crucial ``Mother inequality''~\cite[\S 3]{bq12} and the convexity assumptions \eqref{eq.unused1} and \eqref{eq.unused2} are used.

Finally, given a compact set $Z$ as in the statement, by Mahler's compactness criterion, we can choose $\varepsilon>0$ and $\delta>0$ small enough so that the function $\beta_\infty\df\beta_{\varepsilon,\infty}^\delta$ is uniformly bounded on $Z$. 
By the discussion above, this function has all desired properties.

\textsc{Case 2: $G$ is closed subgroup of $\SL_d(\R)$ and $\Lambda=G\cap\SL_d(\Z)$.} Then $X=G/\Lambda$ is a closed subset of $X_0=\SL_d(\R)/\SL_d(\Z)$ by~\cite[Theorem~1.13]{raghunathan}. Thus, we can use the height  function from Case~1 above. 

\textsc{Case 3: $G=H$ is a connected real rank one simple
Lie group.} 
We assume  $X=G/\Lambda$ is noncompact.  Let
$V=\bigwedge^r \mathfrak g$ endowed with a norm
$\norm{\cdot}$, where
$r$ is the dimension of the unipotent radical of a minimal parabolic subgroup of  $G$. 
Let $v_0\in V$ be a nonzero vector which corresponds to the Lie algebra of such a unipotent radical. 
	It follows from~\cite{gr70} (cf.~\cite[Proposition~3.1]{kw} 
	and~\cite[p.~54]{bq12})
	that there 
	exist $g_1, \dots, g_r\in G$ such that 
	for $i=1,\dots,r$ the vectors 
	$v_i=g_i\acts v_0$ in $V$
	have the following properties: 
	
	\begin{enumerate}[label=(\alph*)]
		\item $\Lambda v_i$ is closed and hence discrete in $V$ for $1\le i\le r$.
		\item  For any subset $F\subset G$, the set $F\Lambda\subset G/\Lambda$ is relatively compact if and only if 
		there exists $a>0$ such that $\norm{g\lambda\acts v_i}>a$ for any $\lambda\in \Lambda$, $g\in F$
		and $1\le i\le r$.
		\item There exists $a_0>0$ such that for 
		any $g\in G$ there exists at most one $v\in \bigcup_{1\le i \le r} \Lambda\acts v_i$  such that $\norm{g\acts v}<a_0$.
	\end{enumerate}
Let $V' $ be the $H$-invariant subspace complementary to $V^H$. In view of property~(b), we know that $v_0\in V'$. By Lemma~\ref{lem;contract}, for every $\delta>0$ small enough, for every $c>0$, we have that for every $n \in \N$ large enough  
	\begin{align}\label{eq;wu}
	\int_H \norm{h\acts v}^{-\delta}\dd \mu^{*n}(h)< c\norm{v}^{-\delta}
	\end{align}
holds for all nonzero $v\in V'$.
Using properties (a)--(c) and \eqref{eq;wu}	
it is straightforward to check that 
\begin{align*}
	 \beta_\infty(g\Lambda)=\max_{1\le i\le r}\max_{\lambda\in \Lambda}\norm{g\lambda\acts v_i}^{-\delta}
\end{align*}
is continuous, proper and satisfies \eqref{contr} when $\delta>0$ is small enough. It is also readily checked that $\beta_\infty(hx)\le \operatorname{N}(\Ad h)^{\kappa'\delta}\beta_\infty(x)$ for some $\kappa'$ depending only on $G$.

\textsc{Case 4: $G=\Aut(\mathfrak g)$ for $\mathfrak g$ semisimple without compact ideals.}
In view of  (4) at the begining of the proof,  we may assume that $G$ is connected. As $G$ is of adjoint type, it is center-free.  By~\cite[Theorem~5.22]{raghunathan}, after replacing $\Lambda$ by a finite index subgroup, there is a collection of semisimple factors $G_i$ of $G$ such that $G=\prod_i G_i$ and $\Lambda_i=G_i\cap\Lambda$ is an irreducible lattice in $G_i$. Then we have $G/\Lambda=\prod_i G_i/\Lambda_i$. Thus, if we can construct functions with the desired properties on all spaces $G_i/\Lambda_i$, then their sum is a Lyapunov function on $X=G/\Lambda$ with the same properties (possibly with different constants). In other words, we have further reduced to the case where the lattice $\Lambda$ in $G$ is irreducible. We can also assume that $\Lambda$ is non-uniform in view of (3) at the beginning of the proof.

Case~3 handles the case of $G$ with real rank one. Thus, we  may additionally assume that the rank is at least two. Then Margulis' arithmeticity theorem says that $\Lambda$ is arithmetic. In our setting, this implies that there is an isomorphism $\sigma\colon G \to G'$ where $G'$ is the connected component of a semisimple real algebraic subgroup of $\SL_{d'}(\R)$ defined over $\Q$  such that $\sigma(\Lambda)$ and $\Lambda'=G'\cap\SL_{d'}(\Z)$ are commensurable (see~\cite[Corollary~6.1.10]{zimmer}). 
Then by Proposition~\ref{prop;facts}(iii), $\sigma_*\mu$ is $\sigma(H)$-expanding, and we conclude using Case~2 and the comment
(2) on commensurability at the start of the proof.

\textsc{Case 5: General case.}
Let $\mathfrak r$ be the maximal amenable ideal of $\mathfrak g$, set $\mathfrak s=\mathfrak g/\mathfrak r$ and $R=\ker(\Ad_{\mathfrak s})$. Then $\mathfrak s$ is the largest semisimple quotient of $\mathfrak g$ without compact ideals and, by semisimplicity, $G/R$ identifies with a finite index subgroup $S$ of $\Aut(\mathfrak s)$. From~\cite[Lemma~6.1]{bq12} we know that $\Lambda\cap R$ is a cocompact lattice in $R$ and the image $\Lambda_S=\Ad_{\mathfrak s}(\Lambda)$ is a lattice in $S$. In particular, the projection $G/\Lambda\to S/\Lambda_S$ is proper. Setting $H_S=\Ad_{\mathfrak s}(H)$, we moreover have that $(\Ad_{\mathfrak s})_*\mu$ is $H_S$-expanding by Proposition~\ref{prop;facts}(iii). By Case~4 above, the theorem holds for $S/\Lambda_S$. Precomposing the obtained Lyapunov function with the projection $G/\Lambda\to S/\Lambda_S$ produces the  desired function $\beta_\infty$ on $X$. Properties (i)--(iii) carry over from the subcases, using for the latter property that the norm in the adjoint representation controls the norms in any other representation after taking a suitable power. 
\end{proof}

Before moving on, we make a simple remark that will be of use in the next part.
\begin{rem}\label{rk.modify.kappa}
Notice that by considering a small power of $\beta_\infty$, at the cost of increasing the constants $a \in (0,1)$ and $b$, one can modify $\kappa>0$ that satisfies property~(iii) in Theorem~\ref{thm;hgeneral}. Indeed, given $\delta \in (0,\kappa)$, using Jensen's inequality, the function $\beta_\infty^{\delta/\kappa}$ is seen to also satisfy the contraction condition \eqref{contr} with the same $m \in \N$ and possibly different constants $a \in (0,1)$ and $b>0$. Moreover, $\beta_\infty^{\delta/\kappa}(hx) \le \operatorname{N}(\Ad h)^\delta\beta_\infty^{\delta/\kappa}(x)$.
\end{rem}

\subsection{Height function with respect to singular subspaces}
\label{subsec;singular}

In this section we construct a height function with respect to a relatively compact subset of a lower-dimensional homogeneous subspace of $X=G/\Lambda$. In contrast to the height function used in~\cite{bq13}, which satisfies a contraction property with respect to a first return Markov operator, our height function will satisfy a contraction property with respect to $A_\mu$ itself. Our construction is inspired by the work of Eskin--Mirzakhani--Mohammadi~\cite{emm} on random walks on moduli space.

To state the main result of this subsection, we start by recalling some notation and fixing some data. Let $G$ be a Lie group and $\Lambda< G$ a lattice. Let $H\leqs G$ be a connected semisimple Lie subgroup with finite center and no compact factors. Let $\mu$ be an $H$-expanding probability measure on $H$ with finite exponential moments. Since $\mu$ has finite exponential moments, we can fix $\delta_0\in(0,1)$ such that $\int_H \operatorname{N}(\Ad(h))^{\delta_0}\dd\mu(h)<\infty$. Fix an arbitrary compact subset $Z$ of $G/\Lambda$ and let $\beta_\infty\colon G/\Lambda \to [1,\infty]$ be the proper lower semicontinuous function given by Theorem~\ref{thm;hgeneral}. By passing to a small enough power, we will suppose that $\beta_\infty$ satisfies $\beta_\infty(hx) \le \operatorname{N}(\Ad(h))^{\delta_0}\beta_\infty(x)$ for every $h \in H$ and $x \in G/\Lambda$ (see Remark~\ref{rk.modify.kappa}). Moreover, given $\varepsilon>0$, we define 
\begin{align*}
X_\varepsilon=\set{x \in G/\Lambda \for \beta_\infty(x)\le\varepsilon^{-1}}.
\end{align*}
Since $\beta_\infty$ is lower semicontinuous and proper, $X_\varepsilon$ is a compact subset of $X$. Here is the result we aim to prove.
\begin{thm}\label{thm;hN}
Given $\varepsilon>0$ sufficiently small, {for any sufficiently small open neighborhood $O$ of the identity in $C_G(\Gamma_\mu)$ and} for any $Y \in \calS(\Gamma_\mu)$ there exists a height function $\beta_{\mathcal N }\colon HX_\varepsilon\to [1, \infty]$ together with constants $n\in \N$, $a_0 \in (0,1)$ and $b_0>0$ such that for any $x\in HX_\varepsilon$ we have
	\begin{align*}
	A_\mu^{n}(\beta_{\mathcal N})(x)\le a_0 \beta_{\mathcal N }(x)+b_0, 
	\end{align*}
	and such that, denoting $\mathcal N=OY$,
	\begin{enumerate}[label=\textup{(\roman*)}]
	\item 	 	 $\beta_{\mathcal N }(x)=\infty$ if and only if $x\in \mathcal N\cap HX_\varepsilon $,
	\item        {$\beta_{\mathcal{N}}$ is bounded on compact subsets of $X_\varepsilon\setminus\overline{O}Y$},
	\item        for any $\ell\ge 1$, the set $\beta^{-1}_{\mathcal N}([1, \ell])$ is a compact subset of $X$. 
	\end{enumerate}
\end{thm}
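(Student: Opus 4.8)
The plan is to follow the strategy of Eskin--Mirzakhani--Mohammadi \cite{emm}, transported to our setting, building $\beta_{\mathcal{N}}$ as a sum of an inverse-distance function to $\mathcal{N}$ and a large multiple of the cusp height function $\beta_\infty$, with the cross-terms controlled by the measure classification and recurrence already established.

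\emph{Step 1: Local geometry near $Y$.} First I would set up, for the fixed $Y \in \calS(\Gamma_\mu)$ with stabilizer group $N_Y \leqslant G$, a transversal description of a neighborhood of $Y$ in $X$. Writing $\mathfrak{g} = \mathfrak{n}_Y \oplus \mathfrak{w}$ for an $H$-invariant (equivalently $\Ad(\Gamma_\mu)$-invariant) complement — which exists by semisimplicity of $H$ and the fact that $\mathfrak{n}_Y$ is $H$-invariant, as $H$ normalizes $N_Y^\circ$ by Theorem~\ref{thm;rigidity} — one obtains on a neighborhood of $Y$ a ``distance to $Y$'' coordinate valued in $\mathfrak{w}$ (or rather a suitable bundle over $Y$ with fibre a piece of $\mathfrak{w}$). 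Because $N_Y$ is only a closed subgroup, not cocompact, one must cut this off using $X_\varepsilon$: the point of intersecting with $HX_\varepsilon$ and of using $\mathcal{N} = OY$ with $O$ a small neighborhood in the centralizer $C_G(\Gamma_\mu)$ is precisely that $Y \cap X_\varepsilon$ is compact, so one gets a uniform lower bound on injectivity radius and uniform control of the exponential coordinates along $Y \cap X_\varepsilon$. Here the role of $O$ is to absorb the ambiguity in $Y$ coming from the centralizer direction (cf.\ the appearance of $l_n \in C_G(\Gamma_\mu)$ in Proposition~\ref{prop.mozes-shah}(ii)), which is the direction along which $\beta_{\mathcal{N}}$ is \emph{not} expected to blow up.

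\emph{Step 2: The transversal contraction estimate.} The heart of the matter is to show that the random walk, started at a point $x$ close to $Y$ but not on $\mathcal{N}$, moves \emph{away} from $Y$ in average in the sense that an appropriate function $f_\delta(x) = \operatorname{dist}_{\mathfrak{w}}(x, \mathcal{N})^{-\delta}$ satisfies $A_\mu^n f_\delta \le a f_\delta + (\text{bounded term})$ for small $\delta > 0$ and suitable $n$. For $x$ genuinely close to $Y$ this is a linearization statement: the transversal displacement to first order is governed by the $H$-action on $\mathfrak{g}/\mathfrak{n}_Y \cong \mathfrak{w}$, and since $\mu$ is $H$-expanding (relative to $G$, which is all we need — Definition~\ref{de;exp_grass}), Lemma~\ref{lem;contract} applied to the representation $(\Ad_{\mathfrak{g}/\mathfrak{n}_Y}, \mathfrak{w})$ — which has no nonzero $H$-fixed vectors because $\mathfrak{l} = \mathfrak{g}^H$ sits inside $\mathfrak{n}_Y$ when $Y$ is the orbit closure, or more carefully after splitting off the $H$-fixed part — gives exactly the needed exponential contraction $\int \|h \acts v\|^{-\delta}\dd\mu^{*n}(h) \le c\|v\|^{-\delta}$. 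The nonlinear error terms in the exponential chart are controlled by finite exponential moments (this is where $\delta_0$ enters) via the Taylor-expansion trick already used in the proof of Lemma~\ref{lem;contract}. For $x$ bounded away from $Y$ (but still in $HX_\varepsilon$) the function $f_\delta$ is bounded, so the inequality is trivial there; the two regimes are glued by the usual device of taking $\beta_{\mathcal{N}} = f_\delta + C\beta_\infty$ with $C$ large, so that whenever the transversal estimate degrades (near the cusp, or near points where the chart breaks down), the $\beta_\infty$-term dominates and supplies contraction via Theorem~\ref{thm;hgeneral}. One must also incorporate the height-change bound $\beta_\infty(hx) \le \operatorname{N}(\Ad h)^{\delta_0}\beta_\infty(x)$ to absorb how much the walk can push toward the cusp in a single bounded-probability step.

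\emph{Step 3: Global definition, properties (i)--(iii), and the obstacle.} Having the local estimate, I would define $\beta_{\mathcal{N}}$ globally on $HX_\varepsilon$ by extending $f_\delta$ by a bounded (e.g.\ constant) value outside a fixed neighborhood of $\mathcal{N}$ and smoothing the transition, then set $\beta_{\mathcal{N}} = f_\delta + C\beta_\infty$. Property (i) is built in: $f_\delta(x) = \infty \iff x \in \mathcal{N}$, and $\beta_\infty$ is finite on $HX_\varepsilon$ (since $\beta_\infty^{-1}(\{\infty\})$ is $H$-invariant and disjoint from the compact $X_\varepsilon$... this needs a small argument that $HX_\varepsilon$ avoids the cusp-singular set, using properness of $\beta_\infty$ restricted appropriately — actually $\beta_\infty$ \emph{can} be infinite on $HX_\varepsilon$, so one should phrase (i) as stated, with ``$x \in \mathcal{N} \cap HX_\varepsilon$'' reflecting that the $f_\delta$-term is what's relevant, and handle the cusp contribution separately). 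Property (ii) follows since on a compact subset of $X_\varepsilon \setminus \overline{O}Y$ both $f_\delta$ and $\beta_\infty$ are bounded. Property (iii) follows from properness of $\beta_\infty$ (sublevel sets of $\beta_{\mathcal{N}}$ are contained in sublevel sets of $C\beta_\infty$, intersected with the closed set where $f_\delta$ is bounded, hence compact). \textbf{The main obstacle} I anticipate is Step 2 away from the linear regime: specifically, ensuring the transversal estimate is \emph{uniform} over the whole of $Y \cap X_\varepsilon$ and controlling the interaction between transversal motion and the simultaneous possibility of the walk wandering toward the cusp or toward a \emph{different} nearby singular subspace. The resolution, following \cite{emm} and the countability result Proposition~\ref{prop;countability}, is that the recurrence theorem (Theorem~\ref{thm;recurrence} with $Y = \emptyset$, or rather its proof via $\beta_\infty$) confines the walk to compact parts where only finitely many singular subspaces intersect, and the dominance of the $C\beta_\infty$-term handles the escape; one then only needs the estimate for $x$ in a genuinely small tube around $Y \cap X_\varepsilon$, where the linearization and Lemma~\ref{lem;contract} apply cleanly.
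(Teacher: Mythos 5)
Your high-level plan --- an EMM-style Lyapunov function combining a transversal ``inverse-distance to $\mathcal{N}$'' term with the cusp height $\beta_\infty$, glued so that the cusp contraction dominates where the transversal estimate fails --- does match the strategy of the paper's proof. But the key technical device is missing, and the proposed glue would not close.

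The gap is in Step~2/3: you use a transversal chart of \emph{fixed} radius (extending $f_\delta$ by a constant outside a fixed neighborhood of $\mathcal{N}$) and then try to compensate with a large multiple $C\beta_\infty$. The paper instead defines the transversal term $\alpha(x)$ using an $x$-dependent radius $r_x = rC^{-1}\beta_\infty(x)^{-k}$, which shrinks toward the cusp, and sets the ``floor'' of $\alpha$ to $r_x^{-\theta}$ rather than a constant. The constants $C,k$ come from a return lemma (Lemma~\ref{lem;return}): every $x\in HX_\varepsilon$ can be brought back to $X_\varepsilon$ by some $h\in\Gamma_\mu$ with $\operatorname{N}(\Ad h)\le C\beta_\infty(x)^k$, which converts the local uniqueness/continuity statement on the compact set $X_\varepsilon$ (Lemma~\ref{lem;tedious}) into a well-defined $\alpha$ on all of $HX_\varepsilon$. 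Crucially, $\theta$ is chosen with $k\theta<1$, so the floor $r_x^{-\theta}\asymp\beta_\infty(x)^{k\theta}$ grows \emph{sub-linearly} in $\beta_\infty(x)$; it is precisely this exponent inequality, together with \eqref{eq;lan}, that lets the $\beta_\infty$-contraction absorb the error where the chart boundary is crossed. With a fixed chart radius and a constant extension, the contraction estimate for $f_\delta$ degrades without quantitative control as $x$ moves into the cusp and elements $h$ with large $\operatorname{N}(\Ad h)$ push the walk across the chart boundary; no fixed constant $C$ in front of $\beta_\infty$ suffices, because the required $C$ would have to grow with $\beta_\infty(x)$. Your phrase ``one then only needs the estimate for $x$ in a genuinely small tube around $Y\cap X_\varepsilon$'' is where this is papered over --- the contraction must hold for all $x\in HX_\varepsilon$, including points deep in the cusp far from $Y\cap X_\varepsilon$, and that is exactly what $r_x$ handles. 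Relatedly, your appeal to Theorem~\ref{thm;recurrence} and Proposition~\ref{prop;countability} to control the walk and limit the intersecting singular subspaces is circular (Theorem~\ref{thm;recurrence} is proved \emph{using} Theorem~\ref{thm;hN}) and is not how the construction proceeds; the height function is built self-containedly from Lemmas~\ref{lem;contract}, \ref{lem;return} and \ref{lem;tedious}. Two smaller points: the transversal complement must be taken to $\mathfrak{n}+\mathfrak{l}$ (with $\mathfrak{l}=\Lie C_G(\Gamma_\mu)$), not merely $\mathfrak{n}$, so that $\mathfrak{v}$ has no nonzero $H$-fixed vectors and Lemma~\ref{lem;contract} applies directly --- your ``after splitting off the $H$-fixed part'' gestures at this but does not pin it down; and property~(iii) is not merely an inclusion of sublevel sets, but requires an argument for lower semicontinuity of $\alpha$, which the paper derives from the continuity of $x\mapsto v$ in Lemma~\ref{lem;tedious}.
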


The rest of this subsection is devoted to the proof of this result, which will require two preliminary lemmas. We fix an inner product on $\mathfrak g$, denote by $\norm{\cdot}$ the associated operator norm on $\End(\mathfrak{g})$, and to ease the notation, we set
\begin{align*}
\operatorname{N}_a(h)\df \operatorname{N}(\Ad h)=\max\set{\norm{\Ad(h)}, \norm{\Ad(h^{-1})}}, 
\end{align*}
where $\Ad$ denotes the  adjoint action of $H$ on the Lie algebra $\mathfrak g$ of $G$. 

\begin{lem}\label{lem;return}
There exist constants $C\ge 1$, $k\in\N$ and $\varepsilon_0>0$ such that for any $\varepsilon \in (0,\varepsilon_0)$ and any $x\in HX_\varepsilon$ there exists $h \in \Gamma_\mu$ with $\operatorname{N}_a(h) \le C  \beta_\infty(x)^k$ such that $hx \in X_\varepsilon$.
\end{lem}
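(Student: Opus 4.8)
The plan is to combine the contraction property of the fixed height function $\beta_\infty$ from Theorem~\ref{thm;hgeneral} with the finite exponential moment assumption, by two applications of Markov's inequality. Let $m\in\N$, $a\in(0,1)$ and $b>0$ be constants for which $\beta_\infty$ satisfies the contraction bound $A_\mu^m(\beta_\infty)\leqs a\beta_\infty+b$ (these exist for the chosen power of $\beta_\infty$ by Remark~\ref{rk.modify.kappa}), put $b'=1+\frac{b}{1-a}$, and set $M=\int_H\operatorname{N}_a(h)^{\delta_0}\dd\mu(h)$, which is finite by the choice of $\delta_0$ and satisfies $M\geqs 1$ since $\operatorname{N}_a\geqs 1$ pointwise. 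I will take $\varepsilon_0=(3b')^{-1}$. Note first that, since $\beta_\infty^{-1}(\set{\infty})$ is $H$-invariant and disjoint from $X_\varepsilon$, it is disjoint from $HX_\varepsilon$; hence $\beta_\infty(x)\in[1,\infty)$ for every $x\in HX_\varepsilon$.

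Fix such an $x$. Since $A_\mu$ is monotone and preserves constants, iterating the contraction bound yields $A_\mu^{mN}(\beta_\infty)(x)\leqs a^N\beta_\infty(x)+\frac{b}{1-a}$ for every $N\in\N$. Taking $N=N(x)=\max\{1,\lceil\log\beta_\infty(x)/\log(1/a)\rceil\}$ forces $a^N\beta_\infty(x)\leqs 1$, so $\int_H\beta_\infty(hx)\dd\mu^{*mN}(h)=A_\mu^{mN}(\beta_\infty)(x)\leqs b'$, and Markov's inequality gives $\mu^{*mN}\bigl(\set{h\for\beta_\infty(hx)\geqs 3b'}\bigr)\leqs\frac13$. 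On the other hand, submultiplicativity of $\operatorname{N}_a$ together with Fubini give $\int_H\operatorname{N}_a(h)^{\delta_0}\dd\mu^{*mN}(h)\leqs M^{mN}$, so, writing $R(x)=(3M^{mN(x)})^{1/\delta_0}$, Markov's inequality again yields $\mu^{*mN}\bigl(\set{h\for\operatorname{N}_a(h)>R(x)}\bigr)\leqs\frac13$.

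The set $\set{h\for\operatorname{N}_a(h)\leqs R(x)\text{ and }\beta_\infty(hx)\leqs 3b'}$ therefore has $\mu^{*mN(x)}$-measure at least $\frac13>0$, so it meets the support of $\mu^{*mN(x)}$ (a set of positive measure always meets the support of a measure), and that support is contained in the closed subgroup $\Gamma_\mu$. Any $h$ in this intersection does the job. Indeed, for $\varepsilon<\varepsilon_0$ we get $\beta_\infty(hx)\leqs 3b'<\varepsilon^{-1}$, i.e.\ $hx\in X_\varepsilon$; and from $N(x)\leqs 1+\log\beta_\infty(x)/\log(1/a)$ we obtain $R(x)\leqs(3M^m)^{1/\delta_0}\beta_\infty(x)^{m\log M/(\delta_0\log(1/a))}$, whence $\operatorname{N}_a(h)\leqs C\beta_\infty(x)^k$ with $C=(3M^m)^{1/\delta_0}$ and $k=\max\{1,\lceil m\log M/(\delta_0\log(1/a))\rceil\}$ — rounding the exponent up to an integer is harmless because $\beta_\infty(x)\geqs 1$ and $C$ may be enlarged.

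The one point that needs care is that the number $mN(x)$ of random-walk steps required to bring $x$ back into $X_\varepsilon$ is \emph{logarithmic}, not polynomial, in $\beta_\infty(x)$: this is precisely what the geometric factor $a^N$ in the iterated contraction provides, and it is what converts the exponential-moment bound $M^{mN(x)}$ into the genuinely polynomial bound $C\beta_\infty(x)^k$ on $\operatorname{N}_a(h)$. Beyond this bookkeeping I do not foresee a real obstacle; in particular nothing about the geometry of $X$ or of the collection $\calS(\Gamma_\mu)$ enters here.
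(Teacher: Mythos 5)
Your proof is correct and follows essentially the same strategy as the paper: iterate the contraction inequality $O(\log\beta_\infty(x))$ times to pull the average of $\beta_\infty$ down to a constant, apply Markov's inequality twice (once to the $\beta_\infty$-average, once to the exponential moment $\int\operatorname{N}_a^{\delta_0}\dd\mu^{*mN}\leqs M^{mN}$) to produce a positive-measure set of $h$ satisfying both the return condition and the polynomial norm bound, and then observe that this set must meet $\supp(\mu^{*mN})\subset\Gamma_\mu$. The paper phrases the two Markov bounds via a decomposition $\mu^{*mn_x}=\mu_1+\mu_2$ with thresholds $1/2$ and $\sqrt{\varepsilon}$ and sets the return threshold at $1/\varepsilon$, whereas you use $1/3$--$1/3$ and a fixed threshold $3b'$; these are cosmetic differences and both yield the stated constants.
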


\begin{proof}
Set $M\df\int \operatorname{N}_a(h)^{\delta_0} \dd\mu(h) <\infty$ and let a positive
\begin{align*}
\varepsilon< \min\set[\bigg]{\frac{1}{4},\biggl(\frac{1-a}{1-a+b}\biggr)^2}\eqqcolon\varepsilon_0
\end{align*}
be given, where $a \in (0,1)$ and $b>0$ are the constants given by Theorem~\ref{thm;hgeneral}. Let $x\in HX_\varepsilon$. Since $\beta_\infty^{-1}(\set{\infty})$ is $H$-invariant, we have $\beta_\infty(x)<\infty$, so that we may define $n_x \ge 1$ to be the smallest integer such that $a^{n_x} \beta_\infty(x) \le 1$. It follows that
\begin{align*}
A_\mu^{m n_x}(\beta_\infty)(x) \le a^{n_x} \beta_\infty(x) + \frac{b}{1-a} \le\frac{1}{\sqrt{\varepsilon}},
\end{align*}
where $m \in \N$ is as in Theorem~\ref{thm;hgeneral}.

Now decompose $\mu^{*m n_x}$ as a sum of two non-negative measures $\mu_1+\mu_2$ where $\mu_2$ is the restriction of $\mu^{*mn_x}$ to the set $\set{\operatorname{N}_a(\cdot)\ge R_x}$ for $R_x= 2^{1/\delta_0}M^{m n_x/\delta_0}$. By submultiplicativity of $N_a$ we have $\int \operatorname{N}_a(h)^{\delta_0} \dd\mu^{*m n_x}(h) \le M^{m n_x}$. Using this bound together with the Markov inequality, we deduce that $\mu_2(H) \le \frac{1}{2}$ and hence $\mu_1(H) \ge \frac{1}{2} \ge \sqrt{\varepsilon}$. On the other hand, we know
\begin{align*}
\int_H \beta_\infty(hx) \dd\mu_1(h) \le A_\mu^{mn_x}(\beta_\infty)(x) \le \frac{1}{\sqrt{\varepsilon}}.
\end{align*}
Now, considering the probability measure $\hat{\mu}_1= \frac{1}{\mu_1(H)}\mu_1$ , we deduce $A_{\hat{\mu}_1}\beta_\infty(x) \le \frac{1}{\varepsilon}$. This means that there exists $h\in\supp(\hat{\mu}_1) \subset \Gamma_\mu$ such that $\beta_\infty(hx) \le \frac{1}{\varepsilon}$.
Finally, since by construction $n_x \le 1+ \frac{\log \beta_\infty(x)}{-\log a}$, we also obtain
\begin{align*}
\operatorname{N}_a(h) \le R_x= 2^{1/\delta_0} M^{m n_x/\delta_0} \le 2^{1/\delta_0}M^{m/\delta_0} \beta_\infty(x)^{m\frac{\log M}{-\delta_0 \log a}}.
\end{align*}
This shows that the statement holds by setting $C=2^{1/\delta_0}M^{m/\delta_0}$ and $k=m \lceil \frac{\log M}{-\delta_0 \log a} \rceil$.
\end{proof}

Let $Y$ be a homogeneous space in $\calS(\Gamma_\mu)$ and denote by $N$ its stabilizer group. Recall that this means that $N\geqs\Gamma_\mu$ is a closed subgroup of $G$, $Y$ is given by $Nx$ for some $x \in G/\Lambda$, and there is an $N$-invariant probability measure on $Nx$ which is invariant and ergodic with respect to $\Gamma_\mu$. By Theorem~\ref{thm;rigidity}, the Lie algebra
$\mathfrak n$ of $N$ is $H$-invariant with respect to the adjoint action. 
We write $\mathfrak g$ as a direct sum  of $\Ad
(H)$-invariant subspaces
\begin{align}\label{eq;add3}
\mathfrak g=(\mathfrak n+\mathfrak l)\oplus \mathfrak v,
\end{align}
where $\mathfrak l$ is the centralizer of $\mathfrak h$ and $\mathfrak v$ is a complementary $H$-invariant 
subspace of $\mathfrak n+\mathfrak l$. Recall that by the epimorphic property of $\Gamma_\mu$ in $H$, $\mathfrak l$ is also the Lie algebra of $C_G(\Gamma_\mu)$.

\begin{lem}\label{lem;tedious}
With the notation of the previous paragraph, for every $Y \in \calS(\Gamma_\mu)$ and compact set $K\subset X=G/\Lambda$, there exist an open neighborhood $O$ of the identity in $C_G(\Gamma_\mu)$ and $r \in (0,1)$ with the property that for any $x\in K$, there is at most one $v\in\mathfrak v$ such that 
\begin{align}\label{eq;sr}
\exp(v)x\in  OY \quad \text{and} \quad  \norm{v}< r.
\end{align}
Moreover, the set $E$ of $x\in X$ for which $v\in\mathfrak v$ with \eqref{eq;sr} exists is open in $X$ and the map $E\cap K\to\mathfrak v,\,x\mapsto v$ is continuous.
\end{lem}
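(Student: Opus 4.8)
The plan is to deduce the statement from a standard transversality-plus-properness argument, using crucially that $\mathfrak{v}$ is a complement to $\mathfrak{n}+\mathfrak{l}$ where $\mathfrak{l}=\Lie(C_G(\Gamma_\mu))$ is the space of $H$-fixed vectors in $\mathfrak{g}$, together with the fact that $Y=Nx_0$ is a \emph{closed} embedded submanifold of $X$ (being a homogeneous subspace). First I would fix a small open neighborhood $O_0$ of the identity in $C_G(\Gamma_\mu)$ that is diffeomorphic to a ball in $\mathfrak{l}$ via $\exp$, and observe that $\overline{O_0}Y$ is closed in $X$: indeed $Y$ is closed and $O_0$ can be chosen with compact closure, and since $C_G(\Gamma_\mu)$ normalizes $N$ (here $N$ is the stabilizer of $Y$, and by Theorem~\ref{thm;rigidity} its connected component is normalized by $H$; more carefully one uses that $C_G(\Gamma_\mu)$ commutes with $\Gamma_\mu\leqs N$, so $O_0Y$ is a union of translates of the closed set $Y$), a compactness argument shows $\overline{O_0}Y$ is closed. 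Then $\mathcal{N}_0\df O_0Y$ carries the structure of an embedded submanifold near each of its points, with tangent space at a point $\exp(v)y$ (for $y\in Y$, $v\in\mathfrak{l}$ small) spanned by the images of $\mathfrak{n}+\mathfrak{l}$ under the relevant differential. The key point is that $\mathfrak{v}$ is transverse to this tangent space, because $\mathfrak{g}=(\mathfrak{n}+\mathfrak{l})\oplus\mathfrak{v}$ as $\Ad(H)$-modules.

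The heart of the argument is a local uniqueness statement: consider the smooth map $\Phi\colon \mathfrak{v}\times \mathcal{N}_0\to X$, $(v,z)\mapsto \exp(v)z$, wait — better, consider for each $y$ the map $(v,\ell,y')\mapsto \exp(v)\exp(\ell)y'$ from a neighborhood of $0$ in $\mathfrak{v}\times\mathfrak{l}\times\{$chart of $Y\}$ into $X$. Its differential at the origin is the inclusion $\mathfrak{v}\oplus\mathfrak{l}\oplus T_yY\hookrightarrow\mathfrak{g}=T_yX$, where $\mathfrak{l}\oplus T_yY$ spans $T_y(\mathcal{N}_0)$ modulo the intersection $\mathfrak{l}\cap\mathfrak{n}$ — one needs to be slightly careful that $\dim\mathcal{N}_0 = \dim Y+\dim(\mathfrak{l}/(\mathfrak{l}\cap\mathfrak{n}))$, but in any case the image of the differential is all of $\mathfrak{g}$ and $\mathfrak{v}$ meets the tangent space of $\mathcal{N}_0$ only in $0$. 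By the inverse function theorem, there is $r_0>0$ and a neighborhood such that $\exp(v)z\in\mathcal{N}_0$ with $z\in\mathcal{N}_0$ near $y$, $\|v\|<r_0$, $v\in\mathfrak{v}$ forces $v=0$; equivalently, for $x$ near $y$ there is at most one $v\in\mathfrak{v}$ with $\|v\|<r_0$ and $\exp(v)x\in\mathcal{N}_0$, and this $v$ depends smoothly on $x$ where it exists. This is a local statement near each $y\in Y$.

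To globalize over the compact set $K$: I would cover $K$ by finitely many such neighborhoods, take $r$ smaller than the minimum of the corresponding $r_0$'s (and shrink so that the triangle-inequality-type overlaps are controlled: if $\exp(v)x\in\mathcal{N}_0$ and $\exp(v')x\in\mathcal{N}_0$ with both $\|v\|,\|v'\|<r$, then $\exp(v)\exp(-v')$ is close to the identity and moves a point of $\mathcal{N}_0$ to $\mathcal{N}_0$, forcing $v=v'$ by the local statement applied at a suitable point), and shrink $O_0$ to $O$ so that $\overline{O}Y$ stays closed and the uniqueness survives. Openness of $E$ and continuity of $x\mapsto v$ on $E\cap K$ then follow from the inverse-function-theorem normal form: the map $\Phi$ restricted to a slice $\{v\in\mathfrak{v}:\|v\|<r\}$ is an embedding onto a tubular neighborhood of $\mathcal{N}_0$, and $E$ is (the preimage of) this tubular neighborhood, with $x\mapsto v$ the associated smooth projection. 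The main obstacle I anticipate is \textbf{the bookkeeping around the non-transversality of $\mathfrak{l}$ and $\mathfrak{n}$} — i.e., making precise that $\mathcal{N}_0=O_0Y$ is a genuine embedded submanifold even though the "parametrizing" data $(\ell,y)\in O_0\times Y$ is redundant (since $C_G(\Gamma_\mu)\cap N$ may be positive-dimensional) — and in ensuring $\overline{O}Y$ is closed; both are handled by using that $Y$ is closed, that $C_G(\Gamma_\mu)$ preserves $Y$, and a compactness argument, but they require care to state cleanly.
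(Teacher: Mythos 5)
Your plan has the right geometric content---$\mathfrak v$ is a transverse slice to $\mathcal N_0=O_0Y$---and this is indeed the mechanism behind the paper's argument. The paper, however, deliberately avoids the step you identify as the main obstacle (establishing that $O_0Y$ is an embedded submanifold of $X$) by working entirely inside $G$. It fixes a compact neighborhood $K'$ of $K$ and shrinks $O$, $r$, and an auxiliary identity neighborhood $U\subset G$ so that: $U$ acts injectively on $K'$; the slice identity $Uy\cap Y=(U\cap N)y$ holds for $y\in Y\cap K'$ (this is where closedness of the orbit $Y$ enters); the map $(v,g)\mapsto\exp(v)g$ from $B_r(\mathfrak v)\times(U\cap C_G(\Gamma_\mu)N)$ to $G$ is a diffeomorphism onto its image; and a perturbation estimate holds. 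Given $\exp(v_i)x=o_iy_i$ for $i=1,2$, one writes $y_2=o_2^{-1}\exp(v_2)\exp(-v_1)o_1y_1$, uses injectivity and the slice identity to identify this small conjugator with an element $n\in U\cap N$, and then feeds $\exp(-v_1)o_1=\exp(-v_2)o_2n$ (both right-hand factors now lying in $U\cap C_G(\Gamma_\mu)N$) into the diffeomorphism to conclude $v_1=v_2$. The diffeomorphism condition only requires the local product structure of $C_G(\Gamma_\mu)N$ near $e\in G$, obtained by parametrizing it by $L/(L\cap N)\times N$ with $L=C_G(\Gamma_\mu)$. That is a lighter burden than your tubular-neighborhood route, which would in addition need a constant-rank argument (the multiplication $L\times N\to G$ does have constant rank $\dim(\mathfrak l+\mathfrak n)$, since its differential at $(l,n)$ is $(X,Y)\mapsto\Ad(n^{-1})(X+Y)$ and $\Ad(n)\mathfrak n=\mathfrak n$), and a check that the tangent space of $O_0Y$ at a point $lny_0$ with $l\ne e$---which is $\mathfrak l+\Ad(l)\mathfrak n$, not $\mathfrak l+\mathfrak n$---stays transverse to $\mathfrak v$ for $l\in O_0$ small. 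These are the loose ends you gesture at without closing.

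One concrete error to fix: your justification that $\overline{O_0}Y$ is closed does not hold up. $C_G(\Gamma_\mu)$ need not normalize $N$ (it commutes with $\Gamma_\mu$, not with all of $N$), and it certainly does not preserve $Y$---indeed $OY$ is a genuine thickening of $Y$ precisely because $lY\ne Y$ for generic $l$. The closedness is nonetheless true, but by the trivial compactness argument: if $l_jy_j\to x$ with $l_j\in\overline{O_0}$ compact, pass to a subsequential limit $l_j\to l$ to get $y_j\to l^{-1}x\in Y$. In any case, closedness of $\overline{O_0}Y$ is not used in the paper's proof of the lemma, so you may not need it at all.
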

\begin{proof}
Let $K'$ be a compact neighborhood of $K$. In view of \eqref{eq;add3}, we can choose $O$, $r$ and a neighborhood $U$ of the identity in $G$ so that all of the following hold:
\begin{enumerate}[label=(\alph*)]
    \item we have $UK\subset K'$,
    \item the natural map  $U \to Uy $ is injective for all $y\in K'$,
    \item for every $y\in Y\cap K'$ we have $U y\cap Y= (U\cap N)y$,
    \item the map $B_r(\mathfrak v)\times (U\cap ON)\to G,\,(v,g)\mapsto\exp(v)g$ is a diffeomorphism onto an open neighborhood of the identity in $G$, where $B_r(\mathfrak v)$ denotes the open $r$-ball in $\mathfrak v$, and
    \item we have $o_2^{-1}\exp(v_2) \exp(-v_1)o_1\in U$ for every $v_1,v_2 \in \mathfrak{g}$ with $\norm{v_i}<r$, $i=1,2$, and $o_1,o_2\in O$.
\end{enumerate}
Now let $x\in K$ and $v_1, v_2\in \mathfrak v$ satisfy \eqref{eq;sr}, say $\exp(v_i)x= o_iy_i$  with $o_i\in O$ and $y_i \in Y$ for $i=1,2$. Using properties (a) and (e) we know $y_1\in K'$. Moreover, $y_2= o_2^{-1}\exp(v_2) \exp(-v_1) o_1y_1$. Applying properties (b), (c) and (e), we deduce that
\begin{align*}
o_2^{-1}\exp(v_2) \exp(-v_1) o_1=n\in U\cap N,
\end{align*}
which means that
\begin{align*}
\exp(-v_1)o_1=\exp(-v_2)o_2n.
\end{align*}
Using (e) once more, we see that $o_1,o_2n\in U\cap ON$. Hence, property~(d) implies that $v_1=v_2$, giving uniqueness. Since $O\subset U$, the final claims of the lemma also follows from (d).
\end{proof}

\begin{proof}[Proof of Theorem~\textup{\ref{thm;hN}}]
Since there is a substantial amount of relevant notation and auxiliary objects, let us start the proof by recalling the initial data. The probability measure $\mu$ on $H$ is $H$-expanding with finite exponential moments, $Z$ is a compact subset of $X=G/\Lambda$ and $\beta_\infty\colon G/\Lambda \to [1, \infty]$ is as given by Theorem~\ref{thm;hgeneral}. By the latter (and Remark~\ref{rk.modify.kappa}), the function $\beta_\infty$ satisfies \eqref{contr} with some $m \in \N$, $a \in (0,1)$ and $b>0$ and $\beta_\infty(hx) \le \operatorname{N}_a(h)^{\delta_0}\beta_\infty(x)$ for every $x \in G/\Lambda$ and $h \in H$, where $\delta_0\in(0,1)$ is chosen so that $\int_H \operatorname{N}_a(h)^{\delta_0} \dd\mu(h)<\infty$. Let $\varepsilon_0>0$, $k \in \N$ and $C\ge 1$ be given by Lemma~\ref{lem;return} and fix $\varepsilon \in (0,\varepsilon_0)$.  Let $O$ be a {relatively compact open} neighborhood of the identity in $C_G(\Gamma_\mu)$ and $r \in (0,1)$ 
{such that the conclusion of Lemma~\ref{lem;tedious} holds} with a compact neighborhood $K$ of $X_\varepsilon=\set{x\in X\for \beta_\infty(x)\le \varepsilon^{-1}}$. Let $Y \in \calS(\Gamma_\mu)$, denote by $N$ its stabilizer group, by $\mathfrak n$ its Lie algebra, and set $\mathcal N=OY$. Finally, let $\mathfrak l$ be the Lie algebra of $C_G(\Gamma_\mu)$ and choose an $\Ad(H)$-invariant complementary space $\mathfrak v$ so that \eqref{eq;add3} holds.

Since $\mu$ is $H$-expanding with finite exponential moments and $\mathfrak v$ has no nonzero $H$-fixed vectors, by Lemma~\ref{lem;contract} there exists
\begin{align}\label{eq;list}
 0<\theta <\min\set{\delta_0,1/k}  
\end{align}
such that for every $a'\in(0,1)$ we have, for all $n \in \N$ large enough, 
\begin{align}\label{eq;qing}
\int_H \norm{\Ad(h)v}^{-\theta}\dd \mu^{*n}(h)\le  a'\norm{v}^{-\theta}
\end{align}
for any nonzero $v\in \mathfrak v$. We fix such $n \in \N$ that is a positive multiple of $m \in \N$. Without loss of generality, we assume $a'>a$ and let $\varepsilon'>0$ be such that $a'=(1+\varepsilon')a$. Since $m | n$, \eqref{contr} implies that
\begin{align}\label{eq;lan}
\int_H \beta_\infty(hx)\dd\mu^{*n}(h)\le a \beta_\infty (x)+ \frac{b}{1-a}.   
\end{align}

For $x\in  HX_\varepsilon$, we define
\begin{align*}
r_x = rC^{-1}\beta_\infty(x )^{-k}.
\end{align*}
Next, we claim that for every $x \in HX_\varepsilon$, there exists at most one $v\in \mathfrak v$ such that 
\begin{align}\label{eq;small}
\exp(v) x\in \mathcal N\quad \text{and} \quad \norm{v}< r_x. 
\end{align}
Indeed, by Lemma~\ref{lem;return}, there exists $h\in \Gamma_\mu$ with $\operatorname{N}_a(h)\le  C\beta _{\infty }(x)^k$ such that $h x\in X_\varepsilon$. 
Since $\mathcal N$ is $\Gamma_\mu$-invariant, we have 
\begin{align*}
\exp(v) x\in \mathcal N \quad \text{if and only if}\quad h\exp(v) x= \exp(\Ad(h)v) hx \in \mathcal N.
\end{align*}
Since $\norm{\Ad(h)v} \le \operatorname{N}_a(h)\norm{v} \le r$, if such an $v \in\mathfrak v$ exists, it is unique thanks to Lemma~\ref{lem;tedious} (applied to $hx \in X_\varepsilon$) and the choice of $r>0$, where we are using that $\mathfrak v$ is $H$-invariant. 
	
Using the claim above, we may define $\alpha\colon  HX_\varepsilon\to [1, \infty]$ by
\begin{align*}
    \alpha(x)=\begin{cases}
      \norm{v}^{-\theta},   & \text{if there exists } v\in \mathfrak v \text{ satisfying \eqref{eq;small},}  \\
     \hfill r_x^{-\theta},    & \text{otherwise.}
    \end{cases}
\end{align*}
Using the corresponding property for $\beta_\infty$ and the choice of $\theta$ in \eqref{eq;list}, it is readily checked that for every $x \in HX_\varepsilon$ and $h \in \Gamma_\mu$, we have $\alpha(hx) \le \operatorname{N}_a(h)^{\delta_0} \alpha(x)$. We shall show that
	\begin{align*}
	\beta_{\mathcal N}=\beta_\infty (x) + \alpha(x).
	\end{align*}
satisfies all requirements of the theorem.

To proceed, we start by decomposing $\mu^{*n}$ as a sum $\mu_1 + \mu_2$ of two non-negative measures with $\mu_1$ of compact support and $\mu_2$ satisfying
\begin{align*}
    \int_H \operatorname{N}_a(h)^{\delta_0} \dd\mu_2(h)< \frac{1-a'}{2}.
\end{align*}
It follows that 
\begin{align}\label{eq.deal.with.mu2}
\int_H \alpha(hx) \dd\mu_2(h) \le \alpha(x) \int_H \operatorname{N}_a(h)^{\delta_0}\dd\mu_2(h) \le \alpha(x)\frac{1-a'}{2}.
\end{align}
Denote by $D$ the constant $r^{-1}C M^k$, where $M=\sup \set{\operatorname{N}_a(h) \for h \in \supp(\mu_1) }$. Then $D>M^k\ge 1$ by choice of $r$, and for any element $h \in S_{\pm}\df\supp(\mu_1)\cup \supp(\mu_1)^{-1}$  we have  
	\begin{align}
	    \label{eq;lip}
	    \beta_\infty(hx)\le M \beta_\infty(x) \text{ and hence } r_x\le D r_{hx}.
	\end{align}

We are now going to establish the contraction property for $\beta_{\mathcal N}$ by distinguishing several cases based upon the size of $\alpha(x)$.

If $\alpha(x)>D^2 r_x ^{-\theta}$, then there exists a uniquely determined  $v\in \mathfrak v$  so that \eqref{eq;small} holds and  $\alpha(x)= \norm{v}^{-\theta}$.
In particular, 
\begin{align*}
\norm{v}<  D^{-2/\theta }r_x< D^{-2}r_x.
\end{align*}
Together with \eqref{eq;lip}, the previous inequality  implies that for $h \in S_{\pm}$, we have 
\begin{align}\label{eq;estimate}
\norm{\Ad (h) v}\le   \operatorname{N}_a(h)\cdot \norm{v}<D \cdot D^{-2}r_x=D^{-1} r_x\le  r_{hx}.
\end{align}
Since $\exp(v) x$ belongs to the $\Gamma_\mu$-invariant set $\mathcal N$, we have
$\exp(\Ad (h)v)hx \in \mathcal N $. In view of \eqref{eq;estimate} and the definition of $\alpha$ it follows that $\alpha(hx)=\norm{\Ad(h)v}^{-\theta}$. By \eqref{eq;qing},
\begin{align*}
\int_ H \alpha(hx)\dd\mu_1(h)= \int_H \norm{\Ad (h)v}^{-\theta}\dd\mu_1(h) \le \int_H \norm{\Ad (h)v}^{-\theta}\dd\mu^{*n}(h) \le a' \alpha(x).
\end{align*}
Combining with \eqref{eq.deal.with.mu2}, we get
\begin{align*}
\int_H \alpha(hx) \dd\mu^{*n}(h)=\int_H \alpha(hx) \dd(\mu_1+\mu_2)(h) \le \frac{1+a'}{2}\alpha(x).
\end{align*}
Together with \eqref{eq;lan}, the previous inequality yields
\begin{align*}
\int_H \beta_{\mathcal{N}}(hx) \dd\mu^{*n}(h) \le \frac{1+a'}{2}\beta_{\mathcal{N}}(x)+\frac{b}{1-a}.
\end{align*}
Therefore, we proved the contraction property of $\beta_{\mathcal{N}}$ for $x \in HX_\varepsilon$ satisfying $\alpha(x)> D^2r_x^{-\theta}$.
	
Now let $x \in HX_\varepsilon$ be such that $\alpha(x) \le D^2r_x^{-\theta}$. In this case, we have
\begin{align}\label{eq;either}
\alpha(x)\le D^2 r_x^{-\theta}= D^2 r^{-\theta} C^\theta \beta_\infty^{k\theta}(x)\le D^3  \beta_\infty(x).
\end{align}
We claim that for any $h\in S_{\pm}$, we have
\begin{align}\label{addtwo}
\alpha(hx)\le D^4 r_{hx}^{-\theta}.
\end{align}
If not, then using \eqref{eq;lip} and the fact that $\alpha(hx) \le M\alpha(x)\le D \alpha(x)$, we find
\begin{align*}
\alpha(x)\ge D^{-1}\alpha(hx)> D^{-1} \cdot D^4 r_{hx}^{-\theta}=D^3r_{hx}^{-\theta} \ge D^{3-\theta} r_{x}^{-\theta},
\end{align*}
which contradicts the first inequality in \eqref{eq;either} since $\theta \in (0,1)$ and $D>1$.
By \eqref{addtwo} and \eqref{eq;lip}
\begin{align*}
\alpha(hx)&\le D^4r_{hx}^{-\theta}= D^4 r^{-\theta}C^\theta\cdot \beta_{\infty}^{k\theta }(hx)
\le D^5 \beta_{\infty}^{k\theta}(x) = D^5 \beta_{\infty}^{k\theta-1}(x)\cdot \beta_{\infty}(x).
\end{align*}
Since $k\theta<1$, if $\beta_\infty(x)$ is larger than some constant depending only on $\varepsilon'a,k\theta$ and $D$, we will have
\begin{align*}
D^5\beta_{\infty }^{k \theta-1 }(x)<\varepsilon'a. 
\end{align*}
In view of \eqref{eq;either}, we know that $\beta_\infty(x)$ is sufficiently large provided that
$\alpha(x)$ is (depending on $D$). 
Therefore, there exists $b'>0$ (depending on $\varepsilon'a, k \theta,  D$) so that if 
\begin{align}\label{eq;either_one}
b'\le \alpha(x)\le D^2 r_x^{-\theta},
\end{align}
then for any $h\in S_{\pm}$
\begin{align}\label{eq;or_this}
\alpha(hx)\le \varepsilon'a \beta_\infty(x). 
\end{align}
So in the case where \eqref{eq;either_one} holds,
combining \eqref{eq;lan}, \eqref{eq.deal.with.mu2} and \eqref{eq;or_this}, we deduce
\begin{align*}
\int_H \beta_{\mathcal N}(h x)\dd \mu^{*n}(h) \le \frac{1+a'}{2} \beta_{\mathcal{N}}(x)+\frac{b}{1-a},
\end{align*} 
proving the required contraction property.

To treat the remaining case, suppose now that  $x\in  H X_\varepsilon $
and $\alpha(x)\le\min\set{b', D^2 r_x^{-\theta}}$. We claim that $\alpha(hx)\le D^3 b'$ for all $h\in S_{\pm}$. Supposing the contrary, we would have
\begin{align*}
\alpha(hx)>D^3b' \ge D^3 \alpha(x)\ge D^3 r_x^{-\theta}.
\end{align*}
From this, using the inequality $\alpha(hx)\le D \alpha(x)$, it follows that
\begin{align*}
\alpha(x)\ge D^{-1} \alpha(hx)> 
D^2  r_x^{-\theta},
\end{align*}
a contradiction. 
Therefore, recalling \eqref{eq;lan} and \eqref{eq.deal.with.mu2}, we obtain
\begin{align*}
\int_H \beta_{\mathcal N}(h x)\dd \mu^{*n}(h)&=
\int_H \alpha(h x)\dd \mu^{*n}(h)+
\int_H \beta_{\infty}(h x)\dd \mu^{*n}(h)  \\
&\le D^3b'+\frac{1-a'}{2}\alpha(x) +a\beta_\infty (x)+\frac{b}{1-a}\\
	& \le \frac{1+a'}{2} \beta_{\mathcal N} (x)+D^3b'+\frac{b}{1-a}. 
\end{align*}
We have thus concluded the proof of the contraction property with $a_0=(1+a')/2$ and the additive constant $b_0=D^3b'+b/(1-a)$.

It remains to prove the claims (i)--(iii). Since $\beta_\infty$ is finite on $HX_\varepsilon$, (i) is directly seen to hold by  definition of $\beta_{\mathcal N}$. {Property (ii) is also immediate from the definition of $\beta_{\mathcal{N}}$, since $\beta_\infty$ is bounded on $X_\varepsilon$ and any compact subset not intersecting $\overline{O}Y$ has positive distance to $\mathcal{N}$.} To prove (iii), let $(x_j)_j$ be a sequence in $HX_\varepsilon$ with $\beta_{\mathcal N }(x_j)\le \ell$ for all $j\in\N$ for some $\ell \in \R$. Since $\beta_{\mathcal{N}}=\beta_\infty + \alpha$ with $\alpha\ge 0$, we also have $\beta_\infty(x_j)\le \ell$ for all $j$. Since $\beta_\infty$ is proper, we may suppose that $\lim_{j \to \infty} x_j=x$ for some point $x\in X$. We need to prove that $x\in HX_\varepsilon$ and $\beta_{\mathcal N}(x)\le \ell$.

We first show that $x\in HX_\varepsilon$.
It follows from Lemma~\ref{lem;return} that there is a compact subset $K_\ell$ of $\Gamma_\mu$ such that for any $j\in \N$, there exists $h_j\in K_\ell$ so that $h_j x_j\in X_\varepsilon$. Since $ X_\varepsilon$ is compact, by possibly passing to a subsequence, we may assume that $h_j x_j$ converges to some $y\in X_\varepsilon$ and $h_j$ converges to some $h\in \Gamma_\mu$. So we have 
\begin{align*}
\lim_{j \to \infty} h_j x_j= hx= y,
\end{align*}
which implies $x=h^{-1}y\in HX_\varepsilon$.
	
	Finally, we show that 
\begin{align}\label{eq;betan}
\alpha(x)\le \liminf_{j \to \infty}\alpha(x_j),    
\end{align}
which will complete the proof in view of the lower semicontinuity of $\beta_\infty$ and the definition of $\beta_{\mathcal N}$. First, let us pass to a subsequence so that the liminf in \eqref{eq;betan} is a limit, say $\liminf_{j\to\infty}\alpha(x_j)=\lim_{j\to\infty}\alpha(x_j)\eqqcolon\alpha_1$. If $\alpha(x)=r_x^{-\theta}$, then \eqref{eq;betan} follows from the definition of $r_x$ and lower semicontinuity of $\beta_\infty$. Suppose therefore that $\alpha(x)>r_x^{-\theta}$. This implies that there exists a unique $v \in \mathfrak v$ such that $\exp(v)x \in \mathcal{N}$ and $\norm{v} <r_x$. Using Lemma~\ref{lem;return}, choose $h\in\Gamma_\mu$ with $\operatorname{N}_a(h)\le C\beta_\infty(x)^k$ such that $hx\in X_\varepsilon$. Then $\norm{\Ad(h)v}<r$ and $\exp(\Ad(h)v)hx\in\mathcal N$. Now, since the points $hx_j$ converge to $hx$, for large $j$ they lie in the neighborhood $K$ of $X_\varepsilon$ to which we applied Lemma~\ref{lem;tedious}. Thus, the last claim in this lemma imply that there exist $v_j\in\mathfrak v$ with $v_j\to v$ such that $\exp(v_j)x_j\in\mathcal N$. Note that since the values $r_{x_j}^{-\theta}$ are contained in $[0,\ell]$, up to passing to a further subsequence, we may suppose that they converge to $\alpha_2$. Clearly, $\alpha_1 \ge \alpha_2$. 
If $\alpha_1>\alpha_2$, then for large $j$ we have $\alpha(x_j)\ge \norm{v_j}^{-\theta}$ 
and it follows that \eqref{eq;betan} holds since $\norm{v_j}^{-\theta}\to\norm{v}^{-\theta}=\alpha(x)$. On the other hand, in case $\alpha_1=\alpha_2$ we know that for every $\epsilon>0$, for $j \in \N$ large enough, we have $\norm{v_j}+\epsilon>r_{x_j}$. But since $v_j \to v$ and $\epsilon>0$ is arbitrary, this implies that  $\alpha(x)=\norm{v}^{-\theta}\le \lim_{j \to \infty} r_{x_j}^{-\theta}=\alpha_2=\alpha_1$, as desired.
\end{proof}

\section{Recurrence, equidistribution, topology of homogeneous measures}\label{sec;rec-equi-top}
Using the ingredients from \S\S\ref{sec;rigidity}--\ref{sec;height_functions}, we can now give the proofs of our results on recurrence, orbit closures, equidistribution, and topology of $\calS(\Gamma_\mu)$. The following lemma is used to extract the necessary information from the height functions constructed in the previous section.
\begin{lem}\label{lem;no_mass}
Let $H$ be a locally compact $\sigma$-compact metrizable group and $X$ a locally compact $\sigma$-compact metrizable space endowed with a continuous $H$-action. Let $\mu$ be a Borel probability measure on $H$ and $\beta\colon X\to [1,\infty]$ be a lower semicontinuous function such that there exist $m\in\N$, $a \in (0,1)$ and $b>0$ such that
\begin{align}\label{contr_ineq}
A_\mu^m(\beta)(x)\le a \beta(x)+b 
\end{align}
for all $x\in X$. 
Suppose that for every $\varepsilon>0$ the set $X_\varepsilon=\beta^{-1}([0,\varepsilon^{-1}])$ is compact and that the set $X_\infty=\beta^{-1}(\set{\infty})$ is $\Gamma_\mu$-invariant. 
Then the following holds:
\begin{enumerate}[label=\textup{(\roman*)}]
\item For any $\delta >0$ there exists a compact subset $R_\delta\subset X\setminus X_\infty$ such that for any $x\in X$ with $\beta(x)<\infty$ {there exists $n_x\in\N$ with $n_x=O(\log\beta(x))$ such that 
\begin{align*}
\mu^{*n}* \delta _x (R_\delta)\ge  1-\delta
\end{align*}
for every $n\ge n_x$.}
\item For every $x\in X$ with $\beta(x)<\infty$,
 for $\mu^\N$-a.e.\ $(g_i)_i\in \Gamma_\mu^\N$, every weak* limit $\nu$ of the sequence $(\frac{1}{n}\sum_{k=0}^{n-1}\delta_{g_k\dotsm g_1x})_n$ of empirical measures satisfies $\nu(X\setminus X_\infty)=1$.
\end{enumerate}

\end{lem}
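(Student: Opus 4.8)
The plan is to deduce both statements from the contraction inequality \eqref{contr_ineq} by the standard supermartingale/Foster--Lyapunov argument, treating the iterate of length $m$ first and then interpolating.

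\textbf{Setting up a supermartingale.} First I would consider the Markov chain on $X$ driven by $\mu$, fix a starting point $x$ with $\beta(x)<\infty$, and work on the probability space $(\Gamma_\mu^\N,\mu^\N)$ with the filtration $\mathcal F_n=\sigma(g_1,\dots,g_n)$. Since $X_\infty=\beta^{-1}(\set{\infty})$ is $\Gamma_\mu$-invariant and $\beta(x)<\infty$, the trajectory $x_n\df g_n\dotsm g_1 x$ stays in $\set{\beta<\infty}$ almost surely. Writing \eqref{contr_ineq} along the subchain indexed by multiples of $m$, the process $M_j\df a^{-j}\bigl(\beta(x_{jm})+\tfrac{b}{a^{-1}-1}\bigr)$ is, up to the additive constant bookkeeping, a nonnegative supermartingale; more precisely $\mathbb E[\beta(x_{(j+1)m})\mid\mathcal F_{jm}]\le a\beta(x_{jm})+b$, so $\mathbb E[\beta(x_{jm})]\le a^j\beta(x)+\tfrac{b}{1-a}$. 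This is the engine behind everything.

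\textbf{Proof of (i).} From $\mathbb E[\beta(x_{jm})]\le a^j\beta(x)+\tfrac{b}{1-a}$ and Markov's inequality, once $a^j\beta(x)\le 1$ — i.e.\ for $j\ge j_x$ with $j_x=O(\log\beta(x))$ — we get $\mathbb E[\beta(x_{jm})]\le 1+\tfrac{b}{1-a}\eqqcolon B$, hence $\mu^{*jm}*\delta_x\bigl(\set{\beta>B/\delta}\bigr)=\mathbb P(\beta(x_{jm})>B/\delta)\le\delta$. To pass from multiples of $m$ to all $n\ge n_x$, I would absorb the at most $m-1$ extra steps: iterating \eqref{contr_ineq} once more shows that for each residue $0\le i<m$ one has $A_\mu^{i}(\beta)\le C_m\beta+D_m$ with constants depending only on $m,a,b$ (this follows by applying $A_\mu$ to the inequality finitely many times and using $A_\mu(\beta)\le A_\mu^m(\beta)$? — no, rather: directly, $A_\mu^{jm+i}(\beta)(x)=A_\mu^i(A_\mu^{jm}\beta)(x)$, and $A_\mu^{jm}\beta\le a^j\beta+\tfrac{b}{1-a}$ pointwise, so $A_\mu^{jm+i}\beta\le a^j A_\mu^i\beta+\tfrac{b}{1-a}\cdot A_\mu^i 1=a^j A_\mu^i\beta+\tfrac{b}{1-a}$; and $A_\mu^i\beta\le C_m\beta+D_m$ is then needed only for the finitely many $i<m$, which follows from \eqref{contr_ineq} applied to bound $A_\mu^m\beta$ and monotonicity is not needed — one simply notes $A_\mu^i\beta$ is a finite iterate). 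Thus for all $n\ge n_x\df m\lceil j_x\rceil$ one gets $\mathbb E[\beta(x_n)]\le$ const, and $R_\delta\df\set{x\in X:\beta(x)\le B'/\delta}$ for a suitable $B'$ works; it is compact because every $X_\varepsilon$ is compact, and it is disjoint from $X_\infty$. I would record the bound $n_x=O(\log\beta(x))$ explicitly.

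\textbf{Proof of (ii).} Here I would use that $\beta$, being lower semicontinuous with compact sublevels, has the property that for a weak* limit $\nu$ of empirical measures, $\nu(X\setminus X_\infty)=\nu(X)-\nu(X_\infty)$ and $\nu(X_\infty)=0$ will follow from a uniform tail bound on the time averages $\tfrac1n\sum_{k<n}\mathbf 1_{\set{\beta(x_k)>T}}$. By the supermartingale estimate, $\mathbb E\bigl[\tfrac1n\sum_{k<n}\beta(x_k)\bigr]$ is bounded uniformly in $n$ (the geometric tail $\sum_j a^j\beta(x)$ is finite and the constant term contributes $O(1)$ per step but after dividing by $n$ and noting the partial sums $\sum_{k<n}(a^{k/m}\beta(x)+\text{const})$ grow linearly, one gets a uniform bound on the Cesàro average of $\mathbb E[\beta(x_k)]$, namely $\limsup_n\tfrac1n\sum_{k<n}\mathbb E[\beta(x_k)]\le\tfrac{b}{1-a}$). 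Then by Fatou applied along the subsequence realizing $\nu$ and lower semicontinuity of $\beta$, for any $T$, $\int\min(\beta,T)\,d\nu\le\liminf\tfrac1n\sum_{k<n}\int\min(\beta,T)\,d(\mu^{*k}*\delta_x)\le\tfrac{b}{1-a}$, so $\int\beta\,d\nu\le\tfrac{b}{1-a}<\infty$ by monotone convergence, forcing $\nu(X_\infty)=0$. To get the \emph{almost sure} statement rather than merely in expectation, I would instead run the argument pathwise: by the supermartingale convergence theorem $\beta(x_{jm})$ converges a.s.\ to a finite limit, hence is a.s.\ bounded along the subsequence, and bounding the in-between steps by the finite-iterate estimate shows $\sup_n\beta(x_n)<\infty$ a.s.\ is too strong — rather, $\tfrac1n\sum_{k<n}\beta(x_k)$ is a.s.\ bounded, which one gets from $\sum_j\mathbb P(\beta(x_{jm})>j^2)<\infty$ via Borel--Cantelli together with the Markov-step interpolation, so a.s.\ $\tfrac1n\sum_{k<n}\beta(x_k)=O(1)$; then the same Fatou/lower-semicontinuity argument gives $\nu(X_\infty)=0$ for every weak* limit $\nu$ along that path.

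\textbf{Main obstacle.} The genuinely delicate point is (ii): extracting the \emph{almost sure} tightness of the empirical measures away from $X_\infty$ from the mean-reversion inequality, since a weak* limit $\nu$ could a priori lose mass into $X_\infty\subset\overline X\setminus X$ or to infinity. The clean way is the pathwise supermartingale control of $\tfrac1n\sum_{k<n}\beta(x_k)$ combined with lower semicontinuity of $\beta$ (so that $\set{\beta\le T}$ is closed and $\nu(\set{\beta\le T})\ge\limsup$ of the empirical mass there, giving $\nu(X)=1$ and simultaneously $\int\beta\,d\nu\le\tfrac{b}{1-a}$). One must also take care that the interpolation constants between steps $jm$ and $jm+i$ depend only on $m,a,b$ and not on $x$, which is what makes $n_x=O(\log\beta(x))$ and the uniformity in (i) work.
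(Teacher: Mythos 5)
Your overall strategy (iterate the contraction to bound $\mathbb{E}[\beta(x_{jm})]$, apply Markov for (i), control the Cesàro averages of $\beta$ along a.e.\ path and use lower semicontinuity for (ii)) is the right shape, but both parts have a gap at the step where you leave the $m$-skeleton or pass from expectations to almost-sure control.

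For (i), you propose to define $R_\delta$ as a sublevel set $\{\beta\le B'/\delta\}$ and to control $\mathbb{E}[\beta(x_n)]$ for \emph{all} $n\ge n_x$ by proving $A_\mu^i\beta\le C_m\beta+D_m$ for $0\le i<m$. This intermediate inequality does not follow from the hypotheses: \eqref{contr_ineq} only constrains $A_\mu^m\beta$, and in the abstract setting of the lemma there is nothing preventing $A_\mu^i\beta(x)=\infty$ for some $i<m$ even when $\beta(x)<\infty$ and $A_\mu^m\beta(x)<\infty$ (the $\Gamma_\mu$-invariance of $X_\infty$ guarantees $\beta(x_i)<\infty$ a.s.\ but not integrability). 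Your parenthetical remark (``which follows from \eqref{contr_ineq} applied to bound $A_\mu^m\beta$ \ldots\ one simply notes $A_\mu^i\beta$ is a finite iterate'') is not a proof. The paper sidesteps this entirely: it applies Markov only along the $m$-skeleton to get $\mu^{*jm}*\delta_x(X_\varepsilon)\ge 1-\delta/2$, then picks a compact $F\subset\Gamma_\mu$ with $\mu^{*l}(F)\ge 1-\delta/2$ for $0\le l<m$, and sets $R_\delta=FX_\varepsilon$. This controls the \emph{location} of $x_{jm+l}$ by compactness of $F$, with no appeal to $A_\mu^l\beta$; note that in the concrete applications (Theorem~\ref{thm;hgeneral}(iii)) one does have $\beta(hx)\le\operatorname{N}(\Ad h)^\kappa\beta(x)$, which would salvage your estimate, but the lemma as stated is more general and your argument would not prove it.

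For (ii), your key claim ``$\sum_j\mathbb{P}(\beta(x_{jm})>j^2)<\infty$ \ldots\ so a.s.\ $\tfrac1n\sum_{k<n}\beta(x_k)=O(1)$'' is a non sequitur. Borel--Cantelli gives a.s.\ $\beta(x_{jm})\le j^2$ eventually, which yields $\tfrac1N\sum_{j<N}\beta(x_{jm})=O(N^2)$, not $O(1)$. You cannot take $T$ fixed either, since $\mathbb{P}(\beta(x_{jm})>T)\not\to0$ (it hovers around $c/T$ with $c=b/(1-a)$), so $\sum_j\mathbb{P}(\beta(x_{jm})>T)=\infty$. The supermartingale convergence idea also does not directly apply: $\beta(x_{jm})$ is not a supermartingale because of the drift $b$, and the natural correction $\beta-c$ may be negative. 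Establishing the almost-sure statement that for every $\delta>0$ there is a compact $K\subset X\setminus X_\infty$ with $\liminf_n\tfrac1n|\{k<n:x_{km}\in K\}|\ge 1-\delta$ is exactly the content of \cite[Proposition~3.9]{bq132}, which the paper invokes rather than reproves; the paper then interpolates to all $n$ via the law of large numbers applied to the event $g_{km+l}\dotsm g_{km+1}\in F$ for $0\le l<m$, and finishes with Portmanteau. Your Fatou/lower-semicontinuity finish is correct once the a.s.\ Cesàro control is in place, but that control is the hard part and your proposed route to it does not work.
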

The techniques going into the first part of the lemma are by now standard. The second part is basically~\cite[Proposition~3.9]{bq132}. Related ideas also appear in Markov chain theory (see e.g.~\cite[Theorem~18.5.2]{meyn-tweedie} and the references given there). We include a brief proof for convenience.

\begin{proof}
Let $x\in X$ be such that $\beta(x)<\infty$. Iterating \eqref{contr_ineq}, we find for every $\varepsilon>0$ and $n\in\N$
\begin{align*}
\mu^{*mn}*\delta_x(X_\varepsilon^c)&\le \varepsilon\int_H\beta(hx)\dd\mu^{*mn}(h)\le \varepsilon\Bigl(a^n\beta(x)+\frac{b}{1-a}\Bigr).
\end{align*}
{For the proof of (i), given $\delta>0$, we set $\varepsilon=\frac{\delta(1-a)}{2b+2}$. Then the above estimate implies that for every $n \geq n_{0,x}\df\lceil\frac{\log\beta(x)}{-\log a}\rceil$, we have $\mu^{*mn}*\delta_x(X_\varepsilon)\ge 1-\delta/2$. Moreover, we may choose a compact subset $F \subset \Gamma_\mu$ such that $\mu^{*l} (F)\ge 1-\delta/2$ for all $0\le l<m$. Now setting $R_\delta$ to be the compact set $FX_\varepsilon$ which, since $X \setminus X_\infty$ is $\Gamma_\mu$-invariant, is contained in $X\setminus X_\infty$, we find
\begin{align*}
\mu^{*n}*\delta_x(R_\delta) \ge 1-\delta 
\end{align*}
for all $n\ge n_x\df mn_{0,x}$.}

For (ii), we appeal to~\cite[Proposition~3.9]{bq132}, which implies that for $\mu^\N$-a.e.\ $(g_i)_i\in \Gamma_\mu^\N$, for every $\delta>0$ there exists a compact subset $K\subset X\setminus X_\infty$ such that
\begin{align*}
\liminf_{n\to\infty}\frac1n\abs{\set{0\le k<n\for g_{km}\dotsm g_1x\in K}}\ge 1-\delta/2.
\end{align*}
Moreover, by the law of large numbers, by choosing a large enough compact set $F\subset\Gamma_\mu$ we can ensure that for $\mu^\N$-a.e.\ $(g_i)_i\in\Gamma_\mu^\N$
\begin{align*}
\liminf_{n\to\infty}\frac1n\abs{\set{0\le k<n\for g_{km+l}\dotsm g_{km+1}\in F\text{ for }0\le l<m}}\ge 1-\delta/2.
\end{align*}

Combining the above, it follows that for the compact subset $R=FK\subset X\setminus X_\infty$ we have
\begin{align*}
\liminf_{n\to\infty}\frac1n\abs{\set{0\le k<n\for g_k\dotsm g_1x\in R}}\ge 1-\delta
\end{align*}
for $\mu^\N$-a.e.\ $(g_i)_i\in \Gamma_\mu^\N$, and we conclude using a version of the Portmanteau lemma.
\end{proof}
\subsection{Recurrence}\label{subsec;recurrence}
We first prove our results about recurrence properties of $H$-expanding random walks.
\begin{proof}[Proof of Theorem~\textup{\ref{thm;recurrence}}]
Let $Z$ be a compact subset of $X\setminus \mathcal{N}$, where we recall that $\mathcal{N}=K_LY$ for a compact subset $K_L$ of $L=C_G(\Gamma_\mu)$, and let $\beta_\infty$ be a height function coming from Theorem~\ref{thm;hgeneral} such that $\beta_\infty$ is bounded on $Z$, say $Z\subset X_\varepsilon=\set{x\in X\for \beta_\infty(x)\le \varepsilon^{-1}}$ for some $\varepsilon>0$. If $Y=\emptyset$, we set $\beta=\beta_\infty$. Otherwise, we apply Theorem~\ref{thm;hN} to $Y_l=lY$ for finitely many points $l\in L$ such that the associated neighborhoods $O_l$ of the identity in $L$ coming out of the theorem satisfy $\overline{O_l}lY\cap Z=\emptyset$ and $K_L\subset \bigcup_lO_ll$. The associated height functions $\beta_l$ (extended to all of $X$ by the value $\infty$ on the complement of $HX_\varepsilon$) take the value $\infty$ on $O_llY$ and are bounded on $Z$. We set $\beta=\sum_l\beta_l$, which is a lower semicontinuous function on $X$ with compact sublevel sets by virtue of Theorem~\ref{thm;hN}(iii). 

In both cases, we now apply Lemma~\ref{lem;no_mass}(i) to the height function $\beta$. The set $R_\delta$ coming out of the lemma is a compact subset of $X\setminus\mathcal{N}$ such that for every $x\in X$ with $\beta(x)<\infty$, for $n\ge n_x$ with $n_x=O(\log\beta(x))$, we have $\mu^{*n}*\delta_x(R_\delta)\ge 1-\delta$. Since $\beta$ is bounded on $Z$ by construction, this estimate holds for all $n\ge n_0$ for all $x\in Z$. If $F$ is a compact subset of $\Gamma_\mu$ such that $\mu^{*n}(F)\ge 1-\delta$ for all $0\le n<n_0$, it follows that $\mu^{*n}*\delta_x(M_{Z,\delta})\ge 1-\delta$ for all $n\ge 0$ and all $x\in Z$ for the compact subset $M_{Z,\delta}\df R_\delta\cup FZ$ of $X\setminus\mathcal{N}$, where we used for the last containment that $\beta^{-1}(\set{\infty})$ is $\Gamma_\mu$-invariant.
\end{proof}

\begin{rem}
For $Y=\emptyset$, the recurrence property in Theorem~\ref{thm;recurrence} is referred to as (R1) in \cite{bq12,em}. In the case of a random walk given by a $G$-expanding probability measure on the quotient of $G$ by an irreducible lattice, a slightly stronger, ``uniform'' recurrence property (referred to as (R2)) can be established by using some results of \cite{em}.
\end{rem}

\subsection{Orbit closures and equidistribution}\label{subsec;equidistribution}
The proof of Theorem~\ref{thm;orbit} is similar to the proofs of the main results in~\cite{bq132}. 
\begin{proof}[Proof of Theorem~\textup{\ref{thm;orbit}}]
Provided $Y_x$ contains $x$, part (i) is an immediate consequence of~(ii). Moreover, taking a compactly supported and continuous test function, it is not hard to see that (ii) follows from (iii) by dominated convergence.

Let us thus prove (iii) with the additional property that $x\in Y_x$. 
For $\mu^{\N}$-a.e.~$(g_i)_i\in H^{\N}$, every weak* limit $\nu$ of the sequence $(\frac{1}{n}\sum_{k=0}^{n-1}\delta_{g_k\dotsm g_1x})_n$ of empirical measures is $\mu$-stationary by the Breiman law of large numbers (see~\cite[Corollary~3.3]{bq132}). By Theorem~\ref{thm;hgeneral} and Lemma~\ref{lem;no_mass}(ii),  
for $\mu^\N$-a.e.\ $(g_i)_i\in H^\N$ every such weak* limit is a probability measure on $X$. We restrict to a full measure set of $(g_i)_i$ where both these conclusions hold and let $\nu$ be a weak* limit of the sequence of empirical measures. 

Let $Y_0$ be a $\Gamma_\mu$-invariant homogeneous subspace of $X$ containing $x$ of minimal dimension. 
By Theorem~\ref{thm;rigidity} every ergodic component of $\nu$ is the homogeneous probability measure associated to an element of
\begin{equation*}
\calS(\Gamma_\mu,Y_0)\df\set{Y\in\calS(\Gamma_\mu)\for Y\subset Y_0}.
\end{equation*}
Let $Y\in\calS(\Gamma_\mu,Y_0)$ be such that $Y$ is not open in $Y_0$. Then by minimality of $\dim(Y_0)$ we know that $x\notin lY$ for any $l\in L\df C_G(\Gamma_\mu)$. 

Let $Z$ be an arbitrary compact subset of $X$, take a height function $\beta_\infty$ as in Theorem~\ref{thm;hgeneral}, and recall that $X_\varepsilon=\beta_\infty^{-1}([1,\varepsilon^{-1}])$. By Theorem~\ref{thm;hN}, for sufficiently small $\varepsilon>0$, there is an open neighborhood $O$ of the identity in $L$ and a height function $\beta_{\mathcal{N}}\colon HX_\varepsilon\to[1,\infty]$ satisfying the contraction property \eqref{contr_ineq} and such that
\begin{itemize}
\item for $x\in HX_\varepsilon$, $\beta_{\mathcal N}(x)=\infty$ if and only if $x\in OY$,
\item for every $\ell\ge 1$, $\beta_{\mathcal N}^{-1}([1,\ell])$ is a compact subset of $X$.
\end{itemize}
We extend $\beta_{\mathcal N}$ to all of $X$ with the value $\infty$ outside of $HX_\varepsilon$. Then the extension satisfies the assumptions of Lemma~\ref{lem;no_mass}. Write $X_{\infty,\mathcal N}$ for the set $\beta_{\mathcal N}^{-1}(\set{\infty})$, so that $HX_\varepsilon\cap OY\subset X_{\infty,\mathcal N}$. After further restricting to a full measure set of $(g_i)_i$ so that Lemma~\ref{lem;no_mass}(ii) holds, we thus find $\nu(HX_\varepsilon\cap OY)=0$. When $\varepsilon$ is small enough, this implies $\nu(Z\cap OY)=0$. 
We repeat this process for the homogeneous subspaces $lY$ for countably many $l\in L$ such that the translations $Ol$ of the associated neighborhoods $O$ cover $L$. This gives $\nu(Z\cap LY)=0$. Repeating again for countably many compact subsets $Z$ covering $X$, it follows that $\nu(LY)=0$.

Hence, in view of the countability statement in Proposition~\ref{prop;countability}, we deduce that $\nu(LY)=0$ holds for every $Y\in\calS(\Gamma_\mu,Y_0)$ that is not open in $Y_0$ (to be precise, after once more restricting to a countable intersection of full measure sets of $(g_i)_i\in H^\N$, once for each $Y$ in a countable set of representatives in \eqref{eq.representatives}). It follows that each ergodic component of $\nu$ must be a homogeneous measure of some $Y\in\calS(\Gamma_\mu,Y_0)$ that is open in $Y_0$. By~\cite[Lemma~2.9]{bq132}, these $Y$ are pairwise disjoint, so that there are only countably many of them. This means that for some $Y\in\calS(\Gamma_\mu,Y_0)$ open in $Y_0$ we must have $\nu(Y)>0$. Then necessarily $x\in Y$. By construction of $\nu$ and $\Gamma_\mu$-invariance of $Y$ it follows that $\nu(Y')=0$ for any $Y'\in\calS(\Gamma_\mu,Y_0)$ distinct from $Y$. Hence, all ergodic components of $\nu$ are in fact equal to the homogeneous probability measure on $Y$, which finishes the proof of  (iii).
\end{proof}

\begin{rem}[Non-averaged convergence in law]
It is a natural question, already posed by Benoist--Quint at the end of their survey~\cite{takagi_lectures}, whether, or under what conditions, the Ces\`aro average in Theorem~\ref{thm;orbit}(ii) can be removed. Unfortunately, in the generality of our results, this question of convergence of $\mu^{*n}*\delta_x$ towards $\nu_x$ seems to be out of reach with current methods. Answers are available only in certain special cases where additional structure can be exploited. For example, in the setting of toral automorphisms, the harmonic analytic approach used by Bourgain--Furman--Lindenstrauss--Mozes~\cite{bflm} allows them to obtain the convergence of $\mu^{*n}*\delta_x$ together with a speed depending on Diophantine properties of the starting point $x$. Their approach was recently refined and generalized to some nilmanifolds in the works \cite{he-desaxce,he-lakrec-lindenstrauss1, he-lakrec-lindenstrauss2} of He--de Saxc\'{e} and He--Lakrec--Lindenstrauss. Outside the realm of nilmanifolds, quantitative results on the convergence of $\mu^{*n}*\delta_x$ include the work of Buenger~\cite[\S3]{buenger} and Khalil--Luethi~\cite{khalil-luethi}, who consider some classes of measures supported on compact-by-solvable groups, and work of the first-named author~\cite{spread-out} on spread-out measures. 

Very recently, it was observed by B\'{e}nard~\cite{benard.foguel} that the non-averaged convergence can be ensured with some additional hypotheses using an old result of Foguel.
\end{rem}

\subsection{Topology of homogeneous measures}\label{subsec;topology}
Here we prove the Mozes--Shah type results regarding the weak* topology on the set of ergodic homogeneous subspaces of $X$. 

Let $G,H,\Lambda,X,\mu,\Gamma_\mu$ be as in Theorem~\ref{thm;orbit} and recall that $\calS(\Gamma_\mu)$ denotes the set of all $\Gamma_\mu$-invariant $\Gamma_\mu$-ergodic homogeneous subspaces $Y$ of $X$. Each element $Y$ of $\calS(\Gamma_\mu)$ carries an associated $\Gamma_\mu$-invariant and ergodic homogeneous probability measure $\nu_Y$. Using this, we embed $\calS(\Gamma_\mu)$ into the space $\mathcal{P}(X)$ of Borel probability measures on $X$ and endow $\calS(\Gamma_\mu)$ with the weak* topology induced from $\mathcal{P}(X)$. Also recall that for a subset $Z \subset X$, we let $\calS_Z(\Gamma)=\set{Y \in \calS(\Gamma) \for Y \cap Z \neq \emptyset}$. 

The following lemma will be useful for the proof of Proposition~\ref{prop.mozes-shah}. In the statement, given $Y\in\calS(\Gamma_\mu)$, we shall say that a point $y\in Y$ is \emph{$Y$-generic} if the conclusion of Theorem~\ref{thm;orbit}(ii) holds, i.e.\ if $\lim_{n\to\infty}\frac1n\sum_{k=0}^{n-1}\mu^{*k}*\delta_y=\nu_Y$ in the weak* topology. Note that $\nu_Y$-a.e.\ point is $Y$-generic by the Chacon--Ornstein ergodic theorem.
\begin{lem}\label{lem;no_mass2}
Let $(\nu_j)_j$ be a sequence of ergodic homogeneous measures associated to subspaces $Y_j\in\calS(\Gamma_\mu)$ converging to a finite measure $\nu$ on $X$ in the weak* topology. Let $\beta$ be a height function on $X$ satisfying the assumptions of Lemma~\textup{\ref{lem;no_mass}} and denote $X_\infty=\beta^{-1}(\set{\infty})$. Suppose that there is a sequence of $Y_j$-generic points $y_j\in Y_j$ such that $y_j\notin X_\infty$ for infinitely many $j$. Then $\nu(X\setminus X_\infty)=1$.
\end{lem}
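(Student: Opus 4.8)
The statement is a compactness-type assertion: if the homogeneous measures $\nu_j$ converge (in the vague/weak* topology on the one-point compactification, so a priori $\nu$ could have total mass $<1$) and there are generic points $y_j$ not escaping into the singular set $X_\infty$, then no mass of $\nu$ sits on $X_\infty$. The natural route is to combine the quantitative recurrence provided by Lemma~\ref{lem;no_mass}(i) with the genericity of the points $y_j$. First I would pass to the subsequence of indices $j$ for which $y_j\notin X_\infty$, so that $\beta(y_j)<\infty$ for every $j$ in this subsequence; it suffices to prove the conclusion for any subsequence, since the hypothesis is that infinitely many such $j$ exist.

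Next, fix $\delta>0$ and let $R_\delta\subset X\setminus X_\infty$ be the compact set produced by Lemma~\ref{lem;no_mass}(i), together with the bound $n_{y_j}=O(\log\beta(y_j))$ such that $\mu^{*n}*\delta_{y_j}(R_\delta)\ge 1-\delta$ for all $n\ge n_{y_j}$. Averaging over $n$, for every $N>n_{y_j}$ we get
\begin{align*}
\frac1N\sum_{k=0}^{N-1}\mu^{*k}*\delta_{y_j}(R_\delta)\ge\Bigl(1-\frac{n_{y_j}}{N}\Bigr)(1-\delta).
\end{align*}
Letting $N\to\infty$ and using that $y_j$ is $Y_j$-generic, i.e.\ $\frac1N\sum_{k=0}^{N-1}\mu^{*k}*\delta_{y_j}\to\nu_j$ weak*, we obtain $\nu_j(R_\delta)\ge\nu_j(\overline{R_\delta^\circ})\ge\limsup_N\frac1N\sum_k\mu^{*k}*\delta_{y_j}(R_\delta^\circ)$ — here one wants to be slightly careful and instead choose $R_\delta$ with a bit of room, or apply the Portmanteau lemma to a compact set containing $R_\delta$ in its interior; in any case, enlarging $R_\delta$ to a compact set $R_\delta'$ still contained in $X\setminus X_\infty$ and using lower semicontinuity of $\nu\mapsto\nu(U)$ for open $U$, we conclude $\nu_j(R_\delta')\ge 1-2\delta$ for all $j$ in our subsequence (the constant $\delta$-bound is uniform in $j$ because the recurrence estimate in Lemma~\ref{lem;no_mass}(i) is uniform once $\beta(y_j)<\infty$).

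Now $R_\delta'$ is a compact subset of $X$, so by the (genuine) Portmanteau lemma for weak* convergence of measures on a locally compact space, $\nu(R_\delta')\ge\limsup_j\nu_j(R_\delta')\ge 1-2\delta$. In particular $\nu(X)\ge 1-2\delta$, and since $\delta>0$ was arbitrary, $\nu$ is a probability measure and moreover $\nu(X\setminus X_\infty)\ge\nu(R_\delta')\ge 1-2\delta$ for every $\delta$; letting $\delta\to0$ gives $\nu(X\setminus X_\infty)=1$, as required.

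\textbf{Main obstacle.} The delicate point is the interplay between the two topologies in play: $\calS(\Gamma_\mu)$ carries the weak* topology as a subset of $\mathcal P(X)$, but a priori a weak* limit of probability measures on a non-compact $X$ can lose mass to infinity, so one must phrase everything in terms of the vague topology (equivalently, convergence in the one-point compactification) and only afterwards recover that $\nu$ is a probability measure. The uniform recurrence bound is exactly what prevents this escape of mass on the sequence $Y_j$, but care is needed to (i) convert the pointwise recurrence of Lemma~\ref{lem;no_mass}(i) into a statement about the Cesàro averages and hence about $\nu_j$ itself — this uses genericity of $y_j$ and a Portmanteau argument on compact sets with nonempty interior, so one should fix the compact sets $R_\delta$ with a small neighbourhood of slack from the start — and (ii) ensure the estimate is uniform over $j$, which it is because the conclusion of Lemma~\ref{lem;no_mass}(i) (the compact set $R_\delta$ and the bound $1-\delta$) does not depend on the starting point. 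Everything else is routine bookkeeping.
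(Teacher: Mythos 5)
Your proposal is correct and follows essentially the same route as the paper's proof: pass to the subsequence with $\beta(y_j)<\infty$, invoke Lemma~\ref{lem;no_mass}(i) to get a compact set $R_\delta\subset X\setminus X_\infty$ carrying most of the mass of $\mu^{*n}*\delta_{y_j}$ for $n$ large, use $Y_j$-genericity (Cesàro averages) to transfer this to $\nu_j(R_\delta)\ge 1-\delta$, then let $j\to\infty$ and let $\delta\to 0$. One small remark: the extra caution about enlarging $R_\delta$ to $R_\delta'$ is not needed. For vague convergence of Radon measures on a locally compact second countable space, the half of the Portmanteau lemma that survives is precisely the one you want here: $\nu(K)\ge\limsup_j\nu_j(K)$ for every \emph{compact} $K$ (approximate $\mathbf 1_K$ from above by $f\in C_c$ and use outer regularity). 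Both in the step passing from Cesàro averages to $\nu_j$ and in the step $j\to\infty$, you are applying this inequality to the already-compact set $R_\delta$, so no slack or thickening is required; the chain $\nu_j(R_\delta)\ge\nu_j(\overline{R_\delta^\circ})\ge\dots$ can simply be replaced by the direct compact-set Portmanteau bound. With that simplification your argument is literally the one in the paper.
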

\begin{proof}
We may assume $y_j\notin X_\infty$ for all  $j$. Let $\delta>0$. By Lemma~\ref{lem;no_mass}(i) there exists a compact subset $R_\delta\subset X\setminus X_\infty$ such that $\mu^{*n}*\delta_{y_j}(R_\delta)\ge 1-\delta$ for all $n\ge n_{y_j}$. Passing to the limit in the $Y_j$-genericity, this implies $\nu_j(R_\delta)\ge 1-\delta$. Letting $j\to\infty$, it follows that also $\nu(R_\delta)\ge 1-\delta$. The conclusion follows, since $R_\delta\subset X\setminus X_\infty$ and $\delta>0$ was arbitrary. 
\end{proof}

\begin{proof}[Proof of Proposition~\textup{\ref{prop.mozes-shah}}]
Let us first prove (ii). Let $(\nu_j)_j$ be a sequence of ergodic homogeneous probability measures associated to subspaces $Y_j$ in $\calS(\Gamma_\mu)$ converging to the homogeneous measure $\nu_\infty$ associated to $Y_\infty\in\calS(\Gamma_\mu)$. Take a sequence of $Y_j$-generic points $y_j\in Y_j$ such that $Z=\overline{\set{y_1,y_2,\dots}}$ is compact. Let $\beta_\infty$ be a height function from Theorem~\ref{thm;hgeneral} that is finite on $Z$, say with $Z\subset X_\varepsilon$ for some $\varepsilon>0$ sufficiently small. Let $O$ be a small neighborhood of the identity in $L=C_G(\Gamma_\mu)$ and $\beta_{\mathcal{N}}$ a height function from Theorem~\ref{thm;hN} taking the value $\infty$ on $HX_\varepsilon\cap OY_\infty$. Extending $\beta_{\mathcal{N}}$ from $HX_\varepsilon$ to $X$ using the value $\infty$, we are in the setting of Lemma~\ref{lem;no_mass2} and know $\nu_\infty(X_{\infty,\mathcal{N}})=1$, where $X_{\infty,\mathcal{N}}=\beta_{\mathcal{N}}^{-1}(\set{\infty})$. Thus, the lemma implies $\beta_{\mathcal{N}}(y_j)=\infty$ for all large $j$, which means that $y_j\in OY_\infty$ since $y_j\in Z\subset X_\varepsilon$. Since $O$ can be chosen arbitrarily small, (ii) is proved.

Now let us establish (i). Note that (ii) implies that for $Z\subset X$ compact, $\calS_Z(\Gamma_\mu)$ is closed in $\calS(\Gamma_\mu)$. So we only have to exhibit a limit point in $\calS(\Gamma_\mu)$ of a given sequence $(Y_j)_j$ in $\calS_Z(\Gamma_\mu)$. Thus, we may replace $Z$ by a compact neighborhood and assume that the homogeneous measures $\nu_j$ associated to the $Y_j$ all satisfy $\nu_j(Z)>0$. Then we can find $Y_j$-generic points $y_j\in Z$. Letting $\beta_\infty$ be a height function from Theorem~\ref{thm;hgeneral} that is finite on $Z$, say again with $Z\subset X_\varepsilon$, Lemma~\ref{lem;no_mass2} thus implies that any limit point $\nu$ of $(\nu_j)_j$ is a probability measure on $X$. Let us pass to a subsequence and assume that $\nu_j\to \nu$. Then $\nu$ is a $\Gamma_\mu$-invariant probability measure on $X$. By Proposition~\ref{prop;countability}, there exists $Y\in\calS(\Gamma_\mu)$ and a relatively compact neighborhood $O$ of the identity in $L$ such that $\nu(OY)>0$. We suppose that the dimension of $Y$ is minimal so that the latter holds. As in the first part of the proof, using a height function $\beta_{\mathcal{N}}$ and Lemma~\ref{lem;no_mass2}, this implies that $y_j\in OY$ for all large $j$. After passing to a subsequence, we have that $Y_j\subset l_jY_\infty$ for some $l_j\in C_G(\Gamma_\mu)$ converging to the identity and $Y_\infty=lY$ for some $l\in C_G(\Gamma_\mu)$. Then all ergodic components of the limit measure $\nu$ are homogeneous probability measures associated to some ergodic homogeneous subspace $Y'\subset Y_\infty$. If subspaces $Y'\subsetneq Y_\infty$ were to feature in the ergodic decomposition with positive weight, then another application of Proposition~\ref{prop;countability} would imply that $\nu(LY')>0$ for some $Y'\in\calS(\Gamma_\mu)$ of lower dimension, contradicting the choice of $Y$. Hence, we have established convergence of $\nu_j$ to the homogeneous probability measure associated to $Y_\infty$, proving compactness of $\calS_Z(\Gamma_\mu)$.

To obtain relative compactness of $\calS_{HZ}(\Gamma_\mu)$, note that by $H$-invariance of $\beta_\infty^{-1}(\set{\infty})$ for the height functions $\beta_\infty$ coming out of Theorem~\ref{thm;hgeneral}, we know that $\beta_\infty(x)<\infty$ for every $x\in HZ$ if $\beta_\infty$ is chosen to be finite on $Z$. Thus, Lemma~\ref{lem;no_mass}(i) implies that there exists a compact subset $R_{1/2}$ of $X$ such that $\calS_{HZ}(\Gamma_\mu)\subset\calS_{R_{1/2}}(\Gamma_\mu)$, and the latter set is compact, as shown above.

Finally, if a limit point of a sequence of probability measures in $\calS(\Gamma_\mu)\cup\set{\delta_\infty}$ has a point $x\in X$ in its support, then a subsequence is contained in $\calS_Z(\Gamma_\mu)$ for some compact neighborhood $Z$ of $x$, proving compactness of $\calS(\Gamma_\mu)\cup\set{\delta_\infty}$.
\end{proof}

\begin{proof}[Proof of Corollary~\textup{\ref{cor;hom_subspace_equidistribution}}]
Clearly, $\calS(\Gamma_\mu,Y_\infty)$ is closed in $\calS(\Gamma_\mu)$. In view of the last statement in Proposition~\ref{prop.mozes-shah}(i), we only have to show that the only possible limit point of $(Y_n)_n$ inside $\calS(\Gamma_\mu,Y_\infty)$ is $Y_\infty$. Let $Y$ be such a limit point.  By Proposition~\ref{prop.mozes-shah}, since $C_G(\Gamma_\mu)$ is assumed discrete, it follows that $Y_n\subset Y$ for infinitely many $n$. By assumption, this forces $Y=Y_\infty$, and we are done. 
\end{proof}

\subsection{Application to nilmanifolds}\label{sec;nilmanifolds}
Let $\Lambda'$ be a lattice in a connected simply connected nilpotent Lie group $N$ and let $X$ be the compact nilmanifold $N/\Lambda'$. The automorphism group $\Aut(\Lambda')$ of $\Lambda'$ is defined to be the subset of automorphisms of $N$ preserving $\Lambda'$. It is well known that any abstract automorphism of $\Lambda'$ extends to an automorphism of $N$, therefore defines an element of $\Aut(\Lambda')$ (see e.g.~\cite[\S II]{raghunathan}).

A probability measure $\mu$ on $\Aut(\Lambda')$ defines a random walk on $X=N/\Lambda'$ by nilmanifold automorphisms. Our results have the following immediate corollaries for such random walks. Under an \emph{affine submanifold} of $X$ we understand a closed subset of $X$ that is the translate of the image in $X$ of a closed subgroup of $N$.
\begin{cor}
Let $X=N/\Lambda'$ be a compact nilmanifold and $\mu$ a probability measure on $\Aut(\Lambda')$ with finite first moment such that the Zariski closure $H$ of $\Gamma_\mu$ in $\Aut(N)$ is a  connected semisimple group without compact factors.
 Then every $\mu$-ergodic $\mu$-stationary probability measure on $X$ is $\Gamma_\mu$-invariant, homogeneous, and supported on a finite union of affine submanifolds. \qed
\end{cor}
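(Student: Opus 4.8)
The plan is to deduce this corollary from Theorem~\ref{thm;rigidity} by suspending the random walk on the nilmanifold to a random walk on a suitable quotient of a Lie group, as indicated in the remark following Theorem~\ref{thm;orbit}. First I would form $G\df A\ltimes N$, where $A\df\Zcl(\Aut(\Lambda'))$ is the Zariski closure of $\Aut(\Lambda')$ inside the real algebraic group $\Aut(N)=\Aut(\mathfrak n)$ and $A$ acts on $N$ by automorphisms. Since $\Aut(\Lambda')$ is discrete in $\Aut(N)$ (it preserves the lattice $\Lambda'$) and $\Lambda'$ is discrete in $N$, the subgroup $\Lambda\df\Aut(\Lambda')\ltimes\Lambda'$ is a discrete subgroup of $G$. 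Note that $N\trianglelefteq G$ with $N\cap\Lambda=\Lambda'$, and that the image of $\Lambda$ in $G/N\cong A$ is the closed subgroup $\Aut(\Lambda')$; hence $N\Lambda$ is closed in $G$ and the orbit map identifies $N\Lambda/\Lambda$, a closed subset of $G/\Lambda$, $N$-equivariantly with $X=N/\Lambda'$. A direct computation shows that for $\gamma\in\Gamma_\mu\subseteq\Aut(\Lambda')$ the element $(\gamma,e)\in G$ preserves $N\Lambda/\Lambda$ and acts on it, under this identification, exactly as the automorphism $\gamma$ acts on $X$. Thus $X\hookrightarrow G/\Lambda$ intertwines the original random walk on $X$ with the random walk on $G/\Lambda$ driven by the pushforward of $\mu$ along $\Gamma_\mu\ni\gamma\mapsto(\gamma,e)$, which I continue to denote by $\mu$; note also that $H=\Zcl(\Gamma_\mu)$ sits in $G$ as $H\ltimes\{e\}\subseteq A\ltimes\{e\}$.

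Next I would verify the hypotheses of Theorem~\ref{thm;rigidity}. By assumption $H$ is connected semisimple without compact factors, and being Zariski closed and semisimple it automatically has finite center. Since $\Gamma_\mu$ is Zariski dense in $H$, the image $\Ad(\Gamma_\mu)$ is Zariski dense in $\Ad(H)$, so Proposition~\ref{prop;Z-dense}—applicable as $\mu$ has finite first moment—shows that $\mu$ is $H$-expanding. Given a $\mu$-ergodic $\mu$-stationary probability measure $\nu_0$ on $X$, its pushforward $\nu$ under $X\hookrightarrow G/\Lambda$ is $\mu$-stationary (the convolution restricts to the $\Gamma_\mu$-invariant set $X$) and remains $\mu$-ergodic because the embedding is equivariant. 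Theorem~\ref{thm;rigidity} then gives that $\nu$ is $\Gamma_\mu$-invariant and homogeneous, with $\Stab_G(\nu)^\circ$ normalized by $H$; restricting to $X$ yields the $\Gamma_\mu$-invariance of $\nu_0$.

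For the remaining assertions, write $\supp\nu=\Stab_G(\nu)\cdot x$, a closed orbit contained in $X=N\Lambda/\Lambda$. Projecting to $G/N\cong A$, the image of this orbit is a single point, so $\Stab_G(\nu)$ maps into its stabilizer $\Aut(\Lambda')$, which is discrete; hence $N_1\df\Stab_G(\nu)^\circ\subseteq N$. The closed orbit $\Stab_G(\nu)x$ carries the finite full-support invariant measure $\nu$, so it is compact, and it decomposes into the orbits of the normal connected subgroup $N_1$, of which there are finitely many since each is open and closed in it. Under the identification $X\cong N/\Lambda'$, each $N_1$-orbit has the form $N_1\tilde n\Lambda'$ with $\tilde n\in N$, which equals the left-translate by $\tilde n$ of the image in $X$ of the closed subgroup $\tilde n^{-1}N_1\tilde n$ of $N$, i.e.\ an affine submanifold of $X$. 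Therefore $\supp\nu_0$ is a finite union of affine submanifolds and $\nu_0$ is the homogeneous measure on the closed orbit $\Stab_G(\nu)x$ of $\Stab_G(\nu)\subseteq\Aut(\Lambda')\ltimes N$.

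The substantive input is entirely contained in Theorem~\ref{thm;rigidity} and Proposition~\ref{prop;Z-dense}; the remaining work is bookkeeping that I expect to be the only real source of friction: checking that $\Lambda$ is genuinely discrete (a lattice property is not required, and Theorem~\ref{thm;rigidity} does not demand one), that the left-translation action of $\Gamma_\mu$ on the embedded copy of $X$ in $G/\Lambda$ matches the automorphism action on $N/\Lambda'$, and that the homogeneous structure on $G/\Lambda$ provided by Theorem~\ref{thm;rigidity} descends to the claimed affine-submanifold structure on $X$.
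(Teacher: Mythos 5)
Your proof is correct and follows the same strategy as the paper: embed $X$ into $G/\Lambda$ via the semidirect-product construction $G=\Zcl(\Aut(\Lambda'))\ltimes N$, $\Lambda=\Aut(\Lambda')\ltimes\Lambda'$, establish $H$-expansion of $\mu$ from Zariski density via Proposition~\ref{prop;Z-dense}, and conclude by Theorem~\ref{thm;rigidity}. The one genuine refinement over the paper's exposition is your observation that Theorem~\ref{thm;rigidity} only needs $\Lambda$ to be discrete, not a lattice: the elementary discreteness argument you give (stabilizer of a lattice in a simply connected nilpotent group is discrete, and a semidirect product of discrete subgroups of the two factors is discrete) therefore replaces the paper's appeal to Borel--Harish-Chandra at this point. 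The paper establishes the lattice property of $\Lambda$ at the same time because it is genuinely needed for the companion statement Corollary~\ref{cor;nilm_equi}, which rests on Theorem~\ref{thm;orbit} and Proposition~\ref{prop.mozes-shah}, but for the measure-rigidity claim at hand discreteness suffices. The remaining bookkeeping you carry out—showing $\Stab_G(\nu)^\circ\subseteq N$ by projecting the (compact) orbit $\Stab_G(\nu)x\subseteq N\Lambda/\Lambda$ to the discrete quotient $G/(N\Lambda)\cong A/\Aut(\Lambda')$, and then splitting $\supp\nu$ into finitely many $N_1$-orbits, each a translate of the image in $X$ of a closed connected subgroup of $N$—is exactly the descent from a homogeneous measure on $G/\Lambda$ to the claimed affine-submanifold structure on $X$, which the paper leaves implicit.
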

\begin{cor}\label{cor;nilm_equi}
Let $X=N/\Lambda'$ be a compact nilmanifold and $\mu$ a probability measure on $\Aut(\Lambda')$ with finite exponential moments such that the Zariski closure $H$ of $\Gamma_\mu$ in $\Aut(N)$ is a  connected semisimple group without compact factors.
Then:
	\begin{enumerate}[label=\textup{(\roman*)}]
		
		\item Every $\Gamma_\mu$-orbit closure in $X$ is a finite union of affine submanifolds.
		\item For every $x\in X$, for $\mu^{\N}$-a.e.~$(g_1, g_2 , \dots)$ one has 
		\begin{align*}
		\lim_{n\to \infty}\frac{1}{n}\sum_{k=0}^{n-1}\delta_{g_k\dotsm g_1x}=\nu_x,
		\end{align*}
		where $\nu_x$ is the homogeneous probability measure on $\overline{\Gamma_\mu x}$.
		\item The set $\calS(\Gamma_\mu)$ is compact. If $Y_n\to Y_\infty$ in $\calS(\Gamma_\mu)$, then there exists a sequence $(l_n)_n$ of $\Gamma_\mu$-invariant elements in $N$ converging to the identity such that $Y_n\subset l_nY_\infty$ for all large $n$.\qed
	\end{enumerate}
\end{cor}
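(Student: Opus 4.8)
\emph{Proof plan.} The plan is to realize the nilmanifold random walk as an instance of the random walks treated above, as in the remark preceding \S\ref{sec;nilmanifolds}. Write $\mathbf{A}=\Aut(N)$, a real linear algebraic group equipped with the $\Q$-structure for which $\Aut(\Lambda')=\mathbf{A}(\Z)$, and set $S=\Zcl(\Aut(\Lambda'))$, a $\Q$-subgroup. Since $\Aut(\Lambda')$ preserves the lattice $\Lambda'$ it has only trivial $\Q$-characters, so by the theorem of Borel and Harish-Chandra it is a lattice in $S$; because the fibres of the projection $G/\Lambda\to S/\Aut(\Lambda')$ are copies of the compact nilmanifold $N/\Lambda'$, it follows that $\Lambda\df\Aut(\Lambda')\ltimes\Lambda'$ is a lattice in the real Lie group $G\df S\ltimes N$, with $S$ acting on $N$ through $\Aut(N)$. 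The Zariski closure $H=\Zcl(\Gamma_\mu)$ sits inside $S\leqs G$; being a linear semisimple Lie group it has finite center, it is connected and without compact factors by hypothesis, and since $\Gamma_\mu$ is Zariski dense in $H$ and $\mu$ has finite exponential moments, Proposition~\ref{prop;Z-dense} shows that $\mu$, viewed as a measure on $H$, is $H$-expanding with finite exponential moments.

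Next I would record the correspondence. The map $\iota\colon N/\Lambda'\to G/\Lambda$, $n\Lambda'\mapsto(e_S,n)\Lambda$, is a homeomorphism onto the closed $N$-homogeneous subspace $X_0\df N\cdot(e\Lambda)$; embedding $\phi\in\Aut(\Lambda')$ into $G$ as $(\phi,e_N)$, a direct computation in the semidirect product gives $(\phi,e_N)\cdot\iota(n\Lambda')=\iota(\phi(n)\Lambda')$, so $\iota$ intertwines the automorphism action of $\Gamma_\mu$ on $N/\Lambda'$ with left translation on $G/\Lambda$, and $X_0$ is $\Gamma_\mu$-invariant; more generally $(\psi,m)$ acts on $X_0\cong N/\Lambda'$ by the affine map $n\mapsto m\psi(n)$. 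Consequently, for $z=\iota(x)$ one has $\overline{\Gamma_\mu^{+}z}=\iota(\overline{\Gamma_\mu x})\subset X_0$.

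With these identifications in hand, parts (i)--(iii) should follow by transporting Theorem~\ref{thm;orbit} and Proposition~\ref{prop.mozes-shah} through $\iota$. Applying Theorem~\ref{thm;orbit} to $(G,H,\Lambda,\mu)$ at $z=\iota(x)$ yields a $\Gamma_\mu$-ergodic homogeneous subspace $Y_z=\overline{\Gamma_\mu^{+}z}\subset X_0$ with homogeneous measure $\nu_z$, together with weak* convergence of the Ces\`aro and (a.s.) empirical averages of $\delta_{g_k\dotsm g_1 z}$ to $\nu_z$; pushing these forward by the continuous map $\iota^{-1}$ (legitimate since $N/\Lambda'$ is compact) gives (ii), with $\nu_x=\iota^{-1}_{*}\nu_z$ the homogeneous measure on $\overline{\Gamma_\mu x}$. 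For (i), to see that $\overline{\Gamma_\mu x}=\iota^{-1}(Y_z)$ is a finite union of affine submanifolds I would argue that $M\df\Stab_G(\nu_z)$ preserves $Y_z\subset X_0$, whence the projection $\pi\colon G\to S$ satisfies $\pi(M)\subset\Aut(\Lambda')$ (it must fix the base point of the relevant fibre), so $M^{\circ}\leqs\ker\pi=N$; writing $Y_z=Mw$ as a finite union of $M^{\circ}$-orbits (using that $M^{\circ}(M\cap\Stab_G(w))$ has finite index in $M$, by \cite[Theorem~1.13]{raghunathan}) and transporting back exhibits $\overline{\Gamma_\mu x}$ as a finite union of translates of closed-subgroup images in $N/\Lambda'$. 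For (iii), under $\iota$ the set $\calS(\Gamma_\mu)$ attached to $N/\Lambda'$ is identified with $\set{Y\in\calS(\Gamma_\mu)\for Y\subset X_0}$ for $G/\Lambda$; by Proposition~\ref{prop.mozes-shah}(i) the set $\calS_{X_0}(\Gamma_\mu)$ is compact (as $X_0$ is compact), and $\set{Y\in\calS(\Gamma_\mu)\for Y\subset X_0}$ is closed in it (a weak* limit of measures supported on the closed set $X_0$ is again supported on $X_0$), hence compact. Finally, if $Y_n\to Y_\infty$ in $\calS(\Gamma_\mu)$ for $N/\Lambda'$, Proposition~\ref{prop.mozes-shah}(ii) provides $l_n\in C_G(\Gamma_\mu)$ with $l_n\to e$ and $\iota(Y_n)\subset l_n\iota(Y_\infty)$; computing $C_G(\Gamma_\mu)=C_S(\Gamma_\mu)\ltimes N^{\Gamma_\mu}$ and noting that $l_n\iota(Y_\infty)$ must meet $X_0$ forces the $S$-component of $l_n$ to lie in the discrete group $\Aut(\Lambda')$, hence to be trivial for large $n$, so $l_n\in N^{\Gamma_\mu}$ tends to the identity and $Y_n\subset l_n Y_\infty$ in $N/\Lambda'$, as claimed.

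The main obstacle is the bookkeeping around the semidirect product model rather than any new dynamical input: one must verify carefully that $\Lambda$ is genuinely a lattice (the Borel--Harish-Chandra step), that homogeneous subspaces of $G/\Lambda$ meeting the fibre $X_0$ are controlled by their image under $\pi\colon G\to S$, and that the centralizer computation $C_G(\Gamma_\mu)=C_S(\Gamma_\mu)\ltimes N^{\Gamma_\mu}$ lets one upgrade the Mozes--Shah translating elements from $C_G(\Gamma_\mu)$ to $\Gamma_\mu$-invariant elements of $N$.
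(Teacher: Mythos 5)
Your proposal follows the same approach as the paper: realize $X=N/\Lambda'$ as the fibre $N\cdot(e\Lambda)$ inside $G/\Lambda$ with $G=\Zcl(\Aut(\Lambda'))\ltimes N$ and $\Lambda=\Aut(\Lambda')\ltimes\Lambda'$, invoke Borel--Harish-Chandra for the lattice property and Proposition~\ref{prop;Z-dense} for $H$-expansion with $H=\Zcl(\Gamma_\mu)$, and then transport Theorem~\ref{thm;orbit} and Proposition~\ref{prop.mozes-shah} through the identification. The paper states this embedding and its consequences very tersely; you supply the bookkeeping (the intertwining identity, the stabilizer computation showing the affine-submanifold structure, and the centralizer computation $C_G(\Gamma_\mu)=C_S(\Gamma_\mu)\ltimes N^{\Gamma_\mu}$ with the discreteness argument forcing $l_n\in N^{\Gamma_\mu}$), all of which are correct.
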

The above corollaries are slight extensions of \cite[Corollary~1.3]{bq13} and \cite[Corollary~1.10]{bq132}, respectively, removing the assumption that the probability measure $\mu$ is finitely supported.
 
To deduce these corollaries from our general theorems, one needs to exhibit an embedding $X\hookrightarrow G/\Lambda$ into the quotient of a real Lie group $G$ containing $\Aut(\Lambda')$ by a lattice $\Lambda<G$. In the classical case of toral automorphisms, one has $\Aut(\Lambda')=\GL_d(\Z)$, and we may simply choose $G=\SL_{d+1}(\R)$ with its lattice $\Lambda=\SL_{d+1}(\Z)$ admitting the embedding $X=(\GL_d(\Z)\ltimes \R^d)/(\GL_d(\Z)\ltimes \Z^d)\hookrightarrow G/\Lambda$. More generally, we can define $G=\Zcl(\Aut(\Lambda'))\ltimes N$ and $\Lambda=\Aut(\Lambda')\ltimes \Lambda'$, where  $\Zcl(\Aut(\Lambda'))$ denotes the Zariski closure of $\Aut(\Lambda')$ inside $\Aut(N)$. Then $\Lambda$ is a lattice in $G$ by Borel--Harish-Chandra~\cite{bhs}, since $\Aut(\Lambda')$ is commensurable to the subgroup of integer points of $\Zcl(\Aut(\Lambda'))$ for a suitable $\Q$-structure on $\Aut(N)$ (see \cite[Theorem~2.12]{raghunathan} and its discussion). Hence, our results apply with $H=\Zcl(\Gamma_\mu)$ in view of Proposition~\ref{prop;Z-dense}. 

\section{Birkhoff genericity}\label{sec;birkhoff}
The aim of this section is to prove Theorem~\ref{thm;birkhoff}.
Recall that $H$ is a connected semisimple Lie group without compact factors and with finite center, $A'=\set{a(t)\for t\in \R }$ is a one-parameter $\Ad$-diagonalizable subgroup of $H$, and $U$ an $a(1)$-expanding 
subgroup of $H$ contained in $H^+_{a(1)}$. 
In particular, $U$ is connected, $\Ad$-unipotent, and normalized by $A'$. 
Moreover, having fixed a maximal compact subgroup $K$ of $H$, $K'$ is defined to be the compact group $C_K(A')\cap N_H(U)$, and $\mu$ is a probability measure on
$K'A'U\eqqcolon P\leqs H$ with finite exponential moments satisfying $\int_P\lambda(g)\dd\mu(g)>0$, where $\lambda$ is defined by the $K'A'U$-factorization $g=ka(\lambda(g))u$ for $g\in P$. Recall also that for $\omega=(g_i)_i\in P^\N$ and $n\in \N$, we write
\begin{align*}
g_{\omega, n}\df g_n \dotsm g_1 = k_{\omega, n} a_{\omega, n} u_{\omega, n}
\end{align*}
for the $K'A'U$-factorization of $g_n\dotsm g_1$. All these notations and assumptions will be understood to be in place until the end of this section.

The first lemma we prove ensures that the limit in condition~(3) of Definition~\ref{def.generated.by.expanding} exists almost surely.
\begin{lem}
	\label{lem;unipotent}
For $\mu^\N$-almost every $\omega\in P^\N$
	the sequence $(u_{\omega, n})_n$ converges to some $u_\omega\in U$. 
\end{lem}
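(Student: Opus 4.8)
The plan is to show that the sequence $(u_{\omega,n})_n$ is almost surely Cauchy in $U$, by controlling the size of the increments $u_{\omega,n+1}u_{\omega,n}^{-1}$ and summing the resulting series. First I would write the one-step recursion. Since $g_{n+1}=k_{n+1}a(\lambda(g_{n+1}))u_{n+1}$ lies in $P=K'A'U$ and $g_{\omega,n+1}=g_{n+1}g_{\omega,n}$, using that $K'$ centralizes $A'$ and normalizes $U$ and that $A'$ normalizes $U$, one obtains a factorization
\begin{align*}
g_{\omega,n+1}=k_{\omega,n+1}a_{\omega,n+1}u_{\omega,n+1}
\quad\text{with}\quad
u_{\omega,n+1}=\bigl(a_{\omega,n}^{-1}\acts(\text{conj. of }u_{n+1})\bigr)\,u_{\omega,n},
\end{align*}
so that $u_{\omega,n+1}u_{\omega,n}^{-1}$ is a conjugate of $u_{n+1}$ by $k_{\omega,n}a_{\omega,n}$ (up to bookkeeping about which $K'$-elements appear), and in particular $\log(u_{\omega,n+1}u_{\omega,n}^{-1})=\Ad(k_{\omega,n}a_{\omega,n})\,w_{n+1}$ for some $w_{n+1}\in\Lie(U)$ with $\norm{w_{n+1}}$ controlled by $\operatorname{N}(g_{n+1})$. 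The key point is the contraction coming from $a_{\omega,n}$: because $U\subset H^+_{a(1)}$, the element $a_{\omega,n}=a(t_{\omega,n})$ with $t_{\omega,n}=\sum_{j\le n}\lambda(g_j)$ acts on $\Lie(U)$ by eigenvalues that are \emph{negative} multiples of $t_{\omega,n}$ (after passing from $a(1)$ to $a(t)$; here I use that $U$ is contained in the unstable horospherical of $a(1)$, equivalently the stable one of $a(-1)$, so $\Ad(a(t))$ contracts $\Lie(U)$ for $t>0$). Hence $\norm{\Ad(a_{\omega,n})w_{n+1}}\le e^{-c\,t_{\omega,n}}\norm{w_{n+1}}$ for some fixed $c>0$ once $t_{\omega,n}\ge 0$, and the $K'$-part contributes only a bounded multiplicative constant since $K'$ is compact.

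Next I would invoke the law of large numbers. By the finite exponential moment (hence finite first moment) assumption on $\mu$ and $\int_P\lambda\dd\mu>0$, the strong law gives $t_{\omega,n}/n\to\int_P\lambda\dd\mu=:\ell>0$ for $\mu^\N$-a.e.\ $\omega$; in particular $t_{\omega,n}\ge \tfrac{\ell}{2}n$ for all $n$ large, almost surely. Combined with the contraction estimate, for a.e.\ $\omega$ there is $n_0(\omega)$ with
\begin{align*}
d\bigl(u_{\omega,n+1},u_{\omega,n}\bigr)\le C\,e^{-c\ell n/2}\,\norm{w_{n+1}}
\le C'\,e^{-c\ell n/2}\,\operatorname{N}(g_{n+1})\qquad(n\ge n_0(\omega)),
\end{align*}
where $d$ is a fixed left-invariant metric on $U$ and I have used that $\norm{w_{n+1}}=\norm{\log(\text{unipotent part of }g_{n+1})}$ is bounded by a polynomial in $\operatorname{N}(g_{n+1})$, which I may as well absorb into $\operatorname{N}(g_{n+1})$ at the cost of enlarging the exponent slightly. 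To conclude that $\sum_n e^{-c\ell n/2}\operatorname{N}(g_{n+1})<\infty$ a.s., note that $\operatorname{N}$ has a finite exponential moment, so $\log\operatorname{N}(g_n)$ has a finite first moment; by Borel--Cantelli (or again the law of large numbers applied to $\log\operatorname{N}(g_n)$), a.s.\ $\log\operatorname{N}(g_n)=o(n)$, whence $e^{-c\ell n/2}\operatorname{N}(g_{n+1})\le e^{-c\ell n/4}$ eventually and the series converges. Therefore $(u_{\omega,n})_n$ is a.s.\ Cauchy, and since $U$ is a (closed, hence complete) connected Lie group it converges to some $u_\omega\in U$.

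The main obstacle I anticipate is purely bookkeeping: tracking how the $K'$-, $A'$- and $U$-components recombine under the product $g_{n+1}g_{\omega,n}$ and verifying that the contraction factor genuinely depends on the running sum $t_{\omega,n}$ rather than just on $\lambda(g_{n+1})$. This hinges on $U$ lying in $H^+_{a(1)}$ (so that $\Ad(a(t))$ contracts $\Lie(U)$ for positive $t$) and on $K'\subset C_K(A')\cap N_H(U)$ (so the compact parts do not spoil the contraction and keep $U$ invariant); once these structural facts are in place, the convergence is a routine geometric-series argument powered by the law of large numbers and the finite exponential moment hypothesis. No use of $H$-expansion is needed for this particular lemma. \qed
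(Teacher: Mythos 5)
Your proposal follows essentially the same strategy as the paper's proof: express the $U$-part of the running product in terms of conjugates of the individual unipotent parts by the $K'A'$-parts, reduce the convergence question to a summability estimate for $\Ad$-conjugates of $\log u_k$, and then combine the positivity of the $A'$-weights on $\Lie(U)$ (coming from $U\subset H^+_{a(1)}$) with the law of large numbers for $t_{\omega,n}=\sum_{j\le n}\lambda(g_j)$ and a first-moment estimate on $\log\operatorname{N}(g_k)$ to obtain a geometric rate. You are also right that $H$-expansion is not used in this lemma. However, there is a sign slip in the bookkeeping: the one-step recursion gives $u_{\omega,n+1}u_{\omega,n}^{-1}=(k_{\omega,n}a_{\omega,n})^{-1}u_{n+1}(k_{\omega,n}a_{\omega,n})$, so its logarithm is $\Ad\bigl((k_{\omega,n}a_{\omega,n})^{-1}\bigr)\log u_{n+1}$, not $\Ad(k_{\omega,n}a_{\omega,n})\log u_{n+1}$ as you wrote. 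Correspondingly, the contraction you then invoke also has the wrong sign: since $H^+_{a(1)}=\set{h\in H\for \lim_{n\to\infty}a(1)^{-n}ha(1)^n=e}$, the weights of $A'$ on $\Lie(U)$ are strictly positive, so $\Ad(a(t))$ \emph{expands} $\Lie(U)$ for $t>0$, and it is $\Ad(a(t)^{-1})=\Ad(a(-t))$ that contracts. These two errors cancel, so your final bound $d(u_{\omega,n+1},u_{\omega,n})\le Ce^{-ct_{\omega,n}}\norm{\log u_{n+1}}$ and the ensuing geometric-series argument are correct, but as written the intermediate steps are individually wrong and would mislead a reader. Aside from this, the paper packages the same ideas slightly differently, writing $u_{\omega,n}$ explicitly as the product $u_n^{p_{n-1}\dotsm p_1}\dotsm u_2^{p_1}u_1$ and appealing to a convergence criterion for infinite matrix products rather than to a Cauchy estimate, and it makes the preliminary reduction to linear $H$ explicit via the isomorphism $\Ad_H|_U$; these are cosmetic differences.
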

\begin{proof}
Since $U$ does not intersect the (finite) center of $H$, the restriction $\Ad_H\colon U\to\Ad(U)$ is a Lie group isomorphism. To prove the claimed convergence, we may thus assume that $H$ is a linear group.
Let $\omega=(g_i)_i \in P^\N$. For $n \in \N$, write $g_n=k_n a_n u_n$ its (unique) factorization into $K'$, $A'$ and $U$ components. We also set $p_n=k_n a_n$. One readily observes that the term $u_{\omega,n}$ is equal to the product
\begin{align}\label{eq.prod}
u_n^{p_{n-1}\dotsm p_1} \dotsm u_3^{p_2 p_1} u_2^{p_1}u_1,
\end{align}
where we use the shorthand $g^h=h^{-1}gh$. In the product \eqref{eq.prod}, a term $u_k^{p_{k-1}\dotsm p_1}$ is equivalently expressed as $\exp\bigl(\Ad((p_{k-1}\dotsm p_1)^{-1})(\log u_k)\bigr)$.
Here, the $\log$ map is well-defined since $U$ being a unipotent linear group implies that the exponential map is a diffeomorphism from $\mathfrak{u}=\Lie(U)$ onto $U$. 
Moreover, since the Lie algebra $\mathfrak{u}$ is nilpotent, we know that $\exp\colon \mathfrak{u} \to U$ is given by $v\mapsto I + vq(v)$, where $q$ is a polynomial map. Therefore, to show that the product \eqref{eq.prod} converges for $\mu^\N$-almost every $\omega$, by a general  
convergence criterion for infinite matrix products (see e.g.~\cite[\S 8.10]{wedderburn.lectures}), it suffices to show that for $\mu^\N$-a.e. $\omega$,
\begin{align*}
\sum_{k \ge 1} \norm{\Ad((a_{k-1}\dotsm a_1)^{-1}) (\log u_k)}
\end{align*}
converges, where $\norm{\cdot}$ is an arbitrary matrix norm on $\mathfrak{u}$. We now prove this convergence. We start by observing that for $u\in U$ the logarithm $\log u$ is a polynomial in $u$. Hence, the random nilpotent elements $(\log u_k)_{k \ge 1}$ are i.i.d.\ and their distribution has a finite first moment.
By the law of large numbers, it follows that almost surely $\norm{\log u_k}= o(k)$. Almost surely, we thus obtain the bound  
\begin{equation}\label{decaying_bound}
\begin{aligned}
\norm{\Ad((a_{k-1}\dotsm a_1)^{-1}) (\log u_k)} &\le o(k) \max_{\alpha 
\in \Pi} \prod_{i=1}^{k-1} \exp(-\alpha\lambda(a_i))\\
&=o(k) \max_{\alpha 
\in \Pi} \exp\biggl(-\alpha\sum_{i=1}^{k-1}\lambda(a_i)\biggr),
\end{aligned}
\end{equation}
where 
\begin{align*}
\Pi=\set{ \alpha\in\R \for\Ad(a(t))v= e ^{\alpha t}v \text{ for all }t\in\R \text{ for some nonzero }v\in \mathfrak u }
\end{align*}
is the finite set of real numbers corresponding to the weights of $A'$ on $\mathfrak u$. Since $U$ is contained in $H_{a(1)}^+$, we have $\Pi\subset(0,\infty)$. Together with $\int_P \lambda(g) \dd\mu(g)>0$, it thus follows from the Birkhoff ergodic theorem that, $\mu^\N$-almost surely, the last term in \eqref{decaying_bound} decays exponentially. This gives the summability claimed above and hence the lemma.
\end{proof}

\begin{prop}
\label{prop;more}
Suppose that the Zariski closure of $\Ad(\Gamma_\mu)$ contains $\Ad(U)$. Then the probability measure $\mu$ is $H$-expanding.
For a discrete subgroup $\Lambda$ of a real Lie group $G$ containing $H$, any ergodic $\mu$-stationary probability measure on $G/\Lambda$ is $H$-invariant. If $\Lambda$ is a lattice in $G$, then the conclusion of Theorem~\textup{\ref{thm;orbit}} holds with $Y_x=\overline {Hx}$ and $\nu_x=\nu_{\overline{Hx}}$. 
\end{prop}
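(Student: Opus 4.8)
The plan is to establish the three assertions in order. For the first, I would simply check that $\mu$ meets the hypotheses of Proposition~\ref{prop.examples2} for the given $U$ and $A'$: it has a finite first moment (being even of finite exponential moments), and $\mu(P)=1$ together with the requirement that the Zariski closure of $\Ad(\Gamma_\mu)$ contain $\Ad(U)$ are precisely conditions~(1)--(2) of Definition~\ref{def.generated.by.expanding}. Finally, $a_{\operatorname{avg}}(\mu)=\exp\bigl(\int_P\lambda(g)\dd\mu(g)\bigr)=a(c)$ with $c\df\int_P\lambda(g)\dd\mu(g)>0$; since $a(1)$ lies in the expanding cone $A^+_U$ and the latter is a cone in $A$ (see~\cite{s15}), also $a(c)=a(1)^c\in A^+_U$, i.e.\ $U$ is $a_{\operatorname{avg}}(\mu)$-expanding. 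Proposition~\ref{prop.examples2} then yields that $\mu$ is $H$-expanding.

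For the second assertion, I would apply Theorem~\ref{thm;rigidity} (legitimate now that $\mu$ is $H$-expanding with finite first moment) to an ergodic $\mu$-stationary probability measure $\nu$ on $G/\Lambda$: this gives that $\nu$ is $\Gamma_\mu$-invariant and homogeneous, with $N^\circ$ normalized by $H$, where $N=\Stab_G(\nu)$; equivalently, $\mathfrak n\df\Lie(N^\circ)$ is an $\Ad_G(H)$-submodule of $\mathfrak g$, so $\mathfrak h\cap\mathfrak n$ is an ideal of $\mathfrak h$. The crux of the argument is then to promote $\Gamma_\mu$-invariance to $U$-invariance of $\nu$: here I would use that $\mu$ is supported on $P$ with positive $A'$-drift and that, by Lemma~\ref{lem;unipotent} and its proof, the forward products satisfy $u_{\omega,n}\to u_\omega$ while $\lambda(a_{\omega,n})=\sum_i\lambda(g_i)\to+\infty$, so the walk uniformly expands the $U$-direction; a horospherical-invariance argument in the spirit of~\cite{sw, prohaska-sert}, into which the Zariski density hypothesis also feeds, then shows $\nu$ is $U$-invariant. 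Granting this, $\mathfrak u\df\Lie(U)\subset\mathfrak n$. Since moreover $U$ has non-trivial projection to every simple ideal $\mathfrak h_j$ of $\mathfrak h$ (otherwise $U$ would centralise $\mathfrak h_j$ and hence fix all of $\mathfrak h_j$ in the adjoint representation, whereas $\Ad(a(1))$ restricted to $\mathfrak h_j$ has determinant $1$ and so cannot have all its eigenvalues $>1$, contradicting $a(1)$-expansion), the ideal $\mathfrak h\cap\mathfrak n$ cannot omit any simple ideal of $\mathfrak h$; hence $\mathfrak h\subset\mathfrak n$, i.e.\ $H\leqs N$, and $\nu$ is $H$-invariant.

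For the third assertion, with $\Lambda$ now a lattice and $x\in X$ fixed, I would invoke Theorem~\ref{thm;orbit}: $Y_x\df\overline{\Gamma_\mu^+x}$ lies in $\calS(\Gamma_\mu)$, contains $x$, and its homogeneous measure $\nu_x$ is the weak* limit both of the Ces\`aro averages $\frac1n\sum_{k=0}^{n-1}\mu^{*k}*\delta_x$ and (almost surely) of the empirical measures along the walk; in particular $\nu_x$ is $\mu$-stationary. Decomposing $\nu_x$ into $\mu$-ergodic components and applying the second assertion, each component, hence $\nu_x$ itself, is $H$-invariant; an $H$-invariant probability measure is homogeneous by Ratner's theorems, and its support, being a closed $H$-invariant set containing $x\in Y_x=\supp(\nu_x)$, contains $\overline{Hx}$, while $Y_x\subset\overline{Hx}$ because $\Gamma_\mu\subset H$. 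Thus $\supp(\nu_x)=\overline{Hx}=Y_x$, and ergodicity of the $H$-action on $\overline{Hx}$ (a consequence of Ratner's theorem) forces $\nu_x=\nu_{\overline{Hx}}$, which is exactly the asserted sharpening of Theorem~\ref{thm;orbit}. The main obstacle will be the horospherical-invariance step of the second paragraph — extracting $U$-invariance of $\nu$ from the positive drift together with the Zariski density hypothesis; the rest is bookkeeping around Theorems~\ref{thm;rigidity} and~\ref{thm;orbit} and Ratner's theorems.
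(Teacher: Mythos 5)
Your first and third assertions are handled essentially as in the paper. For the first, the paper simply verifies the hypotheses of Proposition~\ref{prop.examples2}; your invocation of the expanding cone is a slight detour but the underlying reason is correct: since the positive eigenspace decomposition $V=V_{a(t)}^+\oplus V_{a(t)}^0\oplus V_{a(t)}^-$ is the same for all $t>0$, the inclusion $V^U\subset V_{a(1)}^+$ gives $V^U\subset V_{a(c)}^+$ for $c=\int_P\lambda(g)\dd\mu(g)>0$. The third assertion is likewise what the paper does (applied to the ergodic components of $\nu_x$), just spelled out.

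The real issue is your second assertion. You correctly identify the crux --- promoting $\Gamma_\mu$-invariance to $H$-invariance --- but the path you propose (prove $U\leqslant\Stab_G(\nu)$ via a horospherical-invariance argument ``in the spirit of \cite{sw, prohaska-sert}'') is left entirely as a black box, and it is not the paper's route. The cited works operate only in the setting $H=G$, and it is far from clear that a direct horospherical-invariance argument works here without substantial new machinery. The paper avoids this altogether by an algebraic contraction argument that exploits the special form $\Gamma_\mu\subset P=K'A'U$: first, conjugating $\mu$ by an element $u\in U$ supplied by the Brouwer fixed-point argument of Lemma~\ref{lemma.fixed.point}, one reduces to the case where $\Gamma_\mu$ contains some $g_0=k_0a_0\in K'A'_+$; conjugating any $g=k_ga_gu_g\in\Gamma_\mu$ by $g_0^{-n_k}$ (along a subsequence with $k_0^{n_k}\to e$) yields $k_ga_g\in\Gamma_\mu$ and hence $u_g\in\Gamma_\mu$, so $\Gamma_\mu$ contains all its $U$-parts; by Lemma~\ref{lem;dense}, Zariski density of $\Ad(\Gamma_\mu)\supset\Ad(U)$, and the determinant-one argument you gave (which the paper also uses, phrased for the quotient $\mathfrak h/\mathfrak s$), these $U$-parts cannot all lie in a proper connected normal subgroup $S\leqslant H$; taking $S=(N\cap H)^\circ$ and conjugating an offending $g$ by $g_0^{-n_k}$ produces $U$-parts in $N\cap H$ that converge to $e$ yet avoid the open subgroup $(N\cap H)^\circ$ --- a contradiction, forcing $H\leqslant N$. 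Your proposal as written therefore has a genuine gap at the step you yourself flag as ``the main obstacle''; the paper's fix is not a horospherical-invariance argument but this conjugation trick, which uses the $K'A'U$-factorization and the reduction via Lemma~\ref{lemma.fixed.point}.
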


The following observations will be useful in the proof of the previous proposition. We denote by $A'_+=\set{a(t)\for t>0}$ the positive ray in $A'$.

\begin{lem}\label{lemma.fixed.point}
    Let $\Gamma$ be a subsemigroup of $P$ such that $\Gamma \cap K'A'_+U \neq \emptyset$. Then there exists $u \in U$ such that $u \Gamma u^{-1} \cap K'A'_+ \neq \emptyset$.
\end{lem}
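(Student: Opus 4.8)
\textbf{Proof proposal for Lemma~\ref{lemma.fixed.point}.}

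The plan is to find a fixed point for the semigroup action of $\Gamma$ on $U$ via its natural affine action. Recall that every element $g\in P$ factors as $g=ka(\lambda(g))u$ with $k\in K'$, $a(\lambda(g))\in A'$, $u\in U$, and that $K'$ centralizes $A'$ and normalizes $U$. Consequently, if we let $P$ act on $U$ by $g\cdot v \df$ ``the $U$-component of $gv$ in the $K'A'U$-factorization'', this action is well-defined and each $g\in P$ acts on $U$ as a composition of the contraction/expansion $\Ad(a(\lambda(g)))$, the $K'$-conjugation (an isometry in a suitable $K$-invariant norm on $\mathfrak u$), and a translation by $u_g$. Concretely, identifying $U$ with $\mathfrak u$ via $\exp$, the map $v\mapsto g\cdot v$ has the form $v\mapsto \Phi_g(v) + w_g$ where $\Phi_g = \Ad(k_g)\Ad(a(\lambda(g)))$ is linear and $w_g\in\mathfrak u$ is the translation part; since $U\leqs H^+_{a(1)}$, all weights of $A'$ on $\mathfrak u$ are positive, so for $g\in K'A'_+U$ the linear part $\Phi_g$ strictly contracts $\mathfrak u$, with operator norm $\le e^{-\alpha_{\min}\lambda(g)}<1$ for $\alpha_{\min}>0$ the smallest such weight.

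First I would fix $g_0\in\Gamma\cap K'A'_+U$. Its affine action $v\mapsto \Phi_{g_0}(v)+w_{g_0}$ is a contraction of the complete metric space $\mathfrak u$ (Euclidean), hence by the Banach fixed point theorem has a unique fixed point $v_0\in\mathfrak u$; let $u_0=\exp(v_0)\in U$. The key point is then to upgrade this to a fixed point for all of $\Gamma$. Consider the new semigroup $\Gamma' = u_0^{-1}\Gamma u_0$. Since $u_0\in U$ normalizes nothing problematic — rather, conjugation by $u_0\in U$ preserves $P$ (because $U\leqs P$ and $U$ is normal in $P$... more precisely $U$ is normal in $P=K'A'U$), $\Gamma'$ is again a subsemigroup of $P$, and $u_0$ being a fixed point of the affine action of $g_0$ translates exactly into the statement that the $U$-component of $u_0^{-1}g_0u_0$ is trivial, i.e.\ $u_0^{-1}g_0u_0\in K'A'_+$. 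So $\Gamma'\cap K'A'_+\ne\emptyset$.

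It remains to argue that in fact $u_0$ is a fixed point for the affine action of the \emph{whole} semigroup $\Gamma$, equivalently that $\Gamma'=u_0^{-1}\Gamma u_0$ is contained in $K'A'_+U$ and even meets $K'A'_+$. The mechanism: any $h\in\Gamma$ has $g_0^n h \in\Gamma$ for all $n$, and the affine map of $g_0^n h$ is a contraction (for $n$ large, since $\Phi_{g_0^n h}=\Phi_{g_0}^n\Phi_h$ has norm $\to 0$) whose unique fixed point I can compute two ways. Writing $p = h\cdot u_0\in U$ for the image of $u_0$ under $h$'s affine action, the fixed point of $g_0^n h$ equals $g_0^n\cdot p$, which converges to $v_0$ as $n\to\infty$ since $g_0$'s affine map contracts everything to $v_0$. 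But the fixed point of $g_0^n h$ is also left invariant by $g_0$ for... hmm — this needs care; the cleaner route is: the closure of the forward $\Gamma$-orbit $\{\gamma\cdot u_0 : \gamma\in\Gamma\}$ is $\Gamma$-invariant and, since $g_0$'s affine map contracts any bounded set towards $v_0$, combined with the finite-first-moment/exponential-moment control on the translation parts (from $\mu$ having finite exponential moments, giving $\|w_\gamma\|$ subexponential along words), one shows this orbit closure is bounded and contains $v_0$ as its unique $g_0$-fixed point; then a standard argument (as in the proof of Lemma~\ref{lem;unipotent}, using that weights on $\mathfrak u$ are positive) forces $\Gamma\cdot u_0=\{u_0\}$, i.e.\ $u_0^{-1}\Gamma u_0\subset K'A'_+U$ with trivial $U$-part, hence $u_0^{-1}\Gamma u_0\subset K'A'_+$. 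Taking $u=u_0^{-1}$ gives the conclusion.

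\textbf{Main obstacle.} The crux is the last step — promoting the single fixed point of $g_0$ to a fixed point for all of $\Gamma$. A priori $\Gamma$ could move $u_0$; what saves us is that elements of $\Gamma$ lie in $P=K'A'U$ with \emph{nonnegative} $A'$-drift only after we know $\Gamma\cap K'A'_+U\ne\emptyset$ forces the affine dynamics to be (weakly) contracting, and that $U$ being inside the unstable horospherical $H^+_{a(1)}$ makes all the weights positive. Making the boundedness of the orbit closure precise without a moment assumption on $\Gamma$ itself (only on $\mu$) is the delicate point; in the application $\Gamma$ will be $\Gamma_\mu$ or $\Gamma_\mu^+$, so one may freely invoke the finite exponential moments of $\mu$ and the exponential decay estimate \eqref{decaying_bound} established in Lemma~\ref{lem;unipotent} to control $\|w_\gamma\|$ along $\mu^\N$-typical words and conclude. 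I expect the write-up to mirror the proof of Lemma~\ref{lem;unipotent} closely.
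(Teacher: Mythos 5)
Your core idea --- conjugating by a fixed point of the ``affine-like'' action of a single element $g_0\in\Gamma\cap K'A'_+U$ on $U$, so that $g_0$ becomes a pure $K'A'_+$-element --- is exactly the paper's strategy. But the write-up has two genuine issues, one a misreading of the statement and one a real gap.

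\textbf{The ``main obstacle'' is a non-issue.} The lemma only asks that $u\Gamma u^{-1}\cap K'A'_+\neq\emptyset$, i.e.\ that \emph{some} element of the conjugated semigroup lands in $K'A'_+$. Once you produce $u_0$ with $u_0^{-1}g_0u_0\in K'A'_+$, you are done (take $u=u_0^{-1}$). Your subsequent attempt to show that $u_0^{-1}\Gamma u_0\subset K'A'_+$ is not only unnecessary --- it is false in general: if $\Gamma$ contains a nontrivial $u_1\in U$, then $u_0^{-1}u_1u_0\in U\setminus\{e\}$ can never lie in $K'A'_+$. Relatedly, invoking moment conditions on $\mu$ here is out of place; the lemma is purely algebraic about the semigroup $\Gamma$, with no probability measure in its hypotheses.

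\textbf{The affineness/Banach step fails for non-abelian $U$.} Writing the fixed-point equation in $\mathfrak u$ via $\exp$, one gets $X=\log\bigl(\exp(\Ad(p_0^{-1})X)\,u_0\bigr)$, where $\gamma_0=p_0u_0$. This is affine in $X$ only when $U$ is abelian; for general nilpotent $U$ the Baker--Campbell--Hausdorff terms make it a polynomial map whose Lipschitz constant is not globally below $1$, so the Banach fixed point theorem does not apply as stated. The paper sidesteps this by passing to a high power $\gamma_0^n=p_0^nu(\gamma_0^n)$: for $n$ large, $\Ad(p_0^{-n})$ crushes any fixed ball to a tiny neighborhood of $0$, the product $u(\gamma_0^n)$ stays bounded (it converges, as in Lemma~\ref{lem;unipotent}), and one shows that $f_n(X)=\log\bigl(\exp(\Ad(p_0^{-n})X)\,u(\gamma_0^n)\bigr)$ maps a suitable ball into itself; a fixed point is then produced by the \emph{Brouwer} fixed point theorem (no contraction estimate needed), and it yields $u\gamma_0^nu^{-1}=p_0^n\in K'A'_+$. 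Also note a sign slip: your $\Phi_g=\Ad(k_g)\Ad(a(\lambda(g)))$ is \emph{expanding} on $\mathfrak u$ when $\lambda(g)>0$; the contracting linear part is $\Ad(p_g^{-1})$.
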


\begin{proof}
By hypothesis there exists an element $\gamma_0 \in K'A'_+U \cap \Gamma$. Factorize $\gamma_0=p_0 u_0$ with $p_0 \in K'A'_+$ and $u_0 \in U$. Endow $\mathfrak{u}$ with some Euclidean structure. As in the proof of Lemma~\ref{lem;unipotent}, the linear map $\Ad(p_0^{-1})$ preserves the Lie algebra $\mathfrak{u}$ and any large power of it acts on $\mathfrak{u}$ as a contraction.
Moreover, since $U$ is connected and simply connected, as a consequence of the Baker--Campbell--Hausdorff formula (see e.g.~\cite[\S1.2]{corwin-greenleaf}), for every $u\in U$, the map $q_u\colon \mathfrak{u} \to \mathfrak{u}$ defined by $X \mapsto \log (\exp(X)u)$ is a polynomial map whose degree depends only on $U$ and whose coefficients depend continuously on $u$.

Using the same notation and reasoning as in the proof of Lemma~\ref{lem;unipotent}, we observe that for every $n \ge 1$, we have $\gamma_0^n=p_0^n u_0^{p_0^{n-1}} \dotsm u_0^{p_0}u_0$, with the term $u(\gamma_0^n)\df u_0^{p_0^{n-1}}\dotsm u_0^{p_0}u_0$ converging in $U$ as $n \to \infty$. From these facts, one deduces that there exists a ball $B$ in $\mathfrak{u}$ around $0 \in \mathfrak{u}$ such that for every $n \in \N$ large enough, the continuous map $f_n\colon \mathfrak{u} \to \mathfrak{u}$ defined by
\begin{align*}
f_n(X)=q_{u(\gamma_0^n)}(\Ad(p_0^{-n})X)=\log ( \exp(\Ad(p_0^{-n})X) u(\gamma_0^n))
\end{align*}
satisfies $f_n(B) \subset B$. It follows from the Brouwer fixed point theorem that $f_n$ has a fixed point $X \in \mathfrak{u}$. We claim that $u=\exp(X) \in U$ is the desired element. Indeed, since $\exp (\Ad(p_0^{-n})X)=p_0^{-n}\exp(X)p_0^n$, we have  $p_0^{-n}up_0^nu(\gamma_0^n)=u$ and hence $u \gamma_0^n u^{-1}= u p_0^n u(\gamma_0^n) u^{-1}=p_0^n \in K'A'_+$.
\end{proof}

Given $g\in P$, we write $g=k_ga_gu_g$ for its $K'A'U$-factorization.
\begin{lem}
\label{lem;dense}
For a subset $C\subset P$, let $U_C=\set{u_g\for g\in C}$ be the set of its $U$-parts. If the Zariski closure of $\Ad(C)$ contains $\Ad(U)$, then $\Ad(U_C)$ is Zariski dense in $\Ad(U)$. 
\end{lem}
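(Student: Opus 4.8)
The statement asks us to show that if $\Ad(C)$ generates a Zariski-dense subset of a group containing $\Ad(U)$, then the $U$-parts $\Ad(U_C)$ are themselves Zariski dense in $\Ad(U)$. The plan is to exploit the factorization map $g \mapsto u_g$ and the fact that $K'A'$ normalizes $U$, together with the observation that the Zariski closure of a set is controlled by polynomial maps. First I would pass to the linear setting: since $U$ has trivial intersection with the center of $H$, we may replace $H$ by $\Ad(H)$ and work with $U$, $A'$, $K'$ as algebraic (or at least Zariski-constructible) subgroups of $\GL(\mathfrak{h})$. The decomposition $P = K'A'U$ is a direct product as a variety via multiplication, so the projection $\pi_U \colon P \to U$, $g \mapsto u_g$, is a morphism of varieties.

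The key step is the following: let $W = \Zcl(C) \supset U$ (we are told this contains $U$). Consider the Zariski closure $V = \Zcl(U_C) = \Zcl(\pi_U(C))$ inside $U$; this is a Zariski-closed subset of $U$, and I claim it is in fact a subgroup. Indeed, $\pi_U$ is not a homomorphism, but I would instead argue as follows. Since $K'A'$ centralizes (hence normalizes) $U$ — using here that $K' = C_K(A') \cap N_H(U)$ and $A'$ normalizes $U$ — for $g = k_g a_g u_g \in C$ the element $u_g$ lies in $U \cap (K'A')^{-1}C = U \cap$ (a translate), and one checks that the image of $C$ in the quotient variety $P/K'A' \cong U$ is exactly $U_C$. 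Now $W = \Zcl(C)$ maps onto $\Zcl$ of the image of $C$ in $P/K'A'$ under the quotient morphism $P \to P/K'A'$ (Zariski closures commute with images under morphisms of varieties up to taking closure on the target). Since $U \subset W$ and $U$ maps isomorphically onto $P/K'A'$, the image of $W$ is all of $U$; hence $\Zcl(\pi_U(C)) = U$, which is precisely the assertion $\Ad(U_C)$ Zariski dense in $\Ad(U)$.

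I expect the main obstacle to be a slightly delicate point of algebraic-geometry bookkeeping: the quotient $P/K'A'$ need not literally be an algebraic variety if $K'$ is merely compact rather than algebraic, so I would instead avoid forming the quotient and argue directly. Concretely, fix the morphism $f \colon P \to U$ sending $g$ to $u_g$; one has $f(ku g') = \ldots$ — more usefully, $f$ restricted to $U$ is the identity and $f$ is constant on left cosets $K'A'g$. The real content is then: $\Zcl(f(C)) \supseteq f(\Zcl(C))$ is false in general, but $\overline{f(C)}^{\,\mathrm{Zar}} = \overline{f(\overline{C}^{\,\mathrm{Zar}})}^{\,\mathrm{Zar}}$ holds because $f$ is a morphism of varieties (the image of a Zariski-closed set under a morphism has Zariski closure equal to the Zariski closure of the image of any Zariski-dense subset). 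Applying this with $\overline{C}^{\,\mathrm{Zar}} = W \supseteq U$ and using $f|_U = \mathrm{id}_U$ gives $\overline{f(C)}^{\,\mathrm{Zar}} = \overline{f(W)}^{\,\mathrm{Zar}} \supseteq f(U) = U$, and since $f(C) \subseteq U$ the reverse inclusion is trivial. This yields $\Zcl(U_C) = U$, i.e., $\Ad(U_C)$ is Zariski dense in $\Ad(U)$. The routine part I would not belabor is the verification that $f$ is genuinely a morphism of varieties in the linearized picture, which follows from the fact that the $K'A'U$-factorization in $\Ad(H)$ is polynomial in appropriate coordinates (a consequence of the product decomposition of the minimal parabolic and its unipotent radical).
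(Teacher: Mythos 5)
Your plan and the paper's proof are aimed at the same conclusion, and your instinct — project onto the $U$-part and take Zariski closures — is the right one. But the specific route you chose has a real gap, one that you partially anticipate but then try to paper over.

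The gap: you treat the factorization map $f\colon P\to U$, $g\mapsto u_g$, as a morphism of varieties and apply the standard fact $\Zcl(f(C))=\Zcl(f(\Zcl(C)))$ with $\Zcl(C)\supset U$. This requires two things that are \emph{not} available here. First, $P=K'A'U$ need not be an algebraic (or even Zariski constructible) subset of $\Ad(H)$: in this context $A'=\set{a(t)\for t\in\R}$ is only required to be a one-parameter $\Ad$-diagonalizable subgroup, and it can fail to be algebraic (cf.\ Example~\ref{ex;counter}); consequently $K'=C_K(A')\cap N_H(U)$ need not be algebraic either. So the claim that ``the $K'A'U$-factorization in $\Ad(H)$ is polynomial in appropriate coordinates'' is false in general, and $f$ is not a morphism of varieties. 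Second, and independently, $W=\Zcl(\Ad(C))$ is a closed subset of $\Ad(H)$ and may well escape $\Ad(P)$; then $f(W)$ is simply undefined. You flag a version of this concern but try to dodge it by ``arguing directly'' with $f$ — yet the direct argument still presupposes $f$ is a morphism on a domain containing $W$, which is exactly what fails.

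The paper's proof repairs this by replacing the non-algebraic decomposition $P=K'A'U$ with an honest algebraic one: set $Q=\Zcl(\Ad(P))$, note that $\Ad(U)$ sits inside the unipotent radical $R_u(Q)$ while $\Ad(K'A')$ (being a product of a compact group and a commuting diagonalizable one) has Zariski closure contained in a Levi factor $L$ of $Q$, and use the Levi decomposition $Q=L\ltimes R_u(Q)$. Because $L\times R_u(Q)\to Q$ is an isomorphism of varieties, $\Zcl\bigl(\Ad(K'A')\Ad(U_C)\bigr)=\Zcl(\Ad(K'A'))\cdot\Zcl(\Ad(U_C))$, and comparing components in $L$ and $R_u(Q)$ forces $\Ad(U)\subset\Zcl(\Ad(U_C))$. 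In effect, the Levi decomposition supplies the algebraic projection onto the unipotent part that your $f$ was supposed to be, but unlike $f$, it is genuinely a morphism of algebraic varieties defined on all of $Q\supset W$. If you want to salvage your write-up, this is the substitution to make.
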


\begin{proof}
Denote by $Q$ the Zariski closure of $\Ad(P)$, and observe that $\Ad(U)$ is contained in the unipotent radical $R_u(Q)$ of $Q$. Since $\Ad(K'A')$ is a linearly reductive subgroup of $Q$, 
there is a Levi factor $L$ of $Q$ containing $\Ad(K'A')$ (see~\cite[Theorem~VIII.4.3]{hochschild}). Then we have $Q=L\ltimes R_u(Q)$ as algebraic groups. This implies
\begin{align*}
\Ad(U)\subset\Zcl(\Ad(C))\subset\Zcl(\Ad(K'A')\Ad(U_C))=\underbrace{\Zcl(\Ad(K'A'))}_{\subset L}\underbrace{\Zcl(\Ad(U_C))}_{\subset R_u(Q)}.
\end{align*}
We conclude that $\Ad(U)\subset \Zcl(\Ad(U_C))$, which is what we needed to show.
\end{proof}

\begin{proof}[Proof of Proposition~\textup{\ref{prop;more}}]
After choosing a maximal connected $\R$-split torus $A$ in $H$ containing $A'$, we see that the assumptions of Proposition~\ref{prop.examples2} are satisfied. Thus, $\mu$ is $H$-expanding.
Now let $\nu$ be an ergodic  $\mu$-stationary probability measure on $X=G/
\Lambda$. By Theorem~\ref{thm;rigidity}, $\nu$ is $\Gamma_\mu$-invariant and homogeneous, and the	connected component $N$ of $\Stab_G(\nu)$ is normalized by $H$.

In order to prove the statement about $H$-invariance, we can assume without loss of generality that $\Gamma_\mu$ contains an element in $K'A'_+$. Indeed, suppose that the conclusion is true for such measures; call them \emph{special}. Given an arbitrary measure $\mu$ as in the statement, by Lemma~\ref{lemma.fixed.point} we can find an element $u \in U$ such that $(\tau_u)_* \mu$ is special, where $\tau_u$ denotes conjugation by $u$. The properties in Definition~\ref{def.generated.by.expanding} are preserved by this conjugation. Then $u_* \nu$ is $(\tau_u)_* \mu$-ergodic and stationary and hence it is $H$-invariant. But since $u \in U\leqs H$, this implies that $\nu$ itself is $H$-invariant. 

So let us take $g_0=k_0a_0 \in \Gamma_\mu \cap K'A'_+$. Then, given an arbitrary $g \in \Gamma_\mu$ written as $g=k_ga_gu_g$ in its $K'A'U$ factorization, by considering a sequence $n_k$ such that $k_0^{n_k} \to e$ as $k \to \infty$, we get that the conjugates $g_0^{-n_k}g g_0^{n_k}$ converge to $k_g a_g$. This implies that $k_ga_g$ and thus also $u_g$ belongs to $\Gamma_\mu$. In other words, $\Gamma_\mu$ contains all of its $U$-parts.

We next claim that for any proper connected normal subgroup $S\leqs H$, there exists $g\in\Gamma_\mu$ whose $U$-part $u_g$ does not belong to $S$. To see this, by way of contradiction, let us suppose that all $U$-parts of elements of $\Gamma_\mu$ belong to some proper normal subgroup $S$. Using Lemma~\ref{lem;dense}, we deduce from this that $\Ad(U)\leqs\Ad(S)$, which entails that $U$ acts trivially in the adjoint representation of $H$ on $\mathfrak{h}/\mathfrak{s}$. On the other hand, the image of $a(1)$ in this representation has determinant one, so that it cannot expand all elements of $\mathfrak{h}/\mathfrak{s}$, contradicting $a(1)$-expansion of $U$.

Assuming that $H$ is not contained in $N$, we can apply the above with $S=(N\cap H)^\circ$.  Take $g=k_g a_g u_g \in \Gamma_\mu$ with $u_g\notin(N\cap H)^\circ$. By normality, also the $U$-parts of $g_0^{-n_k}g g_0^{n_k}$ do not belong to $(N\cap H)^\circ$. On the other hand, as observed above, these $U$-parts lie in $\Gamma_\mu\leqs H\cap \Stab_G(\nu)$ and converge to the identity.
This is impossible, since  $S$ is the connected component of $ H\cap \Stab_G(\nu)$ and hence an open subgroup of it. This contradiction shows that $H\leqs N$, and hence that any ergodic $\mu$-stationary probability measure $\nu$ is $H$-invariant.

Finally, applying the $H$-invariance statement to the homogeneous measure $\nu_x$ from Theorem~\ref{thm;orbit}, we see that the conclusions of that theorem hold with $Y_x=\overline{Hx}$.
\end{proof}

The following elementary but key equivariance property is the final ingredient required for the proof of Theorem~\ref{thm;birkhoff}.

\begin{lem}\label{lem;equivariance}
For $\mu^\N$-almost every $\omega=(g_i)_i \in P^\N$ and every $n \in \N$, we have
\begin{align*}
a_{\omega,n}u_\omega=k_{\omega,n}^{-1}u_{T^n \omega}g_{\omega,n},
\end{align*}
where $T\colon P^\N\to P^\N,(g_1,g_2,\dots)\mapsto (g_2,g_3,\dots)$ denotes the shift map.
\end{lem}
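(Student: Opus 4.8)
The proof rests on the interplay between the multiplicativity of the cocycle $g_{\omega,n+m}=g_{T^n\omega,m}\,g_{\omega,n}$ and the commutation relations among $K'$, $A'$ and $U$. Recall that $K'\leqs C_K(A')\cap N_H(U)$, so that $K'$ commutes with $A'$ and both $K'$ and $A'$ normalize $U$; consequently, for all $k,k'\in K'$, $a,a'\in A'$ and $u\in U$ one has $u\,ka=ka\,\bigl((ka)^{-1}u(ka)\bigr)$ with $(ka)^{-1}u(ka)\in U$, and $(ka)(k'a')=(kk')(aa')$. The plan is first to use these identities to compute the $K'A'U$-factorization of a product. Writing $g_{\omega,n+m}=g_{T^n\omega,m}\,g_{\omega,n}=k_{T^n\omega,m}a_{T^n\omega,m}u_{T^n\omega,m}\cdot k_{\omega,n}a_{\omega,n}u_{\omega,n}$, I would slide the factor $u_{T^n\omega,m}$ to the far right past $k_{\omega,n}a_{\omega,n}$ and then recombine the $K'$- and $A'$-parts, using $k_{T^n\omega,m}k_{\omega,n}=k_{\omega,n+m}$ and $a_{T^n\omega,m}a_{\omega,n}=a_{\omega,n+m}$ (these telescope precisely because $K'$ commutes with $A'$). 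By uniqueness of the $K'A'U$-factorization this yields, for every $n,m\in\N$ and every $\omega\in P^\N$,
\begin{align*}
u_{\omega,n+m}=(k_{\omega,n}a_{\omega,n})^{-1}\,u_{T^n\omega,m}\,(k_{\omega,n}a_{\omega,n})\,u_{\omega,n}.
\end{align*}

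Next I would let $m\to\infty$. By Lemma~\ref{lem;unipotent} the left-hand side converges $\mu^\N$-a.s.\ to $u_\omega$. Applying the same lemma to the shift $T^n$ and using that $T^n$ pushes the product measure $\mu^\N$ forward to $\mu^\N$, the sequence $(u_{T^n\omega,m})_m$ converges to $u_{T^n\omega}$ for $\mu^\N$-a.e.\ $\omega$; since there are only countably many $n$, all these full-measure events hold simultaneously on a common full-measure set. As conjugation by the fixed element $k_{\omega,n}a_{\omega,n}$ is continuous, passing to the limit gives, $\mu^\N$-a.s.,
\begin{align*}
u_\omega=(k_{\omega,n}a_{\omega,n})^{-1}\,u_{T^n\omega}\,(k_{\omega,n}a_{\omega,n})\,u_{\omega,n}.
\end{align*}
Multiplying on the left by $k_{\omega,n}a_{\omega,n}$ and recalling $g_{\omega,n}=k_{\omega,n}a_{\omega,n}u_{\omega,n}$ turns this into $k_{\omega,n}a_{\omega,n}u_\omega=u_{T^n\omega}\,g_{\omega,n}$, and multiplying once more on the left by $k_{\omega,n}^{-1}$ yields the asserted identity $a_{\omega,n}u_\omega=k_{\omega,n}^{-1}u_{T^n\omega}\,g_{\omega,n}$.

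There is no genuine obstacle in this argument; it is elementary once the commutation relations are organized correctly. The only points requiring a little care are, on the one hand, the bookkeeping that establishes the finite-$m$ identity above — one must verify that after moving the unipotent factor to the far right the remaining factors really recombine into $k_{\omega,n+m}$ and $a_{\omega,n+m}$, which is exactly where $C_K(A')$ enters — and, on the other hand, the measure-theoretic remark that the almost-sure convergence statements for all the shifts $T^n$ can be arranged to hold on a single full-measure set.
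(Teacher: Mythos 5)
Your proof is correct and takes essentially the same approach as the paper: both exploit the commutation of $K'$ with $A'$, the normalization of $U$ by $K'A'$, and the a.s.\ convergence from Lemma~\ref{lem;unipotent}. The only (cosmetic) difference is that the paper establishes the one-step relation $u_{\omega,n}=a_1^{-1}k_1^{-1}u_{T\omega,n}k_1a_1u_1$, passes to the limit, and then iterates $n$ times, whereas you derive the $n$-step finite identity directly from the cocycle relation $g_{\omega,n+m}=g_{T^n\omega,m}\,g_{\omega,n}$ and pass to the limit once.
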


\begin{proof}
By Lemma~\ref{lem;unipotent}, there exists a set $\Omega$ of full $\mu^\N$-measure such that for every $\omega\in\Omega$, the sequence $u_{\omega,n}$ converges (to the limit $u_\omega$). 
Replacing $\Omega$ by $\bigcap_{i\ge 0}T^{-i}\Omega$ if necessary, we may assume that $T\Omega\subset\Omega$.
Let $\omega=(g_i)_i \in \Omega$ and $n \in \N$. Writing $g_i=k_ia_i u_i$ in its $K'A'U$ factorization, a straightforward computation shows that $u_{\omega,n}= a_1^{-1}k^{-1}_1 u_{T\omega,n-1 } k_1a_1u_1$. Passing to the limit as $n \to \infty$, we obtain $u_\omega=a_1^{-1}k^{-1}_1 u_{T\omega} g_1$. The lemma now follows by iterating the latter equality, using that $A'$ and $K'$ commute. 
\end{proof}

\begin{proof}[Proof of Theorem~\textup{\ref{thm;birkhoff}}]
Suppose the measure $\eta$ is generated by the probability measure $\mu$ supported on $P=K'A'U$ as in Definition~\ref{def.generated.by.expanding}.
 	By Theorem~\ref{thm;orbit} and Proposition~\ref{prop;more}, we know that for every $x\in X$,
 	 for $\mu^\N$-almost every $\omega=(g_i)_i\in P^\N$, the sequence of points
 	\begin{align*}
    (g_{\omega, n} x)_n
 	\end{align*}
 	is equidistributed with respect to $\nu=\nu_{\overline{Hx}}$. 
 	
   Replacing $K'$ by a subgroup, we may assume without loss of generality that 
 $\pi_{K'}(\Gamma_{\mu}) $ is dense in $K'$, where 
 $\pi_{K'}\colon P\to K'$ is the natural projection map. So the action of $\pi_{K'}(\Gamma_\mu)$ on $(K', m_{K'})$ by left translation is ergodic, where $m_{K'}$ is the Haar probability measure on $K'$. 
 By a version of Moore's ergodicity theorem (see~\cite[Theorem~III.2.5(i)]{bekka-mayer}) applied to the regular representation on the Hilbert space $L^2_0(X,\nu)$ of square integrable functions with mean zero,  the action of $\Gamma_\mu$ on $(X, \nu)$ is weakly mixing. 
 Therefore, the action of $\Gamma_\mu$ on $(X\times K'
 , \nu\times m_{K'})$ given by $ g(y, k)= (gy , \pi_{K'}(g)k ) $ is ergodic (cf.\ e.g.~\cite[Proposition~2.2]{schmidt84}). Thus it follows from~\cite[Corollary~5.5]{sw} that for almost every 
$\omega=(g_i)_i\in P^\N$, the sequence 
\begin{align*}
	(g_{\omega, n}x,  k_{\omega, n})_n
\end{align*}
is equidistributed with respect to $\nu \times m_{K'}$. Next, applying~\cite[Proposition~5.1]{sw}, this can be upgraded to almost sure equidistribution of
\begin{align}\label{equi0}
	(g_{\omega, n}x,  k_{\omega, n}, T^n\omega)_n
\end{align}
with respect to $\nu \times m_{K'}\times\mu^\N$, where $T\colon P^\N\to P^\N$ denotes the shift map. We caution here that when the support of $\mu$ is non-compact, the above equidistribution takes place in a non-locally compact space, so that the class of test functions to consider is that of bounded continuous functions. The proof of~\cite[Proposition~5.1]{sw}, however, only needs minor amending to accommodate this issue; see~\cite[Lemma~3.9]{prohaska-sert} and the short discussion before its proof. Applying the map $\omega=(g_1,g_2,\dots)\mapsto (u_\omega,g_1)$ to the equidistribution in \eqref{equi0}, we conclude that, for almost every $\omega=(g_i)_i\in P^\N$, the sequence 
\begin{align}\label{eq;equi}
	(g_{\omega, n}x,  k_{\omega, n}, u_{T^n\omega}, g_{n+1})_n
\end{align}
is equidistributed with respect to $\nu \times m_{K'} \times \tilde{\eta}$, where $\tilde{\eta}$ is a probability measure on $U\times P$ that projects to $\mu$ in the second coordinate. Again, some caution is needed at this step, since $\omega\mapsto u_\omega$ is not necessarily continuous. However, also this can be dealt with by considering Lusin sets and continuous extensions coming from Tietze's theorem as in the proof of~\cite[Proposition~5.2]{sw}.

The rest of the proof is the same as in~\cite[\S12]{sw}; we briefly reproduce it for the convenience of the reader. Given $f\in C_c(X)$, 
one considers the bounded continuous function $\varphi$ on $X\times K' \times U\times P$ defined by

\begin{align*}
\varphi(x, k, u, g)= \int_0^{\lambda(g)} f( a(t) k^{-1} ux ) \dd t,
\end{align*}
where $g=k_g a(\lambda(g)) u_g$ is the decomposition according to $P=K'A'U$. A direct calculation using the invariance of $\nu$ under $H$ shows that 
\begin{align}\label{eq;whole}
\int \varphi\dd(\nu \times m_{K'} \times \tilde{\eta})= \int_P \lambda(g) \dd \mu(g) \int _X f\dd\nu.
\end{align}

Suppose 
 $\omega=(g_i)_i$ is a generic point with respect to the equidistribution of \eqref{eq;equi} for which also Lemma~\ref{lem;equivariance} holds for every $n$.
Using only the last factor $P$ in the equidistribution, it follows that 
 \begin{align}\label{linear_growth}
\lim_{n \to \infty} \frac{\lambda(g_{\omega,n})}{n}=\lim_{n\to\infty}\frac1n\sum_{i=1}^n \lambda(g_i) = \int_P \lambda(g) \dd\mu(g)>0.
 \end{align}
 We thus obtain, by the equidistribution \eqref{eq;equi},
  
 \begin{align*}
 \int \varphi\dd(\nu \times m_{K'} \times \tilde{\eta})&=\lim_{n\to \infty}\frac1n\sum_{i=0}^{n-1} \varphi ( g_{\omega, i}x, k_{\omega, i}, u_{T^i\omega}, g_{i+1} )\\
 &=\lim_{n \to \infty} \frac1n \sum_{i=0}^{n-1} \int_0^{\lambda(g_{i+1})} f(a(t) k_{\omega,i}^{-1}u_{T^i \omega}g_{\omega,i}x)\dd t
 \\
 &=\lim_{n \to \infty} \frac1n \sum_{i=0}^{n-1} \int_0^{\lambda(g_{i+1})} f(a(t) a_{\omega,i}u_\omega x)\dd t\\
    &=\lim_{n\to \infty}\frac1n \sum_{i=0}^{n-1} \int_{\lambda(g_{\omega,i})}^{\lambda(g_{\omega,i+1})} f(a(t) u_\omega x)\dd t\\
    &= \lim_{n\to \infty}\frac{\lambda(g_{\omega,n})}{n}
    \frac{1}{\lambda(g_{\omega,n})}\int_0^{\lambda(g_{\omega,n})} f(a(t) u_\omega x)\dd t\\
    &= \int _P \lambda(g) \dd \mu(g) 
    \lim_{n\to \infty}\frac{1}{\lambda(g_{\omega,n})}\int_0^{\lambda(g_{\omega,n})} f(a(t) u_\omega x)\dd t,
\end{align*} 
where we used Lemma~\ref{lem;equivariance} in the third equality and that $\lambda(g_{\omega,i+1})=\lambda(g_{\omega,i})+\lambda(g_{i+1})$ in the fourth.
Together with \eqref{eq;whole}, this implies 
\begin{align}\label{eq.birkhoff.end}
\lim_{n\to \infty}\frac{1}{\lambda(g_{\omega,n})}\int_0^{\lambda(g_{\omega,n})} f(a(t) u_\omega x)\dd t = \int f \dd\nu.
\end{align}

Finally, notice that since the random variables $\lambda(g_{\omega,n})-\lambda(g_{\omega,n-1})=\lambda(g_n)$ are i.i.d.\ with a distribution that has a finite first moment, it follows from the law of large numbers that almost surely 
\begin{align}\label{eq.first.moment}
\lambda(g_{\omega,n})-\lambda(g_{\omega,n-1}) =  o(n).
\end{align}
Now \eqref{linear_growth}, \eqref{eq.birkhoff.end} and \eqref{eq.first.moment} together imply the Birkhoff genericity of $u_\omega x$ with respect to $(a(t))_{t>0}$ and $\nu$. 
\end{proof}

\section{Connections to Diophantine approximation on fractals}\label{sec;diophantine}
The goal of this section is to explain the connection between random walks and Diophantine approximation on affine fractals, prove a general result (Theorem~\ref{thm;dioph.insection}) which will imply Theorem~\ref{thm.dioph.intro} on Diophantine properties of Bedford--McMullen carpets, and mention some further directions.

\subsection{Weighted Diophantine approximation and Dani--Kleinbock flow}
To begin with, we recall basic notions in Diophantine approximation of matrices and the connection to homogeneous dynamics.

\subsubsection{Badly approximable matrices and Dirichlet improvability}
Let $m,n \in \N$ be positive integers, $\mathbf{r} =(r_1,\dots,r_m) \in (0,1]^m$ and $\mathbf{s} =(s_1,\dots,s_n) \in (0,1]^n$ be such that $\sum_{i=1}^m r_i=\sum_{j=1}^n s_j=1$ and $M \in \Mat_{m \times n}(\R)$ a matrix with rows $M_1,\dots,M_m$. Then $M$ is called \emph{$(\mathbf{r},\mathbf{s})$-badly approximable} or \emph{badly approximable for the weights $(\mathbf{r},\mathbf{s})$} if there exists a constant $C>0$ such that
\begin{align}\label{eq.dioph.mat}
    \max_{1\le i\le m}\abs{M_i\mathbf{q}-p_i}^{1/r_i}\cdot\max_{1\le j\le n}\abs{q_j}^{1/s_j} \ge C
\end{align}
for every $(\mathbf{p}, \mathbf{q}) \in \Z^m\times(\Z^n\setminus\set{0})$. Otherwise, $M$ is called \emph{$(\mathbf{r},\mathbf{s})$-well approximable}.

One can see by Dirichlet's principle, or by Blichfeldt and Minkowski's convex body results, that for every matrix $M \in \Mat_{m \times n}(\R)$, there exist infinitely many pairs $(\mathbf{p}, \mathbf{q}) \in \Z^m\times(\Z^n\setminus\set{0})$ such that the left-hand side of \eqref{eq.dioph.mat} is bounded above by $1$. As a consequence of a general form of Khintchine's theorem~\cite{schmidt.general.khintchine}, the set of $(\mathbf{r},\mathbf{s})$-badly approximable matrices is a Lebesgue null set. However, it has everywhere-full Hausdorff dimension; see \cite[Corollary~4.5]{kleinbock-weiss.Dirichlet} and \cite[\S5.4]{kw.modified.schmidt}.

Given weights $(\mathbf{r}, \mathbf{s})$, an equivalent way to express the aforementioned consequence of the Dirichlet principle is to say that for every matrix $M \in \Mat_{m \times n}(\R)$ and for every $t>0$, the following system of inequalities has a solution in $(\mathbf{p},\mathbf{q}) \in \Z^m \times (\Z^n\setminus\set{0})$:
\begin{align*}
\abs{M_i\mathbf{q}-p_i}\le e^{-tr_i} \quad \text{and} \quad \abs{q_j}\le e^{ts_j}\qquad(1\le i\le m,1\le j\le n).
\end{align*}
One says that the matrix $M  \in \Mat_{m \times n}(\R)$ is \emph{$(\mathbf{r},\mathbf{s})$-Dirichlet improvable} if there exists $\varepsilon \in (0,1)$ such that for every $t>0$ large enough, the following system of inequalities has a solution in $(\mathbf{p},\mathbf{q}) \in \Z^m \times (\Z^n\setminus\set{0})$:
\begin{align*}
\abs{M_i\mathbf{q}-p_i}\le \varepsilon e^{-tr_i} \quad \text{and} \quad \abs{q_j}\le \varepsilon e^{ts_j}\qquad(1\le i\le m,1\le j\le n).
\end{align*}

In the special case where the weights $(\mathbf{r},\mathbf{s})$ are given by $(\mathbf{m}, \mathbf{n})$---by which we mean that $r_i=1/m$ and $s_j=1/n$ for all $i,j$---the notion of Dirichlet improvability was introduced and studied by Davenport--Schmidt, who showed that the set of $(\mathbf{m},\mathbf{n})$-Dirichlet improvable matrices has zero Lebesgue measure~\cite{DS2} and that every $(\mathbf{m},\mathbf{n})$-badly approximable matrix is $(\mathbf{m},\mathbf{n})$-Dirichlet improvable~\cite{DS1}. The former result was generalized to arbitrary weights $(\mathbf{r},\mathbf{s})$ by Kleinbock--Weiss~\cite{kleinbock-weiss.Dirichlet}.

\subsubsection{Dani--Kleinbock flow}
Let $G=\PGL_d(\R)$, $\Lambda=\PGL_d(\Z)$, and set $X=G/\Lambda$. It is easy to see that $X$ can alternatively be written as $\SL_d(\R)/\SL_d(\Z)$, which can be identified with the space of unimodular lattices in $\R^d$ via $g\SL_d(\Z)\leftrightarrow g\Z^d$. For every $\varepsilon>0$, we define
\begin{align*}
K_\varepsilon\df\set*{g\Lambda \in X\for g\in\SL_d(\R),\, \max_{i=1,\dots,d} \abs{(g \mathbf{v})_i}\ge \varepsilon \text{ for every } v \in \Z^d \setminus \set{0}}.
\end{align*}
Viewing $X$ as space of unimodular lattices in $\R^d$, $K_\varepsilon$ is nothing but the subset of lattices all of whose nonzero vectors have length at least $\varepsilon$ in the supremum norm.
The collection of sets $K_\varepsilon$ is clearly decreasing in $\varepsilon$. For $\varepsilon<1$ the set $K_\varepsilon$ has non-empty interior, and for $\varepsilon>1$ one has $K_\varepsilon= \emptyset$, as can be seen by Minkowski's convex body theorem from geometry of numbers. Moreover, Mahler's compactness criterion states that the sets $K_\varepsilon \subset X$ for $\varepsilon>0$ are compact and that a subset of $X$ is relatively compact if and only if it is contained in one of the $K_\varepsilon$. 

Now let $d=m+n$ and denote by $x_0$ the identity coset in $X=G/\Lambda$. The Dani--Kleinbock correspondence principle---observed first by Dani~\cite{dani.correspond} and developed further, among others, by Kleinbock~\cite{kleinbock.duke} and later Kleinbock--Weiss~\cite{kleinbock-weiss.Dirichlet}---states that, loosely speaking, the Diophantine properties of a matrix $M\in \Mat_{m \times n}(\R)$ are encoded in the behavior of the trajectory of $u_Mx_0$ inside $X$ under suitable one-parameter diagonal subgroups of $G$, where $u_M\df
(\begin{smallmatrix}
 I_m & -M \\
 0 & I_n
\end{smallmatrix})$. We are going to use this principle in the form of the following proposition.
Given weights $(\mathbf{r}, \mathbf{s}) \in (0,1]^m \times (0,1]^n$ as before, let $a(t)$ denote the one-parameter subgroup of $G$ corresponding to  $a(1)=\diag(e^{r_1},\dots,e^{r_m},e^{-s_1},\dots,e^{-s_n})$.

\begin{prop}[Dani--Kleinbock correspondence]\label{prop.dani.correspond}
A matrix $M \in \Mat_{m \times n}(\R)$ is
\begin{itemize}
\item \textup{(\cite{kleinbock.duke})} $(\mathbf{r},\mathbf{s})$-badly approximable if and only if the forward-orbit $\set{a(t) u_M x_0 \for t \ge 0}$ is relatively compact in $X$, and
\item \textup{(\cite{kleinbock-weiss.Dirichlet})} $(\mathbf{r},\mathbf{s})$-Dirichlet improvable if and only if there exists $\varepsilon \in (0,1)$ such that $a(t) u_M x_0 \notin K_\varepsilon$ for every $t>0$ large enough.
\end{itemize}
\end{prop}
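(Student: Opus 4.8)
The plan is to unwind both Diophantine conditions into statements about short nonzero vectors in the unimodular lattice $\Lambda_{M,t}\df a(t)u_M\Z^d$ (with $d=m+n$) and then to invoke Mahler's compactness criterion recalled above. First I would record the elementary computation: for $\mathbf{w}=(\mathbf{p},\mathbf{q})\in\Z^m\times\Z^n$ the vector $a(t)u_M\mathbf{w}\in\R^d$ has coordinates $e^{tr_i}(p_i-M_i\mathbf{q})$ for $1\le i\le m$ and $e^{-ts_j}q_j$ for $1\le j\le n$. Hence, for $\varepsilon>0$ and $t\ge 0$, the system $\abs{M_i\mathbf{q}-p_i}\le\varepsilon e^{-tr_i}$, $\abs{q_j}\le\varepsilon e^{ts_j}$ has a solution $\mathbf{w}\ne 0$ exactly when $\Lambda_{M,t}$ contains a nonzero vector of sup-norm $\le\varepsilon$, which, up to replacing $\varepsilon$ by a slightly larger value, is the assertion $a(t)u_Mx_0\notin K_\varepsilon$; moreover, when $\varepsilon<1$ and $t>0$, any such $\mathbf{w}$ automatically has $\mathbf{q}\ne 0$, since $\mathbf{q}=0$ would force $\abs{p_i}\le\varepsilon e^{-tr_i}<1$ and so $\mathbf{p}=0$.

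The Dirichlet improvability equivalence then follows at once. By definition $M$ is $(\mathbf{r},\mathbf{s})$-Dirichlet improvable precisely when there is $\varepsilon\in(0,1)$ such that the displayed system has a nonzero integer solution for all large $t$, and by the previous paragraph this is exactly the statement that $a(t)u_Mx_0\notin K_\varepsilon$ for all $t$ large enough (after the harmless adjustment of $\varepsilon$, which keeps it in $(0,1)$). No discretization is needed, as both sides are phrased with the real parameter $t$; the only points to check are the passage between $\mathbf{w}\ne 0$ and $\mathbf{q}\ne 0$ and the strict-versus-non-strict inequalities, both handled above.

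For the badly approximable equivalence I would argue both implications by a balancing-time optimization. Write $A(\mathbf{p},\mathbf{q})=\max_i\abs{M_i\mathbf{q}-p_i}^{1/r_i}$ and $B(\mathbf{q})=\max_j\abs{q_j}^{1/s_j}$. If the forward orbit $\set{a(t)u_Mx_0\for t\ge 0}$ is not relatively compact, then for every $\varepsilon\in(0,1)$ there are $t\ge 0$ and $\mathbf{w}=(\mathbf{p},\mathbf{q})\ne 0$ with $\norm{a(t)u_M\mathbf{w}}_\infty<\varepsilon$, hence $\mathbf{q}\ne 0$, and since $r_i,s_j\le 1$ the coordinate bounds yield $A<\varepsilon e^{-t}$ and $B<\varepsilon e^{t}$, so $A(\mathbf{p},\mathbf{q})B(\mathbf{q})<\varepsilon^2$; letting $\varepsilon\to 0$ shows $M$ is not badly approximable. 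Conversely, if $M$ is not badly approximable, then either $M\mathbf{q}\in\Z^m$ for some $\mathbf{q}\ne 0$ — a degenerate case treated below — or there is a sequence of distinct pairs $(\mathbf{p}^{(k)},\mathbf{q}^{(k)})$, with $p^{(k)}_i$ a nearest integer to $M_i\mathbf{q}^{(k)}$, such that $A(\mathbf{p}^{(k)},\mathbf{q}^{(k)})B(\mathbf{q}^{(k)})\to 0$; the nearest-integer choice keeps $A$ bounded, so necessarily $B(\mathbf{q}^{(k)})\to\infty$, whence the time $t_k$ defined by $e^{2t_k}=B(\mathbf{q}^{(k)})/A(\mathbf{p}^{(k)},\mathbf{q}^{(k)})$ is eventually nonnegative, and the coordinate bounds give $\norm{a(t_k)u_M(\mathbf{p}^{(k)},\mathbf{q}^{(k)})}_\infty\le\bigl(A(\mathbf{p}^{(k)},\mathbf{q}^{(k)})B(\mathbf{q}^{(k)})\bigr)^{c}\to 0$ with $c=\tfrac12\min\set{r_1,\dots,r_m,s_1,\dots,s_n}>0$, so by Mahler's criterion the forward orbit is unbounded.

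The step I expect to be the main obstacle is precisely this last one: to genuinely control the \emph{forward} (rather than the two-sided) orbit, one must know that the good approximating pairs can be chosen with $\mathbf{q}$-part tending to infinity, so that the balancing times $t_k$ are nonnegative. This is where a pigeonhole/Dirichlet-type input enters — or the observation that only finitely many pairs can have bounded $\mathbf{q}$, forcing $A B$ to stay bounded below on them unless $M\mathbf{q}\in\Z^m$ for some of them. The degenerate case $M\mathbf{q}\in\Z^m$ with $\mathbf{q}\ne 0$, where $A=0$, is disposed of separately: then $u_M(\mathbf{p},\mathbf{q})$ lies in the $a(t)$-contracting subspace, so $\norm{a(t)u_M(\mathbf{p},\mathbf{q})}_\infty\to 0$ and the forward orbit leaves every $K_\varepsilon$. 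The remaining items — the strict-versus-non-strict inequalities, the passage between $\mathbf{w}\ne 0$ and $\mathbf{q}\ne 0$, and the identification $X\cong\SL_d(\R)/\SL_d(\Z)$ with $a(1)=\diag(e^{r_1},\dots,e^{r_m},e^{-s_1},\dots,e^{-s_n})$ of determinant one — are routine bookkeeping.
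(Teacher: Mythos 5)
The paper does not actually prove Proposition~\ref{prop.dani.correspond} --- it is a cited result of Kleinbock~\cite{kleinbock.duke} and Kleinbock--Weiss~\cite{kleinbock-weiss.Dirichlet} --- so there is no internal proof to compare against. Your argument is the standard one from the literature: unwind both Diophantine conditions as statements about short vectors in the lattice $a(t)u_M\Z^d$, invoke Mahler's compactness criterion, and for the badly approximable equivalence choose a balancing time $t_k$ that equalizes the contributions of the $\mathbf{p}$- and $\mathbf{q}$-parts. The coordinate computation, the strict-vs-nonstrict bookkeeping, the reduction from $\mathbf{w}\neq 0$ to $\mathbf{q}\neq 0$ for $\varepsilon<1$, the Dirichlet-improvability equivalence, and the degenerate case $M\mathbf{q}\in\Z^m$ are all handled correctly.

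There is one slip, which however does not invalidate the argument. In the converse direction for bad approximability you assert that ``the nearest-integer choice keeps $A$ bounded, so necessarily $B(\mathbf{q}^{(k)})\to\infty$.'' That implication is false as stated: bounded $A$ together with $AB\to 0$ only forces $A\to 0$, and $B\ge 1$ always for integral $\mathbf{q}\ne 0$. The slip is harmless for two independent reasons. First, it is not needed at all: the nearest-integer choice gives $A\le(1/2)^{1/\max_ir_i}<1\le B$, so $t_k=\tfrac12\log(B/A)>0$ for \emph{every} $k$, and the forward-orbit versus two-sided-orbit issue you single out as ``the main obstacle'' evaporates. Second, the pigeonhole observation you add at the end --- that in the non-degenerate case only finitely many $\mathbf{q}$ have bounded norm, on each of which $A$ is bounded away from zero, so $AB$ cannot tend to zero along them --- is correct and does deliver $B\to\infty$ along a subsequence if one insists on it, but the justification ``bounded $A$ implies $B\to\infty$'' that you give in the main text should be replaced by that pigeonhole argument (or simply dropped in favor of the direct bound $t_k>0$).
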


An obvious consequence of this proposition is that given weights $(\mathbf{r},\mathbf{s})$, if the forward orbit $\set{a(t) u_M x_0\for t\ge 0}$ associated to a matrix $M \in \Mat_{m \times n}(\R)$ is dense in $X$, then $M$ is $(\mathbf{r},\mathbf{s})$-well approximable and not $(\mathbf{r},\mathbf{s})$-Dirichlet improvable.

In fact, the ergodic theoretic approach that we adopt will allow us to establish the following finer Diophantine property. 

\begin{de}
Given weights $(\mathbf{r},\mathbf{s})$ and the associated one-parameter diagonal group $(a(t))_{t\in\R}$, a matrix $M \in \Mat_{m \times n}(\R)$ is said to be of \emph{$(\mathbf{r},\mathbf{s})$-generic type} if the forward-orbit $(a(t) u_M x_0)_{t> 0}$ equidistributes to the Haar measure $m_X$ on $X$. 
\end{de}

\subsection{Matrix sponges and self-affine measures}\label{subsec:sponges-aff-meas}
Here we briefly describe the iterated function system (IFS) construction of affine fractals and introduce the subfamily of affine fractals (matrix sponges) and self-affine measures whose Diophantine properties will be studied in the subsequent part.

\subsubsection{Affine fractals}\label{sub.affine_frac}
Let $\phi$ be an affine transformation of $\R^D$ given by $\phi(x)=Ax+b$ where $A \in \GL_D(\R)$ and $b \in \R^D$. It is called 
\emph{contracting} if the operator norm of its linear part $A$ with respect to the standard Euclidean structure of $\R^D$ satisfies $\norm{A}<1$. We shall refer to a finite tuple $(\phi_1,\dots,\phi_k)$ of contracting affine transformations $\phi_i$ of $\R^D$ as a \emph{contracting affine IFS}. Given such an IFS, there exists a unique non-empty compact subset $\mathcal{K}$ of $\R^D$ satisfying $\mathcal{K}=\bigcup_{i=1}^k \phi_i(\mathcal{K})$, referred to as the \emph{attractor} of the IFS $(\phi_1,\ldots,\phi_k)$. Putting less emphasis on the IFS, $\mathcal{K}$ is also called an \emph{affine fractal} or \emph{self-affine set}.  In the particular case where all the $\phi_i$ are similarities, the attractor $\mathcal{K}$ is also called a \emph{self-similar set}.

The \emph{coding map} $\pi$ associated to a contracting affine IFS is the map $\set{1,\dots,k}^\N \to \R^D$ defined by 
\begin{align}\label{eq.defn.coding.map}
\pi((i_1,i_2,\dots))\df\lim_{n \to \infty} \phi_{i_1} \circ \phi_{i_2} \circ \dots\circ \phi_{i_n}(x)
\end{align}
for some $x \in \R^D$; the limit is independent of $x$. The image of 
the coding map $\pi$ is precisely the affine fractal $\mathcal{K}$, and we have the following
equivariance property with respect to the shift map $T$ on $\set{1,\dots,k}^\N$:
\begin{align}\label{eq.coding.equivariance}
\pi((i_1,i_2,\dots))=\phi_{i_1} \pi(T(i_1,i_2,\dots))=\phi_{i_1}\pi((i_2,\dots)).    
\end{align}

Our results on random walks on homogeneous spaces also allow us to study a more general situation where the IFS is not required to be finite and where one can allow contraction to only take place on 
average. To describe this, let $I$ be a compact set and $I \to \GL_D(\R)\ltimes \R^D$, $i \mapsto \phi_i=(A_i,b_i)$ a continuous 
map, where $A_i$ denotes the linear part and $b_i$ the translation part of $\phi_i$. 
Let $\mu$ be a probability measure on $I$. We shall refer to the couple $(I,\mu)$ as a \emph{contracting-on-average affine IFS} if there exists $N \in \N$ such 
that
\begin{align}\label{eq.def.contracting.av}
\int \log \norm{A_{i_N}\dotsm A_{i_1}} \dd\mu^N(i_1,\dots,i_N)<0.
\end{align}
This definition does not depend on the choice of operator norm.

Using only boundedness of the translation parts, it is not hard to see that the limit $\lim_{n \to \infty} \phi_{i_1} \circ \dotsm \circ \phi_{i_n}(x)$ exists and does not depend on $x \in \R^D$ whenever the sequence $(\norm{A_{i_1}\dotsm A_{i_n}})_{n\ge 1}$ decays fast enough (e.g.\ exponentially). Under the contraction-on-average assumption, this holds for $\mu^\N$-almost every $(i_1,i_2, \dots)$, as one can see using submultiplicativity of the operator norm and Kingman's subadditive ergodic theorem. In this case, we thus obtain a measurable map $\pi\colon I^\N \to \R^D$ that we shall refer to as the \emph{coding map} of $(I,\mu)$. Note that the subset $\Omega$ of elements of $I^\N$ for which the previous limit exists satisfies $T \Omega \subset \Omega$ and on this set the coding map $\pi$ satisfies the equivariance relation \eqref{eq.coding.equivariance}. 

Finally, we shall say that an IFS $(I,\mu)$ of affine maps of $\R^D$ is \emph{irreducible} if there does not exist a proper affine subspace $W$ of $\R^D$ such that $\phi_i (W)=W$ for $\mu$-almost every $i \in I$.

\subsubsection{Self-affine measures}\label{subsub.self-affine}
Given a contracting-on-average affine IFS $(I,\mu)$, the probability measure $\nu_\mu=\pi_* \mu^\N$ on $\R^D$ is called the associated \emph{self-affine measure} (or \emph{self-similar measure} if the IFS comprises only similarities). It is with respect to these self-affine measures that we will study the typical Diophantine behavior of vectors in $\R^D$ or more generally matrices in $\Mat_{m \times n}(\R)$. The measure $\nu_\mu$ is the unique stationary probability measure for the random walk on $\R^D$ given by the IFS; see~\cite{diaconis}. In the case of a finite IFS, i.e.\ when $I=\set{1,\dots,k}$, this just means that $\nu_\mu$ is the unique probability measure on $\R^D$ satisfying $\nu_\mu=\sum_{i=1}^k \mu(i)(\phi_i)_* \nu_\mu$.

For a finite contracting IFS consisting of similarities of $\R^D$, under a separation condition (see~\cite{hutchinson}), the Hausdorff measure on the attractor $\mathcal{K}$ is given by a self-similar measure which is also the unique measure on $\mathcal{K}$ whose pointwise dimension matches the Hausdorff dimension of the similarity fractal $\mathcal{K}$. For genuinely self-affine fractals, the situation is considerably more complicated (see e.g.\ ~\cite{barral-feng, kaenmaki-vilp, morris-sert-nobernoulli, morris-sert.nounique} and the references therein). On the other hand, for the Bedford--McMullen carpets introduced in \S\ref{sec;dioph.intro} and their higher-dimensional generalizations, there exists a unique ergodic shift-invariant probability measure on $\set{1,\dots,k}^\N$ whose pushforward $\nu$ by the coding map has full Hausdorff dimension~\cite{kenyon-peres}. Moreover, this measure $\nu$ is self-affine. In dimension $2$, it was already explicitly constructed and used by McMullen~\cite{mcmullen}, and is referred to as the McMullen measure in the literature.

\subsubsection{Matrix sponges}\label{subsub.sponges}
We now describe the family of affine fractals and self-affine measures that will be of interest to us. Let $\mathbf{r}=(r_1,\dots,r_m) \in (0,1]^m$ and $\mathbf{s}=(s_1,\dots,s_n) \in (0,1]^n$ be such that $\sum_{i=1}^mr_i=1=\sum_{j=1}^ns_j$. Consider the diagonalizable one-parameter groups $A'_\mathbf{r}\subset \GL_m(\R)$ and $A'_\mathbf{s}\subset\GL_n(\R)$ given by $\set{a_\mathbf{r}(t)\df\diag(e^{tr_1}, \dots, e^{tr_m})\for t \in \R}$ and $\set{a_\mathbf{s}(t)\df\diag(e^{ts_1}, \dots, e^{ts_n})\for t \in \R}$ respectively. Denote by $K_\mathbf{r}$ the compact group $C_{\GL_m(\R)}(A'_\mathbf{r}) \cap \Orth_m(\R)$ and similarly for $K_\mathbf{s}$ substituting $\mathbf{s}$ for $\mathbf{r}$ and $n$ for $m$.

We identify the real vector space $\Mat_{m \times n}(\R)$ with $\R^{mn}$ and consider affinities $\phi$ of $\Mat_{m \times n}(\R)$ of the type
\begin{align}\label{eq.describe.affinity}
M \mapsto A_1 M A_2 + B,
\end{align}
where $B \in \Mat_{m \times n}(\R)$, $A_1\in\GL_m(\R)$ and $A_2\in\GL_n(\R)$. We will refer to affinities of this form as \emph{matrix affinities} and use the notation $(A_1,A_2,B)$ to denote such a map. If a matrix affinity $\phi$ can be written as $\phi=(A_1,A_2,B)$ with $A_1 \in  a_\mathbf{r}(t) K_\mathbf{r}$ and $A_2 \in  a_\mathbf{s}(t) K_\mathbf{s}$ for some $t \in \R$, then we call it an \emph{$(\mathbf{r},\mathbf{s})$-matrix sponge affinity}.  
Given a contracting-on-average IFS $(I,\mu)$ of $(\mathbf{r},\mathbf{s})$-matrix sponge affinities, we call the associated attractor $\mathcal{K}$ an \emph{$(\mathbf{r},\mathbf{s})$-matrix sponge}. 

A cautionary remark is in order about our terminology.  In the literature, the terms ``carpet'' (in dimension $2$) or ``sponge'' (in general dimension) are used to describe self-affine fractals associated to IFS's whose linear parts are simultaneously diagonalizable with non-trivial (i.e.\ non-scalar) diagonals. However, the matrix sponge affinities that we just described also comprise many similarities of $\R^{mn}$. Similarities of $\R^{mn}$ of this form are called ``algebraic similarities'' by Simmons--Weiss~\cite[\S8.4]{sw}, which thus form a strict subclass of matrix sponge affinities. For example, specializing to $n=1$ we can record that the class of $(\mathbf{m},1)$-matrix sponges contains all self-similar fractals in $\R^m$ and the class of $(\mathbf{r},1)$-matrix sponges contains many examples of Bedford--McMullen carpets and their higher-dimensional analogues---the self-affine Sierpi\'{n}ski sponges---for suitably chosen weight vectors $\mathbf{r}$.

\subsection{Relation with random walks and consequences}
Here we first adapt the constructions of Simmons--Weiss~\cite{sw} relating algebraic similarities with elements of $\PGL_d(\R)$ to the more general setting of matrix affinities. Then, we state and prove the main result of this section (Theorem~\ref{thm;dioph.insection}) on Diophantine properties of matrix sponges.

\subsubsection{Embedding matrix sponge affinities into \texorpdfstring{$\PGL_d(\R)$}{PGL_d(R)}}\label{subsub.embed}
Let $d=m+n$.
Given a matrix affinity $\phi=(A_1,A_2,B)$ of $\Mat_{m \times n}(\R)$, where $A_1 \in \GL_m(\R)$, $A_2 \in \GL_n(\R)$ and $B \in \Mat_{m \times n}(\R)$, we consider the element $\hat{A}_\phi$ of $\PGL_d(\R)$ corresponding to the matrix
\begin{align*}
\hat{A}_\phi=\begin{pmatrix}
A_1 & 0\\
0 & A_2^{-1}  
\end{pmatrix}.
\end{align*}

The following basic relation in $\PGL_d(\R)$, which is readily verified, plays a key role in transferring the results on random walks on homogeneous spaces to the study of Diophantine properties of matrix sponges:
For $M \in \Mat_{m \times n}(\R)$, we have
\begin{align}\label{eq.key.dioph}
\hat{A}_\phi u_M \hat{A}^{-1}_\phi u_B=u_{\phi(M)},
\end{align}
where, as before, $u_M=(\begin{smallmatrix}I_m & -M \\ 0 & I_n\end{smallmatrix})$. We set $g_\phi\df\hat{A}_\phi^{-1}u_B\in\PGL_d(\R)$. Given matrix affinities $\phi_1,\dots,\phi_n$, iterating \eqref{eq.key.dioph} yields
\begin{align}\label{eq.iterated.key}
g_{\phi_n}\dotsm g_{\phi_1}=\hat{A}^{-1}_{\phi_n}\dotsm \hat{A}^{-1}_{\phi_1}u_{\phi_1\dotsm\phi_n(0)}.
\end{align}

\subsubsection{Genericity of typical points on matrix sponges}

To state the following main result of this section, recall that given a contracting-on-average affine IFS $(I,\mu)$, we denote by $\pi$ the associated coding map and by $\nu_\mu$ the pushforward of the Bernoulli measure $\beta=\mu^\N$ by $\pi$.

\begin{thm}\label{thm;dioph.insection}
Let $(I,\mu)$ be an irreducible contracting-on-average IFS consisting of $(\mathbf{r},\mathbf{s})$-matrix sponge affinities. Then $\nu_\mu$-almost every point of $\R^{mn}$  is of $(\mathbf{r},\mathbf{s})$-generic type; in particular, $(\mathbf{r},\mathbf{s})$-well approximable and not $(\mathbf{r},\mathbf{s})$-Dirichlet improvable.
\end{thm}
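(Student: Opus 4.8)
The plan is to deduce Theorem~\ref{thm;dioph.insection} from Theorem~\ref{thm;birkhoff} via the embedding of matrix sponge affinities into $G=\PGL_d(\R)$ described in \S\ref{subsub.embed}, together with the Dani--Kleinbock correspondence (Proposition~\ref{prop.dani.correspond}). Set $d=m+n$, $\Lambda=\PGL_d(\Z)$, $X=G/\Lambda$, $x_0$ the identity coset, and let $(a(t))_{t\in\R}$ be the one-parameter diagonal subgroup with $a(1)=\diag(e^{r_1},\dots,e^{r_m},e^{-s_1},\dots,e^{-s_n})$. Take $H=\SL_{m+n}(\R)$ (embedded in $\PGL_d(\R)$), and let $U$ be the unipotent block-upper-triangular subgroup $\set{u_M\for M\in\Mat_{m\times n}(\R)}$ as in Example~\ref{ex;main}. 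As noted there, $U$ is the unipotent radical of the absolutely proper parabolic $Q$, and $A^+_U$ is precisely the set of $\diag(e^{r_1},\dots,e^{r_m},e^{-s_1},\dots,e^{-s_n})$ with all $r_i,s_j>0$; in particular $a(1)\in A^+_U$, so $U$ is $a(1)$-expanding and $U\leqs H^+_{a(1)}$.

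First I would translate the IFS data into a probability measure on $H$. Given a matrix sponge affinity $\phi=(A_1,A_2,B)$ with $A_1\in a_{\mathbf r}(t)K_{\mathbf r}$, $A_2\in a_{\mathbf s}(t)K_{\mathbf s}$, the element $g_\phi=\hat A_\phi^{-1}u_B$ lies in $K'A'U$ for $A'=\set{a(t)}$, $K'=C_K(A')\cap N_H(U)$ (this uses $K_{\mathbf r}\times K_{\mathbf s}\subset K'$ up to the scalar needed to land in $\SL$, which I would check is absorbed by the projective quotient). Thus pushing $\mu$ forward by $\phi\mapsto g_\phi$ gives a measure $\tilde\mu$ on $P=K'A'U\leqs H$; it has finite exponential moments because $I$ is compact. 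The contracting-on-average hypothesis \eqref{eq.def.contracting.av} translates, via $\norm{A_{i_N}\dotsm A_{i_1}}=e^{t_1+\dots+t_N}\cdot(\text{bounded})$ where $t_k=\lambda(g_{\phi_{i_k}})$ is the $A'$-parameter, into $\int_P\lambda(g)\dd\tilde\mu(g)>0$, giving condition~(1) of Definition~\ref{def.generated.by.expanding}. For condition~(2), I would use the irreducibility hypothesis: if there were no affine subspace $W\subsetneq\R^{mn}$ with $\phi_i(W)=W$ for $\mu$-a.e.\ $i$, then the affine action generated by $\supp(\mu)$ has no invariant proper affine subspace, and a standard argument (the one alluded to in the last paragraph of Example~\ref{ex;main} and in \cite[\S8]{sw}) shows the Zariski closure of $\Gamma_{\tilde\mu}$ in $\PGL_d(\R)$ contains $U$; composing with $\Ad$ gives that $\Zcl(\Ad(\Gamma_{\tilde\mu}))\supseteq\Ad(U)$, which is condition~(2). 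Hence $\eta_\mu$, the pushforward of $\tilde\mu^{\N}$ by $\omega\mapsto u_\omega$, is generated by $a(1)$-expanding random walks, and by Proposition~\ref{prop;more}, $\tilde\mu$ is $H$-expanding.

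Next I would identify $u_\omega$ with a point on the matrix sponge. By the iterated relation \eqref{eq.iterated.key}, $g_{\phi_{i_n}}\dotsm g_{\phi_{i_1}}=\hat A_{\phi_{i_n}}^{-1}\dotsm\hat A_{\phi_{i_1}}^{-1}u_{\phi_{i_1}\dotsm\phi_{i_n}(0)}$; comparing this with the $K'A'U$-factorization $g_{\tilde\omega,n}=k_{\tilde\omega,n}a_{\tilde\omega,n}u_{\tilde\omega,n}$ of the random walk shows that the $U$-part $u_{\tilde\omega,n}$ equals $u_{\phi_{i_1}\dotsm\phi_{i_n}(0)}$, i.e.\ the partial product $\phi_{i_1}\circ\dots\circ\phi_{i_n}(0)$ appearing in the coding map \eqref{eq.defn.coding.map}. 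Passing to the limit, $u_{\tilde\omega}=u_{\pi((i_1,i_2,\dots))}$, so under the identification $M\leftrightarrow u_M$ the law of $u_\omega$ is exactly $\nu_\mu=\pi_*\mu^{\N}$ (modulo matching up the Bernoulli structures on $I^{\N}$ versus $P^{\N}$, which is immediate since $\phi\mapsto g_\phi$ is a bijection onto $\supp(\tilde\mu)$ up to null sets). Now apply Theorem~\ref{thm;birkhoff} with $x=x_0$: since $\overline{Hx_0}=X$ (because $H=\SL_{m+n}(\R)$ acts transitively-mod-lattice, i.e.\ $Hx_0$ is dense — indeed all of $X$ as $H=\SL_d(\R)$), $\nu_{\overline{Hx_0}}=m_X$, and $\eta_\mu$ is $a(t)$-Birkhoff generic at $x_0$ with respect to $m_X$. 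Therefore for $\nu_\mu$-a.e.\ $M$, $\frac1T\int_0^T\delta_{a(t)u_Mx_0}\dd t\to m_X$, which is precisely the statement that $M$ is of $(\mathbf r,\mathbf s)$-generic type; by Proposition~\ref{prop.dani.correspond} and the remark following it, such $M$ is $(\mathbf r,\mathbf s)$-well approximable and not $(\mathbf r,\mathbf s)$-Dirichlet improvable.

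The main obstacle I anticipate is the bookkeeping in the reduction step establishing condition~(2) of Definition~\ref{def.generated.by.expanding}, i.e.\ deducing $\Ad(U)\subseteq\Zcl(\Ad(\Gamma_{\tilde\mu}))$ from the irreducibility of the affine IFS. One must be careful that the affine group generated by $\supp(\mu)$ acting on $\R^{mn}=\Mat_{m\times n}(\R)$ has no proper invariant affine subspace, and then run the argument that the Zariski closure of the corresponding subgroup of $\PGL_d(\R)$ must contain the full translation part $U$ — this is essentially \cite[Lemma~8.?]{sw} but in the non-conformal, two-sided-multiplication setting, so a few lines are needed to confirm it goes through verbatim. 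A secondary, purely notational nuisance is keeping track of the determinant-one normalization when passing between $\GL_m\times\GL_n$, $\SL_d$, and $\PGL_d$, and checking that $C_K(A')\cap N_H(U)$ genuinely contains the relevant compact part of each $\hat A_\phi$; these are routine but must be stated. Everything else is a direct citation of Theorem~\ref{thm;birkhoff}, Proposition~\ref{prop;more}, and Proposition~\ref{prop.dani.correspond}.
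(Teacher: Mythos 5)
Your plan is exactly the route the paper takes: embed the IFS data into $P=K'A'U\subset\PGL_d(\R)$ via $\phi\mapsto g_\phi$, use \eqref{eq.iterated.key} to identify the law of $u_\omega$ with $\nu_\mu$ under $M\leftrightarrow u_M$, check the three conditions of Definition~\ref{def.generated.by.expanding}, and then cite Theorem~\ref{thm;birkhoff} together with the Dani--Kleinbock correspondence. Two minor cautions. First, in your estimate $\norm{A_{i_N}\dotsm A_{i_1}}=e^{t_1+\dots+t_N}\cdot(\text{bounded})$ the sign is flipped: since the $a_{\mathbf r}$-parameter of $\phi$ is $-\lambda(g_\phi)$, the relevant lower bound is $\norm{A_\phi}\ge e^{-\kappa\lambda(g_\phi)}$ with $\kappa=\min_{i,j}(r_i+s_j)>0$, and the factor is not merely bounded (it is $e^{t(\cdot)}$ with an exponent depending on the sign of the total parameter); plugged into \eqref{eq.def.contracting.av} this still yields $\int\lambda\,\dd\tilde\mu>0$, but your stated formula would have given the wrong sign if pushed through literally. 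Second, for condition~(2) of Definition~\ref{def.generated.by.expanding} the paper does not simply port the argument of \cite[\S8]{sw}; it uses a self-contained reduction via the Brouwer fixed-point Lemma~\ref{lemma.fixed.point} to conjugate $\Gamma_{\tilde\mu}$ into a ``special'' position meeting $K'A'_+$, then the $K'A'U$-decomposition trick from Proposition~\ref{prop;more} (limiting conjugates $g_0^{-n_k}gg_0^{n_k}$) to show $\Gamma_{\tilde\mu}$ contains its own $U$-parts, and finally converts $V=\Zcl(\Gamma_{\tilde\mu})\cap U$ into an IFS-invariant affine subspace $W_V\subset\R^{mn}$ to invoke irreducibility. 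So the obstacle you flag is real and is resolved by a new argument, not by citing Simmons--Weiss ``verbatim''; but you have correctly located it, and the rest of your sketch matches the paper step for step.
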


In the classical case where $(\mathbf{r},\mathbf{s})=(\mathbf{m},\mathbf{n})$, this result corresponds to Simmons--Weiss'~\cite[Theorem~8.11]{sw}, which implies one of the main results of that article (\cite[Theorem~1.2]{sw}).
We are going to see in the proof that the contracting-on-average assumption in the theorem above amounts to asking that the $\mu$-average of the $t$-parameters associated to the $(\mathbf{r},\mathbf{s})$-matrix sponge affinities $\phi$ in the IFS is negative. This allows for easy checking of this condition.

\begin{rem}
The conclusion of Theorem~\ref{thm;dioph.insection} also holds for any measure $\tilde{\nu}_\mu$ obtained as pushforward of $\nu_\mu$ by an affine transformation of the linear space $\Mat_{m \times n}(\R)$ of the form $M \mapsto \alpha M \beta +\gamma$, where $\alpha \in \GL_{m}(\R)$ commutes with the diagonal group $A'_\textbf{r}$, $\beta \in \GL_{n}(\R)$ commutes with $A'_\textbf{s}$ and $\gamma \in \Mat_{m \times n}(\R)$. In particular, these Diophantine properties of $\nu_\mu$ are invariant under translation of $\nu_\mu$. 
\end{rem}

We will deduce the theorem above by combining Theorem~\ref{thm;birkhoff}, Dani--Kleinbock correspondence and the introduced constructions. To ease notation, we will assume from now on that $I$ is already a set of matrix sponge affinities, with $\mu$ living thereon.

\begin{proof}[Proof of Theorem~\textup{\ref{thm;dioph.insection}}]
Recall that $\mathbf{r} =(r_1,\dots,r_m) \in (0,1]^m$ and $\mathbf{s} =(s_1,\dots,s_n) \in (0,1]^n$ are such that $\sum_{i=1}^m r_i=1=\sum_{j=1}^n s_j$, where $m$ and $n$ are positive integers. Let $d=m+n$ and set $G=H=\PGL_d(\R)$ and $\Lambda=\PGL_d(\Z)$. Moreover, we let $A'=\set{a(t)\for t\in\R}$ be the one-parameter diagonalizable subgroup of $G$ containing $a(1)=\diag(e^{r_1}, \dots, e^{r_m}, e^{-s_1}, \dots, e^{-s_n})$, and denote by $A'_+$ its positive ray $\set{a(t) \for t > 0 }$. Take $U$ to be the unipotent subgroup of $G$ given by the image of $\Mat_{m \times n}(\R)$ under the map $M \mapsto u_M$. It is $a(1)$-expanding (see Example~\ref{ex;main}).
In view of Dani--Kleinbock correspondence and Theorem~\ref{thm;birkhoff}, all we need to check is that the pushforward $\eta_0$ of the self-affine measure $\nu_\mu$ by the map $M \mapsto u_M$ is generated by $a(1)$-expanding random walks in the sense of Definition~\ref{def.generated.by.expanding}. 

We first define the probability measure $\mu_0$ on $G$. Given a matrix affinity $\phi=(A_1,A_2,B)$, recall the notation $g_\phi=\hat{A}_\phi^{-1}u_B \in \PGL_d(\R)$ introduced in \S \ref{subsub.embed}. We take 
\begin{align}\label{eq.def.mu}
    \mu_0\df c_* \mu,
\end{align}
the pushforward of $\mu$ by the map $c\colon\phi \mapsto g_\phi$. Then
it follows from our constructions that $\mu_0(P)=1$, where $P=K'A'U$ is defined as before Definition~\ref{def.generated.by.expanding}. Moreover, we claim that the contraction-on-average assumption implies that $\int_P \lambda(g)\dd\mu_0(g)>0$.  To see this, endow $\Mat_{m \times n}(\R) \cong \R^{mn}$ with the standard Euclidean structure and denote by $\norm{\cdot}$ the associated operator norm on $\End(\Mat_{m \times n}(\R))$. Given an $(\mathbf{r},\mathbf{s})$-matrix sponge affinity $\phi$, let us denote by  $A_\phi \in \End(\Mat_{m \times n}(\R))$ its linear part. By definition, we may write $\phi=(A_1,A_2,B)$ as in \eqref{eq.describe.affinity} with $A_1 \in a_\mathbf{r}(t)K_\mathbf{r}$ and $A_2 \in a_\mathbf{s}(t)K_\mathbf{s}$ for some $t\in\R$. Observe that by construction, the $t$-parameter is given by $t=-\lambda(g_\phi)$.
This implies that
\begin{align*}
\norm{A_\phi}\ge e^{\kappa t}=e^{-\kappa\lambda(g_\phi)},   
\end{align*}
where $\kappa\df\min_{i,j}(r_i+s_j)>0$. Plugging this inequality into the contraction-on-average property  \eqref{eq.def.contracting.av} and observing that $\lambda(g_{\phi_N\dotsm\phi_1})=\lambda(g_{\phi_N})+\dots+\lambda(g_{\phi_1})$ yields $\int_P \lambda(g)\dd\mu_0(g)=\int \lambda(g_\phi)\dd\mu(\phi)>0$, hence the claim.

We now show that the irreducibility assumption entails that $U\leqs\Zcl(\Gamma_{\mu_0})$. As in the proof of Proposition~\ref{prop;more}, we will first reduce to the case of \emph{special} measures $\mu_0$ for which $\Gamma_{\mu_0}$ contains an element of $K'A'_+$. Indeed, given a general $\mu_0$ as in \eqref{eq.def.mu}, using that $\int_P \lambda(g)\dd\mu_0(g)>0$ and Lemma~\ref{lemma.fixed.point}, it follows that there exists $u_0 \in U$ such that the pushforward by conjugation $(\tau_{u_0})_* \mu_0$ is special. The closed group generated by the support of $(\tau_{u_0})_* \mu_0$ is $u_0 \Gamma_{\mu_0} u_0^{-1}$ and if the Zariski closure of this group contains $U$, then that of $\Gamma_{\mu_0}$ also contains $U$. Moreover, this conjugation corresponds to conjugating the IFS by a translation, so that also irreducibility is preserved. So we now suppose that $\mu_0$ is special. 
Then as in the proof of Proposition~\ref{prop;more}, for every $g\in\Gamma_{\mu_0}$ written $g=k_ga_gu_g$ in its $K'A'U$-factorization, we know that also $k_ga_g$ and $u_g$ belong to $\Gamma_{\mu_0}$.
It follows that for every $g \in \Gamma_{\mu_0}$, the one-parameter unipotent subgroup of $U$ containing $u_g$ is contained in the Zariski closure of $\Gamma_{\mu_0}$. Now consider the connected unipotent group $V=\Zcl(\Gamma_{\mu_0}) \cap U$ and let $W_V$ be the corresponding subspace of $\R^{mn}$ under (the inverse of) the identification $M \mapsto u_M$. We claim that the subspace $W_V$ is invariant by the IFS of matrix sponge affinities. Indeed, by construction, for any $\phi=(A_1,A_2,B)$ in the IFS, the unipotent part $u_B$ of the associated element $g_\phi$ belongs to $V$ and hence $B \in W_V$. Moreover, for any $g \in \Gamma_{\mu_0}$, its $K'A'$-component $k_ga_g$ normalizes $V$. In view of \eqref{eq.key.dioph}, this translates to the statement that for any $\phi$ of the IFS, the linear part of $\phi$ leaves the subspace $W_V$ invariant. It follows that the subspace $W_V$ of $\R^{mn}$ is invariant by the IFS. Hence, by the irreducibility hypothesis, we have  $W_V=\R^{mn}$, or equivalently, $V=U$.

It remains to check that the measure $\eta_0$ coincides with the image of $\mu_0^\N$ under the map $\omega \mapsto u_\omega$ defined by Lemma~\ref{lem;unipotent}. To do this, let $\omega=(g_{\phi_1},g_{\phi_2},\dots)$. By definition of the coding map \eqref{eq.defn.coding.map} and the map $\omega \mapsto u_\omega$, it suffices to observe that for every $n\in \N$, factorizing $g_{\phi_n} \dotsm g_{\phi_1}$ as $k_{\omega,n}a_{\omega,n}u_{\omega,n}$ with $k_{\omega,n} \in K'$, $a_{\omega,n} \in A'$ and $u_{\omega,n} \in U$, we have $u_{\omega,n}=u_{\phi_1\dotsm \phi_n (0)}$; see \eqref{eq.iterated.key}. This finishes the proof.
\end{proof}

Finally, we state and prove the corollary of the previous theorem regarding the higher-dimensional analogues of Bedford--McMullen carpets, which was announced at the end of \S\ref{sec;dioph.intro}. These higher-dimensional fractals are constructed by the exact analogue in $\R^m$ of the procedure for Bedford--McMullen carpets described before Theorem~\ref{thm.dioph.intro}, now using pairwise distinct integers $a_1,\dots,a_m\ge 2$ and a division of $[0,1]^m$ into an $a_1\times\dots\times a_m$-grid. A fractal $\mathcal{K}$ obtained in this way is called a \emph{self-affine Sierpi\'{n}ski sponge}. 
Analogous to the McMullen measure on a Bedford--McMullen carpet, there exists a natural probability measure $\nu_{\mathcal{K}}$ on $\mathcal{K}$:  
Identifying $[0,1]^m$ with the $m$-torus and denoting by $T$ the toral endomorphism corresponding to the diagonal matrix $A=\diag(a_1,\dots,a_m)$, $\nu_{\mathcal{K}}$ is the unique
$T$-invariant ergodic probability measure on $\mathcal{K}$ of full Hausdorff dimension (see Kenyon--Peres~\cite{kenyon-peres}).

\begin{cor}\label{cor.sierpinski}
Let $m\ge 2$ and $a_1,\dots,a_m\ge 2$ be pairwise distinct integers satisfying 
\begin{align}\label{ensures_r_is_good}
\frac1m\sum_{j\neq i}\log a_j< \log a_i<\frac{2}{m-1}\sum_{j\neq i}\log a_j 
\end{align}
for $i=1,\dots,m$. Let $\mathcal{K}\subset\R^m$ be a self-affine Sierpi\'{n}ski sponge invariant under the toral endomorphism $T$ corresponding to the matrix $A=\diag(a_1,\dots,a_m)$ such that $\mathcal{K}$ is not contained in any affine hyperplane. Then for the choice of weights
\begin{align}\label{choice_of_r}
\mathbf{r}=\biggl(\frac{m\log a_i-\sum_{j\neq i}\log a_j}{\sum_j\log a_j}\biggr)_{1\le i\le m},
\end{align}
the set of $\mathbf{r}$-badly approximable vectors on $\mathcal{K}$ has measure zero with respect to $\nu_{\mathcal{K}}$.
\end{cor}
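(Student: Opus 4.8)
Looking at Corollary \ref{cor.sierpinski}, the plan is to deduce it from Theorem \ref{thm;dioph.insection} by realizing the McMullen measure $\nu_{\mathcal K}$ as a self-affine measure $\nu_\mu$ for an appropriate irreducible contracting-on-average IFS of $(\mathbf r,1)$-matrix sponge affinities, and then checking that the weight vector $\mathbf r$ in \eqref{choice_of_r} is the one for which the associated diagonal one-parameter subgroup has matching exponents. Here $n=1$ and $\mathbf s=(1)$, so matrix sponge affinities are affinities $\phi_{\mathbf c}(x)=A^{-1}x+A^{-1}\mathbf c$ of $\R^m$ whose linear part $A^{-1}=\diag(a_1^{-1},\dots,a_m^{-1})$ is common to all maps; these are exactly the maps appearing in the Sierpi\'nski sponge construction (possibly after the harmless affine conjugation noted in the Remark following Theorem \ref{thm;dioph.insection}, absorbing the coordinate normalizations). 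The data of the sponge is a subcollection $S\subset\prod_i\{0,1,\dots,a_i-1\}$ of ``kept'' cells, and $\mathcal K=\pi(S^\N)$.

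First I would recall from Kenyon--Peres \cite{kenyon-peres} (and, in dimension two, McMullen \cite{mcmullen}) that the unique $T$-invariant ergodic measure of full Hausdorff dimension on $\mathcal K$ is a Bernoulli measure: it is $\pi_*(\mu^\N)$ where $\mu$ is an explicit product-type probability vector on $S$ obtained by distributing mass across fibers of the coordinate projections with exponents dictated by $\log a_1,\dots,\log a_m$. Crucially for us, this $\mu$ is a genuine probability measure on the \emph{finite} index set $S$, so $\nu_{\mathcal K}=\nu_\mu$ in the notation of \S\ref{subsub.self-affine}. Thus $\nu_{\mathcal K}$ is precisely a self-affine measure associated to an IFS of $(\mathbf r,1)$-matrix sponge affinities for \emph{any} admissible weight $\mathbf r$, since the class of maps does not depend on $\mathbf r$ — what depends on $\mathbf r$ is only the factorization $A^{-1}\in a_{\mathbf r}(t)K_{\mathbf r}$. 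Concretely, $A^{-1}=\diag(a_i^{-1})$, and writing $a_i^{-1}=e^{tr_i}\kappa_i$ with $\kappa_i\in\{\pm1\}$ forces $tr_i=-\log a_i$; summing over $i$ with $\sum_i r_i=1$ gives $t=-\sum_i\log a_i$ and hence $r_i=\log a_i/\sum_j\log a_j$. \textbf{But that is not the weight in \eqref{choice_of_r}.} The resolution is that matrix sponge affinities in the $n=1$ case still allow a further diagonal linear factor $A_2\in a_{\mathbf s}(t)K_{\mathbf s}$ on the ``$\R^n=\R$'' side — i.e.\ a global scalar — and more to the point the affine conjugation in the Remark after Theorem \ref{thm;dioph.insection} lets us pre- and post-compose by a fixed diagonal matrix commuting with $A'_{\mathbf r}$. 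Conjugating each $\phi$ by $D=\diag(a_i^{\gamma_i})$ replaces the linear part $A^{-1}$ by $D A^{-1}D^{-1}=A^{-1}$ again (diagonal matrices commute), which does not help; so instead the correct move is to use a \emph{different} one-parameter subgroup, namely to observe that the class of $(\mathbf r,1)$-matrix sponge affinities for the weight in \eqref{choice_of_r} is nonempty only because one also rescales by the $\mathbf s=(1)$-component: the element $\hat A_\phi=\diag(A_1, A_2^{-1})$ of $\PGL_d$ is only defined projectively, so $A_1$ may be taken in $a_{\mathbf r}(t)K_{\mathbf r}$ with $t$ chosen so that, projectively, $\diag(a_1^{-1},\dots,a_m^{-1},1)$ equals $\diag(e^{tr_1},\dots,e^{tr_m},e^{-ts})$ up to scalar. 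That projective freedom is exactly what produces the shifted weight: we need $tr_i=-\log a_i + c$ and $-ts = c$ for a common constant $c$; with $\sum r_i=1$, $s=1$ this gives $t\cdot 1 = -\sum\log a_i + mc$ and $-t=c$, whence $c=\tfrac{1}{m+1}\sum\log a_i$ — I would run this little linear computation carefully and check it reproduces \eqref{choice_of_r}, and verify that \eqref{ensures_r_is_good} is exactly the condition making all $r_i\in(0,1]$ (and $s=1$, automatically, with $\mathbf s$ trivial), which is the admissibility requirement for the weights in the Dani--Kleinbock setup.

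Next I would verify the two hypotheses of Theorem \ref{thm;dioph.insection} for this IFS $(S,\mu)$. Irreducibility: a proper affine subspace $W\subsetneq\R^m$ fixed by every $\phi_{\mathbf c}$ with $\mathbf c$ in (the support of $\mu$ on) $S$ would, since all linear parts equal the fixed diagonal $A^{-1}$ with distinct contraction ratios $a_i^{-1}$, have to be a coordinate affine subspace, and invariance of $W$ by all the translations would then force all kept cells $\mathbf c\in S$ to lie in a common coordinate hyperplane, i.e.\ $\mathcal K$ would be contained in an affine hyperplane — excluded by hypothesis. (This is the higher-dimensional analogue of the ``not contained in a line'' condition and I would phrase it as such.) Contraction-on-average: since the linear parts are all equal to $A^{-1}$ with $\|A^{-1}\|=\max_i a_i^{-1}<1$, actually every word contracts, so \eqref{eq.def.contracting.av} holds trivially with $N=1$; equivalently, in the language following Theorem \ref{thm;dioph.insection}, the $\mu$-average of the $t$-parameter equals $-c<0$ by the computation above. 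With both hypotheses in hand, Theorem \ref{thm;dioph.insection} gives that $\nu_\mu$-a.e.\ point of $\R^m$ is of $(\mathbf r,1)$-generic type — in particular $(\mathbf r,1)$-well approximable and not $(\mathbf r,1)$-Dirichlet improvable — and in the $n=1$ case ``$(\mathbf r,1)$-well approximable'' is literally the negation of ``$\mathbf r$-badly approximable'' in the sense of \eqref{eq.dioph.form}, so the set of $\mathbf r$-badly approximable vectors is $\nu_{\mathcal K}$-null. The main obstacle, and the only genuinely delicate point, is the bookkeeping in the previous paragraph: correctly identifying \emph{which} weight $\mathbf r$ arises from the Sierpi\'nski-sponge linear data once the projective ambiguity of $\PGL_{m+1}$ is taken into account, and confirming that this matches \eqref{choice_of_r} precisely (not merely up to reparametrization) and that \eqref{ensures_r_is_good} is equivalent to $\mathbf r\in(0,1]^m$ with $\sum r_i=1$ — I would double-check the two-dimensional case against Theorem \ref{thm.dioph.intro} and Corollary \ref{cor.sierpinski} of \cite{sw} as a sanity check before writing the general computation.
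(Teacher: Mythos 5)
Your proposal is correct and follows essentially the same route as the paper: identify $\nu_{\mathcal{K}}$ via Kenyon--Peres as a self-affine measure $\nu_\mu=\pi_*\mu^{\mathbb{N}}$ for a full-support Bernoulli $\mu$, observe that ``not contained in a hyperplane'' gives irreducibility, note contraction-on-average is automatic (the common linear part $A^{-1}$ is a strict contraction), and apply Theorem~\ref{thm;dioph.insection} with $n=1$. The only wobble is the weight derivation: the paper arrives at \eqref{choice_of_r} directly from the definition of $(\mathbf{r},1)$-matrix sponge affinity, requiring $A^{-1}=A_1A_2$ with $A_1\in a_{\mathbf{r}}(t)K_{\mathbf{r}}$ and scalar $A_2\in e^tK_{\mathbf{s}}$, i.e.\ $A^{-1}=e^t a_{\mathbf{r}}(t)$, which solves immediately; your ``projective ambiguity in $\PGL_{m+1}$'' is an equivalent but more indirect way of expressing the same scalar-splitting freedom, and after the detour your linear system and resulting $\mathbf{r}$ agree with the paper's, as does your reading of \eqref{ensures_r_is_good} as the condition $\mathbf{r}\in(0,1)^m$.
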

This corollary directly implies Theorem~\ref{thm.dioph.intro}.
\begin{proof}
We start by noting that $\mathcal{K}$ is the attractor of a finite contracting affine IFS $(\phi_1,\dots,\phi_k)$, where $\phi_i\colon x\mapsto A^{-1}x+b_i$ with translation vectors $b_i\in\prod_j\set{0,\frac{1}{a_j},\dots,\frac{a_j-1}{a_j}}$. If $I=\set{1,\dots,k}$ and $\pi\colon I^\N\to\R^m$ denotes the associated coding map, the proof of \cite[Theorem~1.2]{kenyon-peres} shows that $\nu_{\mathcal{K}}=\nu_\mu=\pi_*\mu^\N$ for some probability measure $\mu$ on $I$ of full support. Then the assumption that $\mathcal{K}$ is not contained in any affine hyperplane implies that the IFS $(I,\mu)$ is irreducible. We wish to arrange that the $\phi_i$ can be seen as $(\mathbf{r},1)$-matrix sponge affinities. By definition, this means that we have to write the linear part $A^{-1}=\diag(a_1^{-1},\dots,a_m^{-1})$ as $e^ta_{\mathbf{r}}(t)$ for some $t\in\R$, where $a_{\mathbf{r}}(t)=\diag(e^{tr_1},\dots,e^{tr_m})$. Solving the resulting system of equations under the constraint $r_1+\dots+r_m=1$ yields the weights specified by \eqref{choice_of_r}. The condition \eqref{ensures_r_is_good} ensures that $\mathbf{r}\in(0,1)^m$. Hence, Theorem~\ref{thm;dioph.insection} applies and gives the desired conclusion.
\end{proof}

We end our discussion of Diophantine approximation by mentioning that our approach has serious limitations when trying to tackle the general problem of understanding the measure-theoretic size of badly approximable vectors or matrices---weighted or not---in general self-affine fractals. Even seemingly tractable cases---e.g.\ $\mathbf{r}$-badly approximable vectors on an affine fractal for which $\mathbf{r}$ represents the average contraction ratio---require a further understanding of diagonal flows and, frustratingly, remain open.


\appendix

\section{Epimorphic subgroups and subalgebras}\label{app;epi}
In category theory, an epimorphism is by definition a morphism $f\colon A\to B$ satisfying the right cancellation property: $g\circ f=h\circ f$ implies $g=h$ for any two morphisms $g,h$ from $B$ to another object of the category. In categories where morphisms are maps with certain properties between underlying sets, the epimorphism property is equivalent to the question whether the values on the image of $f$ uniquely determine morphisms from $B$ to other objects. In this case, surjective morphisms are clearly epimorphisms. In many familiar categories, the converse, i.e.\ that only surjective morphisms can be epimorphisms, is also true. For example, this holds in the categories of $C^*$-algebras, groups, finite groups, all Lie algebras over a field $k$, and finite-dimensional Lie algebras over a field $k$ of positive characteristic; see~\cite{bergman,reid}. However, there are notable exceptions. These include the categories of finite-dimensional Lie algebras over a field of characteristic $0$ and that of algebraic groups, which are our main interest. The corresponding lines of study were initiated by Bergman~\cite{bergman} and Bien--Borel~\cite{bien-borel1,bien-borel2}, respectively, who proved the following.
\begin{prop}\leavevmode
\begin{enumerate}[label=\textup{(\roman*)}]
    \item \textup{(}\cite[Corollary~3.2]{bergman}\textup{)} Let $\mathfrak f\subset\mathfrak g$ be finite-dimensional Lie algebras over a field $k$. Then the inclusion $\mathfrak f\hookrightarrow\mathfrak g$ is an epimorphism if and only if in every finite-dimensional representation of $\mathfrak g$, the subspaces annihilated by $\mathfrak f$ and $\mathfrak g$ coincide.
    \item \textup{(}\cite[Theorem~1]{bien-borel1}\textup{)} Let $\mathbf{G}$ be a Zariski connected linear algebraic group over an algebraically closed field $k$, and $\mathbf{F}\leqs \mathbf{G}$ an algebraic subgroup. Then the inclusion $\mathbf{F}\hookrightarrow \mathbf{G}$ is an epimorphism if and only if in every finite-dimensional algebraic representation of $\mathbf{G}$, the subspaces of $\mathbf{F}$- and $\mathbf{G}$-fixed vectors coincide.
\end{enumerate}
\end{prop}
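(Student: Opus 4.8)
The plan is to prove the two equivalences in parallel, since they have the same shape: in each the ``fixed vectors $\Rightarrow$ epimorphism'' direction is formal, while the converse contains the only real content. First I would dispose of the direction ``epimorphism $\Rightarrow$ fixed subspaces coincide''. Given a finite-dimensional (algebraic) representation $V$ of $\mathbf G$, form the semidirect product $\mathbf G\ltimes V$ (a linear algebraic group), together with the standard inclusion $\iota\colon g\mapsto(g,0)$. For $v\in V^{\mathbf F}$ define a second homomorphism $\sigma_v\colon g\mapsto(g,\,gv-v)$; the map $g\mapsto gv-v$ is a $1$-cocycle, which is exactly what is needed for $\sigma_v$ to be a morphism of algebraic groups, and because $v$ is $\mathbf F$-fixed one has $\sigma_v|_{\mathbf F}=\iota|_{\mathbf F}$. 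Epimorphicity of $\mathbf F\hookrightarrow\mathbf G$ forces $\sigma_v=\iota$, i.e.\ $gv=v$ for all $g\in\mathbf G$, so $v\in V^{\mathbf G}$; the inclusion $V^{\mathbf G}\subseteq V^{\mathbf F}$ being obvious, the two subspaces agree. For Lie algebras one runs the identical argument with $\mathfrak g\ltimes V$ ($V$ an abelian ideal), $\iota(x)=(x,0)$, and $\sigma_v(x)=(x,\,x\cdot v)$ for $v\in V^{\mathfrak f}$.

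Next I would treat the converse, ``fixed subspaces coincide $\Rightarrow$ epimorphism'', which is where an embedding into a general linear group enters. Let $\phi,\psi\colon\mathbf G\to\mathbf H$ be morphisms agreeing on $\mathbf F$. Since every linear algebraic group embeds in some $\GL(W)$, I may replace $\mathbf H$ by $\GL(W)$. Equip $\End(W)$ with the algebraic $\mathbf G$-action $g\cdot T=\psi(g)\,T\,\phi(g)^{-1}$; then $\mathrm{id}_W\in\End(W)^{\mathbf F}$ because $\phi$ and $\psi$ coincide on $\mathbf F$, hence $\mathrm{id}_W\in\End(W)^{\mathbf G}$ by hypothesis, which says precisely $\psi(g)=\phi(g)$ for all $g$. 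For the Lie algebra statement the same computation works once the target is finite-dimensional: give $\End(W)$ the $\mathfrak g$-module structure $x\cdot T=\psi(x)T-T\phi(x)$, observe $x\cdot\mathrm{id}_W=\psi(x)-\phi(x)$, and conclude as before. This closes the proof of (ii) completely, and of (i) modulo one reduction.

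The hard part, and the only step that is not bookkeeping, is precisely that reduction in the Lie algebra case: the Lie algebra $\mathfrak h$ receiving a pair of maps to be distinguished need not be finite-dimensional, so Ado's theorem (characteristic $0$) or Iwasawa's embedding theorem (characteristic $p$) is not immediately applicable. This is the core of Bergman's analysis: one studies the dominion $\mathrm{Dom}_{\mathfrak g}(\mathfrak f)$ --- the intersection of the equalizers of all pairs $\phi,\psi$ agreeing on $\mathfrak f$ --- and identifies it through the amalgamated coproduct $\mathfrak g\amalg_{\mathfrak f}\mathfrak g$ and the associated algebra $U(\mathfrak g)\otimes_{U(\mathfrak f)}U(\mathfrak g)$, turning the question into one about $U(\mathfrak f)$-modules that may be tested on finite-dimensional representations; granting this, the argument of the preceding paragraph finishes (i). No analogous subtlety occurs for algebraic groups, where the criterion can in fact be restated as $\mathcal O(\mathbf G)^{\mathbf F}=k$, since every finite-dimensional representation of $\mathbf G$ embeds into the coordinate ring $\mathcal O(\mathbf G)$. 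In short, I expect every step except Bergman's module-theoretic reduction to be routine, and I would present the argument with that reduction cited and the two symmetric constructions above spelled out.
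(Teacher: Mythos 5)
The paper does not prove this proposition: it is stated as a direct citation of Bergman and Bien--Borel, and the appendix then \emph{uses} the characterization (in the definition of epimorphic subalgebra/subgroup and the lemmas that follow) without re-deriving it, so there is no ``paper's proof'' to match against. Your reconstruction is correct and is the standard argument. The implication from epimorphicity to coincidence of fixed (resp.\ annihilated) subspaces is exactly the semidirect-product trick: the two morphisms $\iota$ and $\sigma_v$ into $\mathbf G\ltimes V$ (resp.\ $\mathfrak g\ltimes V$) agree on $\mathbf F$ (resp.\ on $\mathfrak f$) precisely when $v$ is $\mathbf F$-fixed (resp.\ $\mathfrak f$-annihilated), and epimorphicity then forces $\sigma_v=\iota$, i.e.\ $v$ is $\mathbf G$-fixed. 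For the converse you embed the target in $\GL(W)$ (resp.\ $\mathfrak{gl}(W)$) and run the $\End(W)$-argument with the twisted action $g\cdot T=\psi(g)T\phi(g)^{-1}$ (resp.\ $x\cdot T=\psi(x)T-T\phi(x)$); that this is a representation, that $\mathrm{id}_W$ is $\mathbf F$-fixed (resp.\ $\mathfrak f$-annihilated), and that $\mathbf G$-fixedness (resp.\ $\mathfrak g$-annihilation) of $\mathrm{id}_W$ means $\phi=\psi$ are all as you say. One remark on (i): the worry that the receiving Lie algebra ``need not be finite-dimensional'', so that Ado is not immediately applicable, only bites if one insists on epimorphicity in the category of \emph{all} Lie algebras over $k$. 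If one takes epimorphism in the category of finite-dimensional Lie algebras over a field of characteristic zero --- which is how the appendix frames the discussion in the paragraph preceding the proposition --- then the target is finite-dimensional by fiat, Ado's theorem provides the embedding into some $\mathfrak{gl}(W)$, and your $\End(W)$ argument closes (i) directly. Bergman's dominion/amalgamated-coproduct analysis is what handles the unrestricted category; if that is the intended reading, deferring to the citation there is appropriate, but it is worth being explicit about which category one is working in, since in the finite-dimensional setting the extra machinery is unnecessary.
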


We take this representation-theoretic characterization as the defining property of an epimorphic subgroup of a semisimple real Lie group.
\begin{de}\label{de;epimorphic}\leavevmode
\begin{enumerate}[label=(\roman*)]
    \item Let $\mathfrak f$ be a subalgebra of a finite-dimensional real Lie algebra $\mathfrak g$. We say that $\mathfrak f$ is \emph{epimorphic} in $\mathfrak g$ if for any finite-dimensional real
    representation of $\mathfrak g$, the subspaces annihilated by $\mathfrak f$ and $\mathfrak g$ coincide.
    \item Let $G$ be a connected semisimple real Lie group. A subgroup $F$ of $G$ is said to be \emph{epimorphic} in $G$ if for every finite-dimensional representation of $G$, the vectors fixed by $F$ are also fixed by $G$.
\end{enumerate}
\end{de}
In the literature, it has been common to only introduce and study the concept of epimorphic subgroups for algebraic groups. Let us therefore check that our definition coincides with the usual one when the groups involved are algebraic.
\begin{prop}\label{prop;defs_coincide}
Let $G$ be a Zariski connected semisimple real algebraic group and $F$ a Lie subgroup of $G$ such that $F^\circ$ is Zariski dense in $F$. Suppose that $F$ is epimorphic in $G$ in the category of real algebraic groups, meaning that in every finite-dimensional real algebraic representation of $G$, the vectors fixed by $F$ are also fixed by $G$. Then $F^\circ$ is epimorphic in $G^\circ$ in the sense of Definition~\textup{\ref{de;epimorphic}}.
\end{prop}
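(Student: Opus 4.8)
The plan is to transfer the problem to the simply connected cover of $G$, where representation theory becomes entirely algebraic.

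After a few trivial reductions we are in the following situation. Since $G$ is semisimple with finite center, $G^\circ$ is a connected semisimple Lie group with finite center, so Definition~\ref{de;epimorphic}(ii) applies, and we must show that for every finite-dimensional real representation $(\rho,V)$ of $G^\circ$ one has $V^{F^\circ}\subseteq V^{G^\circ}$, the reverse inclusion being obvious. Moreover, since $F^\circ$ is Zariski dense in $F$, the algebraic subgroup $\mathbf F:=\Zcl(F)$ is Zariski connected, and the hypothesis on $F$ says precisely that $\mathbf F$ is an epimorphic algebraic subgroup of $G$ (as the fixed-point sets of $F$ and $\Zcl(F)$ coincide in any algebraic representation).

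Let $\pi\colon\widetilde G\to G$ be the simply connected covering isogeny of algebraic groups, with finite central kernel $Z_0$; it restricts to a surjective homomorphism of Lie groups $\widetilde G(\R)^\circ\twoheadrightarrow G^\circ$. Because $\widetilde G$ is simply connected, every finite-dimensional representation of $\widetilde G(\R)^\circ$ is the restriction of an algebraic representation of $\widetilde G$ (via the correspondence with $\mathfrak g$-modules, using that $d\pi$ is an isomorphism of Lie algebras). Pulling $\rho$ back along $\widetilde G(\R)^\circ\to G^\circ$ we may therefore assume that $V$ is an algebraic representation of $\widetilde G$; this changes neither $V^{F^\circ}$ nor $V^{G^\circ}$, and $V^{G^\circ}=V^{\widetilde G}$ by Zariski density of $\widetilde G(\R)^\circ$ in $\widetilde G$. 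Writing $F^\circ_1$ for the connected subgroup of $\widetilde G(\R)^\circ$ with Lie algebra $\mathfrak f=\Lie(F)$ and $\mathbf H:=\Zcl(F^\circ_1)\leqs\widetilde G$ (a Zariski-connected algebraic subgroup), we get $V^{F^\circ}=V^{F^\circ_1}=V^{\mathbf H}$. Hence it suffices to prove that $\mathbf H$ is epimorphic in $\widetilde G$. A short computation gives $\pi(\mathbf H)=\mathbf F$, and comparing dimensions shows $\mathbf H=\pi^{-1}(\mathbf F)^\circ$. So everything comes down to the implication: \emph{$\mathbf F$ epimorphic in $G$ implies $\pi^{-1}(\mathbf F)^\circ$ epimorphic in $\widetilde G$}. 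I would prove this using the characterization of epimorphic algebraic subgroups as those contained in no proper observable subgroup: if $\mathbf H$ lay in a proper observable subgroup $\mathbf O\subsetneq\widetilde G$, then $\pi(\mathbf O)$ would be a proper observable subgroup of $G$ (observability passes to images under central isogenies; and $\pi(\mathbf O)=G$ would force $\mathbf O Z_0=\widetilde G$, hence $\mathbf O=\widetilde G$) containing $\pi(\mathbf H)=\mathbf F$, contradicting epimorphicity of $\mathbf F$. An alternative route would be to invoke Bien--Borel to replace $\mathbf F$ by an epimorphic split-solvable subgroup $A'U\leqs\mathbf F$, lift $U$ isomorphically and $A'$ up to isogeny into $\widetilde G$, and conclude via the description of the expanding cone in \S\ref{subsec;exp_cone}, which depends only on the Lie algebras $\mathfrak g$ and $\mathfrak u$.

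The main obstacle is exactly what forces the detour through $\widetilde G$: representations of $G^\circ$ need not extend to algebraic representations of $G$ (for example, the standard representation of $\SL_3(\R)\cong\PGL_3(\R)^\circ$ does not extend to the algebraic group $\PGL_3$), so one genuinely has to pass to the simply connected cover and then keep careful track of the finite central kernels. In particular the subgroup $\mathbf H$ one is led to analyze can be a proper finite-index subgroup of $\pi^{-1}(\mathbf F)$, which is precisely why a criterion insensitive to finite central modifications—such as the one phrased in terms of observable subgroups—is more convenient here than a direct representation-theoretic argument.
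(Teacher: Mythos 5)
Your reductions are all correct: pulling back along the simply connected algebraic cover $\pi\colon\widetilde G\to G$, identifying representations of $\widetilde G(\R)^\circ$ with algebraic representations of $\widetilde G$, lifting $F^\circ$ to $F^\circ_1$ and observing that $\mathbf H=\Zcl(F^\circ_1)=\pi^{-1}(\mathbf F)^\circ$ with $V^{F^\circ}=V^{\mathbf H}$ and $V^{G^\circ}=V^{\widetilde G}$. This reduces the proposition to showing that $\mathbf H$ is (algebraically) epimorphic in $\widetilde G$. The paper organizes the argument differently: it passes through the Lie-algebra characterization via Lemmas~\ref{lem;proof} and \ref{lem;converse}, and inside Lemma~\ref{lem;converse} it handles the mismatch between $\mathfrak g$-representations and algebraic $G$-representations by replacing an irreducible $V$ with a tensor power $V^{\otimes n}$ large enough to kill the finite central kernel $\mathbf N$ of $\widetilde{\mathbf G}\to\mathbf G$, rather than by pulling $V$ back to $\widetilde G$. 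The two routes are essentially dual to each other.

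The one step you leave unjustified is the parenthetical assertion that \emph{observability passes to images under central isogenies}. Unwinding your reduction, this is the claim that if $\mathbf O\leqs\widetilde G$ is observable then so is $\mathbf O Z_0$ (equivalently, that $\widetilde G/\mathbf O Z_0$ is quasi-affine whenever $\widetilde G/\mathbf O$ is). This is equivalent to the fact that observability of an algebraic subgroup depends only on its identity component, which is true but is a theorem (Bia{\l}ynicki-Birula--Hochschild--Mostow; see Grosshans' book on algebraic homogeneous spaces) and deserves a citation or proof. As written the step is a genuine gap in the sense of being stated rather than established. Note, however, that once you are on $\widetilde G$ the observable-subgroup detour is avoidable: for $V$ irreducible and $v\in V^{\mathbf H}$, the finite central subgroup $Z_0$ acts by a character of some finite order $n$; then $V^{\otimes n}$ descends to an algebraic representation of $G$, $v^{\otimes n}$ becomes an $\mathbf F$-fixed vector (since $\pi^{-1}(\mathbf F)=\mathbf H Z_0$), hence is $G$-fixed by hypothesis, hence $\mathfrak g$ annihilates $v^{\otimes n}$ and therefore $v$, so $v\in V^{\widetilde G}$. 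This is exactly the mechanism of the paper's Lemma~\ref{lem;converse}, applied upstairs instead of downstairs, and it yields a self-contained version of your argument.
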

To be precise, by $G$ being a real algebraic group we mean that $G=\mathbf{G}(\R)$ is the group of real points of an underlying complex algebraic group $\mathbf{G}$ defined over $\R$, and a real algebraic representation is the restriction to real points of an algebraic representation of $\mathbf{G}$ defined over $\R$. Moreover,  $F^\circ$ and $G^\circ$ denote the connected components of $F$ and $G$, respectively, in the Lie group topology.
It is easy to see that the converse of the proposition is also true. Finally, we remark that $F$ is epimorphic in $G$ in the category of real algebraic groups if and only if $F$ is epimorphic in $\mathbf{G}$ in the category of complex algebraic groups.

The idea of the proof of the proposition above is to pass to the Lie algebra level, where all representations are algebraic thanks to semisimplicity. The following two lemmas enable this step.
\begin{lem}
    \label{lem;proof} 
    Let $G$ be a connected semisimple Lie group and $F$ a closed subgroup of $G$. If $\mathfrak f=\Lie(F)$ is an epimorphic subalgebra of $\mathfrak g=\Lie(G)$, then $F$ is epimorphic in $G$. 
\end{lem}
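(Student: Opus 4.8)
The plan is to reduce the group-level epimorphicity statement to the Lie-algebra-level statement that is already available, namely that $\mathfrak f$ is epimorphic in $\mathfrak g$, by differentiating representations. Since $F$ is closed, it is an embedded Lie subgroup of $G$ by Cartan's theorem, its identity component $F^\circ$ is the connected Lie subgroup with Lie algebra $\mathfrak f$, and $F^\circ$ is generated by $\exp(\mathfrak f)$; likewise $G$, being connected, is generated by $\exp(\mathfrak g)$.

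First I would record two standard facts. (1) Any finite-dimensional representation $(\rho,V)$ of a Lie group is automatically smooth, hence differentiates to a Lie algebra representation $d\rho\colon\mathfrak g\to\mathfrak{gl}(V)$ satisfying $\rho(\exp X)=\exp(d\rho(X))$ for $X\in\mathfrak g$. (2) For a connected Lie group --- applied once to $G$ and once to $F^\circ$ --- the subspace of vectors fixed by the group equals the subspace of vectors annihilated by its Lie algebra in the differentiated representation. Indeed, since the group is generated by the image of $\exp$, invariance of $v$ amounts to $\exp(d\rho(X))v=v$ for all $X$ in the Lie algebra; the implication ``$\Leftarrow$'' being trivial, for ``$\Rightarrow$'' one replaces $X$ by $tX$, so that $\exp(t\,d\rho(X))v=v$ for all $t\in\R$, and differentiates at $t=0$ to obtain $d\rho(X)v=0$.

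With these in hand the argument is a short chain of inclusions. Let $(\rho,V)$ be an arbitrary finite-dimensional representation of $G$ and write $V^F,V^{F^\circ},V^G$ for the respective fixed subspaces. By fact (2), $V^{F^\circ}=\set{v\in V\for d\rho(X)v=0\text{ for all }X\in\mathfrak f}$ and $V^G=\set{v\in V\for d\rho(Y)v=0\text{ for all }Y\in\mathfrak g}$. Since $\mathfrak f$ is epimorphic in $\mathfrak g$ and $d\rho$ is a finite-dimensional real representation of $\mathfrak g$, these two subspaces coincide, i.e.\ $V^{F^\circ}=V^G$. Combining this with the evident inclusions $V^G\subset V^F\subset V^{F^\circ}$ (the first because $G$-fixed vectors are in particular $F$-fixed, the second because $F^\circ\leqs F$) yields $V^F=V^G$. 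As $(\rho,V)$ was arbitrary, $F$ is epimorphic in $G$.

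I do not expect any serious obstacle: the only points needing a little care are the automatic smoothness of finite-dimensional representations and the connectedness-based passage between $G$-invariance (resp.\ $F^\circ$-invariance) and $\mathfrak g$-invariance (resp.\ $\mathfrak f$-invariance), both entirely standard. It is also worth noting that semisimplicity of $G$ plays no role in this lemma; what is genuinely used is that $G$ is connected and that $F$ is closed, so that $\Lie(F^\circ)=\mathfrak f$ and $F^\circ$ is generated by $\exp(\mathfrak f)$.
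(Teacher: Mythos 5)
Your proof is correct and follows essentially the same route as the paper's (terse) argument: pass to the differentiated representation, use that $F$-fixed vectors are $\mathfrak f$-annihilated, invoke epimorphicity of $\mathfrak f$ in $\mathfrak g$, and use connectedness of $G$ to return to $G$-fixed vectors. Your side remark that semisimplicity of $G$ is not used here is accurate; the hypothesis appears only because the paper's Definition~\ref{de;epimorphic}(ii) is stated for semisimple groups.
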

\begin{proof}
A  representation of $G$ naturally induces a representation of its Lie algebra. 
A vector that is $F$-fixed on the Lie group level is then $\mathfrak f$-annihilated on the Lie algebra level. Therefore, such vectors are annihilated by $\mathfrak g$ and hence fixed by $G$, since $G$ is connected.
\end{proof}

\begin{lem}\label{lem;converse}
Let $F$ and $G$ be as in Proposition~\textup{\ref{prop;defs_coincide}}. 
Then  $\mathfrak f=\Lie(F)$ is an epimorphic subalgebra of $\mathfrak g=\Lie(G)$.
\end{lem}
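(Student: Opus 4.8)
The plan is to verify the representation-theoretic condition of Definition~\ref{de;epimorphic}(i): for every finite-dimensional real representation $(\rho,V)$ of $\mathfrak g$, the subspace $V^{\mathfrak f}$ of $\mathfrak f$-annihilated vectors coincides with the subspace $V^{\mathfrak g}$ of $\mathfrak g$-annihilated vectors (only $V^{\mathfrak f}\subseteq V^{\mathfrak g}$ needs proof). The central difficulty is that such a $\rho$ need not be algebraic for $\mathbf G$ — it is only algebraic for the simply connected cover — so the algebraic epimorphicity of $F$ in $\mathbf G$ cannot be applied to $V$ directly and must first be transported to the simply connected group.

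First I would set up the comparison with the simply connected group. Let $p\colon\tilde{\mathbf G}\to\mathbf G$ be the simply connected central isogeny over $\mathbb R$, with $\ker p$ finite and central. Since $\mathfrak g$ is semisimple, $\rho$ is the differential of an algebraic representation of $\tilde{\mathbf G}$ over $\mathbb R$, still denoted $\rho$. The subspace $V^{\mathfrak f}$ equals the subspace of $V$ fixed by the connected Lie subgroup $G_1\coloneqq\langle\exp_{\tilde{\mathbf G}}\mathfrak f\rangle$ of $\tilde{\mathbf G}(\mathbb R)$, and since $G_1$ is Zariski dense in its Zariski closure $\mathbf F_1\leqslant\tilde{\mathbf G}$, which is a Zariski connected algebraic group, we get $V^{\mathfrak f}=V^{\mathbf F_1}=V^{\Lie\mathbf F_1}$. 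So it suffices to prove that $\mathbf F_1$ is epimorphic in $\tilde{\mathbf G}$ in the category of (real) algebraic groups, for then $V^{\mathfrak f}=V^{\Lie\mathbf F_1}=V^{\mathbf F_1}=V^{\tilde{\mathbf G}}=V^{\mathfrak g}$.

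Next I would transport the hypothesis. As $F^\circ$ is Zariski dense in $F$ by assumption, $\mathbf F_0\coloneqq\overline{F}^{\,\mathrm{Zar}}=\overline{F^\circ}^{\,\mathrm{Zar}}$ is Zariski connected, and the algebraic epimorphicity of $F$ in $\mathbf G$ gives $W^{\mathbf F_0}=W^{\mathbf G}$ for every real algebraic representation $W$ of $\mathbf G$; that is, $\mathbf F_0$ is epimorphic in $\mathbf G$. One checks $p(G_1)=F^\circ$, hence $p(\mathbf F_1)=\mathbf F_0$, so $p|_{\mathbf F_1}\colon\mathbf F_1\to\mathbf F_0$ is an isogeny and $\mathbf F_1=(p^{-1}(\mathbf F_0))^\circ$. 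I would then invoke two elementary facts about epimorphic algebraic subgroups: (a) if $\mathbf H\leqslant\mathbf G$ is epimorphic and $p\colon\tilde{\mathbf G}\to\mathbf G$ is a central isogeny, then $p^{-1}(\mathbf H)$ is epimorphic in $\tilde{\mathbf G}$ — because $\ker p\subseteq p^{-1}(\mathbf H)$ is central and fixes any $p^{-1}(\mathbf H)$-fixed vector $w$, hence fixes the subrepresentation $\langle\tilde{\mathbf G}w\rangle$, which therefore descends to a $\mathbf G$-representation; and (b) the identity component $\mathbf H^\circ$ of an epimorphic algebraic subgroup $\mathbf H\leqslant\tilde{\mathbf G}$ is epimorphic — because, by the Peter--Weyl decomposition $\mathcal O(\tilde{\mathbf G})=\bigoplus_V V\otimes V^*$, epimorphicity of $\mathbf H$ amounts to $\mathcal O(\tilde{\mathbf G})^{\mathbf H}=\mathbb R$, while $\mathcal O(\tilde{\mathbf G})^{\mathbf H^\circ}$ is a finite-dimensional integral domain over $\mathbb R$ (finiteness since $\tilde{\mathbf G}/\mathbf H^\circ\to\tilde{\mathbf G}/\mathbf H$ is finite) containing $\mathcal O(\tilde{\mathbf G})^{\mathbf H}=\mathbb R$, hence equal to $\mathbb R$. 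Applying (a) to $\mathbf F_0\leqslant\mathbf G$ and then (b) inside $\tilde{\mathbf G}$ yields that $\mathbf F_1=(p^{-1}(\mathbf F_0))^\circ$ is epimorphic in $\tilde{\mathbf G}$, completing the argument.

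The main obstacle is precisely the failure of $\mathbf G$ to be simply connected: the given representations of $\mathfrak g$ cannot be fed into the hypothesis, and the remedy of passing to $\tilde{\mathbf G}$ forces one to know that epimorphicity of algebraic subgroups is insensitive both to central isogenies and to replacing a subgroup by its identity component — i.e.\ that it is genuinely a Lie-algebra property. These facts belong to the Bien--Borel circle of ideas; alternatively they admit the short direct proofs sketched above, which I would include for completeness.
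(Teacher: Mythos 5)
Your proof is correct, and it takes a genuinely different route from the paper's. Both arguments pass to the simply connected real algebraic cover $p\colon\tilde{\mathbf G}\to\mathbf G$, where every representation of $\mathfrak g$ becomes algebraic, but they transport different objects across the isogeny. The paper fixes a putative bad irreducible representation $V$ of $\mathfrak g$, complexifies, distinguishes two cases according to whether the complexification remains irreducible, lifts to $\tilde{\mathbf G}$, and then passes to a tensor power $V^{\otimes n}$ chosen so that the finite central kernel $\ker p$ (which by Schur's lemma acts by roots of unity) acts trivially; this tensor power descends to a real algebraic representation of $\mathbf G$, to which the algebraic-epimorphicity hypothesis applies, giving a contradiction. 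You instead transport the \emph{hypothesis} up the cover: from $W^F=W^{\mathbf G}$ for real algebraic representations of $\mathbf G$ you deduce that the Zariski connected subgroup $\mathbf F_1=(p^{-1}(\overline{F}^{\,\mathrm{Zar}}))^\circ$ is epimorphic in $\tilde{\mathbf G}$, via your facts (a) (stability under preimage along central isogenies) and (b) (stability under passing to the identity component), and then apply this directly to the lifted $\rho$. Your route is more structural and sidesteps both the tensor-power device and the paper's real-versus-complex case analysis, at the cost of needing (a) and (b). Fact (a) as you prove it is clean. Your sketch of (b) has a small rough edge: finiteness of the quotient morphism $\tilde{\mathbf G}/\mathbf H^\circ\to\tilde{\mathbf G}/\mathbf H$ does not by itself give that $\mathcal O(\tilde{\mathbf G})^{\mathbf H^\circ}$ is finite over $\mathcal O(\tilde{\mathbf G})^{\mathbf H}=\R$, since these are rings of global functions on varieties that need not be affine. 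The conclusion is salvaged by noting that the finite group $\mathbf H/\mathbf H^\circ$ acts on the domain $R=\mathcal O(\tilde{\mathbf G})^{\mathbf H^\circ}$ with invariants $\R$, so every element of $R$ is integral over $\R$ via its orbit polynomial; hence $R$ is a field of bounded degree over $\R$, i.e.\ $\R$ or $\C$, and $\C$ is excluded by Zariski density of $\tilde{\mathbf G}(\R)$. (In any case, (b) is part of the standard Bien--Borel dictionary: in characteristic zero, epimorphicity depends only on $\Lie\mathbf H$.)
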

\begin{proof}
If $\mathfrak f$ is not an  epimorphic subalgebra of $\mathfrak g$, 
then using complete reducibility of $\mathfrak g$-representations, we can find a non-trivial irreducible representation
 $\rho\colon \mathfrak g\to \mathfrak {gl}(V)$ such that the subspace 
 \begin{align*}
 V_0=V^{\mathfrak f}=\set{v\in V\for \rho(f)v=0\text{ for all }f\in\mathfrak f}
 \end{align*}
 is nonzero. Let $\mathfrak{g}_\C$ and $V_\C$ be the complexifications of $\mathfrak g$ and $V$, respectively. It follows from the discussion in~\cite[\S8]{onishchik} (Theorem~1 and Corollary~1) that either (1) $\mathfrak{g}_\C$ acts irreducibly on $V_\C$, or (2)
  $V$ has a complex structure  and $\mathfrak{g}$ acts by $\C$-linear transformations. In both cases, we thus obtain an irreducible complex representation of $\mathfrak{g}_\C$ (either on $V_\C$ or on $V$), which we denote by $\rho_\C$. We also set $k=\R$ in the first case and $k=\C$ in the second, and record that since $\mathfrak g$ acts $k$-linearly, the subspace $V_0$ is $k$-invariant.

 We claim that there exists $n\in \N$ such that the tensor product representation
$\rho ^{\otimes_{k} n} $ of $\mathfrak g$ lifts to a real algebraic representation of $G$. 
Assuming the claim and using that $F^\circ$ is Zariski dense in $F$, we find that $V_0^{\otimes_ {k}n}$ is a nonzero $F$-fixed  subspace of $V^{\otimes_{k}n}$.
Since $F$ is an epimorphic subgroup of $G$ in the algebraic category, the space $V_0^{\otimes_ {k}n}$  is $G$-fixed. It follows that $\mathfrak g$ annihilates $V_0^{\otimes_ {k}n}$, hence
$\mathfrak g$ annihilates $V_0$. This contradicts the assumption that $(\rho,V)$ is a non-trivial irreducible representation, and thus establishes the statement of the lemma.

It remains to prove the claim. Let $\mathbf{G}$ be a Zariski connected semisimple complex algebraic group defined over $\R$ such that $G=\mathbf{G}(\R)$. Then $\mathfrak g_\C$ is the Lie algebra of $\mathbf{G}$. By~\cite[Corollary~A.4.11]{conrad} there is a simply connected algebraic cover $\tilde{\mathbf{G}}$ of $\mathbf{G}$ defined over $\R$.

In case (1), since the representation $\rho_\C\colon\mathfrak g_\C\to\mathfrak{gl}(V_\C)$ is algebraic by semisimplicity, it lifts to an irreducible algebraic representation $\tilde{\mathbf{G}} \to \GL(V_\C)$ defined over $\R$ (with respect to the real structure on $V_\C$ given by $V$). The kernel $\mathbf{N}$ of the covering
map $\tilde{\mathbf{G}}\to \mathbf{G}$ is finite and central. By Schur's lemma and irreducibility, $\mathbf{N}$ thus acts on $V_\C$ by scalar multiplication by roots of unity. Therefore, there exists $n\in \N$ such that $\mathbf{N}$ acts trivially on $V_\C^{\otimes_\C n}$. Since the representation of $\tilde{\mathbf{G}}$ on $V_\C^{\otimes_\C n}$ is defined over $\R$, we deduce that it induces a real algebraic representation of $G$ on $V^{\otimes_{k}n}=V^{\otimes _\R n}$. 

In case (2), $\rho_\C\colon\mathfrak g_\C\to\mathfrak{gl}(V)$ lifts to an irreducible algebraic representation $\tilde{\mathbf{G}}\to\GL(V)$. By the same argument as in the first case, for some $n\in\N$ the kernel $\mathbf{N}$ of the covering map acts trivially on $V^{\otimes_\C n}$. Hence, the action of $\tilde{\mathbf{G}}$ on $V^{\otimes_{k}n}=V^{\otimes _{\C} n}$ factors through an algebraic representation of $\mathbf{G}$. By restriction of scalars, we can view $\mathbf{G}$ and $\GL(V^{\otimes _{\C} n})$ as groups of real points of algebraic groups defined over $\R$. Composing the map $G\to \mathbf{G}$ with the representation of $\mathbf{G}$ on $V^{\otimes _{\C} n}$ we obtain the desired lift of $\rho^{\otimes_{k}n}$.
\end{proof}

\begin{proof}[Proof of Proposition~\textup{\ref{prop;defs_coincide}}]

By Lemma~\ref{lem;converse}, 
 $\mathfrak f=\Lie(F)$ is an epimorphic subalgebra of $\mathfrak g=\Lie(G)$.  Then Lemma~\ref{lem;proof} implies that $F^\circ$ is epimorphic in $G^\circ$ in the sense of Definition~\ref{de;epimorphic}(ii).
\end{proof}

\bibliographystyle{plain}
\bibliography{refs}

\end{document}